\documentclass[11pt,letterpaper]{amsart}
\usepackage{graphicx,subcaption}
\usepackage{amsmath} 
\usepackage{amsthm,amsfonts,amssymb,mathrsfs,amscd,amstext,amsbsy} 
\usepackage{epic,eepic} 
\usepackage{yfonts}
\usepackage{paralist,enumerate}
\usepackage[all]{xy}
\usepackage{hyperref}
\hypersetup{colorlinks}
\usepackage{tikz}
\usetikzlibrary {positioning}
\definecolor {processblue}{cmyk}{0.96,0,0,0}
\usetikzlibrary{arrows}
\tikzset{    vertex/.style={circle,draw,minimum size=1.5em},    edge/.style={->,> = latex'}}

\newtheorem{theorem}{Theorem}[section]
\newtheorem{cor}[theorem]{Corollary}
\newtheorem{lemma}[theorem]{Lemma}

\newtheorem{theo}[theorem]{Theorem}
\newtheorem{lem}[theorem]{Lemma}
\newtheorem{pro}[theorem]{Proposition}

\newtheorem{exa}[theorem]{Example}

\newtheorem{Definition}[theorem]{Definition}
\newtheorem*{Definition*}{Definition}

\begin{document}
\title[Circle action on oriented 6-manifold with isolated fixed points]{Graphs for torus actions on oriented manifolds with isolated fixed points and classification in dimension 6}
\author{Donghoon Jang}
\thanks{MSC 2020: primary 58C30, secondary 57M60}
\address{Department of Mathematics, Pusan National University, Pusan, South Korea}
\email{donghoonjang@pusan.ac.kr}

\begin{abstract}
Let a torus act on a compact oriented manifold $M$ with isolated fixed points, with an additional mild assumption that its isotropy submanifolds are orientable. We associate a signed labeled multigraph encoding the fixed point data (weights and signs at fixed points and isotropy submanifolds) of the manifold. We study operations on $M$ and its multigraph, (self) connected sum and blow up, etc.

When the circle group acts on a 6-dimensional $M$, we classify such a multigraph by proving that we can convert it into the empty graph by successively applying two types of operations. In particular, this classifies the fixed point data of any such manifold.

We prove this by showing that for any such manifold, we can successively take equivariant connected sums at fixed points with itself, $\mathbb{CP}^3$, and 6-dimensional analogue $Z_1$ and $Z_2$ of the Hirzebruch surfaces (and these with opposite orientations)
 to a fixed point free action on a compact oriented 6-manifold.

We also classify a multigraph for a torus action on a 4-dimensional $M$.
\end{abstract}

\maketitle

\tableofcontents

\section{Introduction}

Torus actions on manifolds have been studied in low dimensions. In dimensions 1 and 2 the classification results are known. Circle actions and torus actions on 3- and 4-manifolds have been studied and classified in 1960's and 1970's. Raymond classified circle actions on compact 3-manifolds \cite{R}.
Orlik and Raymond proved that a 2-torus action on a simply connected closed orientable 4-manifold is $S^4$, or a connected sum of $\mathbb{CP}^2$, $\overline{\mathbb{CP}^2}$, and the Hirzebruch surfaces \cite{OR1}. Fintushel proved an analogous result that a circle action on a simply connected closed oriented 4-manifold is a connected sum of $S^4$, $\mathbb{CP}^2$, $\overline{\mathbb{CP}^2}$, and $S^2 \times S^2$ \cite{F2}. 
For non-simply connected case, Orlik and Raymond showed that a 2-torus action on a closed orientable 4-manifold is determined by its orbit data \cite{OR2}. 
Fintushel showed that a circle action on a closed oriented 4-manifold is also determined by its orbit data \cite{F3}.
Also see \cite{F1}, \cite{J1}, \cite{P} for some classification results on certain 4-dimensional oriented $S^1$-manifolds.

Circle actions on different types of 4-manifolds with fixed points have been also classified. Carrell, Howard, and Kosniowski studied complex surfaces \cite{CHK}. Ahara and Hattori \cite{AH}, Audin \cite{Au}, and Karshon \cite{Ka} studied symplectic 4-manifolds. 
For both complex 4-manifolds and symplectic 4-manifolds, there is a common phenomenon; we can successively blow down any such manifold to one of minimal manifolds, which are $\mathbb{CP}^2$, the Hirzebruch surfaces, and ruled surfaces. Also see a generalization of these results to almost complex 4-manifolds when there are finite fixed points \cite{J3}.

Consider a torus action on a manifold that has a non-empty finite fixed point set. Because the dimension of a manifold and the dimension of its fixed point set have the same parity, the next dimension to consider is dimension 6. Unlike the classification results in dimension 4, for any type of manifold classifying torus actions on 6-manifolds becomes more demanding. Part of the reason why a classification is more complicated in dimension 6 is that (1) the orbit space is too big for an action of a torus on a 6-manifold, and (2) for a 4-dimensional complex or symplectic manifold detecting and blowing down a 2-sphere with self-intersection number -1 to a minimal manifold is relatively simple as done in the above results for complex and symplectic manifolds, but for this kind of argument in dimension 6 we need to blow down $\mathbb{CP}^2$ containing 3 fixed points whose weights must cooperate to be able to blow down equivariantly, and this is difficult to realize.

Known classification results for torus actions on 6-manifolds have strong assumptions, on actions and/or manifolds. For instance, McGavran and Oh considered $T^3$-actions on closed orientable 6-manifolds that have orbit space $D^3$ \cite{MO}, Kuroki considered $T^3$-actions on closed oriented 6-manifolds with fixed points and with vanishing odd cohomology groups \cite{Ku}, Tolman considered Hamiltonian $S^1$-actions on symplectic manifolds with minimal cohomology groups \cite{T}, and Ahara and the author considered $S^1$-actions on almost complex 6-manifolds with small number of fixed points \cite{Ah}, \cite{J2}.

Let a $k$-dimensional torus $T^k$ act effectively on a $2n$-dimensional manifold $M$ and suppose that there is a fixed point. The \textbf{complexity} of the action is $n-k$. Since there is a fixed point, $k$ can be at most $n$, and if $k=n$, then there are only finitely many fixed points. The bigger the complexity is, the more difficult a classification is. People have studied torus actions of minimal complexity (complexity zero, $k=n$), for instance quasitoric manifolds \cite{DJ}, topological toric manifolds \cite{IFM}, unitary toric manifolds \cite{Ma}, etc. For instance, a Hamiltonian $T^n$-action on a $2n$-dimensional compact symplectic manifold is determined by its moment image  \cite{D}. For the case of complexity one, see \cite{KT1}, \cite{KT2}, and \cite{KT3} for Hamiltonian actions on compact symplectic manifolds of complexity one.

People have used graphs to study torus actions on manifolds, where a graph encodes the data on the fixed point set of a given manifold. For instance, Karshon classified 4-dimensional Hamiltonian $S^1$-spaces in terms of graphs for the aforementioned result \cite{Ka}. Godinho and Sabatini \cite{GS} and Tolman \cite{T} used multigraphs to study Hamiltonian $S^1$-actions in higher dimensions. Musin studied Petrie's conjecture in low dimensions via labeled multigraphs \cite{Mu}. The author studied $S^1$-actions on almost complex manifolds via multigraphs \cite{J4}.

In this paper, we study multigraphs encoding the fixed point data of torus actions on compact oriented manifolds with isolated fixed points that satisfy a mild condition that isotropy submanifolds are orientable. For instance, the condition is satisfied if a given manifold admits a unitary structure, an (almost) complex structure, or a symplectic structure, and a torus group acts on the manifold, preserving the given structure. Let a torus $T^k$ act on a compact oriented manifold $M$ with isolated fixed points, whose isotropy submanifolds are orientable. By the fixed point data of $M$ we shall mean weights and signs at the fixed points; see Definition \ref{d21}. We show that there always exists a multigraph that encodes the fixed point data (Proposition \ref{p34}). Moreover, the multigraph also encodes which fixed points are in the same isotropy submanifold and hence the multigraph also encodes geometry (isotropy submanifolds) of a given manifold; see Definition \ref{d32}. Such a multigraph is signed and labeled, meaning that each vertex has sign $\pm$ and each edge is labeled by an element of $\mathbb{Z}^k$ (Definition \ref{d31}) whose label is a weight of a vertex (fixed point). We also introduce several operations on a multigraph, such as reversing an edge, a connected sum of two multigraph, a self connected sum of a multigraph, blow up of a multigraph, etc. We shall see that our multigraphs and these operations on multigraphs behave well with operations on manifolds, implying that our multigraphs are proper ones to consider.


We classify multigraphs for $S^1$-actions on 6-dimensional oriented manifolds with isolated fixed points, by showing that there is a systematic way of converting any such multigraph into the empty graph, and any multigraph between the steps are realized as one encoding the fixed point data of some $S^1$-manifold. Moreover, we only need two types of operations, called reversing an edge and connected sum. We can roughly state our multigraph version of main result of this paper as follows.

\begin{theo}[\textbf{Theorem \ref{t61}}] \label{t11}
For a circle action on a 6-dimensional compact oriented manifold with isolated fixed points whose isotropy submanifolds are orientable, there exists a signed labeled multigraph encoding the fixed point data of the manifold, and we can successively apply two types of operations, called reversing edge and connected sum, where we take connected sums with the multigraph itself, or with 3 types of standard multigraphs for the manifolds $\mathbb{CP}^3$, $Z_1$, and $Z_2$, to convert the multigraph into the empty graph. There is a definite procedure that this ends in a finite number of steps.
\end{theo}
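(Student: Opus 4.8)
My plan is to reduce the multigraph to the empty graph by a downward induction on the complexity measure $(w,n_w)$, ordered lexicographically, where $w$ is the largest absolute value of a weight occurring at any fixed point and $n_w$ is the number of fixed points carrying a weight of absolute value $w$. The empty graph is exactly the multigraph of a fixed point free action, so it suffices to show that, whenever a fixed point exists, a finite sequence of edge reversals together with one connected sum with a standard multigraph of $\mathbb{CP}^3$, $Z_1$, or $Z_2$ (possibly with reversed orientation) and one connected sum of the multigraph with itself yields a multigraph of strictly smaller complexity. Because each graph connected sum corresponds to an equivariant connected sum of oriented $S^1$-manifolds, and our graph operations are compatible with the operations on manifolds, every intermediate multigraph is automatically realized by a genuine manifold; the strict decrease of a measure valued in a well-ordered set then forces termination in finitely many steps.

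For the reduction step I would fix a largest weight $w$ and a fixed point $p$ carrying a weight $w$. Maximality of $w$ forces the component of the isotropy submanifold $M^{\mathbb{Z}_w}$ through $p$ in the weight-$w$ direction to be a single $2$-sphere $S$, joining $p$ to a second fixed point $p'$ along which the weight is $-w$; this is precisely a $w$-labeled edge of the multigraph. The normal bundle of $S$ in $M$ supplies the crucial arithmetic constraint: the two weights of $M$ transverse to $S$ at $p$ and the two at $p'$ coincide in pairs modulo $w$, since the $\mathbb{Z}_w$-rotation numbers of the normal bundle are locally constant along $S$. Together with the localization relation $\sum_p \epsilon(p)/(a_p b_p c_p)=0$ and the balancing relations for each weight established earlier, this congruence pins down the local model of $S$ to one of finitely many types, recorded by the residues of the transverse weights modulo $w$ and by the signs $\epsilon(p)$ and $\epsilon(p')$.

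I would then match $S$ with a weight-$w$ sphere $S'$ carrying the same local data inside a suitably scaled copy of $\mathbb{CP}^3$, $Z_1$, or $Z_2$: the claim is that these three models, together with their orientation reversals, realize every admissible combination of transverse residues and endpoint sign pattern. After reversing the appropriate edges so that the weight triple at one end of $S$ becomes the negative of the corresponding triple on $S'$, I take the connected sum with the model at that pair of fixed points, cancelling them, and then a self connected sum at the remaining pair, cancelling $p'$ against the other endpoint of $S'$. The only fixed points the model contributes to the result are those off $S'$, and a direct computation of the weights of $\mathbb{CP}^3$, $Z_1$, and $Z_2$ shows these all have absolute value strictly less than $w$; hence $n_w$ strictly drops, and once it reaches zero the largest weight itself decreases.

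The main obstacle is precisely the cooperation of weights flagged in the introduction: one must verify that for every weight-$w$ sphere that can occur in an oriented $S^1$-manifold there really is a model among $\mathbb{CP}^3$, $Z_1$, $Z_2$ (up to orientation reversal and edge reversals) whose corresponding sphere matches both the transverse residues modulo $w$ and the endpoint sign pattern, so that the connected sum is equivariant and orientation-preserving and the resulting graph is again the fixed point data of an honest oriented $S^1$-manifold. I expect this to demand a careful case analysis organized by the residues modulo $w$ and by the values of $\epsilon$, in which the congruence and counting relations are used to exclude every configuration not covered by the three models. The delicate endpoint is the base case $w=1$, the semifree situation, where the standard models must be used to cancel the remaining $\pm 1$-weighted fixed points outright and drive $n_w$ to zero.
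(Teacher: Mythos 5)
Your overall strategy---downward induction on the largest weight $w$, analysis of the isotropy submanifold through a fixed point carrying $w$, and cancellation of a pair of fixed points by one connected sum with a standard model followed by a self connected sum---is the same as the paper's (Section \ref{s10}). However, there are two genuine gaps. First, your claim that maximality of $w$ forces the component of $M^{\mathbb{Z}_w}$ through $p$ to be a $2$-sphere is false: effectiveness only bounds the multiplicity of $w$ in $T_pM$ by two, and weights $\{w,w,x\}$ do occur (e.g.\ $S^6$ with rotation numbers $\{3,3,1\}$, where the component of $M^{\mathbb{Z}_3}$ through the two fixed points is an $S^4$). In that situation there is no weight-$w$ sphere at all, so your reduction step does not apply. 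The paper needs a separate argument here (Case (0-b) of the main proof, resting on Lemma \ref{l82}): one integrates the equivariant Euler class $e_{T^k}(NF)$ of the normal bundle over the $4$-dimensional component $F$, and the vanishing of $\int_F e_{T^k}(NF)$ for degree reasons forces the fixed points in $F$ to pair up with opposite fixed point data, after which a self connected sum alone removes them. This localization computation is a missing idea in your proposal, not a routine verification.

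Second, the case analysis you defer (``I expect this to demand a careful case analysis'') is the actual core of the theorem, and your bookkeeping fails on a concrete configuration, namely $\Sigma_{p_1}=[+,l,x,x]$ and $\Sigma_{p_2}=[-,l,l-x,l-x]$ with $2x\neq l$ (the paper's Case (2-b)). No single copy of $\mathbb{CP}^3$, $Z_1$, or $Z_2$, with either orientation and any admissible parameters, contains two fixed points with data $[-,l,x,x]$ and $[+,l,l-x,l-x]$: every fixed point of $\mathbb{CP}^3$ has three distinct weights, and the two weight-$l$ fixed points of the relevant $Z_1$ and $Z_2$ models carry $\{l,x,x\}$ and $\{l,l-x,x\}$, never $\{l,l-x,l-x\}$. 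This is precisely why the precise statement (Theorem \ref{t61}) lists a \emph{fourth} standard multigraph, the one for $Z_2(l,x,x)\sharp\overline{Z_2}(l,l-x,l-x)$ (Figure \ref{f28}), i.e.\ two copies of $Z_2$ pre-glued along fixed points. If you instead try to stage this as one $Z_2$-sum followed by a second, the first sum removes the weight-$l$ fixed point $p_1$ but introduces a new one with data $[-,l-x,l,x]$, so your lexicographic measure $(w,n_w)$ does not strictly decrease and your induction stalls; it only goes through if a reduction step is allowed to use two model sums (or the pre-glued model). Two smaller inaccuracies: the base cases $w=1$ and $w=2$ require no models at all---Lemmas \ref{l91} and \ref{l92} pair fixed points by counting (via Lemmas \ref{l39} and \ref{l28}) and finish with self connected sums only---and the compatibility of graph and manifold operations that you assert is not automatic but rests on Property A (needed, via Lemma \ref{l49}, to guarantee that self connected sums never create self-loops).
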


For the precise statement of Theorem \ref{t11}, see Theorem \ref{t61}. A new type of manifolds $Z_1$ and $Z_2$, which did not appear in classifications of circle actions and torus actions on lower dimensional manifolds, play an important role. For the manifolds $Z_n$ and actions on $S^6$, $\mathbb{CP}^3$, and $Z_n$ that we use, see Section \ref{s7}; the actions are all standard.

Moreover, along the way of proving Theorem \ref{t11}, we also classify multigraphs for torus actions on 4-dimensional compact oriented manifolds with isolated fixed points; see Theorem \ref{t51}. 

We show how Theorem \ref{t11} works with an example, though we have not introduced necessary terminologies yet.

Let $0<a<b<c$ be integers. Let $S^1$ act on the complex projective space $\mathbb{CP}^3$ by
\begin{center}
$g \cdot [z_0:z_1:z_2:z_3]=[z_0:g^a z_1: g^b z_2: g^c z_3]$
\end{center}
for all $g \in S^1 \subset \mathbb{C}$. The action has 4 fixed points $p_1=[1:0:0:0]$, $\cdots$, $p_4=[0:0:0:1]$ that have fixed point data
\begin{center}
$[+,a,b,c]$, $[-,a,b-a,c-a]$, $[+,b,b-a,c-b]$, $[-,c,c-a,c-b]$,
\end{center}
see Definition \ref{d21} and Example \ref{e412} for terminologies and details. The left multigraph of Figure \ref{f1-1} encodes this fixed point data. If we reverse the orientation of $\mathbb{CP}^3$ then the right multigraph of Figure \ref{f1-1} encodes the fixed point data of $\overline{\mathbb{CP}^3}$ where we denote its fixed points by $q_1,\cdots,q_4$. First, we perform an operation of multigraphs, called a connected sum, at $p_1$ and $q_1$ to get a multigraph $\Gamma_1$, which is Figure \ref{f1-2}. Next we perform an operation, called a self connected sum, of $\Gamma_1$ at $p_2$ and $q_2$ to get $\Gamma_2$ (Figure \ref{f1-3}), and then on $\Gamma_2$ we take a self connected sum at $p_3$ and $q_3$ to get $\Gamma_3$ (Figure \ref{f1-4}). Finally if we take a self connected sum of $\Gamma_3$ at $p_4$ and $q_4$, we get the empty graph. Figure \ref{f1} illustrates the whole procedure.

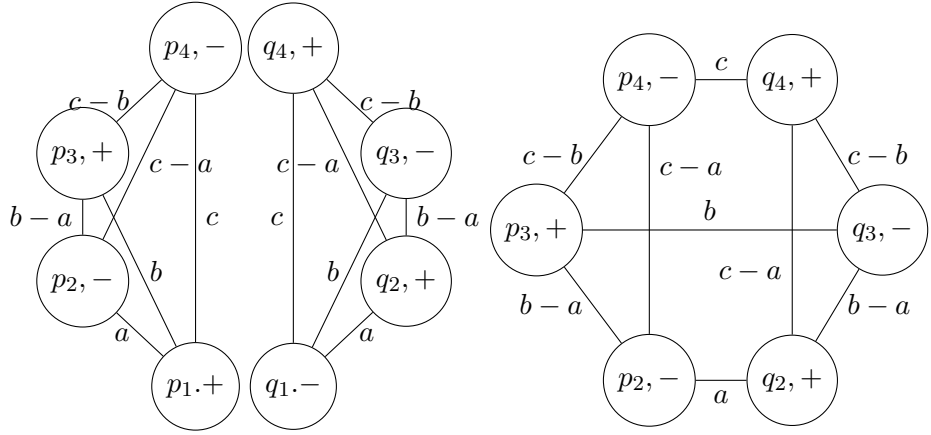
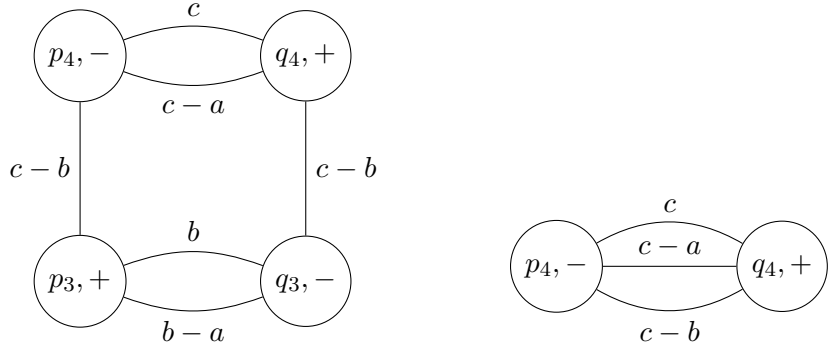
\begin{figure} 
\centering
\begin{subfigure}[b][6.8cm][s]{.49\textwidth}
\centering
\vfill
\begin{tikzpicture}[state/.style ={circle, draw}]
\node[state] (a) at (0,0) {$p_1.+$};
\node[state] (b) at (-1.5,1.4) {$p_2,-$};
\node[state] (c) at (-1.5,3.1) {$p_3,+$};
\node[state] (d) at (0,4.5) {$p_4,-$};
\path (a) edge node[left] {$a$} (b);
\path (a) edge node[right] {$b$} (c);
\path (a) edge node[right] {$c$} (d);
\path (b) edge node [left] {$b-a$} (c);
\path (b) edge node [right] {$c-a$} (d);
\path (c) edge node [left] {$c-b$} (d);
\node[state] (e) at (1.3,0) {$q_1.-$};
\node[state] (f) at (2.8,1.4) {$q_2,+$};
\node[state] (g) at (2.8,3.1) {$q_3,-$};
\node[state] (h) at (1.3,4.5) {$q_4,+$};
\path (e) edge node[right] {$a$} (f);
\path (e) edge node[left] {$b$} (g);
\path (e) edge node[left] {$c$} (h);
\path (f) edge node [right] {$b-a$} (g);
\path (f) edge node [left] {$c-a$} (h);
\path (g) edge node [right] {$c-b$} (h);
\end{tikzpicture}
\caption{Multigraph for $\mathbb{CP}^3$ (left) and $\overline{\mathbb{CP}^3}$ (right)} \label{f1-1}
\vfill
\end{subfigure}
\begin{subfigure}[b][6.8cm][s]{.49\textwidth}
\centering
\vfill
\begin{tikzpicture}[state/.style ={circle, draw}]
\node[state] (b) at (0,0) {$p_2,-$};
\node[state] (c) at (-1.5,2) {$p_3,+$};
\node[state] (d) at (0,4) {$p_4,-$};
\path (b) edge node [left] {$b-a$} (c);
\path (b) edge node [pos=0.8, right] {$c-a$} (d);
\path (c) edge node [left] {$c-b$} (d);
\node[state] (f) at (1.9,0) {$q_2,+$};
\node[state] (g) at (3.1,2) {$q_3,-$};
\node[state] (h) at (1.9,4) {$q_4,+$};
\path (b) edge node[below] {$a$} (f);
\path (c) edge node[above] {$b$} (g);
\path (d) edge node[above] {$c$} (h);
\path (f) edge node [right] {$b-a$} (g);
\path (f) edge node [pos=0.3, left] {$c-a$} (h);
\path (g) edge node [right] {$c-b$} (h);
\end{tikzpicture}
\caption{$\Gamma_1$, connected sum of $\mathbb{CP}^3$ and $\overline{\mathbb{CP}^3}$ at $p_1$ and $q_1$} \label{f1-2}
\vfill
\end{subfigure}
\begin{subfigure}[b][6cm][s]{.49\textwidth}
\centering
\vfill
\begin{tikzpicture}[state/.style ={circle, draw}]
\node[state] (c) at (0,0) {$p_3,+$};
\node[state] (d) at (0,3) {$p_4,-$};
\path (c) edge node [left] {$c-b$} (d);
\node[state] (g) at (3,0) {$q_3,-$};
\node[state] (h) at (3,3) {$q_4,+$};
\path (c) [bend left =20] edge node[above] {$b$} (g);
\path (d) [bend left =20] edge node[above] {$c$} (h);
\path (c) [bend right =20] edge node [below] {$b-a$} (g);
\path (d) [bend right =20] edge node [below] {$c-a$} (h);
\path (g) edge node [right] {$c-b$} (h);
\end{tikzpicture}
\caption{$\Gamma_2$, self connected sum of $\Gamma_1$ at $p_2$ and $q_2$} \label{f1-3}
\vfill
\end{subfigure}
\begin{subfigure}[b][4cm][s]{.49\textwidth}
\centering
\vfill
\begin{tikzpicture}[state/.style ={circle, draw}]
\node[state] (d) at (0,0) {$p_4,-$};
\node[state] (h) at (3,0) {$q_4,+$};
\path (d) [bend left =30] edge node[above] {$c$} (h);
\path (d) edge node[above] {$c-a$} (h);
\path (d) [bend right =30] edge node [below] {$c-b$} (h);
\end{tikzpicture}
\caption{$\Gamma_3$, self connected sum of $\Gamma_2$ at $p_3$ at $q_3$} \label{f1-4}
\vfill
\end{subfigure}
\caption{Converting a multigraph for $\mathbb{CP}^3$ to the empty graph}\label{f1}
\end{figure}

A geometric statement of our main result is that for a 6-dimensional compact oriented $S^1$-manifold with isolated fixed points, we can successively take equivariant connected sums to a fixed point free action, where we take each connected sum with itself, or with $\mathbb{CP}^3$, $Z_1$ and $Z_2$.

\begin{theorem}\label{t12}
Let the circle act on a 6-dimensional compact connected oriented manifold $M$ with isolated fixed points whose isotropy submanifolds are orientable. Then we can successively take equivariant connected sums with itself, or with $\mathbb{CP}^3$, $Z_1$, and $Z_2$ (and these with opposite orientations) to construct another 6-dimensional compact connected oriented manifold, which is equipped with a fixed point free $S^1$-action. There is a definite procedure that this ends in a finite number of steps. The circle actions on $\mathbb{CP}^3$, $Z_1$, and $Z_2$ all have non-empty finite fixed point sets.
\end{theorem}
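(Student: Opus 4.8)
The plan is to deduce this geometric statement directly from the combinatorial classification in Theorem \ref{t61} (equivalently Theorem \ref{t11}), by running the multigraph reduction and its geometric counterpart in lockstep. By Proposition \ref{p34}, the $S^1$-manifold $M$ carries an associated signed labeled multigraph $\Gamma$ encoding its fixed point data. By Theorem \ref{t61}, there is a finite sequence of operations — reversing edges and connected sums (with $\Gamma$ itself, or with the standard multigraphs of $\mathbb{CP}^3$, $Z_1$, $Z_2$ and their orientation reversals) — transforming $\Gamma$ into the empty graph, with every intermediate multigraph realized as the fixed point data of some $S^1$-manifold. My first step is therefore to fix, for each combinatorial operation appearing in the sequence, its geometric counterpart, and to check that applying it to a manifold realizing the current multigraph yields a manifold realizing the next one.

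I would then lift each operation. Reversing an edge is purely a change in the labeling convention (reading a weight from the opposite endpoint) and carries no geometric content, so it leaves the manifold untouched. A connected sum of the current multigraph with the standard multigraph of some $N \in \{\mathbb{CP}^3, \overline{\mathbb{CP}^3}, Z_1, \overline{Z_1}, Z_2, \overline{Z_2}\}$ at vertices $p$ and $q$ lifts to the equivariant connected sum of the current manifold with the corresponding standard $S^1$-manifold of Section \ref{s7} at the fixed points $p$ and $q$: one excises invariant balls around $p$ and $q$ and glues the boundary spheres $S^5$ by an orientation-reversing $S^1$-equivariant diffeomorphism. A self connected sum of the multigraph at $p$ and $q$ lifts to the equivariant connected sum of the current manifold with itself at the two fixed points $p$ and $q$. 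In each case the two glued fixed points are deleted, matching the combinatorial operation that removes the corresponding vertices; since all weights at a fixed point are nonzero, the invariant sphere $S^5$ carries no fixed points, so gluing along it creates none and the fixed points stay isolated. The claim that the actions on $\mathbb{CP}^3$, $Z_1$, $Z_2$ have non-empty finite fixed point sets is immediate from the explicit standard actions of Section \ref{s7}.

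The third step is to verify that the invariants persist along the whole sequence: each operation produces a compact connected oriented $6$-manifold with an $S^1$-action whose fixed point data is encoded by the new multigraph. Compactness is clear; connectedness is preserved because the gluing is along a single sphere, so both an ordinary connected sum of two connected manifolds and a self connected sum of a connected manifold remain connected; dimension and the $S^1$-action are preserved by construction. Orientability is where the matching condition enters: an equivariant connected sum at $p$ and $q$ is available exactly when the $S^1$-representations on $T_pM$ and $T_qN$ agree as weight multisets and the signs $\epsilon_p,\epsilon_q$ are opposite, which is precisely the condition the combinatorial connected sum imposes, and the freedom to use the opposite orientations of the building blocks is what guarantees a sign-opposite partner is always available. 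As the combinatorial process reaches the empty graph in finitely many steps by Theorem \ref{t61}, the geometric process terminates in finitely many steps at a compact connected oriented $6$-manifold with empty multigraph; by Proposition \ref{p34} an empty multigraph means no fixed points, so the final action is fixed point free.

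The main obstacle is the faithful geometric realization of the connected sum operations, namely establishing that each combinatorial connected sum genuinely lifts to an equivariant connected sum at fixed points and that the resulting fixed point data is the new multigraph. The delicate points are (i) matching the local representations so that the equivariant gluing along $S^5$ exists, (ii) choosing orientations — allowing the reversed orientations of $\mathbb{CP}^3$, $Z_1$, $Z_2$ — so that the glued orientations are consistent and the sign data transforms correctly, and (iii) confirming that deleting the two glued fixed points leaves exactly the combinatorially predicted data. These compatibility statements are furnished by the earlier sections, where the operations on multigraphs are shown to behave well with the corresponding operations on manifolds; the remaining work is to assemble them into the single terminating sequence provided by Theorem \ref{t61}.
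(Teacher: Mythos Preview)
Your overall strategy is the paper's own: in Section \ref{s10} the paper proves Theorems \ref{t12}, \ref{t61}, and \ref{t62} simultaneously by running the geometric connected sums and the multigraph operations in lockstep, invoking Propositions \ref{p47} and \ref{p48} at each step exactly as you propose. So the architecture is correct.

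There is, however, one genuine omission. You paraphrase Theorem \ref{t61} as allowing connected sums with the standard multigraphs of $\mathbb{CP}^3$, $Z_1$, $Z_2$ and their orientation reversals, and then lift by taking $N \in \{\mathbb{CP}^3, \overline{\mathbb{CP}^3}, Z_1, \overline{Z_1}, Z_2, \overline{Z_2}\}$. But Theorem \ref{t61} actually lists \emph{four} standard multigraphs: those of $\mathbb{CP}^3$, $Z_1$, $Z_2$, and $Z_2 \sharp \overline{Z_2}$ (Figure \ref{f28}). The last one is not optional; it is exactly what the proof uses in Case (2-b), where $\Sigma_{p_1}=[+,l,x,x]$ and $\Sigma_{p_2}=[-,l,l-x,l-x]$ with $2x\neq l$. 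Your lifted sequence therefore produces, at that step, a connected sum with $Z_2 \sharp \overline{Z_2}$, which is not on the list of building blocks Theorem \ref{t12} permits. To close the gap you must add one sentence: by Example \ref{e74}, the model $Z_2 \sharp \overline{Z_2}$ is itself the equivariant connected sum of $Z_2(l,x,x)$ and $\overline{Z_2}(l,l-x,l-x)$ at a pair of fixed points, so a single connected sum of $M$ with $Z_2 \sharp \overline{Z_2}$ decomposes into two successive equivariant connected sums, one with $Z_2$ and one with $\overline{Z_2}$. With that observation your argument goes through and matches the paper.
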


We can state a combinatorial statement of our main result, which classifies the fixed point data of a 6-dimensional compact oriented $S^1$-manifold with isolated fixed points, roughly as follows.

\begin{theo}[\textbf{Theorem \ref{t62}}] \label{t14}
For a circle action on a 6-dimensional compact oriented manifold $M$ with isolated fixed points whose isotroy submanifolds are orientable, we can successively apply a combination of 5 types of operations to convert the fixed point data of $M$ to the empty collection. There is a definite procedure that ends in a finite number of steps.
\end{theo}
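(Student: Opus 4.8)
The plan is to obtain this combinatorial statement as the fixed-point-data shadow of the multigraph version, Theorem \ref{t61}. By Proposition \ref{p34} the fixed point data of $M$ is carried by a signed labeled multigraph $\Gamma$, and Definition \ref{d32} provides a forgetful map sending such a $\Gamma$ to the multiset of its signed weight-triples, retaining the sign and the three weights at each vertex while discarding the incidence pattern. Theorem \ref{t61} furnishes a finite sequence of reversing-edge moves and connected sums---taken with $\Gamma$ itself, or with the standard multigraphs of $\mathbb{CP}^3$, $Z_1$, $Z_2$ and their orientation reversals from Section \ref{s7}---carrying $\Gamma$ to the empty graph. Since the empty graph forgets to the empty collection, the result follows once I show that every move in this sequence descends to a well-defined move on the multiset of triples and that these fall into exactly five types.

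First I would record the five operations as the images of the multigraph moves. Reversing an edge joining two vertices of common label $w$ flips the signs attached to their two triples and leaves every weight unchanged, as dictated by the orientation bookkeeping of Definition \ref{d31}; this is the first operation. The remaining four are the connected sums. A self connected sum at two vertices carrying identical weight-triples of opposite sign simply deletes both triples. A connected sum with $\mathbb{CP}^3$ at a vertex of weights $\{w_1, w_2, w_3\}$ deletes that triple and appends the three remaining triples of the standard $\mathbb{CP}^3$-graph, whose labels are built from $w_1, w_2, w_3$ and their pairwise differences exactly as in Example \ref{e412}; the $Z_1$- and $Z_2$-moves are defined identically from the explicit fixed point data of $Z_1$ and $Z_2$ in Section \ref{s7}. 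Each rule is phrased using only the signed triples at the vertices it touches, so each is a genuine operation on multisets of triples.

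The main obstacle will be the descent of the reversing-edge move, since an edge is a choice of isotropy sphere whereas a bare multiset does not remember which pairs of fixed points are so joined: two triples may share a weight without the corresponding fixed points lying on a common invariant sphere. The hard part is therefore to certify that every reversed edge arising in the reduction of Theorem \ref{t61} is pinned down by the fixed point data alone. I would argue this from the fact that in dimension six each fixed point carries exactly three weights, so that an edge of label $w$ is determined by $w$ together with the triples at its two endpoints, and the induced double sign-flip then depends on nothing the forgetful map has discarded. For the four connected sums the analogous check is immediate, since excising an invariant ball about a fixed point and regluing alters no weight or sign at any other fixed point, so those rules commute with the forgetful map outright.

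Finally I would transport the termination guarantee of Theorem \ref{t61} to the multiset level. There the descent is governed by a complexity functional---say the sum of all edge-labels, equal to the total of the weights over all vertices---which each connected sum with $\mathbb{CP}^3$, $Z_1$, or $Z_2$ strictly lowers by trading one triple for triples assembled from strictly smaller pairwise differences in the manner of the Euclidean algorithm, while each self connected sum strictly lowers the number of triples and only finitely many reversing-edge moves intervene between successive descents. Since the five multiset operations change this functional by exactly the amounts their multigraph counterparts do, the definite terminating procedure of Theorem \ref{t61} reads off downstairs as a definite terminating procedure on the multiset, ending in the empty collection. Pushing the sequence guaranteed by Theorem \ref{t61} through the forgetful map thus converts the fixed point data of $M$ to the empty collection in finitely many steps using the five stated operations.
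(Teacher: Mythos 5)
Your overall strategy---push the multigraph reduction of Theorem \ref{t61} through the forgetful map to fixed point data---is in spirit what the paper does (it proves Theorems \ref{t12}, \ref{t61}, and \ref{t62} simultaneously, pairing each multigraph move with its fixed-point-data counterpart). However, your execution has two genuine gaps.

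First, you misidentify the five operations. Reversing an edge is \emph{not} one of the five operations of Theorem \ref{t62}: by Definition \ref{d35} it replaces the label $w(e)$ by $-w(e)$ \emph{and} flips the signs of both endpoints, which by the equivalence relation of Definition \ref{d21} produces the \emph{same} fixed point data, so edge reversal descends to the identity on the multiset of triples. (Your description, flipping the two signs while ``leaving every weight unchanged,'' would send $\Sigma_p$ to $-\Sigma_p$, which is not an equivalence and not what edge reversal does.) The actual five operations are: self connected sum, and connected sums with $\mathbb{CP}^3$, $Z_1$, $Z_2$, and $Z_2 \sharp \overline{Z_2}$. You omit the last one entirely---note that the precise statement of Theorem \ref{t61} includes Figure \ref{f28}, the multigraph of $Z_2 \sharp \overline{Z_2}$, not just the three manifolds named in the informal Theorem \ref{t11}. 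This omission is not cosmetic: Operation (5) is indispensable in Case (2-b) of the paper's proof, where the two endpoints of an $l$-edge have data $[+,l,x,x]$ and $[-,l,l-x,l-x]$ with $2x \neq l$; none of self connected sum, $\mathbb{CP}^3$, $Z_1$, or $Z_2$ applies to that pair, so your four connected-sum rules cannot complete the reduction.

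Second, your termination functional fails. The sum of all weights does \emph{not} decrease under Operations (3), (4), (5); it strictly increases. For instance, Operation (4) removes $[\pm,A,A,C]$ and $[\pm,A,C-A,C]$, whose weights total $2A+3C$, and adds four triples whose weights (taking $0<A<C/2$, so $C-2A>0$) total $(2C-2A)+(C+A)+(C+A)+C = 5C$, and $5C > 2A+3C$ precisely because $A<C$; the case $C/2<A<C$ gives the same conclusion. The correct termination measure, which is what the paper uses, is lexicographic: every operation is applied with $C=l$ the weight of maximal magnitude, removes both occurrences of $\pm l$ at the two fixed points involved, and introduces only weights of magnitude strictly smaller than $|l|$; hence the multiplicity of the maximal magnitude strictly drops, and when it reaches zero the maximum itself drops. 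Your argument as written asserts a strict descent that is false, so the finiteness claim does not follow from it.
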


For the precise statement of Theorem \ref{t14}, see Theorem \ref{t62}. Theorem \ref{t14} means that given any such manifold $M$, we must be able to perform one of the 5 operations to the fixed point data of $M$. After performing one of those, again we can perform one of those (unless we already have the empty collection), and so on until we get the empty collection.

The classification results of this paper are hints for classifications of torus actions on other types of 6-manifolds, such as (almost) complex manifolds and symplectic manifolds. For a classification of a torus action on a manifold that has fixed points, the fixed point data is a necessary information to know; if for instance we want to classify an action on an (almost) complex 6-manifold $M$ with finite fixed point set, one can give a multigraph for an almost complex manifold (a directed labeled multigraph instead of a signed labeled multigraph; see Definition \ref{d410}), convert it to a signed labeled multigraph regarding $M$ as an oriented manifold, and check if it satisfies the conclusion of Theorem \ref{t61}. Or, we can check if complex weights at the fixed points satisfy the conclusion of Theorem \ref{t62} when converted to the fixed point data as an oriented manifold $M$.

The structure of this paper is as follows. In section \ref{s2}, we give necessary background. In Section \ref{s3}, we define a multigraph encoding the fixed point data of a torus action on an oriented manifold with isolated fixed points, and show that such a multigraph always exists. In Section \ref{s4}, we define a notion of a connected sum of a multigraph and show it behaves well with a connected sum of manifolds at fixed points. In Section \ref{s5}, we classify such a multigraph if the dimension of the manifold is 4, with a classification of GKM graphs of 4-dimensional GKM manifolds as a corollary. In Section \ref{s6}, we give precise statements of Theorems \ref{t11} and \ref{t14}. In Section \ref{s7} we give examples of torus actions on 6-dimensional manifolds and their multigraphs, which are standard (minimal) models for our results. In Section \ref{s8} we discuss a relation of fixed point data of fixed points in the isotropy submanifold. After dealing with some base cases in Section \ref{s9}, in Section \ref{s10} we prove Theorems \ref{t11}, \ref{t12}, and \ref{t14} altogether.

\section{Background} \label{s2}

Let a torus $T^k$ act on a $2n$-dimensional oriented manifold $M$. Let $p$ be an isolated fixed point. The tangent space $T_pM$ of $p$ at $M$ decomposes into the sum of $n$ complex 1-dimensional vector spaces $L_{p,1}$, $\cdots$, $L_{p,n}$, where on each $L_{p,i}$ the torus acts by multiplication by $g^{w_{p,i}}$ for some non-zero element $w_{p,i}$ of $\mathbb{Z}^k$. These elements $w_{p,1}$, $\cdots$, $w_{p,n}$ are called \textbf{weights} at $p$. The sign of each weight is not well-defined, but the product of the signs of the weights is well-defined. For each $i$, choose an orientation of $L_{p,i}$, that is, choose a sign of $w_{p,n}$. The tangent space $T_pM$ has two orientations, one induced from the orientation on $M$ and the other induced from the orientation on the representation space $L_{p,1} \oplus \cdots \oplus L_{p,n}$ with the chosen orientations on $L_{p,i}$'s. We define $\epsilon_M(p)$ to be $+1$ if these orientations agree and $-1$ otherwise, and call it \textbf{sign} of $p$. We shall use $\epsilon(p)$ instead of $\epsilon_M(p)$ if there is no confusion.

The sign $\epsilon(p)$ of $p$ depends on the choice of orientations of $L_{p,i}$'s; reversing the orientation of some $L_{p,i}$ reverses the sign of the corresponding weight $w_{p,i}$ by its negative $-w_{p,i}$, and this results in changing the sign $\epsilon(p)$ of $p$ by its negative $-\epsilon(p)$. Also note that if the $T^k$-action on $M$ is effective, $w_{p,1},\cdots,w_{p,n}$ span $\mathbb{Z}^k$.

We define an ordered pair $(\epsilon(p),\{w_{p,1},\cdots,w_{p,n}\})$, where $\epsilon(p)$ is the sign of $p$ and $\{w_{p,1},\cdots,w_{p,n}\}$ is a multiset of the weights at $p$.

For an ordered pair $(a,\{a_1,\cdots,a_n\})$ where $a \in \{-1,+1\}$ and $\{a_1,\cdots,a_n\}$ is a multiset of non-zero elements of $\mathbb{Z}^k$, we define an equivalence relation as follows:
\begin{itemize}
\item $(a,\{a_1,\cdots,a_i,\cdots,a_n\})$ is equivalent to $(-a,\{a_1,\cdots,-a_i,\cdots,a_n\})$.
\end{itemize}

For instance, $(+1,\{1,2,3\})$ is equivalent to $(-1,\{1,2,-3\})$ when $k=1$. Because the second component is a multiset, we have $(-1,\{1,2,-3\})=(-1,\{1,-3,2\})$.

Let $p$ be a fixed point and let $w_{p,1}$, $\cdots$, $w_{p,n}$ be the weights at $p$. Let $\epsilon(p)=\pm$ be the sign of $p$ corresponding to this choice of weights $w_{p,1},\cdots,w_{p,n}$ (this choice on orientations of $L_{p,1}\oplus \cdots \oplus L_{p,n}$). Suppose we replace some $w_{p,i}$ with $-w_{p,i}$ (suppose we reverse the orientation of $L_{p,i}$). Then this reverses the sign of $p$. As oriented $T^k$-representations, $(\pm,\{w_{p,1},\cdots,w_{p,i},\cdots,w_{p,n}\})$ is isomorphic to $(\mp,\{w_{p,1},\cdots,-w_{p,i},\cdots,w_{p,n}\})$. Therefore, the above definition of equivalence relation is natural.

\begin{Definition} \label{d21}
Let a torus $T^k$ act on a $2n$-dimensional oriented manifold $M$ with isolated fixed points. 
\begin{enumerate}[(1)]
\item For an isolated fixed point $p$, the \textbf{fixed point data} of $p$, denoted by $\Sigma_p$, is the equivalence class $[(\epsilon(p), \{w_{p,1},\cdots,w_{p,n}\})]$. For simplicity, we write $[\epsilon(p), w_{p,1},\cdots,w_{p,n}]$ instead of $[(\epsilon(p), \{w_{p,1},\cdots,w_{p,n}\})]$ if this causes no confusion. Moreover, when we write $\epsilon(p)$ inside the fixed point data of $p$, we shall omit 1 and only write its sign $\pm$. 
\item We define $-\Sigma_p$ to be the equivalence class of $[-\epsilon(p),w_{p,1},\cdots,w_{p,n}]$, where $\Sigma_p=[\epsilon(p),w_{p,1},\cdots,w_{p,n}]$. 
\item We define the \textbf{fixed point data} of $M$, denoted by $\Sigma_M$, to be a collection of the equivalence classes of fixed point daum of all fixed points of $M$. That is,
\begin{center}
$\displaystyle \Sigma_M=\cup_{p \in M^{T^k}} \Sigma_p$.
\end{center}
\end{enumerate}
\end{Definition}

\begin{exa}[The $2n$-dimensional sphere $S^6$] \label{e1}
Let $n$ be a positive integer and let $k \in \{1,\cdots,n\}$. Let $a_1$, $\cdots$, $a_n$ be non-zero elements in $\mathbb{Z}^k$. Let a torus $T^k$ act on $S^{2n}$ by
\begin{center}
$g \cdot (z_1,\cdots,z_n,x)=(g^{a_1} z_1, \cdots, g^{a_n} z_n,x)$
\end{center}
for all $g \in T^k \subset \mathbb{C}^k$ and for all $(z_1,\cdots,z_n,x) \in S^{2n}$, where 
\begin{center}
$S^{2n}=\{(z_1,\cdots,z_n,x) \in \mathbb{C}^n \times \mathbb{R} \, : \, x^2+\sum_{i=1}^n |z_i|^2=1\}$.
\end{center}
The action has two fixed points $q_1=(0,\cdots,0,1)$ and $q_2=(0,\cdots,0,-1)$. The weights at $q_i$ are both $\{a_1,\cdots,a_n\}$, and $\epsilon(q_1)=-\epsilon(q_2)=1$. The fixed point data of this action on $S^{2n}$ is 
\begin{center}
$[+,a_1,\cdots,a_n]$, $[-,a_1,\cdots,a_n]$.
\end{center}
\end{exa}

For an action of a group $G$ on a manifold $M$, we denote by $M^G$ its fixed point set, the set of points in $M$ that are fixed by the $G$-action on $M$. That is,
\begin{center}
$M^G=\{m \in M \, | \, g \cdot m=m\textrm{ for all }g \in G\}$.
\end{center}

Let $w=(w_1,\cdots,w_k)$ be an element of $\mathbb{Z}^k$. We define $\ker w$ to be the subgroup of $T^k$ whose elements fix $w$. That is,
\begin{center}
$\ker w=\{g=(g_1,\cdots,g_k) \in T^k \subset \mathbb{C}^k \, | \, g^w:=g_1^{w_1} \cdots g_k^{w_k}=1\}$.
\end{center}

Let a torus $T^k$ act effectively on a $2n$-dimensional compact oriented manifold $M$ with isolated fixed points. Let $w$ be a weight that occurs at some fixed point, such that $\ker w$ is a non-trivial subset of $T^k$. Let $F$ be a component of $M^{\ker w}$ such that $F \cap M^{T^k} \neq \emptyset$. By Lemma \ref{l29} below, $F$ is orientable; hence the normal bundle $NF$ is also orientable. Choose an orientation of $F$ and that of $NF$, so that the induced orientation on $TF \oplus NF$ agrees with the orientation of $M$. Let $p\in F \cap M^{T^k}$ be a $T^k$-fixed point. By permuting $L_{p,i}$'s, we may let
\begin{center}
$T_pM=L_{p,1} \oplus \cdots \oplus L_{p,m} \oplus L_{p,m+1} \oplus \cdots \oplus L_{p,n}$,
\end{center}
where $T_pF=L_{p,1} \oplus \cdots \oplus L_{p,m}$ and $N_pF=L_{p,m+1} \oplus \cdots \oplus L_{p,n}$, and the torus $T^k$ acts on each $L_{p,i}$ with weight $w_{p,i}$. 

\begin{Definition} \label{d23} 
\begin{enumerate}[(1)]
\item $\epsilon_F(p)=+1$ if the orientation on $F$ agrees with the orientation on $L_{p,1}\oplus \cdots \oplus L_{p,m}$, and $\epsilon_F(p)=-1$ otherwise.
\item $\epsilon_N(p)=+1$ if the orientation on $NF$ agrees with the orientation on $L_{p,m+1}\oplus \cdots \oplus L_{p,n}$, and $\epsilon_N(p)=-1$ otherwise.
\end{enumerate} 
\end{Definition}

By definition, $\epsilon(p)=\epsilon_F(p) \cdot \epsilon_N(p)$. As $\epsilon(p)$ does, the numbers $\epsilon_F(p)$ and $\epsilon_N(p)$ depend on the choice of the orientations on $L_{p,i}$'s. We need these terminologies for instance, to define a multigraph encoding the fixed point data of $M$; see Definition \ref{d32} and thereafter.

Let a torus $T^k$ act on a manifold $M$. The \textbf{equivariant cohomology} of $M$ is 
\begin{center}
$\displaystyle H_{T^k}^{*}(M)=H^{*}(M \times_{T^k} ET^k)$, 
\end{center}
where $ET^k$ is a contractible space on which $T^k$ acts freely. 
Suppose that $M$ is oriented and compact. The projection map $\pi : M \times_{T^k} ET^k \rightarrow BT^k$ induces a natural push-forward map
\begin{center}
$\displaystyle \pi_{*} : H_{S^{1}}^{i}(M;\mathbb{Z}) \rightarrow H^{i-\dim M}(BT^k;\mathbb{Z})$
\end{center}
for all $i \in \mathbb{Z}$, where $BT^k$ is the classifying space of $T^k$. This map is given by integration over the fiber $M$, and is also denoted by $\int_M$.

\begin{theo} \emph{[Atiyah-Bott-Berline-Vergne localization theorem]} \cite{AB} \label{t24}
Let a torus $T^k$ act on a compact oriented manifold $M$. Let $ \alpha \in H_{T^k}^{\ast}(M;\mathbb{Q})$. As an element of $\mathbb{Q}(t)$,
\begin{center}
$\displaystyle \int_{M} \alpha = \sum_{F \subset M^{T^k}} \int_{F} \frac{\alpha|_{F}}{e_{T^k}(NF)}$,
\end{center}
where the sum is taken over all fixed components, and $e_{T^k}(NF)$ denotes the equivariant Euler class of the normal bundle to $F$ in $M$.
\end{theo}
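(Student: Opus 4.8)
This is the classical Atiyah--Bott--Berline--Vergne localization theorem, so I would follow the standard proof via the localization isomorphism together with the self-intersection formula. Write $S = H_{T^k}^{*}(\mathrm{pt};\mathbb{Q}) = \mathbb{Q}[t_1,\ldots,t_k]$, over which $H_{T^k}^{*}(M;\mathbb{Q})$ is a graded module, and let $\mathcal{S} = \mathbb{Q}(t_1,\ldots,t_k)$ be its field of fractions (the $\mathbb{Q}(t)$ of the statement). Both sides of the asserted identity are to be compared in $\mathcal{S}$: the left side $\int_M \alpha = \pi_{*}\alpha$ actually lies in $S$, while the right side is a priori only $\mathcal{S}$-valued, since each $e_{T^k}(NF)$ must be inverted. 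So the whole content is an equality of elements of $\mathcal{S}$.

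The two inputs I would invoke are the following. First, the localization isomorphism (Atiyah--Segal, Quillen): writing $i \colon M^{T^k} \hookrightarrow M$ for the inclusion of the fixed set, the restriction map $i^{*}$ becomes an isomorphism after tensoring with $\mathcal{S}$ over $S$. Second, the self-intersection formula: for the equivariant Gysin pushforward $i_{*}$ along $i$ one has $i^{*} i_{*}(x) = e_{T^k}(N)\cup x$, where $N = N M^{T^k}$ is the equivariant normal bundle (orientable by Lemma \ref{l29}, which also lets us fix the orientations entering $e_{T^k}(N)$). The key point is that $e_{T^k}(N)$ is invertible in $H_{T^k}^{*}(M^{T^k})\otimes_S \mathcal{S}$: on each component $F$ its restriction is $e_{T^k}(NF)$, whose leading term in $S$ is the product of the nonzero normal weights at any $p \in F$, a nonzero element of $S$ and hence a unit in $\mathcal{S}$. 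Consequently $i_{*}$ is also an isomorphism after localization.

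Granting these, the computation is short. Since $i^{*}$ is injective after localization and
\begin{equation*}
i^{*}\Bigl(i_{*}\bigl(\tfrac{i^{*}\alpha}{e_{T^k}(N)}\bigr)\Bigr) = e_{T^k}(N)\cup \tfrac{i^{*}\alpha}{e_{T^k}(N)} = i^{*}\alpha ,
\end{equation*}
it follows that in the localized module
\begin{equation*}
\alpha = i_{*}\Bigl(\frac{i^{*}\alpha}{e_{T^k}(N)}\Bigr).
\end{equation*}
Applying $\int_M = \pi_{*}$ and using functoriality of the pushforward, namely that $\pi_{*} \circ i_{*} = (\pi\circ i)_{*}$ is integration over $M^{T^k}$, we obtain
\begin{equation*}
\int_M \alpha = \int_{M^{T^k}} \frac{i^{*}\alpha}{e_{T^k}(N)} = \sum_{F\subset M^{T^k}} \int_F \frac{\alpha|_F}{e_{T^k}(NF)} ,
\end{equation*}
which is the claim.

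The main obstacle is establishing the localization isomorphism, i.e.\ that $i^{*}$ is an isomorphism after inverting the nonzero elements of $S$; everything else is formal. This reduces, via the long exact sequence of the pair $(M,M^{T^k})$ and the Thom isomorphism, to proving that $H_{T^k}^{*}(X;\mathbb{Q})$ is an $S$-torsion module whenever $T^k$ acts with empty fixed set. I would prove the latter by covering the compact fixed-point-free $X$ by finitely many invariant tubular neighborhoods, each equivariantly homotopy equivalent to an orbit $T^k/H$ with $H \subsetneq T^k$ a proper closed subgroup; there $H_{T^k}^{*}(T^k/H)\cong H^{*}(BH)$ is annihilated by any nonzero character of $T^k$ trivial on $H$, hence is $S$-torsion. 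An induction on the number of pieces using the Mayer--Vietoris sequence (if two of the three terms are torsion then so is the third) then shows $H_{T^k}^{*}(X)$ is torsion, completing the argument.
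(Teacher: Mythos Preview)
The paper does not prove this statement at all: Theorem~\ref{t24} is simply quoted from \cite{AB} as background, with no argument given. Your outline is a correct sketch of the standard proof (localization isomorphism via torsion of the fixed-point-free part, invertibility of $e_{T^k}(NF)$ after localization, and the self-intersection formula), so there is nothing to compare against here.
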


For a compact oriented $S^1$-manifold with isolated fixed points, the Atiyah-Singer index theorem yields the following formula.

\begin{theo} \emph{[Atiyah-Singer index theorem]} \cite{AS} \label{t25} Let the circle group $S^1$ act on a $2n$-dimensional compact oriented manifold $M$ with isolated fixed points. Then the signature of $M$ is
\begin{center}
$\displaystyle \textrm{sign}(M) = \sum_{p \in M^{S^1}} \epsilon(p) \prod_{i=1}^{n} \frac{1+t^{w_{p,i}}}{1-t^{w_{p,i}}}$
\end{center}
for all indeterminate $t$, and is a constant. \end{theo}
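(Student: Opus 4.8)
The plan is to realize $\mathrm{sign}(M)$ as the value at the identity element of the $S^1$-equivariant index of the signature operator, to compute that index by a fixed-point formula, and finally to prove that the resulting expression is constant in $t$ directly from its form. First I would invoke the Hirzebruch signature theorem (itself the prototypical case of the Atiyah--Singer index theorem): $\mathrm{sign}(M)$ equals the index of the signature operator, the elliptic operator built from $d+d^{*}$ acting between the $\pm 1$ eigenspaces of the involution on differential forms induced by the Hodge star. Because the $S^1$-action commutes with this operator, the index upgrades to an equivariant index, i.e. a virtual character $\chi \in R(S^1)=\mathbb{Z}[t,t^{-1}]$. As the cohomology of $M$ is finite dimensional, $\chi(t)$ is a genuine Laurent polynomial, and its value at the identity $t=1$ is the ordinary index, namely $\mathrm{sign}(M)$.

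Next, since the fixed points are isolated, I would compute $\chi(t)$ by the Atiyah--Singer Lefschetz fixed-point formula for the signature operator (equivalently, by applying the localization theorem, Theorem \ref{t24}, to the equivariant $L$-class of $M$). At an isolated fixed point $p$ the tangent space splits as $\bigoplus_{i=1}^{n} L_{p,i}$, with $g=t$ acting on $L_{p,i}$ by $t^{w_{p,i}}$, and the standard computation of the local contribution of the signature operator gives $\prod_{i=1}^{n}\frac{1+t^{w_{p,i}}}{1-t^{w_{p,i}}}$ up to an orientation sign. That sign is precisely $\epsilon(p)$: it records whether the ambient orientation of $T_pM$ agrees with the one determined by the chosen weights. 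One should then check compatibility with the equivalence relation of Definition \ref{d21}: replacing some $w_{p,i}$ by $-w_{p,i}$ flips $\epsilon(p)$ and simultaneously multiplies the corresponding factor by $-1$, since $\frac{1+t^{-w}}{1-t^{-w}}=-\frac{1+t^{w}}{1-t^{w}}$, so the summand $\epsilon(p)\prod_i\frac{1+t^{w_{p,i}}}{1-t^{w_{p,i}}}$ depends only on the fixed point data $\Sigma_p$. Summing over $p\in M^{S^1}$ yields the asserted formula.

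It remains to see that this expression is constant in $t$, which is the rigidity of the signature. Here I would argue directly from the fixed-point expression. The two sides agree as rational functions (localization is an exact identity on $S^1$ away from the finitely many roots of unity, hence everywhere), so it suffices to analyze the right-hand side. Letting $t\to 0$ along the positive real axis, each factor tends to $+1$ if $w_{p,i}>0$ and to $-1$ if $w_{p,i}<0$, so the whole sum has a finite limit; a Laurent polynomial bounded as $t\to 0$ can contain no negative powers of $t$. Letting $t\to+\infty$ the factors tend to the opposite signs, again giving a finite limit, so $\chi(t)$ contains no positive powers of $t$ either. Hence $\chi$ is constant, and its value is $\chi(1)=\mathrm{sign}(M)$. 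As a consistency check, the relation $\frac{1+t^{-w}}{1-t^{-w}}=-\frac{1+t^{w}}{1-t^{w}}$ gives $\chi(t^{-1})=(-1)^{n}\chi(t)$, forcing $\mathrm{sign}(M)=0$ when $n$ is odd, as it must be.

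The main obstacle is the second step: pinning down the precise local Lefschetz contribution of the signature operator at an isolated fixed point together with the correct orientation bookkeeping, so that the constant factors and the cotangent-type contributions assemble exactly into $\epsilon(p)\prod_i\frac{1+t^{w_{p,i}}}{1-t^{w_{p,i}}}$ rather than into some sign- or convention-shifted variant. If one instead routes through the localization theorem and the equivariant $L$-class, the analogous difficulty is the passage between the additive cohomological parameter in $H^{*}(BS^1)$ and the multiplicative character parameter $t$. Once the fixed-point expression is correctly in hand, the rigidity argument above is short.
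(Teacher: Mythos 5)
The paper does not actually prove Theorem \ref{t25}: it is quoted directly from Atiyah--Singer \cite{AS} (it is the $G$-signature theorem specialized to $G=S^1$ with isolated fixed points), so there is no internal proof to compare against. Your proposal reconstructs the standard derivation, and its logical skeleton is sound: identify $\mathrm{sign}(M)$ with the index of the signature operator, promote this index to a virtual character $\chi\in\mathbb{Z}[t,t^{-1}]$ of $S^1$, apply the Lefschetz fixed-point formula at generic $g\in S^1$ (for which $M^g=M^{S^1}$), so that $\chi$ agrees with the fixed-point sum as a rational function of $t$, and then eliminate all nonzero powers of $t$ in $\chi$ using the boundedness of the fixed-point sum as $t\to 0$ and $t\to\infty$. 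That rigidity step is exactly right, and it is the only part of the statement (the words ``is a constant'') that requires an argument beyond citing the fixed-point formula itself; your limit argument is the standard one, and your observation that $\chi(t^{-1})=(-1)^n\chi(t)$ correctly recovers Proposition \ref{p26} as a consistency check. Your verification that each summand is well defined on equivalence classes --- replacing $w_{p,i}$ by $-w_{p,i}$ flips both $\epsilon(p)$ and the factor $\frac{1+t^{w_{p,i}}}{1-t^{w_{p,i}}}$ --- is also the right compatibility check with Definition \ref{d21}. The one place where your argument, like the paper, ultimately leans on \cite{AS} is the computation of the local Lefschetz contribution of the signature operator at an isolated fixed point, namely that it is $\epsilon(p)\prod_{i=1}^{n}\frac{1+t^{w_{p,i}}}{1-t^{w_{p,i}}}$ with precisely this orientation sign; you flag this honestly as the main obstacle, and deferring it to the $G$-signature theorem leaves your write-up no less complete than the paper's own treatment.
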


As an application of Theorem \ref{t25}, we obtain the following result.

\begin{pro} \label{p26}
Let the circle act on a compact oriented manifold $M$ with isolated fixed points. Suppose that $\dim M \equiv 2 \mod 4$. Then the signature of $M$ vanishes. For each fixed point $p$, by reversing the orientation of each $L_{p,i}$ if necessary, where $T_pM=L_{p,1}\oplus \cdots \oplus L_{p,n}$, assume that all weights $w_{p,i}$ are positive. Then the number of fixed points $p$ with $\epsilon(p)=+1$ and the number of fixed point $p$ with $\epsilon(p)=-1$ are equal.
\end{pro}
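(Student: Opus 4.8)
The plan is to read everything off the Atiyah--Singer index formula (Theorem \ref{t25}). Writing $n = \tfrac{1}{2}\dim M$, the hypothesis $\dim M \equiv 2 \bmod 4$ says exactly that $n$ is odd. I set
\[
f(t) = \sum_{p \in M^{S^1}} \epsilon(p) \prod_{i=1}^{n} \frac{1 + t^{w_{p,i}}}{1 - t^{w_{p,i}}},
\]
which by Theorem \ref{t25} equals $\mathrm{sign}(M)$ and, crucially, is a constant independent of $t$. Since $M$ is compact with isolated fixed points, $M^{S^1}$ is finite, so $f$ is a bona fide element of $\mathbb{Q}(t)$.

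For the vanishing of the signature I would substitute $t \mapsto t^{-1}$ and use the elementary identity
\[
\frac{1 + t^{-w}}{1 - t^{-w}} = \frac{t^{w} + 1}{t^{w} - 1} = -\,\frac{1 + t^{w}}{1 - t^{w}},
\]
valid for every nonzero weight $w$. Each of the $n$ factors in a product then picks up a sign, so $f(t^{-1}) = (-1)^{n} f(t)$. Since $f$ equals the constant $\mathrm{sign}(M)$, this gives $\mathrm{sign}(M) = (-1)^{n}\,\mathrm{sign}(M)$, and with $n$ odd we get $\mathrm{sign}(M) = -\mathrm{sign}(M)$, hence $\mathrm{sign}(M) = 0$. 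This step needs no normalization of the weights: replacing $w_{p,i}$ by $-w_{p,i}$ simultaneously flips $\epsilon(p)$, so each summand of $f$ is unchanged, reflecting that the summand is well defined on the equivalence class $\Sigma_p$.

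For the counting statement I would first impose the normalization of the proposition, orienting each $L_{p,i}$ so that $w_{p,i} > 0$ and letting $\epsilon(p)$ be the resulting sign. Then I evaluate $f$ by letting $t \to 0^{+}$ through real values: for every positive weight $w$ one has $\frac{1 + t^{w}}{1 - t^{w}} \to 1$, so every product tends to $1$ and $f(t) \to \sum_{p} \epsilon(p)$. As $f$ is the constant $\mathrm{sign}(M)$, this forces $\sum_{p \in M^{S^1}} \epsilon(p) = \mathrm{sign}(M) = 0$. Rewriting the left side as $\#\{p : \epsilon(p) = +1\} - \#\{p : \epsilon(p) = -1\}$ gives the desired equality.

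The main obstacle is purely one of rigor: the individual summands of $f$ have poles at roots of unity (where some $t^{w_{p,i}} = 1$), so neither the substitution $t \mapsto t^{-1}$ nor the limit $t \to 0^{+}$ may be applied termwise without comment. The clean way around this is to use that $f$ is \emph{a priori} a constant in $\mathbb{Q}(t)$ and to carry out all computations along real $t \in (0,1)$, which avoids every pole; the constancy then guarantees that any such limit equals $\mathrm{sign}(M)$ exactly. The only other point to record is the well-definedness noted above, which makes the passage to the positive-weight normalization harmless.
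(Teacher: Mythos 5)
Your proof is correct, and the counting half is essentially the paper's argument: after normalizing all weights to be positive, evaluate the Atiyah--Singer formula at $t=0$ (you take $t\to 0^{+}$, the paper simply sets $t=0$) to get $\mathrm{sign}(M)=\sum_{p}\epsilon(p)$, which forces the two counts to agree. Where you genuinely diverge is the vanishing of the signature. The paper gets this from the Hirzebruch signature theorem: $\mathrm{sign}(M)$ equals the $L$-genus, which is automatically zero since $\dim M\not\equiv 0 \bmod 4$. You instead extract it from the localization formula itself, via the symmetry $t\mapsto t^{-1}$: each factor satisfies $\frac{1+t^{-w}}{1-t^{-w}}=-\frac{1+t^{w}}{1-t^{w}}$, so the constant $f$ satisfies $f=(-1)^{n}f$, and $n$ odd kills it. This is a clean and fully rigorous alternative --- note that $t\mapsto t^{-1}$ is a field automorphism of $\mathbb{Q}(t)$, so your worry about poles evaporates; you never need to evaluate anything at a root of unity, and your fallback of working along real $t\in(0,1)$ is also fine. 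What each approach buys: yours is self-contained, using only Theorem \ref{t25} and no input from the topology of $M$ beyond it, whereas the paper's is shorter at the cost of invoking the standard fact that the $L$-genus (hence the signature) vanishes in dimensions $\equiv 2 \bmod 4$.
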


\begin{proof}
By the Hirzebruch signature theorem \cite{H}, the signature of $M$ is equal to the $L$-genus of $M$. Since $\dim M \neq 0 \mod 4$, the $L$-genus of $M$ vanishes; thus $\mathrm{sign}(M)=0$. Taking $t=0$ in Theorem \ref{t25},
\begin{center}
$\displaystyle \textrm{sign}(M) = \sum_{p \in M^{S^1}} \epsilon(p)$
\end{center}
and this proposition follows. \end{proof}

Proposition \ref{p26} implies that if $\dim M \equiv 2 \mod 4$, then the number of fixed points must be even.

\begin{cor} \label{c27} Let a torus $T^k$ act on a compact oriented manifold $M$. If the number of fixed points is odd, the dimension of $M$ is divisible by four. \end{cor}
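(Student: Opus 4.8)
The plan is to reduce the $T^k$-action to a circle action with the same fixed point set and then invoke Proposition \ref{p26}. First I would note that since the number of fixed points is a finite odd number, $M^{T^k}$ is nonempty and discrete, so its points are isolated; at each such point the tangent representation has no trivial summand and hence splits into complex lines, so $\dim M = 2n$ is automatically even. It therefore suffices to exclude the case $\dim M \equiv 2 \mod 4$, that is, to show $n$ cannot be odd.

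The heart of the argument is to produce a subcircle $S^1 \subseteq T^k$ with $M^{S^1} = M^{T^k}$. The inclusion $M^{T^k} \subseteq M^{S^1}$ holds for any subcircle, so only the reverse inclusion needs work. Here I would use that a compact $T^k$-manifold has finitely many orbit types and, as $T^k$ is abelian, finitely many distinct isotropy subgroups. Every proper isotropy subgroup $H$ is a proper closed subgroup of $T^k$, so $\dim H < k$ and its Lie algebra is a proper subspace of $\mathfrak{t}$. These finitely many proper subspaces cannot exhaust $\mathfrak{t}$, so I can choose an integral vector $\xi$ lying in none of them and let $S^1$ be the circle it generates. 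Then a point $m$ lies in $M^{S^1}$ exactly when $\xi$ belongs to the Lie algebra of the isotropy group $T_m$; by the choice of $\xi$ this forces $T_m = T^k$, i.e. $m \in M^{T^k}$, giving the desired equality.

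With such an $S^1$ fixed, the restricted action has precisely the same (odd, finite) set of isolated fixed points as the $T^k$-action. Assuming $\dim M \equiv 2 \mod 4$, Proposition \ref{p26} then applies to this circle action and yields that the number of fixed points with $\epsilon(p)=+1$ equals the number with $\epsilon(p)=-1$; in particular the total number of fixed points is even, contradicting that it is odd. Hence $\dim M \not\equiv 2 \mod 4$, and together with evenness this gives $4 \mid \dim M$.

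I expect the only delicate step to be the construction of the subcircle $S^1$ with $M^{S^1} = M^{T^k}$; everything afterward is a direct application of Proposition \ref{p26}. This step is the standard \emph{generic subcircle} argument, and the finiteness of orbit types is exactly what makes the avoidance of the proper isotropy Lie subalgebras possible, so I anticipate it going through cleanly once that finiteness is quoted.
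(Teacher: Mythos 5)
Your proposal is correct and follows essentially the same route as the paper: the paper also reduces to a generic subcircle $S^1 \subseteq T^k$ with $M^{S^1}=M^{T^k}$ (justified by the finiteness of orbit types) and then applies Proposition \ref{p26} to rule out $\dim M \equiv 2 \bmod 4$. Your additional remark that isolated fixed points force $\dim M$ to be even is a correct (and implicitly used) ingredient that the paper leaves unstated.
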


There is a more elementary way to prove Corollary \ref{c27}; for a torus action on a compact manifold, its Euler number is equal to the sum of the Euler numbers of its fixed components \cite{K}. Then Poincar\'e duality implies Corollary \ref{c27}.



Comparing the coefficients of the smallest positive weight in Theorem \ref{t25} (after we choose all weights to be positive), the following lemma holds.

\begin{lem} \label{l28} \cite{J1} Let the circle act on a $2n$-dimensional compact oriented manifold $M$ with isolated fixed points. For each fixed point $p$, by reversing the orientation of each $L_{p,i}$ if necessary, where $T_pM=L_{p,1}\oplus \cdots \oplus L_{p,n}$, assume that all weights $w_{p,i}$ are positive. Let $w$ be the smallest positive weight that occurs among all the fixed points. Then the number of times weight $w$ occurs at fixed points of sign $+1$, counted with multiplicity, is equal to the number of times weight $w$ occurs at fixed points of sign $-1$, counted with multiplicity. That is,
\begin{center}
$\displaystyle \sum_{p \in M^{S^1},\epsilon(p)=+1} N_p(w)=\sum_{p \in M^{S^1},\epsilon(p)=-1} N_p(w)$,
\end{center}
where $N_p(w)=|\{i : w_{p,i}=w\}|$ is the number of times weight $w$ occurs at $p$.
\end{lem}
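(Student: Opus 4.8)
The plan is to extract the statement from the Atiyah--Singer index formula (Theorem~\ref{t25}) by expanding both sides as power series in $t$ near $t=0$ and reading off the coefficient of $t^w$. After the reduction in which every weight $w_{p,i}$ is chosen to be positive, each factor $\frac{1+t^{w_{p,i}}}{1-t^{w_{p,i}}}$ is holomorphic at $t=0$ and expands as
$$\frac{1+t^{w_{p,i}}}{1-t^{w_{p,i}}} = 1 + 2t^{w_{p,i}} + 2t^{2w_{p,i}} + \cdots = 1 + 2\sum_{k \ge 1} t^{k w_{p,i}}.$$
Hence the right-hand side of Theorem~\ref{t25} is a genuine power series in $t$, and since by that theorem it equals the constant $\mathrm{sign}(M)$, every coefficient of $t^m$ with $m \ge 1$ must vanish. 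I would then apply this to $m=w$.

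First I would compute, for a single fixed point $p$, the coefficient of $t^w$ in $\prod_{i=1}^n \frac{1+t^{w_{p,i}}}{1-t^{w_{p,i}}}$. Expanding the product, a monomial $t^w$ arises by choosing from each factor one of the terms $1, 2t^{w_{p,i}}, 2t^{2w_{p,i}},\dots$ so that the chosen exponents sum to $w$. Here the hypothesis that $w$ is the \emph{smallest} positive weight occurring among all fixed points is decisive: every factor contributes exponents that are non-negative multiples of $w_{p,i} \ge w$, so the only way to reach total exponent exactly $w$ is to take the term $2t^{w}$ from a single factor with $w_{p,i}=w$ and the constant term $1$ from all other factors; taking two or more nonconstant terms would already produce exponent $\ge 2w > w$. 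Consequently the coefficient of $t^w$ in the product for $p$ is exactly $2N_p(w)$, where $N_p(w) = |\{i : w_{p,i}=w\}|$.

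Summing over all fixed points weighted by the signs $\epsilon(p)$, the coefficient of $t^w$ on the right-hand side of Theorem~\ref{t25} equals $2\sum_{p\in M^{S^1}} \epsilon(p)\, N_p(w)$. Since this coefficient must vanish, we obtain $\sum_{p} \epsilon(p)\, N_p(w) = 0$, which separates into
$$\sum_{p:\,\epsilon(p)=+1} N_p(w) = \sum_{p:\,\epsilon(p)=-1} N_p(w),$$
as claimed.

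I expect the only real subtlety to be the bookkeeping in the middle step: one must invoke the minimality of $w$ to rule out every contribution to $t^w$ except the single-factor one, and keep careful track of the factor $2$ coming from the numerator $1+t^{w_{p,i}}$. The analytic passage — that a rational function of $t$ equal to a constant has all higher power-series coefficients zero — is routine once one notes that each summand is holomorphic at the origin, so no genuine obstacle remains beyond this combinatorial extraction.
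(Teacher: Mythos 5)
Your proof is correct and follows exactly the route the paper indicates: the paper derives this lemma by ``comparing the coefficients of the smallest positive weight in Theorem \ref{t25}'' after choosing all weights positive, which is precisely your expansion of each factor $\frac{1+t^{w_{p,i}}}{1-t^{w_{p,i}}}$ as $1+2\sum_{k\ge 1}t^{kw_{p,i}}$ and extraction of the coefficient of $t^{w}$ using minimality of $w$. The bookkeeping (the factor $2N_p(w)$ per fixed point and the vanishing of all positive-degree coefficients of the constant $\mathrm{sign}(M)$) is exactly right.
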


We need the below lemma to associate a multigraph to a torus action on an oriented manifold with isolated fixed points, in which we extend Lemma \ref{l28} to torus actions.

\begin{lem} \label{l210}
Let a torus $T^k$ act on a compact oriented manifold $M$ with isolated fixed points whose isotropy submanifolds are orientable. Let $w$ be a weight at some fixed point $p$ and $F$ a connected component of $M^{\ker w}$ containing $p$. Reversing the orientation of each $L_{q,i}$ if necessary, assume that all weights in $T_qF$ are positive multiples of $w$ for all $q \in F \cap M^{T^k}$. Then
\begin{center}
$\displaystyle \sum_{q \in F \cap M^{T^k},\epsilon_F(q)=+1} N_q(w)=\sum_{q \in F \cap M^{T^k},\epsilon_F(q)=-1} N_q(w)$,
\end{center}
where $F$ is chosen an orientation, and $N_q(w)=|\{i : w_{p,i}=w\}|$ is the multiplicity of $w$ in $T_qF$.
\end{lem}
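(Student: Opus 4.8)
\section*{Proof proposal}

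The plan is to reduce the statement to the circle case, namely Lemma \ref{l28}, applied to the submanifold $F$. First I would record the elementary group-theoretic fact that the homomorphism $\lambda \colon T^k \to S^1$, $g \mapsto g^w$, is surjective with kernel $\ker w$, so that it induces an isomorphism $T^k/\ker w \cong S^1$. Since $F \subseteq M^{\ker w}$, the subgroup $\ker w$ fixes $F$ pointwise, and hence the $T^k$-action on $F$ descends to an action of this quotient circle $S^1 \cong T^k/\ker w$ on $F$. By Lemma \ref{l29}, $F$ is orientable because it contains the $T^k$-fixed point $p$, and I fix on $F$ the same orientation used in Definition \ref{d23}.

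Next I would identify the fixed data of this induced circle action. Because $\ker w$ already acts trivially on $F$, the $S^1$-fixed set in $F$ is exactly $F^{T^k} = F \cap M^{T^k}$, which is discrete; hence the circle acts on the compact oriented manifold $F$ with isolated fixed points, so Lemma \ref{l28} is applicable to $F$. At a fixed point $q \in F \cap M^{T^k}$, the tangent weight spaces $L_{q,i}$ spanning $T_qF$ are precisely those on which $\ker w$ acts trivially, which forces each corresponding weight $w_{q,i}$ to be an integer multiple of $w$; under the standing reorientation these become positive multiples $w_{q,i} = c_{q,i}\,w$ with $c_{q,i}$ a positive integer. Transporting through $\lambda$, the element $g$ acts on $L_{q,i}$ by $g^{w_{q,i}} = \lambda(g)^{c_{q,i}}$, so the induced $S^1$-weight on $L_{q,i}$ is exactly $c_{q,i}$, and every weight of the circle action on $F$ is a positive integer.

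The key bookkeeping is that the $T^k$-weight $w$ corresponds to the $S^1$-weight $1$. Since $w$ occurs at $p \in F$, the weight $1$ occurs for the circle action on $F$, and being the least positive integer it is automatically the \emph{smallest} positive weight of that action. Moreover $N_q(w) = |\{i : w_{q,i} = w\}|$ equals the multiplicity of the $S^1$-weight $1$ at $q$, and by Definition \ref{d23} the sign of $q$ as a fixed point of the circle action on $F$ coincides with $\epsilon_F(q)$. Feeding all of this into Lemma \ref{l28} for the manifold $F$ yields exactly the claimed identity.

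I expect the only real subtlety to be the verification in the second step that the $\ker w$-fixed directions at $q$ carry weights that are honest integer multiples of $w$, so that the induced $S^1$-weights are well-defined positive integers with $w \leftrightarrow 1$. Writing $w = d\,w_0$ with $w_0 \in \mathbb{Z}^k$ primitive, the subgroup $\ker w$ is the codimension-one subtorus $\ker w_0$ together with the finite part on which $g^{w_0}$ is a $d$-th root of unity; a short check shows that a weight vanishing on \emph{all} of $\ker w$ must be divisible by $w$ itself, not merely by $w_0$. Everything else is a direct transport of Lemma \ref{l28} along $T^k/\ker w \cong S^1$, together with matching the orientation conventions of Definition \ref{d23}.
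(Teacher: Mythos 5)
Your proof is correct. It follows the same overall strategy as the paper---reduce to the circle case by applying Lemma \ref{l28} to a circle action on $F$ whose smallest positive weight records exactly the multiplicity of $w$---but implements the reduction differently. The paper restricts the given action to a generic \emph{subcircle} $S \subset T^k$: it chooses an integral element $\xi$ with $\langle w_{q,i},\xi\rangle \neq 0$ for all weights (such $\xi$ exists because there are finitely many orbit types), normalized so that $\langle w,\xi\rangle > 0$, and applies Lemma \ref{l28} to the $S$-action on $F$, whose smallest positive weight is $x=\langle w,\xi\rangle$, the $T^k$-weight $aw$ corresponding to the $S$-weight $ax$. You instead pass to the \emph{quotient} circle $T^k/\ker w \cong S^1$ via $g \mapsto g^w$, under which the $T^k$-weight $cw$ becomes the $S^1$-weight $c$, so $w \leftrightarrow 1$ is automatically the smallest positive weight and the fixed point sets match with no genericity argument at all. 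Your route is somewhat cleaner: it is canonical (no choice of $\xi$, no appeal to finiteness of orbit types) and the weight bookkeeping is immediate. The price is that you must verify the quotient action is well defined and that every weight in $T_qF$ is an honest integer multiple of $w$, including divisibility coming from the torsion part of $\ker w$; your primitive-vector argument for this is correct, and in fact the paper needs and uses the same divisibility fact but only asserts it (``by definition of $\ker w$''). Both proofs then conclude identically, matching $N_q(w)$ with the multiplicity of the smallest circle weight, matching the signs $\epsilon_F(q)$, and quoting Lemma \ref{l28}.
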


\begin{proof}
Choose an orientation of $F$ as it is orientable by the assumption.
The $T^k$-action on $M$ restricts to act on $F$, and the fixed point set of this action on $F$ is equal to $F \cap M^{T^k}$, that is, $F^{T^k}=F \cap M^{T^k}$. Let $q \in F^{T^k}$. By definition of $\ker w$, every weight in $T_qF$ is an integer multiple of $w$. Replacing a weight with its negative (reversing of the orientation of $L_{q,i}$) if necessary, we may assume that every weight in $T_qF$ is a positive multiple of $w$.

Let $\xi \in (\mathbb{Z}^k \cap \mathfrak{t}) \setminus \ker w$ be a non-zero integral element of the Lie algebra $\mathfrak{t}$ of the torus $T^k$ such that $\langle w_{q,i}, \xi \rangle \neq 0$ for all weights $w_{q,i}$; such an element $\xi$ exists because $M$ is compact and hence there is a finite number of orbit types. If $\langle w,\xi \rangle$ is negative, we replace $\xi$ by $-\xi$ so that $\langle w,\xi \rangle>0$. Let $S$ be a subcircle of $T^k$ generated by $\xi$. Then the $T^k$-fixed point set of $M$ is equal to the $S$-fixed point set of $M$, that is, $M^{T^k}=M^S$; also $F^{T^k}=F^S$. Moreover, if $q \in F^{T^k}$ has a $T^k$-weight $aw$ for some positive integer $a$, the corresponding $S$-weight is $\langle aw, \xi \rangle$, which is a positive integer.

For $q \in F^{T^k}$, because every $T^k$-weight in $T_qF$ is a positive integer multiple of $w$, every $S$-weight in $T_qF$ is a positive integer multiple of $\langle w,\xi \rangle$. Let $x:=\langle w, \xi \rangle$. Then $x$ is the smallest positive weight for the $S$-action on $F$. Moreover, since we have chosen weights so that every $T^k$-weight in $T_qF$ is a positive multiple $aw$ of $w$ and its corresponding $S$-weight is a positive muleiple $ax$ of $x$, the $T^k$-action and the $S$-action give the same sign $\epsilon_F(q)$ to $q$.

Because the fixed point $p$ has $T^k$-weight $w$, $p$ has $S$-weight $x$. Applying Lemma \ref{l28} to the smallest positive weight $x$ for the $S$-action on $F$,
\begin{center}
$\displaystyle \sum_{p \in F^{S},\epsilon_F(p)=+1} N_p(x)=\sum_{p \in F^{S},\epsilon_F(p)=-1} N_p(x)$.
\end{center}
The $T^k$-weight $w$ only comes from the $S$-weight $x$. Therefore, this implies that
\begin{center}
$\displaystyle \sum_{p \in F^{T^k},\epsilon_F(p)=+1} N_p(w)=\sum_{p \in F^{T^k},\epsilon_F(p)=-1} N_p(w)$.
\end{center}
Hence the lemma holds. \end{proof}

It is well known that if a 2-dimensional compact connected oriented manifold admits a torus action with a fixed point, it must be the 2-sphere and there are exactly 2 fixed points; see \cite[Lemma 2.12]{J1} for a proof.

\begin{lem} \label{l211}
Let a torus $T^k$ act non-trivially on a 2-dimensional compact connected oriented manifold $M$. If there is a fixed point, $M$ is the 2-sphere and the action on $M$ has exactly two fixed points.
\end{lem}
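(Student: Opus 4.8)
The plan is to reduce the statement to the Euler-number formula of Theorem \ref{t210} together with the classification of compact oriented surfaces. The first task is to show that the fixed set $M^{T^k}$ consists of finitely many isolated points. I would argue this from the local structure of the action: every connected component $F$ of $M^{T^k}$ is a closed submanifold, and at each $q\in F$ the normal representation $N_qF$ is a real $T^k$-representation with no nonzero fixed vector. Since every nontrivial irreducible real representation of a torus is $2$-dimensional, $N_qF$ is a sum of such pieces and hence has even rank; as $\dim M=2$, this forces $\dim F\in\{0,2\}$. If some component had $\dim F=2$, it would be open and closed, so $F=M$ by connectedness, making the action trivial and contradicting the hypothesis. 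Therefore every component of $M^{T^k}$ is a point, i.e., the fixed points are isolated.

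Next I would invoke Theorem \ref{t210}, which gives $\chi(M)=\sum_{F\subset M^{T^k}}\chi(F)$. Each fixed component is a point with $\chi=1$, so $\chi(M)$ equals the number $|M^{T^k}|$ of fixed points, which is at least $1$ by hypothesis. On the other hand, $M$ is a compact connected oriented surface, so by the classification of surfaces $\chi(M)=2-2g$ for a nonnegative integer $g$ (the genus). Combining these, $2-2g=|M^{T^k}|\ge 1$ forces $g=0$, hence $M$ is the $2$-sphere and $\chi(M)=2$, so there are exactly two fixed points. (Alternatively, once isolation is known one may note that $\dim M=2$ is not divisible by $4$, so Corollary \ref{c27} rules out an odd number of fixed points, giving $|M^{T^k}|=2$ directly.)

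The only delicate point is the first step — verifying that no positive-dimensional fixed component can occur — and this is handled cleanly by the even-rank observation on the normal representation together with connectedness; everything after that is a direct application of the already-established Euler-number formula and the genus identity for oriented surfaces. I do not expect any genuine obstacle beyond packaging this local-structure argument carefully.
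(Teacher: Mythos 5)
Your proof is correct and follows essentially the same route as the paper: fixed points must be isolated (the paper asserts this tersely, you justify it via the even-rank normal representation and connectedness), and then Kobayashi's formula (Theorem \ref{t210}) together with $\chi(M)=2-2g$ forces $g=0$ and exactly two fixed points. The extra care you take with the isolation step is a welcome elaboration of what the paper leaves implicit, but it is not a different argument.
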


\section{Multigraphs for torus actions} \label{s3}

In this section, we show, as for a circle action, that for a torus action on a compact oriented manifold with isolated fixed points there exists a multigraph that encodes the fixed point data of the manifold. For this, we need several terminologies.

\begin{Definition} \label{d31}
Let $k \in \mathbb{N}^+$. A \textbf{(signed labeled $k$-)multigraph} consists of
\begin{enumerate}
\item a set $V$ of vertices,
\item a set $E$ of edges,
\item a map $\epsilon:V \to \{-1,+1\}$, and
\item a map $w:E \to \mathbb{Z}^k$.
\end{enumerate}
Let $\Gamma$ be a signed labeled $k$-multigraph. For each vertex $v \in V$, the image $\epsilon(v)$ is called \textbf{sign} of $v$. For each edge $e \in E$, the image $w(e)$ is called \textbf{label} of $e$. For a vertex $v$, we define the \textbf{fixed point data} of $v$, denoted by $\Sigma_v$, to be the equivalence class $[\epsilon(v),w(e_1),\cdots,w(e_n)]$, where $e_1$, $\cdots$, $e_n$ are the edges of $v$. If an edge $e$ has vertices $p$ and $q$, we say that $(p,q)$ is $w(e)$-edge.
\end{Definition}

\begin{Definition} \label{d32}
Let a torus $T^k$ act on a compact oriented manifold $M$ with isolated fixed points. We say that a (signed labeled $k$-)multigraph \textbf{describes} $M$ if the following hold.
\begin{enumerate}
\item The vertex set of the multigraph is equal to the fixed point set $M^{T^k}$.
\item The fixed point data of a vertex $p$ is equal to the fixed point data of the corresponding fixed point $p$.
\item For each edge $e$, the two vertices of $e$ lie in the same connected component of the isotropy submanifold $M^{\ker w(e)}$.
\end{enumerate}
Let $\Gamma$ be a signed labeled $k$-multigraph describing $M$. We say that $\Gamma$ satisfies \textbf{Property A} if for every edge $e$, its vertices $p_1$ and $p_2$ satisfy $\epsilon_F(p_1)=-\epsilon_F(p_2)$, where $F$ is a connected component of $M^{\ker w(e)}$ containing $p_1$ and $p_2$, $F$ is chosen any orientation, and all weights in $T_pF$ are chosen to be positive multiples of $w(e)$, for all $p \in F \cap M^{T^k}$.
\end{Definition}

Property A looks technical at first glance, but is natural and seems to be an appropriate property to impose, as we shall see hereafter, because multigraphs with Property A behave well with various operations on manifolds, such as connected sum, self connected sum, and blow up.

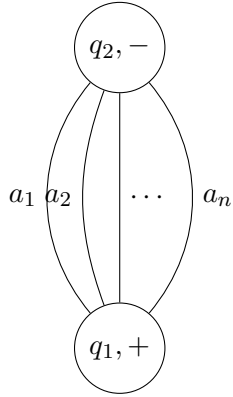
\begin{figure}
\centering
\begin{tikzpicture}[state/.style ={circle, draw}]
\node[state] (a) at (0,0) {$q_1,+$};
\node[state] (b) at (0,4) {$q_2,-$};
\path (a) [bend left =40]edge node[left] {$a_1$} (b);
\path (a) [bend left =20]edge node[left] {$a_2$} (b);
\path (a) edge node[right] {$\cdots$} (b);
\path (b) [bend left =40]edge node[right] {$a_n$} (a);
\end{tikzpicture}
\caption{Multigraph describing $S^{2n}$ in Example \ref{e1}} \label{f2}
\end{figure}

For instance, Figure \ref{f2} describes the action on $S^{2n}$ in Example \ref{e1}. For an oriented $S^1$-manifold with isolated fixed points, the author proved that there is a multigraph describing it \cite{J1}. Unfortunately, its proof holds if isotropy submanifolds are orientable.

To prove our main results, we need such a multigraph to satisfy Property A. It was not explicit but the multigraph the author gave in \cite{J1} satisfied Property A. Because this is important, we shall correct and review its proof.

\begin{pro} \label{p33} \cite{J1}
Let the circle act on a compact oriented manifold $M$ with isolated fixed points whose isotropy submanifolds are orientable. Then there exists a signed labeled multigraph describing $M$ that satisfies Property A. In particular, the multigraph has no self-loops.
\end{pro}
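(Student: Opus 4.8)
The plan is to construct the multigraph explicitly by grouping fixed points according to the connected components of the isotropy submanifolds, and then invoke Lemma \ref{l210} to pair up the fixed points correctly so that Property A holds. The vertex set is forced on us: it must equal $M^{S^1}$. The edges are what we must build, and the key organizing idea is that each edge should correspond to a weight $w$ occurring at some fixed point together with a connected component $F$ of $M^{\ker w} = M^{\mathbb{Z}_w}$ containing at least one fixed point.

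First I would fix, for each fixed point $p$, an ordering/orientation of the $L_{p,i}$ so that all weights $w_{p,i}$ are positive (using Proposition \ref{p26} to record the sign $\epsilon(p)$ consistently). Then, for each positive weight $w$ appearing in the manifold and each connected component $F$ of $M^{\mathbb{Z}_w}$ meeting $M^{S^1}$, I consider the restricted $S^1$-action on $F$. By Lemma \ref{l29}, $F$ is orientable; choose an orientation and arrange all weights in $T_qF$ to be positive multiples of $w$ for each $q \in F \cap M^{S^1}$. Lemma \ref{l210} (or its specialization Lemma \ref{l28}, since here $k=1$) then gives
\begin{center}
$\displaystyle \sum_{q \in F \cap M^{S^1},\,\epsilon_F(q)=+1} N_q(w)=\sum_{q \in F \cap M^{S^1},\,\epsilon_F(q)=-1} N_q(w)$.
\end{center}
This balancing identity is exactly what is needed to pair each occurrence of the weight $w$ at a $+$-vertex (with respect to $\epsilon_F$) with an occurrence at a $-$-vertex: I would form a bijection between the multiset of $(q,i)$ with $w_{q,i}=w$, $\epsilon_F(q)=+1$ and the corresponding multiset with $\epsilon_F(q)=-1$, and declare each matched pair to be a $w$-edge of the multigraph. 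Doing this for every $(w,F)$ produces all the edges.

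Then I would verify the three conditions of Definition \ref{d32} together with Property A. Condition (1) is automatic; condition (2) holds because each fixed point $p$ has exactly $n$ incident edge-ends, one for each weight $w_{p,i}$ (each occurrence of a weight is matched exactly once), so the labels of the edges at $p$ recover the multiset $\{w_{p,1},\dots,w_{p,n}\}$ and the sign $\epsilon(p)$ is preserved by construction; condition (3) holds since an edge between $q$ and $q'$ is only drawn when both lie in the same component $F$ of $M^{\mathbb{Z}_w}$. Property A is immediate from the pairing: every edge joins a vertex with $\epsilon_F=+1$ to one with $\epsilon_F=-1$. Finally, the absence of self-loops follows because a self-loop would pair a fixed point $q$ with itself, but a single fixed point cannot simultaneously have $\epsilon_F(q)=+1$ and $\epsilon_F(q)=-1$.

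\textbf{The main obstacle} I expect is a well-definedness/consistency issue: the weights at a fixed point $p$ lie in several different components $F$ simultaneously (one for each $w_{p,i}$, via $\ker w_{p,i}$), and the sign data $\epsilon_F(p)=\epsilon_F(p)\cdot\epsilon_N(p)$ relative to each component must be compatible so that the total number of edge-ends at $p$ is exactly $n$ and reproduces $\epsilon(p)=\prod_i \mathrm{sign}$. Concretely, I must check that summing the local pairings over all components assigns each weight-occurrence $w_{p,i}$ to precisely one edge, and that the product of the orientation data is consistent with the globally chosen sign $\epsilon(p)$; this requires carefully relating $\epsilon_F$ and $\epsilon_N$ via $\epsilon(p)=\epsilon_F(p)\epsilon_N(p)$ across the different isotropy components. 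Once this bookkeeping is organized correctly, the balancing lemma does all the real work and the rest is formal.
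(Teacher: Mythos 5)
Your proposal is correct and follows essentially the same route as the paper's proof: choose all weights positive, work component-by-component in each isotropy submanifold $M^{\ker w}$ (orientable by Lemma \ref{l29}), and use the balancing identity of Lemma \ref{l28}/\ref{l210} to pair weight-$w$ occurrences at vertices with $\epsilon_F=+1$ against those with $\epsilon_F=-1$, which simultaneously yields Property A and rules out self-loops. The only quibble is the citation of Proposition \ref{p26} for fixing signs (the sign $\epsilon(p)$ is determined by the definition once the weight orientations are chosen, not by that proposition), and your "main obstacle" resolves itself since each weight occurrence $w_{p,i}$ lies in exactly one pair $(w,F)$ and hence receives exactly one edge-end.
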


\begin{proof}[\textbf{Review of the proof in \cite{J1}}] For each fixed point $p$, choose an orientation of $L_{p,i}$ so that each weight $w_{p,i}$ at $p$ is positive. To each fixed point $p$, we assign a vertex, also denoted by $p$, and assign a sign $\epsilon(p)$ to the vertex $p$. Let $w$ be a positive integer. Consider the set $M^{\ker w}$ of points in $M$ that are fixed by the $\ker w$-action. Let $F$ be a component of $M^{\ker w}$ which contains an $S^1$-fixed point, that is, $F \cap M^{S^1} \neq \emptyset$. Choose an orientation of $F$ as it is orientable by the assumption. The $S^1$-action on $M$ restricts to act on $F$, and the smallest positive weight of the $S^1$-action on $F$ is $w$. Applying Lemma \ref{l28} to the $S^1$-action on $F$,
\begin{center}
$\displaystyle \sum_{p \in F^{S^1},\epsilon_F(p)=+1} N_p(w)=\sum_{p \in F^{S^1},\epsilon_F(p)=-1} N_p(w)$,
\end{center}
where $N_p(w)=|\{i  :  w_{p,i}=w\}|$ is the number of times weight $w$ occurs at $p$. Therefore, if a fixed point $p_1 \in F^{S^1}$ with $\epsilon_F(p_1)=+1$ has weight $w$ and a fixed point $p_2 \in F^{S^1}$ with $\epsilon_F(p_2)=-1$ has weight $w$, then we can draw an edge between the corresponding vertices $p_1$ and $p_2$, giving a label $w$ to the edge, to exhaust all weights $w$ that appear at fixed points in $F^{S^1}$. In particular, this does not cause any self-loops.

Repeating the above argument for each positive integer $w$ and each component of $M^{\mathbb{Z}_w}$, the proposition follows. \end{proof}

We extend Proposition \ref{p33} to torus actions.

\begin{pro} \label{p34}
Let a torus $T^k$ act on a compact oriented manifold $M$ with isolated fixed points whose isotropy submanifolds are orientable. Then there exists a (signed labeled $k$-)multigraph describing $M$ that satisfies Property A. In particular, the multigraph has no self-loops.
\end{pro}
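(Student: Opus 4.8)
The plan is to follow the construction in the proof of Proposition \ref{p33} closely, replacing the use of the circle balance Lemma \ref{l28} by its torus analogue Lemma \ref{l210}. The only genuinely new ingredient is that for a torus the instruction ``choose all weights positive'' no longer makes literal sense, so I would first fix a single global orientation convention that plays the role of positivity and is simultaneously compatible with the per-component normalization demanded by Lemma \ref{l210} and by Property A.

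First I would choose a generic element $\xi \in (\mathbb{Z}^k \cap \mathfrak{t}) \setminus \{0\}$ with $\langle w_{p,i}, \xi\rangle \neq 0$ for every weight $w_{p,i}$ at every fixed point; such $\xi$ exists because $M$ is compact, so there are only finitely many weights. For each fixed point $p$ and each $i$, orient $L_{p,i}$ so that $\langle w_{p,i}, \xi\rangle > 0$. This is a global choice, and it determines the sign $\epsilon(p)$; assign to each fixed point a vertex carrying this sign. Next, for each weight $w$ occurring at some fixed point, normalized so that $\langle w, \xi\rangle > 0$, and for each connected component $F$ of $M^{\ker w}$ with $F \cap M^{T^k} \neq \emptyset$, I would orient $F$ (possible by Lemma \ref{l29}) and apply Lemma \ref{l210}. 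Every weight lying in $T_qF$ is an integer multiple $aw$ of $w$ with $\langle aw, \xi\rangle > 0$, forcing $a > 0$, so the $\xi$-positive orientations already realize the ``positive multiple of $w$'' normalization on $T_qF$ with no further reorientation, and Lemma \ref{l210} gives
\begin{center}
$\displaystyle \sum_{q \in F \cap M^{T^k},\, \epsilon_F(q)=+1} N_q(w) = \sum_{q \in F \cap M^{T^k},\, \epsilon_F(q)=-1} N_q(w)$.
\end{center}
This equality lets me match the weight-$w$ occurrences at vertices with $\epsilon_F=+1$ bijectively with those at vertices with $\epsilon_F=-1$, drawing one edge labeled $w$ per matched pair. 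Since each such edge joins a vertex of $\epsilon_F=+1$ to one of $\epsilon_F=-1$, it is never a self-loop.

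It remains to verify the three conditions of Definition \ref{d32} together with Property A. Condition (1) holds by construction, and condition (3) holds because both endpoints of any edge labeled $w$ lie in the same component $F$ of $M^{\ker w}$. For condition (2), the key observation is that a line $L_{p,i}$ with weight $w_{p,i}=w$ is tangent to exactly one component $F$ of $M^{\ker w}$, namely the one containing $p$, and it is counted only in $N_p(w)$, never in $N_p(u)$ for $u\neq w$ (even when $L_{p,i}$ happens to be tangent to a component of $M^{\ker u}$, as can occur for proportional weights); hence it is consumed by exactly one edge, one drawn while processing the pair $(w,F)$. Thus the multiset of labels of edges at $p$ equals $\{w_{p,1},\dots,w_{p,n}\}$, which together with the sign $\epsilon(p)$ yields $\Sigma_p$. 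Property A is then immediate: each edge was drawn from a $+1$ to a $-1$ vertex for $\epsilon_F$, and reversing the arbitrary orientation of $F$ flips all $\epsilon_F(q)$ at once, so the relation $\epsilon_F(p_1)=-\epsilon_F(p_2)$ is independent of that choice. The main obstacle I anticipate is exactly this bookkeeping in condition (2): checking that the single global $\xi$-normalization simultaneously realizes the per-component ``positive multiple of $w$'' convention for every isotropy submanifold at once, and that no weight occurrence is double-counted across the nested submanifolds $M^{\ker w}$ associated to proportional weights.
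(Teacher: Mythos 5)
Your proof is correct and follows essentially the same route as the paper's: decompose along isotropy components $F$ of $M^{\ker w}$, apply Lemma \ref{l210}, and match weight-$w$ occurrences at vertices with $\epsilon_F=+1$ against those with $\epsilon_F=-1$, which yields Property A and excludes self-loops. The only difference is cosmetic bookkeeping: your single generic $\xi$ fixes all orientation choices of the $L_{p,i}$ globally and at once, whereas the paper normalizes weights to positive multiples of $w$ separately within each component $(w,F)$; both normalizations agree on every $T_qF$, so the resulting multigraph is the same.
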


\begin{proof}
Let $p$ be a fixed point and let $w$ be a weight at $p$. Let $F$ be a connected component of $M^{\ker w}$ which contains $p$. Choose an orientation of $F$ as it is orientable by the assumption.

The $T^k$-action on $M$ restricts to act on $F$, and the fixed point set $F^{T^k}$ of this action on $F$ is equal to $F \cap M^{T^k}$. Let $q \in F^{T^k}$. By definition of $\ker w$, every weight in $T_qF$ is a multiple of $w$. Replacing a weight with its negative (reversing of the orientation of $L_{q,i}$) if necessary, assume that every weight in $T_qF$ is a positive multiple of $w$. By Lemma \ref{l210},
\begin{center}
$\displaystyle \sum_{q \in F \cap M^{T^k},\epsilon_F(q)=+1} N_q(w)=\sum_{q \in F \cap M^{T^k},\epsilon_F(q)=-1} N_q(w)$,
\end{center}
where $N_q(w)=|\{i : w_{p,i}=w\}|$ is the multiplicity of $w$ in $T_qF$.
Therefore, we can exhaust all weights $w$ that appear at fixed points in $F^{T^k}$ as follows: if a fixed point $p_1 \in F^{T^k}$ with $\epsilon_F(p_1)=+1$ has weight $w$ and a fixed point $p_2 \in F^{T^k}$ with $\epsilon_F(p_2)=-1$ has weight $w$, then we draw an edge between the corresponding vertices $p_1$ and $p_2$, giving a label $w$ to the edge. In particular, this does not cause any self-loops.

Repeating the above argument for all weights $w$ over all of the fixed points $p$, the proposition follows. In particular, the multigraph obtained satisfies Property A, because for any edge $e$ with label $w(e)$ its two vertices $p_1$ and $p_2$ satisfy $\epsilon_F(p_1)=-\epsilon_F(p_2)$ where $F$ is a connected component of $M^{\ker w(e)}$ containing $p_1$ and $p_2$, $F$ is given any orientation, and all weights in $F$ are chosen to be positive multiples of $w(e)$. \end{proof}

Given a multigraph describing a compact oriented $T^k$-manifold with isolated fixed points, if we replace the label and the signs of an edge by their negatives, the resulting multigraph still describes the manifold. Accordingly, we need a corresponding notion for a multigraph.

\begin{Definition}[\textbf{Reversing edge}] \label{d35}
Let $\Gamma$ be a signed labeled multigraph. We define an operation, called \textbf{reversing an edge $e$}, as follows.
\begin{enumerate}
\item Replace the label $w(e)$ of the edge $e$ with its negative $-w(e)$.
\item For each vertex $p$ of the edge $e$, replace the sign $\epsilon(p)$ of $p$ with $-\epsilon(p)$.
\end{enumerate}
\end{Definition}

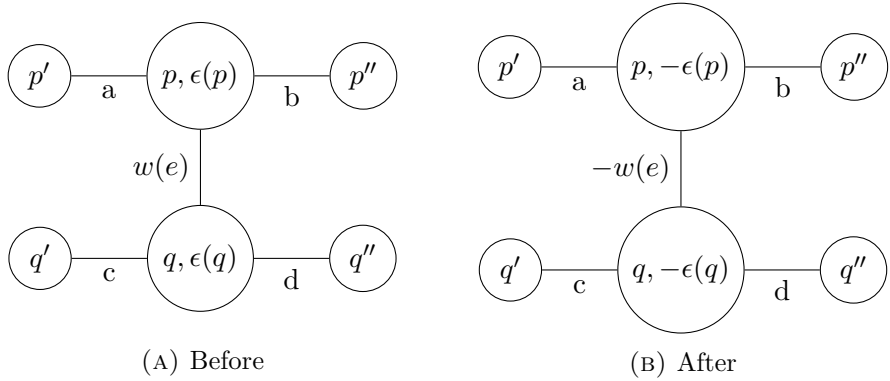
\begin{figure}
\centering
\begin{subfigure}[b][5cm][s]{.49\textwidth}
\centering
\vfill
\begin{tikzpicture}[state/.style ={circle, draw}]
\node[state] (a) {$p,\epsilon(p)$};
\node[state] (b) [left=of a] {$p'$};
\node[state] (c) [right=of a] {$p''$};
\node[state] (d) [below=of a]{$q,\epsilon(q)$};
\node[state] (e) [left=of d] {$q'$};
\node[state] (f) [right=of d] {$q''$};
\path (a) edge node[below] {a} (b);
\path (a) edge node [below] {b} (c);
\path (a) edge node[left] {$w(e)$} (d);
\path (d) edge node [below] {c} (e);
\path (d) edge node [below] {d} (f);
\end{tikzpicture}
\vfill
\caption{Before}\label{r1}
\end{subfigure}
\begin{subfigure}[b][5cm][s]{.50\textwidth}
\centering
\vfill
\begin{tikzpicture}[state/.style ={circle, draw}]
\node[state] (a) {$p,-\epsilon(p)$};
\node[state] (b) [left=of a] {$p'$};
\node[state] (c) [right=of a] {$p''$};
\node[state] (d) [below=of a]{$q,-\epsilon(q)$};
\node[state] (e) [left=of d] {$q'$};
\node[state] (f) [right=of d] {$q''$};
\path (a) edge node[below] {a} (b);
\path (a) edge node [below] {b} (c);
\path (a) edge node[left] {$-w(e)$} (d);
\path (d) edge node [below] {c} (e);
\path (d) edge node [below] {d} (f);
\end{tikzpicture}
\vfill
\caption{After}\label{r2}
\vspace{\baselineskip}
\end{subfigure}\qquad
\caption{Reversing an edge $e$ between $p$ and $q$: signs of other vertices and other edges at $p',p'',q',q''$ remain the same}\label{r}
\end{figure}

\begin{lem} \label{l36}
Let a torus $T^k$ act on a compact oriented manifold $M$ with isolated fixed points, whose isotropy submanifolds are orientable. Suppose that a multigraph $\Gamma$ describes $M$. Let $e$ be an edge of $\Gamma$. Let $\Gamma'$ be a multigraph obtained from $\Gamma$ by reversing the edge $e$. Then $\Gamma'$ also describes $M$. If $\Gamma$ satisfies Property A, so does $\Gamma'$.
\end{lem}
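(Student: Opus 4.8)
The plan is to verify the two assertions separately, handling first that $\Gamma'$ describes $M$ by checking the three conditions of Definition \ref{d32}, and then the Property A clause. Throughout, write $p$ and $q$ for the two (distinct, since a Property A multigraph has no self-loops) vertices of the reversed edge $e$, and set $w := w(e)$.

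For the claim that $\Gamma'$ describes $M$, condition (1) is immediate, as reversing an edge alters neither the vertex set nor its identification with $M^{T^k}$. For condition (3), the only label that changes is that of $e$, which becomes $-w$; since $\ker(-w) = \ker w$ as subgroups of $T^k$, we have $M^{\ker(-w)} = M^{\ker w}$, so the endpoints $p, q$ still lie in a common connected component, and every other edge is untouched. Condition (2) is where the equivalence relation from Section \ref{s2} does the work: reversing $e$ replaces, in the fixed point data of $p$, the overall sign $\epsilon(p)$ by $-\epsilon(p)$ and exactly the one weight $w$ contributed by $e$ by $-w$, which is precisely the move $(a,\{\ldots,a_i,\ldots\}) \sim (-a,\{\ldots,-a_i,\ldots\})$; hence $\Sigma_p$ is unchanged as an equivalence class, and likewise $\Sigma_q$, while all other vertices are literally unchanged. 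This multiset argument is robust even when $p$ carries several edges labeled $w$, since only the copy coming from $e$ flips.

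For Property A, the key point I would emphasize is that the sign $\epsilon_F(p)$ appearing there is the \emph{geometric} quantity of Definition \ref{d23}: it depends only on a chosen orientation of the fixed component $F$ and on the convention that every weight in $T_pF$ be a positive multiple of the edge label, and it does not involve the graph sign $\epsilon(p)$ at all. Reversing $e$ therefore affects this computation only through the label switching from $w$ to $-w$. Enforcing the new convention (positive multiples of $-w$ rather than of $w$) forces reversing the orientation of each complex line $L_{p,1},\ldots,L_{p,m}$ spanning $T_pF$, where $2m = \dim F$; each such reversal flips the real orientation of one complex line, so $\epsilon_F(p)$ is multiplied by $(-1)^m$. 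Because $F$ is connected, $m$ is the same for both $p$ and $q$, so $\epsilon_F(p)$ and $\epsilon_F(q)$ are scaled by the identical factor $(-1)^m$, and the relation $\epsilon_F(p) = -\epsilon_F(q)$ that Property A gives for $\Gamma$ is inherited by $\Gamma'$. For every edge other than $e$ the label and the ambient geometry are unchanged, so their Property A relations persist verbatim.

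I expect no serious obstacle, as the content is essentially bookkeeping; the only place an error could hide is the Property A step, where one must recognize that $\epsilon_F$ is insensitive to the graph sign and that it is the label flip — via the forced reorientation of all of $T_pF$ and the resulting common factor $(-1)^m$ on both endpoints — that propagates. Once that observation is isolated, both assertions reduce to the equivalence relation of Section \ref{s2} and to the identity $\ker(-w) = \ker w$.
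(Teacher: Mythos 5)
Your proposal is correct and follows essentially the same route as the paper's proof: the "describes $M$" part via the equivalence relation on fixed point data (with the additional helpful observation that $\ker(-w)=\ker w$, which the paper leaves implicit), and the Property A part via the observation that switching the convention from positive multiples of $w(e)$ to positive multiples of $-w(e)$ multiplies both $\epsilon_F(p)$ and $\epsilon_F(q)$ by the same factor $(-1)^m$, $\dim F = 2m$, so the relation $\epsilon_F(p)=-\epsilon_F(q)$ persists. The only cosmetic slip is justifying that $e$ has distinct endpoints by invoking Property A, which is not assumed in the first assertion; like the paper, the first part implicitly assumes the edge is not a self-loop.
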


\begin{proof}
Let $p_1$ and $p_2$ be the vertices (fixed points) of $e$. Let $e_2$, $\cdots$, $e_n$ be the other edges at $p_1$. Because $\Gamma$ describes $M$, the fixed point data of $p_1$ is $[\epsilon(p_1),w(e),w(e_2),\cdots,w(e_n)]$. On the other hand, $\Gamma'$ gives $p_1$ the fixed point data $[-\epsilon(p_1),-w(e),w(e_2),\cdots,w(e_n)]$, which is equivalent to $[\epsilon(p_1),w(e),w(e_2),\cdots,w(e_n)]$. Similarly, both $\Gamma$ and $\Gamma'$ give $p_2$ the same fixed point data. The signs of other vertices and the labels of others edges of $\Gamma$ remain the same in $\Gamma'$. Because $\Gamma$ describes $M$, $p_1$ and $p_2$ are in the same component $F$ of $M^{\ker w(e)}$. Since in $\Gamma'$ the vertices $p_1$ and $p_2$ are still connected by an edge $e'$ (which is the edge $e$ of $\Gamma$), it follows that $\Gamma'$ also describes $M$. 

Suppose that $\Gamma$ satisfies Property A. Choose an orientation of $F$ as is orientable by Lemma \ref{l28}. Choose all weights in $F$ to be positive multiples of $w(e)$. Let $[\epsilon_F(p_i),w_{p_i,1},\cdots,w_{p_i,m}]$ denote the fixed point data of $p_i$ for the $T^k$-action on $F$, where $\dim F=2m$ and $w_{p_i,1},\cdots,w_{p_i,m}$ are the weights in $T_{p_i}F$ and all positive multiples of $w(e)$. Then $\epsilon_F(p_1)=-\epsilon_F(p_2)$ because $\Gamma$ satisfies Property A.

Now the label of $e'$ in $\Gamma'$ is $-w(e)$. To check that $\Gamma'$ satisfies Property A, we choose all weights in $F$ to be positive multiples of $-w(e)$. Then the fixed point data of $p_i$ is $[(-1)^m \epsilon_F(p_i),-w_{p_i,1},\cdots,-w_{p_i,m}]$. Therefore, with this choice of weights (all weights in $T_pF$ are positive multiples of $-w(e)$), the new signs $(-1)^m \epsilon_F(p_1)$ and $(-1)^m \epsilon_F(p_2)$ of $p_1$ and $p_2$ still satisfy $(-1)^m \epsilon_F(p_1)=-(-1)^m \epsilon_F(p_2)$, because $\epsilon_F(p_1)=-\epsilon_F(p_2)$. This also holds if we reverse the orientation of $F$. Thus, $\Gamma'$ also satisfies Property A. \end{proof}

In fact, we may want to regard $\Gamma$ and $\Gamma'$ as the same multigraph, as they both describe $M$.

\begin{Definition}[\textbf{Exchanging edges}]
Let $\Gamma$ be a signed labeled multigraph. Suppose that an edge $e_1$ has label $a$ and vertices $p_1$ and $p_2$ and an edge $e_2$ has label $a$ and vertices $q_1$ and $q_2$.
We define an operation, called \textbf{exchanging edges}, as follows.
\begin{enumerate}
\item Remove the edges $e_1$ and $e_2$.
\item Draw an edge of label $a$ between $p_1$ and $q_2$, and draw an edge of label $a$ between $q_1$ and $p_2$.
\end{enumerate}
\end{Definition}

Figure \ref{f4} illustrates exchanging edges.

\begin{figure}
\centering
\begin{subfigure}[b][4cm][s]{.4\textwidth}
\centering
\vfill
\begin{tikzpicture}[state/.style ={circle, draw}]
\node[state] (a) {$p_1$};
\node[state] (b) [above=of a] {$p_2$};
\node[state] (d) [right=of a]{$q_1$};
\node[state] (e) [above=of d] {$q_2$};
\path (a) edge node[left] {a} (b);
\path (d) edge node [right] {a} (e);
\end{tikzpicture}
\vfill
\caption{Before}\label{f4-1}
\end{subfigure}
\begin{subfigure}[b][4cm][s]{.4\textwidth}
\centering
\vfill
\begin{tikzpicture}[state/.style ={circle, draw}]
\node[state] (a) {$p_1$};
\node[state] (b) [above=of a] {$p_2$};
\node[state] (d) [right=of a]{$q_1$};
\node[state] (e) [above=of d] {$q_2$};
\path (a) edge node[pos=.8, right] {a} (e);
\path (b) edge node[pos=.2, left] {a} (d);
\end{tikzpicture}
\vfill
\caption{After}\label{f4-2}
\vspace{\baselineskip}
\end{subfigure}\qquad
\caption{Exchanging edges between $p_1$ and $p_2$ and $q_1$ and $q_2$: signs of vertices and other edges of $p_1,p_2,q_1,q_2$ are omitted}\label{f4}
\end{figure}
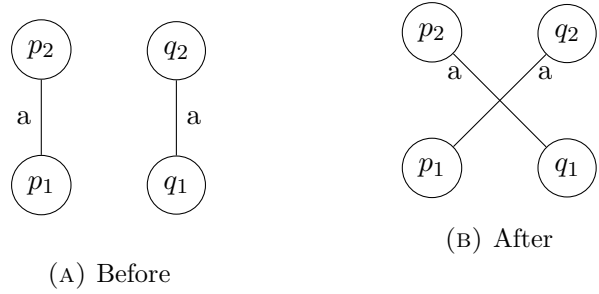

\begin{lem} \label{l38}
Let a torus $T^k$ act on a compact oriented manifold $M$ with isolated fixed points, whose isotropy submanifolds are orientable. Let $\Gamma$ be a multigraph describing $M$. Suppose that an edge $e_1$ has label $a$ and vertices $p_1$ and $p_2$, an edge $e_2$ has label $a$ and vertices $q_1$ and $q_2$, and $p_1$ and $q_1$ are in the same connected component $F$ of $M^{\ker a}$. Let $\Gamma'$ be a multigraph obtained from exchanging the edges $e_1$ and $e_2$. Then $\Gamma'$ also describes $M$.

Suppose that $\Gamma$ satisfies Property A and $\epsilon_F(p_1)=\epsilon_F(q_1)$, where $F$ is chosen any orientation and all weights in $F$ are chosen to be positive multiples of $a$. If we exchange the edges so that $(p_1,q_2)$ is $a$-edge and $(q_1,p_2)$ is $a$-edge in the resulting multigraph $\Gamma'$, then $\Gamma'$ also satisfies Property A.
\end{lem}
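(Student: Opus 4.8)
The plan is to verify the two defining conditions of ``describing $M$'' from Definition \ref{d32} that could be affected by the exchange, and then to track the geometric signs $\epsilon_F$ through the operation in order to recover Property A. Throughout I would keep in mind that exchanging edges changes only which pairs of vertices are joined by the two $a$-labeled edges, leaving every vertex sign and every other edge untouched.

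First I would show that $\Gamma'$ describes $M$. The vertex set is untouched, so condition (1) of Definition \ref{d32} holds. For condition (2), the key observation is that the exchange does not alter the sign of any vertex, and each of the four vertices $p_1,p_2,q_1,q_2$ retains exactly one incident edge of label $a$ (only its other endpoint is reassigned), while all remaining edges are unchanged; hence the multiset of labels at each vertex, and therefore its fixed point data $[\epsilon(p),w(e_1),\cdots,w(e_n)]$, is exactly as in $\Gamma$. For condition (3), I would first argue that all four vertices lie in a single component $F$ of $M^{\ker a}$: since $\Gamma$ describes $M$, the edge $e_1$ forces $p_1,p_2$ into one component and $e_2$ forces $q_1,q_2$ into one component, and the hypothesis that $p_1$ and $q_1$ both lie in $F$ identifies these two components with $F$. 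Consequently the two new edges $(p_1,q_2)$ and $(q_1,p_2)$, both of label $a$, each join two vertices of $F$, so condition (3) holds and $\Gamma'$ describes $M$.

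Next I would prove the Property A statement. The crucial point is that the numbers $\epsilon_F(p_1),\epsilon_F(p_2),\epsilon_F(q_1),\epsilon_F(q_2)$ are geometric quantities of the manifold: once $F$, an orientation of $F$, and the choice of all weights in $T_qF$ as positive multiples of $a$ are fixed, they do not depend on the combinatorial edge structure. Thus every edge other than the two exchanged ones still satisfies Property A in $\Gamma'$ for free, and it remains only to check the two new edges. Since $\Gamma$ satisfies Property A, edge $e_1$ gives $\epsilon_F(p_1)=-\epsilon_F(p_2)$ and edge $e_2$ gives $\epsilon_F(q_1)=-\epsilon_F(q_2)$. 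Combining these with the hypothesis $\epsilon_F(p_1)=\epsilon_F(q_1)$ yields $\epsilon_F(q_2)=-\epsilon_F(q_1)=-\epsilon_F(p_1)$ and $\epsilon_F(p_2)=-\epsilon_F(p_1)=-\epsilon_F(q_1)$, which are exactly the relations $\epsilon_F(p_1)=-\epsilon_F(q_2)$ and $\epsilon_F(q_1)=-\epsilon_F(p_2)$ required by Property A for the new edges $(p_1,q_2)$ and $(q_1,p_2)$. As in the proof of Lemma \ref{l36}, reversing the chosen orientation of $F$ multiplies every $\epsilon_F$ by a common sign and so preserves all of these relations, making the verification independent of the orientation choice.

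The argument is essentially bookkeeping, so I do not expect a genuine obstacle; the only points requiring care are the combination of the separate ``same component'' facts to conclude that all four vertices lie in the single component $F$ (this is what guarantees the new edges are legal and that one consistent set of geometric signs $\epsilon_F$ applies throughout), and the observation that $\epsilon_F$ is unaffected by the purely combinatorial exchange. The hypothesis $\epsilon_F(p_1)=\epsilon_F(q_1)$ is precisely the compatibility condition that makes the prescribed pairing $(p_1,q_2),(q_1,p_2)$ respect Property A; pairing the endpoints the other way would in general destroy it.
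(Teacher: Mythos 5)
Your proposal is correct and follows essentially the same route as the paper's proof: you establish that all four vertices lie in the single component $F$ to verify that $\Gamma'$ describes $M$, and then you combine the Property A relations $\epsilon_F(p_1)=-\epsilon_F(p_2)$, $\epsilon_F(q_1)=-\epsilon_F(q_2)$ with the hypothesis $\epsilon_F(p_1)=\epsilon_F(q_1)$ to get the required signs for the two new edges. The only difference is that you write out explicitly the sign algebra and the orientation-independence that the paper leaves implicit, which is fine.
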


\begin{proof}
The multigraph $\Gamma'$ gives each vertices (fixed points) the same fixed point data as $\Gamma$. Because $(p_1,p_2)$ is $a$-edge, both $p_1$ and $p_2$ lie in $F$ and similarly $q_1$ and $q_2$ lie in $F$. By assumption, both $p_1$ and $q_1$ lie in $F$. Thus, the connected component $F$ of $M^{\ker a}$ contains all of $p_1,p_2,q_1,q_2$. Hence, $\Gamma'$ also describes $M$.

Since $\Gamma$ satisfies Property A and $(p_1,p_2)$ ($(q_1,q_2)$) is $a$-edge, $\epsilon_F(p_1)=-\epsilon_F(p_2)$ ($\epsilon_F(q_1)=-\epsilon_F(q_2)$) where $F$ is chosen any orientation and all weights in $T_pF$ are chosen to be all positive multiples of $a$ for all $p \in F \cap M^{T^k}$. 
With the same choice of weights and with any orientation on $F$, in $\Gamma'$ we have that $(p_1,q_2)$ ($(q_1,p_2)$) is $a$-edge and $\epsilon_F(p_1)=-\epsilon_F(q_2)$ ($\epsilon_F(q_1)=-\epsilon_F(p_2)$). Other vertices and edges satisfy Property A. Thus, $\Gamma'$ also satisfies Property A.
\end{proof}

While we introduce the notion of exchanging edges, we do not need to use in our proofs, because multigraphs satisfying Property A are somehow correct ones to study torus actions on oriented manifolds with isolated fixed points, in a sense that we do not need to exchange edges.

Kosniowski proved that if the circle acts on a compact oriented manifold with exactly two fixed points, they have the opposite fixed point data \cite{Ko}; also see \cite{J1}. We extend this to torus actions.

\begin{lem} \label{l39}
Let a torus $T^k$ act on a $2n$-dimensional compact oriented manifold $M$ with isolated fixed points, where $n>0$. Suppose that weights at each fixed point are $\{\pm a_1,\cdots, \pm a_n\}$ for some $a_1,\cdots,a_n \in \mathbb{Z}^k \setminus \{0\}$. Then the number of fixed points with fixed point data $[+,a_1,\cdots,a_n]$ and that with fixed point data $[-,a_1,\cdots,a_n]$ are equal. 
\end{lem}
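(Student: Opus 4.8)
The plan is to reduce the torus case to a circle action on the same manifold and then read off the conclusion from the Atiyah--Singer index theorem (Theorem \ref{t25}). The crux is an elementary observation: once weights are normalized, every fixed point contributes $\pm\prod_{i=1}^n \frac{1+t^{b_i}}{1-t^{b_i}}$ to the signature, with the sign dictated exactly by whether its fixed point data is $[+,\dots]$ or $[-,\dots]$. Since $\mathrm{sign}(M)$ is independent of $t$ while that product is not, the $+$ and $-$ contributions are forced to cancel, and hence to occur in equal number.

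First I would pass to a generic subcircle. As noted just before Corollary \ref{c27}, compactness of $M$ gives finitely many orbit types, so there is a nonzero integral $\xi \in \mathbb{Z}^k \cap \mathfrak{t}$ avoiding the finitely many hyperplanes $\ker w$ for the weights $w$ occurring at fixed points; let $S^1$ be the subcircle it generates, so that $M^{S^1}=M^{T^k}$. The $S^1$-action then has the same isolated fixed points, and at each $p$ the $S^1$-weights are $b_{p,i}=\langle w_{p,i},\xi\rangle$, which are nonzero by the choice of $\xi$; since each $w_{p,i}=\pm a_i$, these are $\pm b_i$ with $b_i:=\langle a_i,\xi\rangle$. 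The key check here is that the sign $\epsilon(p)$ is defined purely by comparing the orientation of $T_pM$ from $M$ with the one from $\bigoplus_i L_{p,i}$, a comparison that depends only on the chosen orientations of the lines $L_{p,i}$ and not on the acting group, so $\epsilon(p)$ is unchanged on passing from $T^k$ to $S^1$. Hence a fixed point with $T^k$-data $[+,a_1,\dots,a_n]$ acquires $S^1$-data $[+,b_1,\dots,b_n]$, and similarly for $[-,\dots]$; moreover, because each $b_i\neq 0$, normalizing all weights to be positive turns $[+,b_1,\dots,b_n]$ and $[-,b_1,\dots,b_n]$ into the same positive multiset with opposite overall signs, so these two classes are distinct. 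Thus the counts of $+$ and $-$ fixed points agree for the $T^k$- and the $S^1$-action, and it suffices to treat the circle.

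For the circle action I would normalize the weights at each $p$ to be $b_1,\dots,b_n$ up to sign. Reversing the orientation of some $L_{p,i}$ replaces $w_{p,i}$ by $-w_{p,i}$ and $\epsilon(p)$ by $-\epsilon(p)$, and since $\frac{1+t^{-w}}{1-t^{-w}}=-\frac{1+t^{w}}{1-t^{w}}$, the summand $\epsilon(p)\prod_i \frac{1+t^{w_{p,i}}}{1-t^{w_{p,i}}}$ of Theorem \ref{t25} is invariant, so it may be evaluated on the normalized data $[\delta(p),b_1,\dots,b_n]$, giving $\delta(p)\prod_{i=1}^n \frac{1+t^{b_i}}{1-t^{b_i}}$ with $\delta(p)\in\{+1,-1\}$. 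Writing $N_+$ and $N_-$ for the numbers of fixed points with data $[+,b_1,\dots,b_n]$ and $[-,b_1,\dots,b_n]$, Theorem \ref{t25} then yields
\[
\mathrm{sign}(M) = (N_+-N_-)\prod_{i=1}^n \frac{1+t^{b_i}}{1-t^{b_i}}.
\]
The left side is constant in $t$, while the product has a pole at $t=1$ (each factor does, and $n\ge 1$) and is therefore non-constant; hence $N_+-N_-=0$, which is the claim.

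The hard part will not be the index-theoretic computation, which is short, but the reduction bookkeeping: verifying that $\epsilon(p)$ is truly insensitive to shrinking $T^k$ to $S^1$ and that the $\pm$ dichotomy of the fixed point data is transported faithfully. The one subtle point to pin down is that $[+,b_1,\dots,b_n]\neq[-,b_1,\dots,b_n]$, which is exactly what makes the counts $N_\pm$ well defined; this follows from all the $b_i$ being nonzero.
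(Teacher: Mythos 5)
Your argument takes a genuinely different route from the paper's, and its index-theoretic core is sound. The paper never reduces to a subcircle: it applies the Atiyah--Bott--Berline--Vergne localization theorem (Theorem \ref{t24}) with $\alpha=1$ directly to the $T^k$-action. After normalizing the weights at every fixed point to be $a_1,\dots,a_n$, the push-forward $\int_M 1$ vanishes for dimensional reasons, while localization computes it as $(N_+-N_-)\frac{1}{a_1\cdots a_n}$, whence $N_+=N_-$ in one line. You instead pass to a subcircle and use the signature formula (Theorem \ref{t25}), concluding from the fact that $\prod_{i=1}^n\frac{1+t^{b_i}}{1-t^{b_i}}$ has a pole at $t=1$ while $\mathrm{sign}(M)$ is constant in $t$. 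The delicate points of that route --- that each summand of Theorem \ref{t25} is unchanged under $w_{p,i}\mapsto-w_{p,i}$, $\epsilon(p)\mapsto-\epsilon(p)$; that $\epsilon(p)$ is insensitive to restricting the action to the subcircle; and that $[+,b_1,\dots,b_n]\neq[-,b_1,\dots,b_n]$ once all $b_i\neq0$ --- are all handled correctly. What the paper's route buys is brevity and the absence of any reduction; what yours buys is that it only invokes the signature formula, the same tool used for Lemma \ref{l28}.

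There is, however, a genuine gap in your reduction step: choosing $\xi$ with $\langle w,\xi\rangle\neq0$ for all weights $w$ occurring at fixed points does \emph{not} imply $M^{S^1}=M^{T^k}$. That condition only guarantees that the points of $M^{T^k}$ are isolated inside $M^{S^1}$; it does not exclude components of $M^{S^1}$ disjoint from $M^{T^k}$, which arise from isotropy subgroups that are not kernels of weights at fixed points. Concretely, since Lemma \ref{l39} does not assume $M$ connected, take $M=S^4\sqcup(S^3\times S^1)$ with $T^2$ acting on $S^4\subset\mathbb{C}^2\times\mathbb{R}$ standardly (two fixed points, weights $(1,0)$, $(0,1)$) and on $S^3\times S^1$ by $(g_1,g_2)\cdot(z_1,z_2,y)=(g_1g_2z_1,\,g_2z_2,\,y)$, which has no $T^2$-fixed points. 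This $M$ satisfies the hypotheses of Lemma \ref{l39} with $a_1=(1,0)$, $a_2=(0,1)$, and $\xi=(1,-1)$ passes your test; yet the subcircle it generates is exactly $\ker(1,1)$, which fixes the $2$-torus $\{(z_1,0,y)\}\subset S^3\times S^1$, so the $S^1$-action does not have isolated fixed points and Theorem \ref{t25} does not apply as stated. (A connected example can be built by an equivariant connected sum of these two pieces along free orbits; and none of this contradicts the lemma itself, only your intermediate claim.) The repair is the standard fact the paper quotes just before Corollary \ref{c27}: compactness gives finitely many orbit types, so one may choose $\xi$ outside the Lie algebras of \emph{all} proper isotropy subgroups of positive dimension --- a finite union of proper subspaces of $\mathfrak{t}$ --- rather than merely outside the weight hyperplanes; with that choice $M^{S^1}=M^{T^k}$, the fixed points stay isolated, and the rest of your proof goes through.
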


\begin{proof}
Replacing $-a_i$ with $a_i$ (reversing the orientation of $L_{p,i}$) if necessary, we may assume that weights at each fixed point are $\{a_1,\cdots,a_n\}$. By a dimensional reason that $\int_M$ is a map from $H_{T^k}^i(M;\mathbb{Z})$ to $H^{i - 2n}(BT^k;\mathbb{Z})$, the image of $1$ under the map $\int_M$ is zero;
\begin{center}
$\displaystyle \int_M 1=0$.
\end{center}
Applying Theorem \ref{t24} to the equivariant cohomology class 1,
\begin{center}
$\displaystyle \int_M 1=\sum_{p \in M^{T^k}} \int_p \frac{1}{e_{T^k}(T_pM)}=\sum_{p \in M^{T^k},\epsilon(p)=+1} \frac{1}{a_1 \cdots a_n} + \sum_{p \in M^{T^k},\epsilon(p)=-1} (-1) \frac{1}{a_1 \cdots a_n}$.
\end{center}
These prove the lemma. \end{proof}

\begin{theo} \label{t310}
Let a torus $T^k$ act on a compact oriented manifold with exactly two fixed points $p$ and $q$, whose isotropy submanifolds are orientable. Then $p$ and $q$ have the opposite fixed point data, that is, $\Sigma_p=-\Sigma_q$.
\end{theo}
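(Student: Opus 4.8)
The plan is to combine the existence of a describing multigraph (Proposition \ref{p34}) with the localization identity already packaged in Lemma \ref{l39}. The argument splits into two independent parts: first I would show that $p$ and $q$ carry the same multiset of weights, and then that their signs are forced to be opposite.

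First I would fix a signed labeled multigraph $\Gamma$ describing $M$ with no self-loops, which exists by Proposition \ref{p34}. Since $M^{T^k}=\{p,q\}$, the vertex set of $\Gamma$ is $\{p,q\}$, and because $\Gamma$ has no self-loops, every edge joins $p$ to $q$. By Definitions \ref{d31} and \ref{d32}(2), the weights at $p$ are exactly the labels of the edges incident to $p$, and likewise for $q$; but each edge is incident to \emph{both} $p$ and $q$, so its label is simultaneously a weight of $p$ and a weight of $q$. Hence the multiset of weights at $p$ coincides with the multiset of weights at $q$. In particular, after reversing the orientations of the $L_{p,i}$ and $L_{q,i}$ if necessary, I may write the weights at both $p$ and $q$ as a common multiset $\{a_1,\dots,a_n\}$ with $a_i \in \mathbb{Z}^k \setminus \{0\}$.

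With this common choice of weights fixed, the hypothesis of Lemma \ref{l39} is met. That lemma then gives that the number of fixed points with fixed point data $[+,a_1,\dots,a_n]$ equals the number with $[-,a_1,\dots,a_n]$. As $M$ has exactly the two fixed points $p$ and $q$, one of them has $\epsilon=+1$ and the other $\epsilon=-1$ relative to the chosen weights, so $\Sigma_p=[\epsilon(p),a_1,\dots,a_n]=[-\epsilon(q),a_1,\dots,a_n]=-\Sigma_q$, as claimed.

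I expect essentially all the content to lie in the weight-matching step; once the multisets agree, the sign statement is a formal consequence of the vanishing $\int_M 1=0$ recorded in the proof of Lemma \ref{l39}. The one subtlety to handle carefully is the bookkeeping of sign conventions: the equivalence relation on fixed point data ties each weight's sign to the overall sign $\epsilon$, so I must make the orientation choices on the $L_{p,i}$ and $L_{q,i}$ so that $p$ and $q$ are compared against the \emph{same} representative $\{a_1,\dots,a_n\}$ before invoking Lemma \ref{l39}. Reading the common weights off the shared edges of $\Gamma$ is exactly what provides this compatible choice.
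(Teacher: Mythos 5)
Your proposal is correct and follows essentially the same route as the paper's own proof: invoke Proposition \ref{p34} to get a describing multigraph with no self-loops, deduce that all edges join $p$ to $q$ so the two fixed points share the same multiset of weights up to sign, and then apply Lemma \ref{l39} to force opposite signs. The only difference is that you spell out the bookkeeping (choosing a common representative $\{a_1,\dots,a_n\}$ and the two-point counting step) which the paper leaves implicit.
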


\begin{proof}
By Proposition \ref{p34} there is a signed labeled $k$-multigraph describing $M$ that satisfies Property A. In particular, it has no self-loops. Thus, $p$ and $q$ have the same multiset of weights (up to sign). By Lemma \ref{l39}, the theorem follows.
\end{proof}

\section{Connected sum and blow up of manifold and multigraph}\label{s4}

We describe an equivariant connected sum of two oriented manifolds $M$ and $N$ equipped with torus actions at fixed points and a corresponding operation on multigraphs describing them. Unlike a tradition we will take equivariant connected sums at several fixed points $p_1$, $\cdots$, $p_k$ of $M$ and $q_1$, $\cdots$, $q_k$ of $N$, where for each $i$ the two fixed points $p_i$ and $q_i$ have the opposite fixed point data that $\Sigma_{p_i}=-\Sigma_{q_i}$.

Let a torus $T^k$ act on two $2n$-dimensional connected oriented manifolds $M$ and $N$ with isolated fixed points. Suppose that for $i=1,\cdots,k$, a fixed point $p_i \in M^{T^k}$ of $M$ and a fixed point $q_i \in N^{T^k}$ of $N$ have the opposite fixed point data, that is, $\Sigma_{p_i}=-\Sigma_{q_i}$. Let $[\epsilon_{M}(p_i),w_{p_i,1},\cdots,w_{p_i,n}]$ be the fixed point data of $p_i$. Then $q_i$ has the fixed point data $[-\epsilon_{M}(p_i),w_{p_i,1},\cdots,w_{p_i,n}]$.

For each $i$, there is an equivariant diffeomorphism $f_i$ ($g_i$) from a unit disk $D_{2n}$ in $\mathbb{C}^n$ to a neighborhood of $p_i$ ($q_i$), where the torus $T^k$ acts on $\mathbb{C}^n$ by
\begin{center}
$g \cdot (z_1,\cdots,z_n)=(g^{w_{p,1}}z_1,\cdots,g^{w_{p,n}}z_n)$
\end{center}
for all $g \in T^k \subset \mathbb{C}^k$ and for all $(z_1,\cdots,z_n) \in \mathbb{C}^n$.

\begin{Definition} \label{d41}
The \textbf{equivariant connected sum} of $M$ and $N$ (at $p_1$, $\cdots$, $p_k \in M$ and at $q_1$, $\cdots$, $q_k \in N$) is the quotient 
\begin{center}
$\displaystyle \{(M \setminus \cup_{i=1}^k f_i(0)\} \sqcup \{N \setminus \cup_{i=1}^k g_i(0)\}/\sim$,
\end{center}
where we identify $f_i(tu)$ with $g_i((1-t)u)$ for each $u \in \partial D_{2n}$ and each $0 < t < 1$, for each $i$.
\end{Definition}

This definition also holds if $M=N$, that is, a self connected sum. Let a torus $T^k$ act on a $2n$-dimensional connected oriented manifolds $M$ with isolated fixed points. Suppose that for $i=1,\cdots,k$, two fixed points $p_i$ and $q_i$ have the opposite fixed point data, that is, $\Sigma_{p_i}=-\Sigma_{q_i}$. Let $f_i$ and $g_i$ be the above equivariant diffeomorphisms.

\begin{Definition} \label{d42}
The \textbf{self equivariant connected sum} of $M$ (at $p_1$, $\cdots$, $p_k \in M$ and at $q_1$, $\cdots$, $q_k \in M$) is the quotient 
\begin{center}
$\displaystyle \left\{M \setminus \{(\cup_{i=1}^k f_i(0)) \cup (\cup_{i=1}^k g_i(0))\}\right\}/\sim$,
\end{center}
where we identify $f_i(tu)$ with $g_i((1-t)u)$ for each $u \in \partial D_{2n}$ and each $0 < t < 1$, for each $i$.
\end{Definition}

If we take an equivariant connected sum at $p_i$ of $M$ and $q_i$ of $M$ for all $i$, because $\epsilon_M(p_i)=-\epsilon_N(q_i)$ for $1 \leq i \leq k$ and each gluing map reverses orientation, we get another connected oriented $T^k$-manifold $P$ with fixed points $(M^{S^1} \setminus \{p_1,\cdots,p_k\}) \sqcup (N^{S^1} \setminus \{q_1,\cdots,q_k\})$. Moreover, the fixed point data of $P$ is $(\Sigma_M \setminus \cup_{i=1}^k \Sigma_{p_i}) \sqcup (\Sigma_N \setminus \cup_{i=1}^k \Sigma_{q_i})$.

\begin{lem} \label{l43}
Let $M$ and $N$ be two $2n$-dimensional compact connected oriented $T^k$-manifolds with isolated fixed points. Suppose that for $i=1,\cdots,k$, $p_i \in M^{S^1}$ and $q_i \in N^{S^1}$ satisfy $\Sigma_{p_i}=-\Sigma_{q_i}$. The equivariant connected sum of $M$ and $N$ at $p_1$, $\cdots$, $p_k$ and $q_1$, $\cdots$, $q_k$ is a $2n$-dimensional compact connected oriented $T^k$-manifold $P$ with isolated fixed points, whose fixed point set is $(M^{T^k} \setminus \{p_1,\cdots,p_k\}) \sqcup (N^{T^k} \setminus \{q_1,\cdots,q_k\})$ and fixed point data is $(\Sigma_M \setminus \cup_{i=1}^k \Sigma_{p_i}) \sqcup (\Sigma_N \setminus \cup_{i=1}^k \Sigma_{q_i})$. \end{lem}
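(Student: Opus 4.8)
The plan is to verify the asserted properties of $P$ one at a time, directly from the gluing construction of Definition \ref{d41}, using the local equivariant models $f_i,g_i$ and the hypothesis $\Sigma_{p_i}=-\Sigma_{q_i}$ at each crucial step. First I would check that $P$ is a smooth $2n$-manifold: each $f_i$ and $g_i$ is a diffeomorphism onto an open set, and the transition on the $i$-th neck is $\phi_i(tu)=(1-t)u$ for $u\in\partial D_{2n}$, $0<t<1$, which is a diffeomorphism of the punctured disk; the quotient is Hausdorff because the gluing is supported on disjoint disk neighborhoods away from the deleted centers. Compactness follows by covering $P$ with the compact pieces $M\setminus\cup_i f_i(D^\circ)$, $N\setminus\cup_i g_i(D^\circ)$, and the compact middle portions of the necks; connectedness follows since $M$ and $N$ are connected and the necks join them.

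The next step is equivariance of the action. Here I would invoke the equivalence relation defining the fixed point data (Definition \ref{d21}): since $\Sigma_{p_i}=-\Sigma_{q_i}$, I may choose orientations of the $L_{p_i,j}$ and $L_{q_i,j}$ so that $p_i$ and $q_i$ carry the \emph{same} multiset of weights $w_{p_i,1},\cdots,w_{p_i,n}$ and opposite signs $\epsilon_N(q_i)=-\epsilon_M(p_i)$. Then the linear $T^k$-representations on $\mathbb{C}^n$ used by $f_i$ and $g_i$ are literally identical, and the transition $\phi_i$ commutes with this representation: because $g\in T^k$ acts by unit-modulus scalars, it preserves norms, hence preserves $\partial D_{2n}$ and commutes with radial rescaling, giving $\phi_i(g\cdot x)=g\cdot\phi_i(x)$. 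Thus the gluing is $T^k$-equivariant and the actions on $M\setminus\cup_i f_i(0)$ and $N\setminus\cup_i g_i(0)$ descend to a smooth $T^k$-action on $P$.

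For orientation — where the sign part of the hypothesis enters — I would note that $\phi_i$ is orientation-reversing, since writing $x=tu$ it inverts the radial coordinate ($t\mapsto 1-t$) while fixing the spherical directions. Orient the $M$-part by $o_M$ and the $N$-part by $o_N$; in the $f_i$-chart $o_M$ is $\epsilon_M(p_i)$ times the standard orientation of $\mathbb{C}^n$, and in the $g_i$-chart $o_N$ is $\epsilon_N(q_i)$ times the standard one. Pushing the $M$-side orientation across $\phi_i$ flips its sign, so consistency across the neck requires $-\epsilon_M(p_i)=\epsilon_N(q_i)$, which is exactly the hypothesis; hence $P$ is oriented with orientation restricting to $o_M$ and $o_N$ on the two parts.

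Finally, for the fixed-point statements: since every weight $w_{p_i,j}$ is nonzero, the origin is the only fixed point of each linear model, so the necks contain no fixed points and deleting $f_i(0),g_i(0)$ deletes exactly $p_i,q_i$, giving $P^{T^k}=(M^{T^k}\setminus\{p_1,\cdots,p_k\})\sqcup(N^{T^k}\setminus\{q_1,\cdots,q_k\})$ with all survivors still isolated, as the surgery is supported near the deleted points. Each surviving fixed point retains its weights, and since $P$'s orientation agrees with $o_M$ (resp. $o_N$) near it, it retains its sign, so its fixed point data is unchanged; this yields $\Sigma_P=(\Sigma_M\setminus\cup_{i=1}^k\Sigma_{p_i})\sqcup(\Sigma_N\setminus\cup_{i=1}^k\Sigma_{q_i})$. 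The hard part will be the simultaneous compatibility in the middle two steps: one must check that the \emph{single} gluing map $\phi_i$ is at once equivariant and orientation-reversing, with the matched-weights half of $\Sigma_{p_i}=-\Sigma_{q_i}$ securing equivariance and the opposite-sign half securing orientability — this is precisely why the hypothesis is tailored as it is.
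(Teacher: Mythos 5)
Your proposal is correct and follows essentially the same route as the paper, which states this lemma without a formal proof and justifies it in the preceding paragraph by exactly the two observations you develop: each gluing map $f_i(tu)\mapsto g_i((1-t)u)$ reverses orientation, so the sign condition $\epsilon_M(p_i)=-\epsilon_N(q_i)$ is what makes the orientations match, and the weights half of $\Sigma_{p_i}=-\Sigma_{q_i}$ makes the local linear models agree so the gluing is equivariant and removes precisely the fixed points $p_i,q_i$. Your write-up simply supplies the details (orientation-reversal of the radial flip, norm-preservation giving equivariance, and the fixed-point bookkeeping) that the paper leaves implicit.
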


We describe what an equivariant connected sum does to multigraphs describing oriented $T^k$-manifolds. 

\begin{Definition}[\textbf{Connected sum of two multigraphs}] \label{d44}
Let $\Gamma_1$ and $\Gamma_2$ be two $n$-valent signed labeled $k$-multigraphs without any self-loops. Suppose that $p \in \Gamma_1$ and $q \in \Gamma_2$ satisfy $\Sigma_p=-\Sigma_q$. 
Let $e_1,\cdots,e_n$ be the edges of $p$ and $e_1',\cdots,e_n'$ the edges of $q$. 
Suppose that for each $i$, the label of $e_i$ and that of $e_i'$ are equal; $w(e_i)=w(e_i')$. For each $i$, let $p_i$ and $q_i$ be the other vertex of $e_i$ and $e_i'$, respectively. 
The \textbf{connected sum} of $\Gamma_1$ and $\Gamma_2$ at $p$ and $q$ is another signed labeled multigraph obtained as follows.
\begin{enumerate}
\item Remove $p$ and $q$ and the edges of $p$ and $q$.
\item For each $i$, draw an edge between $p_i$ and $q_i$, giving label $w(e_i)$. 
\end{enumerate}
\end{Definition}

\begin{figure}
\centering
\begin{subfigure}[b][5cm][s]{.6\textwidth}
\centering
\vfill
\begin{tikzpicture}[state/.style ={circle, draw}]
\node[state] (a) {$p,+$};
\node[state] (b) [above left=of a] {$p_1$};
\node[state] (c) [left=of a] {$p_2$};
\node[state] (d) [below left=of a] {$p_3$};
\node[state] (e) [right=of a]{$q,-$};
\node[state] (f) [above right=of e] {$q_1$};
\node[state] (g) [right=of e] {$q_2$};
\node[state] (h) [below right=of e] {$q_3$};
\path (a) edge node[right] {$a$} (b);
\path (a) edge node [below] {$b$} (c);
\path (a) edge node [right] {$c$} (d);
\path (e) edge node[left] {$a$} (f);
\path (e) edge node [below] {$b$} (g);
\path (e) edge node [left] {$c$} (h);
\end{tikzpicture}
\vfill
\caption{Before}\label{f5-1}
\end{subfigure}
\begin{subfigure}[b][5cm][s]{.3\textwidth}
\centering
\vfill
\begin{tikzpicture}[state/.style ={circle, draw}]
\node[state] (a) {$p_1$};
\node[state] (b) [below=of a] {$p_2$};
\node[state] (c) [below=of b] {$p_3$};
\node[state] (d) [right=of a] {$q_1$};
\node[state] (e) [below=of d] {$q_2$};
\node[state] (f) [below=of e] {$q_3$};
\path (a) edge node[below] {$a$} (d);
\path (b) edge node [below] {$b$} (e);
\path (c) edge node [below] {$c$} (f);
\end{tikzpicture}
\vfill
\caption{After}\label{f5-2}
\vspace{\baselineskip}
\end{subfigure}\qquad
\caption{Connected sum of $\Gamma_1$ ($M$) and $\Gamma_2$ ($N$) at $p$ and $q$. This also illustrates a self connected sum at $p$ and $q$.}\label{f5}
\end{figure}
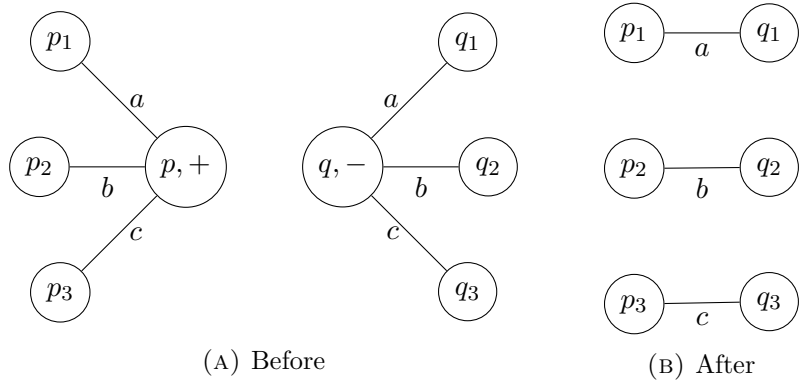

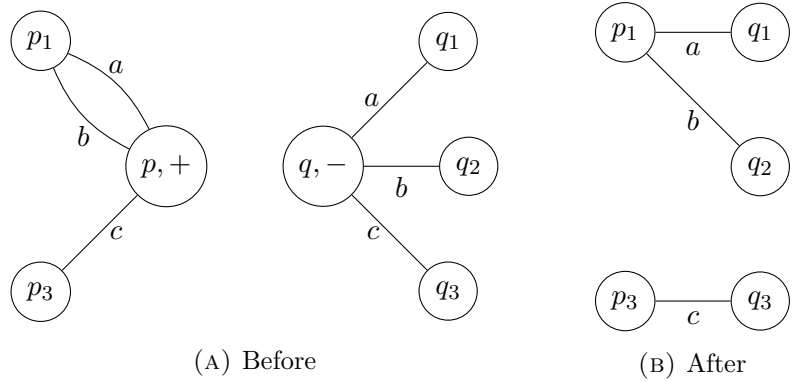
\begin{figure}
\centering
\begin{subfigure}[b][5cm][s]{.6\textwidth}
\centering
\vfill
\begin{tikzpicture}[state/.style ={circle, draw}]
\node[state] (a) {$p,+$};
\node[state] (b) [above left=of a] {$p_1$};
\node[state] (d) [below left=of a] {$p_3$};
\node[state] (e) [right=of a]{$q,-$};
\node[state] (f) [above right=of e] {$q_1$};
\node[state] (g) [right=of e] {$q_2$};
\node[state] (h) [below right=of e] {$q_3$};
\path (a) [bend left =20]edge node[below] {$b$} (b);
\path (a) [bend right =20]edge node[above] {$a$} (b);
\path (a) edge node [right] {$c$} (d);
\path (e) edge node[left] {$a$} (f);
\path (e) edge node [below] {$b$} (g);
\path (e) edge node [left] {$c$} (h);
\end{tikzpicture}
\vfill
\caption{Before}\label{f6-1}
\end{subfigure}
\begin{subfigure}[b][5cm][s]{.3\textwidth}
\centering
\vfill
\begin{tikzpicture}[state/.style ={circle, draw}]
\node[state] (a) {$p_1$};
\node[state] (d) [right=of a] {$q_1$};
\node[state] (e) [below=of d] {$q_2$};
\node[state] (f) [below=of e] {$q_3$};
\node[state] (c) [left=of f] {$p_3$};
\path (a) edge node[below] {$a$} (d);
\path (a) edge node[below] {$b$} (e);
\path (c) edge node [below] {$c$} (f);
\end{tikzpicture}
\vfill
\caption{After}\label{f6-2}
\vspace{\baselineskip}
\end{subfigure}\qquad
\caption{Connected sum of $\Gamma_1$ ($M$) and $\Gamma_2$ ($N$) at $p$ and $q$ when there are multiple edges. This also illustrates a self connected sum at $p$ and $q$}\label{f6}
\end{figure}

Figures \ref{f5} and \ref{f6} illustrate connected sums of two multigraphs.

\begin{Definition}[\textbf{(Self) connected sum of a multigraph}] \label{d45}
Let $\Gamma$ be an $n$-valent signed labeled $k$-multigraph without any self-loops. Suppose that $p \in \Gamma$ and $q \in \Gamma$ satisfy $\Sigma_p=-\Sigma_q$. 
Let $e_1,\cdots,e_n$ be the edges of $p$ and $e_1',\cdots,e_n'$ the edges of $q$. 
Suppose that for each $i$, $w(e_i)=w(e_i')=w_{p,i}$.
Suppose that there is no vertex $r$ such that there are an edge between $p$ and $r$ and an edge between $q$ and $r$ that have the same label.
The \textbf{(self) connected sum} of $\Gamma$ at $p$ and $q$ is another signed labeled multigraph obtained as follows.
\begin{enumerate}[(1)]
\item If $\Gamma$ has only two vertices $p$ and $q$, remove $p$ and $q$ and the edges of $p$ and $q$ to convert $\Gamma$ into the empty multigraph.
\item Suppose that $\Gamma$ has more than 2 vertices. Then
\begin{enumerate}[(a)]
\item Remove $p$ and $q$ and the edges of $p$ and $q$.
\item For each $i$, draw an edge between the other vertex of $e_i$ and the other vertex of $e_i'$, giving label $w(e_i)$.
\end{enumerate}
\end{enumerate}
\end{Definition}

\begin{Definition}
By a \textbf{connected sum} of multigraphs we mean any of Definitions \ref{d44} and \ref{d45}. 
\end{Definition}

\begin{figure}
\centering
\begin{subfigure}[b][5cm][s]{.49\textwidth}
\centering
\vfill
\begin{tikzpicture}[state/.style ={circle, draw}]
\node[state] (a) at (0,0) {$q,-$};
\node[state] (b) at (0,3) {$p,+$};
\path (a) [bend left =20]edge node[left] {$a$} (b);
\path (a) edge node[left] {$b$} (b);
\path (b) [bend left =20]edge node[right] {$c$} (a);
\end{tikzpicture}
\vfill
\caption{Before}\label{f7-1}
\end{subfigure}
\begin{subfigure}[b][5cm][s]{.49\textwidth}
\centering
\vfill
\begin{tikzpicture}[state/.style ={circle, draw}]
\end{tikzpicture}
\vfill
\caption{After}\label{f7-2}
\vspace{\baselineskip}
\end{subfigure}\qquad
\caption{Self connected sum at $p$ and $q$: When there are three edges between $p$ and $q$}\label{f7}
\end{figure}
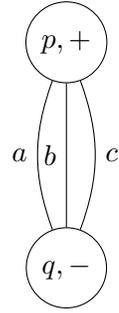

\begin{figure}
\centering
\begin{subfigure}[b][4cm][s]{.49\textwidth}
\centering
\vfill
\begin{tikzpicture}[state/.style ={circle, draw}]
\node[state] (a) {$p,+$};
\node[state] (b) [below=of a] {$q,-$};
\node[state] (c) [left=of a] {$p'$};
\node[state] (d) [left=of b] {$q'$};
\path (a) [bend left =20]edge node[right] {$a$} (b);
\path (b) [bend left =20]edge node[left] {$b$} (a);
\path (a) edge node[below] {$c$} (c);
\path (b) edge node[below] {$c$} (d);
\end{tikzpicture}
\vfill
\caption{Before}\label{f8-1}
\end{subfigure}
\begin{subfigure}[b][4cm][s]{.49\textwidth}
\centering
\vfill
\begin{tikzpicture}[state/.style ={circle, draw}]
\node[state] (a) {$p'$};
\node[state] (b) [below=of a] {$q'$};
\path (a) edge node[left] {$c$} (b);
\end{tikzpicture}
\vfill
\caption{After}\label{f8-2}
\vspace{\baselineskip}
\end{subfigure}\qquad
\caption{Self connected sum at $p$ and $q$: When there are two multiple edges between $p$ and $q$. Other edges at $p',q'$ are omitted}\label{f8}
\end{figure}
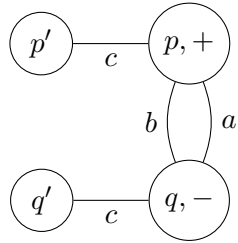
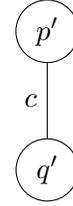

\begin{figure}
\centering
\begin{subfigure}[b][4cm][s]{.6\textwidth}
\centering
\vfill
\begin{tikzpicture}[state/.style ={circle, draw}]
\node[state] (a) {$p,+$};
\node[state] (b) [left=of a] {$p'$};
\node[state] (c) [right=of a] {$p''$};
\node[state] (d) [below=of a]{$q,-$};
\node[state] (e) [left=of d] {$q'$};
\node[state] (f) [right=of d] {$q''$};
\path (a) edge node[below] {$b$} (b);
\path (a) edge node [below] {$c$} (c);
\path (a) edge node[left] {$a$} (d);
\path (d) edge node [below] {$b$} (e);
\path (d) edge node [below] {$c$} (f);
\end{tikzpicture}
\vfill
\caption{Before}\label{f9-1}
\end{subfigure}
\begin{subfigure}[b][4cm][s]{.39\textwidth}
\centering
\vfill
\begin{tikzpicture}[state/.style ={circle, draw}]
\node[state] (a) {$p'$};
\node[state] (b) [right=of a] {$p''$};
\node[state] (c) [below=of a] {$q'$};
\node[state] (d) [right=of c] {$q''$};
\path (a) edge node[left] {$b$} (c);
\path (b) edge node[right] {$c$} (d);
\end{tikzpicture}
\vfill
\caption{After}\label{f9-2}
\vspace{\baselineskip}
\end{subfigure}\qquad
\caption{Self connected sum at $p$ and $q$ - When there is one edge between $p$ and $q$, and $p$ and $q$ have no multiple edges.}\label{f9}
\end{figure}

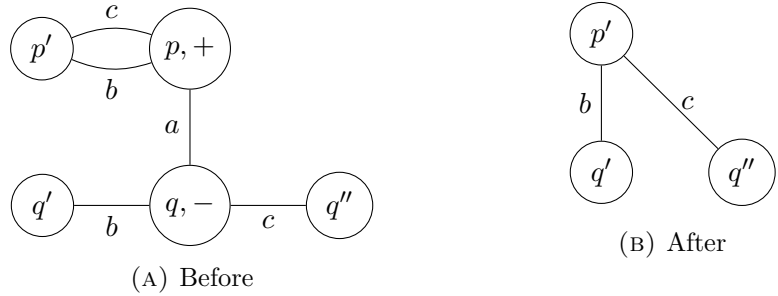
\begin{figure}
\centering
\begin{subfigure}[b][4cm][s]{.6\textwidth}
\centering
\vfill
\begin{tikzpicture}[state/.style ={circle, draw}]
\node[state] (a) {$p,+$};
\node[state] (b) [left=of a] {$p'$};
\node[state] (d) [below=of a]{$q,-$};
\node[state] (e) [left=of d] {$q'$};
\node[state] (f) [right=of d] {$q''$};
\path (a) [bend left =20]edge node[below] {$b$} (b);
\path (a) [bend right =20]edge node[above] {$c$} (b);
\path (a) edge node[left] {$a$} (d);
\path (d) edge node [below] {$b$} (e);
\path (d) edge node [below] {$c$} (f);
\end{tikzpicture}
\vfill
\caption{Before}\label{f10-1}
\end{subfigure}
\begin{subfigure}[b][4cm][s]{.39\textwidth}
\centering
\vfill
\begin{tikzpicture}[state/.style ={circle, draw}]
\node[state] (a) {$p'$};
\node[state] (c) [below=of a] {$q'$};
\node[state] (d) [right=of c] {$q''$};
\path (a) edge node[left] {$b$} (c);
\path (a) edge node[right] {$c$} (d);
\end{tikzpicture}
\vfill
\caption{After}\label{f10-2}
\vspace{\baselineskip}
\end{subfigure}\qquad
\caption{Self connected sum at $p$ and $q$ - When there is one edge between $p$ and $q$, and only $p$ has multiple edges.}\label{f10}
\end{figure}

\begin{figure}
\centering
\begin{subfigure}[b][4cm][s]{.6\textwidth}
\centering
\vfill
\begin{tikzpicture}[state/.style ={circle, draw}]
\node[state] (a) {$p,+$};
\node[state] (b) [left=of a] {$p'$};
\node[state] (d) [below=of a]{$q,-$};
\node[state] (e) [left=of d] {$q'$};
\path (a) [bend left =20]edge node[below] {$b$} (b);
\path (a) [bend right =20]edge node[above] {$c$} (b);
\path (a) edge node[left] {$a$} (d);
\path (d) [bend left =20]edge node[below]  {$b$} (e);
\path (d) [bend right =20]edge node[above]  {$c$} (e);
\end{tikzpicture}
\vfill
\caption{Before}\label{f11-1}
\end{subfigure}
\begin{subfigure}[b][4cm][s]{.39\textwidth}
\centering
\vfill
\begin{tikzpicture}[state/.style ={circle, draw}]
\node[state] (a) {$p'$};
\node[state] (c) [below=of a] {$q'$};
\path (c) [bend left =20]edge node[left]  {$b$} (a);
\path (c) [bend right =20]edge node[right]  {$c$} (a);
\end{tikzpicture}
\vfill
\caption{After}\label{f11-2}
\vspace{\baselineskip}
\end{subfigure}\qquad
\caption{Self connected sum at $p$ and $q$ - When there is one edge between $p$ and $q$, and both $p$ and $q$ have multiple edges.}\label{f11}
\end{figure}

\begin{figure}
\centering
\begin{subfigure}[b][4.2cm][s]{.6\textwidth}
\centering
\vfill
\begin{tikzpicture}[state/.style ={circle, draw}]
\node[state] (a) at (0,3) {$p,+$};
\node[state] (b) at (0,0) {$q,-$};
\node[state] (c) at (-1.5,1.5) {$r$};
\node[state] (d) at (1.5,3) {$p'$};
\node[state] (e) at (1.5,0) {$q'$};
\path (a) edge node[left] {$a$} (b);
\path (a) edge node[above] {$b$} (c);
\path (b) edge node[below] {$c$} (c);
\path (a) edge node[below] {$c$} (d);
\path (b) edge node[above] {$b$} (e);
\end{tikzpicture}
\vfill
\caption{Before}\label{f12-1}
\end{subfigure}
\begin{subfigure}[b][4.2cm][s]{.39\textwidth}
\centering
\vfill
\begin{tikzpicture}[state/.style ={circle, draw}]
\node[state] (c) at (-1.5,1.5) {$r$};
\node[state] (d) at (1.5,3) {$p'$};
\node[state] (e) at (1.5,0) {$q'$};
\path (c) edge node[above] {$b$} (d);
\path (c) edge node[below] {$c$} (e);
\end{tikzpicture}
\vfill
\caption{After}\label{f12-2}
\vspace{\baselineskip}
\end{subfigure}\qquad
\caption{Self connected sum at $p$ and $q$ - When there is a vertex $r$ with one edge to $p$ and one edge to $q$}\label{f12}
\end{figure}

\begin{figure}
\centering
\begin{subfigure}[b][4.2cm][s]{.6\textwidth}
\centering
\vfill
\begin{tikzpicture}[state/.style ={circle, draw}]
\node[state] (a) at (0,3) {$p,+$};
\node[state] (b) at (0,0) {$q,-$};
\node[state] (c) at (-1.5,1.5) {$r$};
\node[state] (d) at (1.5,1.5) {$s$};
\path (a) edge node[left] {$a$} (b);
\path (a) edge node[above] {$b$} (c);
\path (b) edge node[below] {$c$} (c);
\path (a) edge node[below] {$c$} (d);
\path (b) edge node[above] {$b$} (d);
\end{tikzpicture}
\vfill
\caption{Before}\label{f13-1}
\end{subfigure}
\begin{subfigure}[b][4.2cm][s]{.39\textwidth}
\centering
\vfill
\begin{tikzpicture}[state/.style ={circle, draw}]
\node[state] (c) at (-1.5,1.5) {$r$};
\node[state] (d) at (1.5,1.5) {$s$};
\path (c) [bend left =20] edge node [above] {$b$} (d);
\path (c) [bend right =20] edge node [below] {$c$} (d);
\end{tikzpicture}
\vfill
\caption{After}\label{f13-2}
\vspace{\baselineskip}
\end{subfigure}\qquad
\caption{Self connected sum at $p$ and $q$ - When there are two vertices $r,s$ with one edge to $p$ and one edge to $q$}\label{f13}
\end{figure}

Figures \ref{f7} through \ref{f13} illustrate self connected sums of 3-valent signed labeled multigraphs in various cases.

\begin{pro}[\textbf{Connected sum of two manifolds}] \label{p47}
For $i=1,2$, let a torus $T^k$ act on a $2n$-dimensional compact connected oriented manifold $M_i$ with isolated fixed points, whose isotropy submanifolds are orientable. Let $\Gamma_i$ be a multigraph describing $M_i$ with Property A. Suppose that $p \in M_1^{T^k}$ and $q \in M_2^{T^k}$ satisfy $\Sigma_{p}=-\Sigma_{q}$, and the labels of edges $e_i$ of $p$ in $\Gamma_1$ and the labels of edges $e_i'$ of $q$ in $\Gamma_2$ agree up to order. Let $M$ ($\Gamma$) be a connected sum of $M_1$ and $M_2$ ($\Gamma_1$ and $\Gamma_2$) at $p$ and $q$. Then $\Gamma$ describes $M$ with Property A. \end{pro}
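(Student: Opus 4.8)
The plan is to verify the three conditions of Definition \ref{d32} that make $\Gamma$ describe $M$, and afterwards Property A, in each case treating separately the edges inherited unchanged from $\Gamma_1$ and $\Gamma_2$ and the new edges joining $p_i$ to $q_i$. Conditions (1) and (2) will follow at once from Lemma \ref{l43} together with the construction in Definition \ref{d44}: Lemma \ref{l43} identifies $M^{T^k}$ with $(M_1^{T^k}\setminus\{p\})\sqcup(M_2^{T^k}\setminus\{q\})$ and the fixed point data of $M$ with $(\Sigma_{M_1}\setminus\Sigma_p)\sqcup(\Sigma_{M_2}\setminus\Sigma_q)$, while on the multigraph side we delete precisely $p$ and $q$ and, since each new edge $(p_i,q_i)$ carries the label $w(e_i)=w(e_i')$ of the deleted edges $e_i$ and $e_i'$, every surviving vertex keeps its sign and its multiset of edge labels, hence its fixed point data.

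The geometric content is condition (3) and Property A for the new edges, where the key point is that the surgery is local and modeled on a single representation. Fix a new edge with label $w=w(e_i)$, and let $F_1$ be the connected component of $M_1^{\ker w}$ containing $p$ and $F_2$ that of $M_2^{\ker w}$ containing $q$; both are compact, connected, orientable by Lemma \ref{l29}, and of dimension at least $2$ since $w$ is a weight at $p$ (resp. $q$). Because the equivariant diffeomorphisms $f_i$ and $g_i$ model neighborhoods of $p$ and $q$ on the standard $T^k$-representation and $\Sigma_p=-\Sigma_q$ forces these representations to agree up to the orientation-reversing gluing, the link of $p$ in $F_1$ is identified with the link of $q$ in $F_2$, both being the $\ker w$-fixed linear subspace of the representation sphere. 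Hence $F_1$ and $F_2$ glue into a single connected component $F$ of $M^{\ker w}$, which is itself the equivariant connected sum of the lower-dimensional oriented $T^k$-manifolds $F_1$ and $F_2$ at $p$ and $q$ and contains both $p_i$ and $q_i$; this establishes condition (3). For an edge inherited from $\Gamma_1$ or $\Gamma_2$ the same picture applies: its isotropy component either survives untouched, if it avoids $p$ and $q$, or is connect-summed with the corresponding component through the other summand, and removing a disk from a connected manifold of dimension $\ge 2$ preserves connectedness, so condition (3) persists.

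For Property A it remains to compare the signs $\epsilon_F$ inside $F=F_1\# F_2$. The clean way is to apply Lemma \ref{l43} to the summands $F_1$ and $F_2$: once we know that $p\in F_1$ and $q\in F_2$ have opposite fixed point data as fixed points of $F_1$ and $F_2$, Lemma \ref{l43} shows that $\epsilon_F$ restricts to the tangent signs $\epsilon_{F_1}$ and $\epsilon_{F_2}$ on the surviving vertices. Combining this with Property A of $\Gamma_1$ and $\Gamma_2$, which give $\epsilon_{F_1}(p)=-\epsilon_{F_1}(p_i)$ and $\epsilon_{F_2}(q)=-\epsilon_{F_2}(q_i)$, and with the relation $\epsilon_{F_1}(p)=-\epsilon_{F_2}(q)$ between the two summands, yields $\epsilon_F(p_i)=-\epsilon_F(q_i)$, which is Property A for the new edge. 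The inherited edges keep Property A by the same orientation-compatibility.

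Thus everything reduces to the sign identity $\epsilon_{F_1}(p)=-\epsilon_{F_2}(q)$, i.e. to showing that $p$ and $q$ have opposite fixed point data within $F_1$ and $F_2$, and this is the step I expect to be the main obstacle. Writing $\epsilon(p)=\epsilon_{F_1}(p)\,\epsilon_N(p)$ and $\epsilon(q)=\epsilon_{F_2}(q)\,\epsilon_N(q)$ as in Definition \ref{d23}, the tangent and normal weights at $p$ and $q$ match because $\Sigma_p=-\Sigma_q$, and $\epsilon(p)=-\epsilon(q)$; so the desired identity is equivalent to $\epsilon_N(p)=\epsilon_N(q)$ for the gluing-compatible choices of orientation. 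Verifying this amounts to splitting the global orientation relation on $M$ into its parts tangent and normal to $F$, and checking that the orientation-reversing gluing reverses orientations along the fixed locus $F$ while matching the normal orientations at $p$ and $q$. This consistent bookkeeping of orientations is the technical heart of the argument; with it in hand, Property A for both the new and the inherited edges follows.
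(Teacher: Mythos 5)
Your skeleton — conditions (1)--(3) of Definition \ref{d32} via Lemma \ref{l43} and the gluing of isotropy components, then Property A for the new edges — matches the paper's proof, and those first parts are fine. The genuine gap is at the step you yourself flag as ``the main obstacle'': you never establish the sign identity $\epsilon_{F_1}(p)=-\epsilon_{F_2}(q)$, you only describe what establishing it would ``amount to.'' A proof whose last paragraph says the consistent bookkeeping of orientations is the technical heart of the argument, and then does not do that bookkeeping, is incomplete at exactly its crucial point. Moreover, your proposed route reduces this identity (via $\epsilon=\epsilon_F\cdot\epsilon_N$ and $\epsilon(p)=-\epsilon(q)$) to the claim $\epsilon_N(p)=\epsilon_N(q)$ for gluing-compatible orientations, which is just another unproven orientation computation of the same difficulty; the reduction buys nothing.

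What you are missing is the observation that makes this step trivial in the paper's proof: Property A is insensitive to the choice of orientation of an isotropy component, since reversing the orientation of a component flips $\epsilon_F$ at \emph{both} endpoints of every edge inside it and hence preserves the relation $\epsilon_F(p_1)=-\epsilon_F(p_2)$ (this is why Definition \ref{d32} reads ``$F$ is chosen any orientation''). So there is nothing to \emph{derive}: for each new edge $(p_i,q_i)$ with label $a_i$, one simply \emph{chooses} the orientation of the component $F_1\subset M_1^{\ker a_i}$ through $p$ so that $\epsilon_{F_1}(p)=+1$, and the orientation of the component $F_2\subset M_2^{\ker a_i}$ through $q$ so that $\epsilon_{F_2}(q)=-1$. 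This normalization is exactly the compatibility needed for the two orientations to descend through the orientation-reversing gluing to an orientation of the glued component $F$; with it, Property A of $\Gamma_1$ and $\Gamma_2$ gives $\epsilon_{F}(p_i)=\epsilon_{F_1}(p_i)=-\epsilon_{F_1}(p)=-1$ and $\epsilon_{F}(q_i)=\epsilon_{F_2}(q_i)=-\epsilon_{F_2}(q)=+1$, which is Property A for the new edge. Your formulation --- fix an orientation of the glued $F$ first, then verify that the induced signs at $p$ and $q$ come out opposite --- is the same computation run backwards; run backwards it requires exactly the bookkeeping you left undone, whereas run forwards, as in the paper, it requires none.
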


\begin{proof}
Relabeling the indices if necessary, we may assume that the label of $e_i$ and the label of $e_i'$ are the same for all $i$. Let $a_i$ be the label of $e_i$, $1 \leq i \leq n$. Then $\Sigma_{p}=[\epsilon(p),a_1,\cdots,a_n]$ and $\Sigma_{q}=[-\epsilon(p),a_1,\cdots,a_n]$. 

Let $p_i$ be the other vertex of the edge $e_i$ at $p$ and let $q_i$ be the other vertex of the edge $e_i'$ at $q$. By definition of a multigraph describing $M_1$, the fixed points $p$ and $p_i$ are in the same component $F_i$ of $M_1^{\ker a_i}$. Similarly, $q$ and $q_i$ are in the same component $G_i$ of $M_2^{\ker a_i}$. An equivariant connected sum of $M_1$ and $M_2$ at $p$ and $q$ removes neighborhoods of $p$ and $q$ and glue together, and thus removes a neighborhood of $p$ in $F_i$ and a neighborhood of $q$ in $G_i$ and connects $F_i$ and $G_i$. Hence, after taking the equivariant connected sum, in $M$ the fixed points $p_i$ and $q_i$ are in the same component $H_i$ fixed by the $\ker a_i$-action and both have weight $a_i$. Therefore, if we remove the two vertices (fixed points) $p$ and $q$ and the edges $e_i$ and $e_i'$ of $p$ and $q$ and then draw an edge between $p_i$ and $q_i$ with label $a_i$ for each $i$, the resulting multigraph $\Gamma$ describes the connected sum of $M_1$ and $M_2$ at $p$ and $q$.

Choose all weights in $T_{p'}F_i$ and $T_{q'}G_i$ to be positive multiples of $a_i$ for all $p' \in F_i \cap M_1^{T^k}$ and for all $q' \in G_i \cap M_2^{T^k}$. By the assumption, $F_i$ is orientable. Choose an orientation of $F_i$ so that $\epsilon_{F_i}(p)=+1$. Similarly, choose an orientation of $G_i$ so that $\epsilon_{G_i}(q)=-1$. Since $\Gamma_i$ satisfies Property A and $(p,p_i)$ is $a_i$-edge, we have $\epsilon_{F_i}(p_i)=-\epsilon_{F_i}(p)$. Similarly, $\epsilon_{G_i}(q_i)=-\epsilon_{G_i}(q)$. Because $\epsilon_{F_i}(p)=-\epsilon_{G_i}(q)$, it follows that $\epsilon_{F_i}(p_i)=-\epsilon_{G_i}(q_i)$. Then in $\Gamma$ there is an edge between $p_i$ and $q_i$ with label $a_i$, $\epsilon_{H_i}(p_i)=-\epsilon_{H_i}(q_i)$, and $p_i,q_i$ are in the same component $H_i$ of $M^{\ker a_i}$, where all weights in $H_i$ are positive multiples of $a_i$. Therefore, $\Gamma$ satisfies Property A. \end{proof}

\begin{pro}[\textbf{Self connected sum}] \label{p48}
Let a torus $T^k$ act on a compact connected oriented manifold $M$ with isolated fixed points, whose isotropy submanifolds are orientable. Let $\Gamma$ be a multigraph describing $M$ with Property A. Suppose that two fixed points $p$ and $q$ satisfy $\Sigma_p=-\Sigma_q$, and the labels of edges of $p$ and the labels of edges of $q$ agree up to order.
Let $M'$ ($\Gamma'$) be a self connected sum of $M$ ($\Gamma$) at $p$ and $q$. Then $\Gamma'$ describes $M'$ with Property A. 
\end{pro}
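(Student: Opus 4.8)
The plan is to follow the proof of Proposition \ref{p47} almost verbatim, and then to isolate and resolve the one genuinely new feature of a self connected sum. After relabeling we may assume $w(e_i)=w(e_i')=a_i$ for all $i$, so that $\Sigma_p=[\epsilon(p),a_1,\cdots,a_n]$ and $\Sigma_q=[-\epsilon(p),a_1,\cdots,a_n]$; let $p_i$ (resp. $q_i$) be the other vertex of $e_i$ (resp. $e_i'$). Since $\Gamma$ describes $M$, the points $p$ and $p_i$ lie in a common component $F_i$ of $M^{\ker a_i}$, and $q$ and $q_i$ lie in a common component $G_i$ of $M^{\ker a_i}$. First I would record that the self equivariant connected sum $M'$ of $M$ at $p$ and $q$ (Definition \ref{d42}) is again a compact connected oriented $2n$-manifold with isolated fixed points: the gluing uses the standard models and is performed away from all other fixed points, it removes exactly $p$ and $q$, and since $\epsilon(p)=-\epsilon(q)$ the gluing maps reverse orientation, so $M'$ inherits an orientation from $M$. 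Hence the vertex set of $\Gamma'$ equals $(M')^{T^k}=M^{T^k}\setminus\{p,q\}$ and every surviving vertex keeps its fixed point data, giving conditions (1) and (2) of Definition \ref{d32}.

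Next I would check condition (3) together with Property A. For each $i$ with $p_i,q_i\notin\{p,q\}$, the gluing removes a neighborhood of $p$ inside $F_i$ and of $q$ inside $G_i$ and connects them, so $p_i$ and $q_i$ lie in a single component $H_i$ of $(M')^{\ker a_i}$, each with weight $a_i$; thus the new $a_i$-edge of $\Gamma'$ between $p_i$ and $q_i$ is legitimate. (Edges of $\Gamma$ joining $p$ to $q$ simply disappear, matching Definition \ref{d45}, and the hypothesis that no vertex $r$ is joined to both $p$ and $q$ by edges of equal label is exactly what prevents these new edges from becoming self-loops.) Since $H_i$ contains the $T^k$-fixed point $p_i$, it is orientable by Lemma \ref{l29}, so we may orient it; it then remains to prove $\epsilon_{H_i}(p_i)=-\epsilon_{H_i}(q_i)$ after choosing all weights in $H_i$ to be positive multiples of $a_i$.

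Here I would split into two cases. If $F_i\neq G_i$, the argument of Proposition \ref{p47} applies unchanged: orient $F_i$ and $G_i$ independently so that $\epsilon_{F_i}(p)=+1$ and $\epsilon_{G_i}(q)=-1$, invoke Property A of $\Gamma$ on the edges $(p,p_i)$ and $(q,q_i)$ to obtain $\epsilon_{F_i}(p_i)=-1$ and $\epsilon_{G_i}(q_i)=+1$, and transport these to the glued orientation of $H_i$. The \textbf{main obstacle} is the remaining case $F_i=G_i=:F$, where $p,q,p_i,q_i$ all lie in one component and only a single orientation of $F$ is available, so the two conditions $\epsilon_F(p)=+1$ and $\epsilon_F(q)=-1$ can no longer be arranged by independent choices. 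Via Property A of $\Gamma$ applied to $(p,p_i)$ and $(q,q_i)$, the desired equality $\epsilon_{H_i}(p_i)=-\epsilon_{H_i}(q_i)$ reduces to the identity $\epsilon_F(p)=-\epsilon_F(q)$, which must now be \emph{proved} rather than \emph{arranged}.

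To establish $\epsilon_F(p)=-\epsilon_F(q)$, I would use the decomposition $\epsilon=\epsilon_F\cdot\epsilon_N$ of Definition \ref{d23}. From $\Sigma_p=-\Sigma_q$ one has $\epsilon(p)=-\epsilon(q)$ in any common weight convention, so it suffices to show $\epsilon_N(p)=\epsilon_N(q)$. The normal bundle $NF$ is a $T^k$-equivariant bundle over the connected base $F$ on which $\ker a_i$ acts with no nonzero fixed vectors, so its weight decomposition $NF=\bigoplus_{\chi}NF_{\chi}$ is globally defined over $F$ and endows $NF$ with a canonical complex, hence oriented, structure whose fiberwise weights at every fixed point realize the chosen normal weights. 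Consequently the sign comparing this complex orientation of $NF$ to the orientation induced from $M$ and $F$ is a locally constant function on connected $F$, hence the same at $p$ and $q$; this is precisely $\epsilon_N(p)=\epsilon_N(q)$. Therefore $\epsilon_F(p)=-\epsilon_F(q)$, and combining with Property A of $\Gamma$ yields $\epsilon_F(p_i)=-\epsilon_F(p)=\epsilon_F(q)=-\epsilon_F(q_i)$, so $\epsilon_F(p_i)=-\epsilon_F(q_i)$; since $H_i$ agrees with $F$ away from the surgery, this gives $\epsilon_{H_i}(p_i)=-\epsilon_{H_i}(q_i)$. All remaining vertices and edges are untouched and already satisfy Property A, so $\Gamma'$ describes $M'$ with Property A.
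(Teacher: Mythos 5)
You have correctly isolated the point that the paper's own proof glosses over: when $F_i=G_i=:F$, the device of Proposition \ref{p47} --- orienting the two isotropy components \emph{independently} so that $\epsilon_{F_i}(p)=+1$ and $\epsilon_{G_i}(q)=-1$ --- is unavailable, and $\epsilon_F(p)=-\epsilon_F(q)$ must be proved. Unfortunately, your proof of that identity is where the proposal breaks down. You claim that the $\ker a_i$-isotypic decomposition endows $NF$ with a canonical complex, hence oriented, structure, so that $\epsilon_N$ is locally constant and thus $\epsilon_N(p)=\epsilon_N(q)$. This fails whenever $\ker a_i$ has an isotypic component of real type, i.e.\ some element of $\ker a_i$ acts by $-\mathrm{id}$ on a normal direction: there a complex structure and its conjugate are both equivariant and induce \emph{opposite} orientations, so no canonical choice exists. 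For a circle action (the case of the paper's main theorems, where $\ker a_i$ is finite cyclic) this happens whenever $a_i$ is even and a normal weight is congruent to $a_i/2$ modulo $a_i$ --- for instance label $2$ with normal weights $1$, a configuration central to the paper (cf.\ Lemma \ref{l92}). A concrete counterexample to your locally-constant claim: let $S^1$ act on $\mathbb{CP}^2$ by $g\cdot[z_0:z_1:z_2]=[z_0:gz_1:g^2z_2]$, and let $F=\{z_1=0\}$, a component of $M^{\mathbb{Z}_2}$ containing the fixed points $[1:0:0]$ and $[0:0:1]$. Then $NF\cong\mathcal{O}(1)$; after sign choices both fixed points have normal weight $+1$, yet the two weight-orientations disagree relative to any global orientation of $NF$, so $\epsilon_N([1:0:0])=-\epsilon_N([0:0:1])$. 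The structural reason your argument cannot work is that it never uses the hypothesis $\epsilon(p)=-\epsilon(q)$, while the desired conclusion genuinely depends on it: in this example the two fixed points have \emph{equal} (not opposite) fixed point data, and $\epsilon_N$ indeed differs.

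The fact you need is true, and the paper proves exactly it inside the proof of Lemma \ref{l49} (which is also how the paper rules out self-loops, rather than by leaning on the hypothesis in Definition \ref{d45} as you do). Suppose $\epsilon_F(p)=\epsilon_F(q)$, with all weights in $TF$ chosen to be positive multiples of $a_i$, and let $F'$ be the self connected sum of $F$ at $p$ and $q$ induced by the ambient surgery. (If $p$ and $q$ are the only fixed points in $F$, then every $a_i$-edge at $p$ or $q$ joins $p$ to $q$ and simply disappears, so there is nothing to check for Property A; alternatively Lemma \ref{l39} applied to $F$ rules this sign configuration out.) Otherwise $F'$ is a component of $(M')^{\ker a_i}$ containing a $T^k$-fixed point, hence orientable by Lemma \ref{l29}; orient it compatibly with $F$ away from the surgery region, which is connected. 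Applying Theorem \ref{t24} with $\alpha=1$ to $F$ and to $F'$, whose fixed sets differ exactly by $\{p,q\}$, shows that $\int_{F'}1$ and $\int_F 1=0$ differ by $2\epsilon_F(p)$ divided by the product of the weights in $T_pF$, which is nonzero --- contradicting $\int_{F'}1=0$. Hence $\epsilon_F(p)=-\epsilon_F(q)$, and the rest of your argument (which otherwise follows the paper's route through Proposition \ref{p47}) goes through. Replacing your normal-bundle paragraph by this localization argument, or by a citation of the corresponding step in the proof of Lemma \ref{l49}, repairs the proof.
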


\begin{proof}
The proof is analogous to the proof of Proposition \ref{p47} and we highlight differences.

We use the same notations as in Proposition \ref{p47}. Let $(p,p_i)$ be $a_i$-edge and let $(q,q_i)$ be $a_i$-edge. In this case, the component $F_i$ of $M^{\ker a_i}$ containing $p$ and $p_i$ and the component $G_i$ of $M^{\ker a_i}$ containing $q$ and $q_i$ may be equal. Moreover, we may have $p_i=q$ and thus $q_i=p$, meaning that there is an edge between $p$ and $q$ with label $a_i$. Nevertheless, the proof of Proposition \ref{p47} applies with a little modification.

Since $\Gamma$ satisfies Property A, if $p_i \neq q$, by Lemma \ref{l49} below $p_i$ and $q_i$ are necessarily distinct. Therefore, a self connected sum does not produce any self-loops. The rest of the proof is similar to that of Proposition \ref{p47}. 
\end{proof}

The below lemma shows that our multigraph does not have a vertex with two edges of same label, to vertices of opposite fixed point datum.

\begin{lem} \label{l49}
Let a torus $T^k$ act effectively on a compact oriented manifold $M$ with isolated fixed points, whose isotropy submanifolds are orientable. Let $\Gamma$ be a multigraph describing $M$ with Property A. Suppose that there are three vertices $p_1$, $p_2$, and $p_3$ such that $\Sigma_{p_1}=-\Sigma_{p_2}$, $(p_1,p_3)$ is $a$-edge, and $(p_2,p_3)$ is $b$-edge. Then $a \neq b$. \end{lem}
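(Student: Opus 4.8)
The plan is to argue by contradiction. Suppose $a = b$, so that $(p_1, p_3)$ and $(p_2, p_3)$ are both $a$-edges. Since $\Gamma$ describes $M$, both edges tell us that $p_1, p_3$ lie in a common component of $M^{\ker a}$ and that $p_2, p_3$ lie in a common component of $M^{\ker a}$; because both contain $p_3$, all three fixed points $p_1, p_2, p_3$ lie in the \emph{same} connected component $F$ of $M^{\ker a}$. First I would record this, invoke Lemma \ref{l29} to orient $F$, and choose all weights in $T_rF$ to be positive multiples of $a$ for every $r \in F \cap M^{T^k}$, so that Property A and the signs $\epsilon_F(r)$ are all computed with respect to one fixed, consistent choice.

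**Exploiting Property A on the two edges.**
Next I would apply Property A to each of the two $a$-edges at $p_3$. Since $(p_1, p_3)$ is an $a$-edge, Property A gives $\epsilon_F(p_1) = -\epsilon_F(p_3)$; since $(p_2, p_3)$ is an $a$-edge, Property A gives $\epsilon_F(p_2) = -\epsilon_F(p_3)$. Combining these yields $\epsilon_F(p_1) = \epsilon_F(p_2)$. The crux is then to compare this with the hypothesis $\Sigma_{p_1} = -\Sigma_{p_2}$. The relation $\Sigma_{p_1} = -\Sigma_{p_2}$ says that, as fixed point data in $M$, the global signs satisfy $\epsilon(p_1) = -\epsilon(p_2)$ once the weight multisets are matched up with a common sign convention.

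**Reconciling global signs with $F$-signs.**
The key step, and the one I expect to be the main obstacle, is relating the full signs $\epsilon(p_i)$ to the restricted signs $\epsilon_F(p_i)$. By Definition \ref{d23} and the identity $\epsilon(p_i) = \epsilon_F(p_i)\cdot \epsilon_N(p_i)$, I would argue that since $\Sigma_{p_1} = -\Sigma_{p_2}$ forces the weight multisets at $p_1$ and $p_2$ to coincide (after the chosen sign normalization), the tangential weights along $F$ and the normal weights split identically at $p_1$ and $p_2$. Here the effectiveness of the action and the structure of $M^{\ker a}$ should guarantee that $p_1$ and $p_2$ carry the same normal data relative to $F$, so that $\epsilon_N(p_1) = \epsilon_N(p_2)$. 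Combined with $\epsilon_F(p_1) = \epsilon_F(p_2)$ from the previous paragraph, this would give $\epsilon(p_1) = \epsilon(p_2)$, directly contradicting $\epsilon(p_1) = -\epsilon(p_2)$.

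**The expected difficulty.**
The delicate point is that $\Sigma_{p_1} = -\Sigma_{p_2}$ is a statement about the \emph{global} fixed point data, whereas Property A controls only the \emph{tangential} sign $\epsilon_F$. I would need to verify carefully that the weight-matching implicit in $\Sigma_{p_1} = -\Sigma_{p_2}$ is compatible with the splitting $T_{p_i}M = T_{p_i}F \oplus N_{p_i}F$ in a way that separates into matching tangential and matching normal parts. If the normal weights at $p_1$ and $p_2$ could be matched differently from the tangential ones, the clean conclusion $\epsilon_N(p_1) = \epsilon_N(p_2)$ might fail, so the heart of the argument is showing that the hypothesis $\Sigma_{p_1} = -\Sigma_{p_2}$ together with both being $a$-edge-connected to $p_3$ forces a compatible matching. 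Once that compatibility is established, the sign bookkeeping closes immediately and the contradiction is reached, proving $a \neq b$.
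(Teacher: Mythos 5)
Your opening steps coincide with the paper's: all three vertices lie in a single component $F$ of $M^{\ker a}$, and Property A applied to the two $a$-edges gives $\epsilon_F(p_1)=\epsilon_F(p_2)$, which together with $\epsilon(p_1)=-\epsilon(p_2)$ forces $\epsilon_N(p_1)=-\epsilon_N(p_2)$. But the step you defer to the end --- that the matching of weight multisets forces $\epsilon_N(p_1)=\epsilon_N(p_2)$ --- is a genuine gap, and it cannot be closed by the local, representation-theoretic reconciliation you sketch. Connectedness of $F$ only provides a $\ker a$-equivariant identification of $N_{p_1}F$ with $N_{p_2}F$ that preserves the bundle orientation of $NF$; whether this identification also matches the orientations induced by the chosen $T^k$-weights depends on whether the normal $\ker a$-representation admits an orientation-reversing equivariant automorphism, and it can. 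For example, for $k=1$, $a=2$ and fixed point data $[\pm,2,1,1]$, the group $\ker a=\mathbb{Z}_2$ acts on the normal fiber by $-\mathrm{id}$, which commutes with every reflection, so the local data at $p_1$ and $p_2$ are equally consistent with $\epsilon_N(p_1)=\epsilon_N(p_2)$ and with $\epsilon_N(p_1)=-\epsilon_N(p_2)$; the same ambiguity occurs whenever some normal weight $u$ restricts to a character of $\ker a$ of order at most $2$. So no argument using only the weights at $p_1$ and $p_2$ can finish your proof; a global input is unavoidable, and this is precisely why the hypothesis $\Sigma_{p_1}=-\Sigma_{p_2}$ must be used through more than sign bookkeeping.

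The paper supplies the global input by a different mechanism that never compares normal signs. Since $\Sigma_{p_1}=-\Sigma_{p_2}$, one may form the self equivariant connected sum $M'$ of $M$ at $p_1$ and $p_2$; the gluing is equivariant, so it restricts to a self connected sum of $F$ at these two points, producing a component $F'$ of $(M')^{\ker a}$ containing fixed points, which must be orientable by Lemma \ref{l29}. But because $\epsilon_F(p_1)=\epsilon_F(p_2)$, the fixed point data of $p_1$ and $p_2$ for the induced action on $F$ are \emph{equal}, say $[\epsilon_F(p_1),a,x_2,\cdots,x_m]$, and Theorem \ref{t24} applied with $\alpha=1$ gives $\int_F 1=0$, whence $\int_{F'}1=\int_F 1 - 2\,\epsilon_F(p_1)/(a x_2\cdots x_m)\neq 0$, contradicting the orientability of $F'$. (The paper also isolates the easy case $k=1$, $a=\pm 1$, where $F=M$ and Property A alone yields the contradiction; your argument does cover that case, since there the normal bundle has rank zero.) To complete your proposal you would have to replace the unproven normal-sign compatibility claim by an argument of this global type, at which point you would essentially have reconstructed the paper's proof.
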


\begin{proof}
Assume on the contrary that $a=b$. Let $[\epsilon(p_1),a,w_2,\cdots,w_n]$ be the fixed point data of $p_1$. Then $p_2$ has fixed point data $[-\epsilon(p_1),a,w_2,\cdots,w_n]$.

Suppose first that $k=1$ and $a=\pm 1$. Then $M^{\ker a}=M$. Since $\Gamma$ satisfies Property A, vertices of an edge of label $\pm 1$ must have different signs. Because $(p_1,p_3)$ is $a$-edge and $(p_2,p_3)$ is $a$-edge, this means that the signs $\epsilon(p_1)$ and $\epsilon(p_2)$ are the opposite of $\epsilon(p_3)$, but this leads to a contradiction since $\epsilon(p_1)=-\epsilon(p_2)$ by the assumption. Thus, $a \neq \pm 1$ if $k=1$.

From now on we suppose that $a \neq \pm 1$ if $k=1$. Consider a connected component $F$ of $M^{\ker a}$ containing $p$. Then $\dim F<\dim M$. Since the action is effective, $p_3$ has two weights $\{a,a\}$ divisible by $a$, and $a \neq \pm 1$ if $k=1$, it follows that $\dim F:=2m \geq 4$; $p_3$ may have more than two weights that are integer multiples of $a$. Choose an orientation of $F$ as $F$ is orientable by the assumption, and choose all weights in $F$ to be positive multiples of $a$. Since $\Gamma$ satisfies Property A, the signs $\epsilon_F(p_1)$ and $\epsilon_F(p_2)$ for the $T^k$-action on $F$ are the opposite of $\epsilon_F(p_3)$ because $(p_1,p_3)$ is $a$-edge and $(p_2,p_3)$ is $a$-edge. Therefore, $\epsilon_F(p_1)=\epsilon_F(p_2)$.

We have $\Sigma_{p_1}=[\epsilon(p_1),a,w_2,\cdots,w_n]=-\Sigma_{p_2}=[-\epsilon(p_1),a,w_2,\cdots,w_n]$. Thus, we can take a self equivariant connected sum of $M$ at $p$ and $q$ to construct another compact oriented $T^k$-manifold $M'$. The self connected sum takes a self equivariant connected sum of the induced $T^k$-action on $F$ at $p_1$ and $p_2$, to construct another $2m$-dimensional compact oriented manifold $F'$, which is a connected submanifold of $M'$. However, because $\epsilon_F(p_1)=\epsilon_F(p_2)$, the connected sum of $M$ yields $F'$ non-orientable, which is a contradiction. Therefore, $a \neq b$ and the lemma follows.

We can also see that $F'$ is not orientable as follows. Assume on the contrary that $F'$ is orientable; choose an orientation of $F'$ so that if $p$ is in both $F^{T^k}$ and ${F'}^{T^k}$, then $\epsilon_F(p)=\epsilon_{F'}(p)$. For instance $p_3$ is in both of them.

Applying Theorem \ref{t24} to the induced $T^k$-action on $F$ with taking $\alpha=1$,
\begin{center}
$\displaystyle \int_F 1=\sum_{p \in F^{T^k}} \int_p \frac{1}{e_{T^k}(T_pM)}=\sum_{p \in F^{T^k}} \epsilon_F(p) \cdot \frac{1}{w_{p,1}\cdots w_{p,m}}$,
\end{center}
which is zero by a dimensional reason $\int_F:H_{T^k}^i(F;\mathbb{Z}) \to H^{i- \dim F}(\mathbb{CP}^\infty;\mathbb{Z})$. Here $w_{p,1},\cdots,w_{p,m}$ are weights in $N_pF$.

On the other hand, because $p_1$ and $p_2$ have the same multiset of weights for the $T^k$-action on $M$ and $\epsilon_F(p_1)=\epsilon_F(p_2)$, the fixed point data of $p_1$ and $p_2$ for the induced $T^k$-action on $F$ are equal; it is $[\epsilon_F(p_1),a,x_2,\cdots,x_m]$ for some $x_2,\cdots,x_m$.

Because $F'$ has two less fixed points than $F$, the fixed point data of $F'$ is
\begin{center}
$\displaystyle \Sigma_{F'}=\Sigma_F \setminus \{[\epsilon_F(p_1),a,x_2,\cdots,x_m],[\epsilon_F(p_2),a,x_2,\cdots,x_m]\}$.
\end{center}
Applying Theorem \ref{t24} to the induced $T^k$-action on $F'$ with taking $\alpha=1$,
\begin{center}
$\displaystyle 0=\int_{F'} 1=\sum_{p \in {F'}^{T^k}} \int_p \frac{1}{e_{T^k}(T_pM)}=\sum_{p \in {F}^{T^k}} \epsilon_F(p) \cdot \frac{1}{w_{p,1}\cdots w_{p,m}}-\epsilon_F(p_1) \cdot \frac{1}{w_{p_1,1}\cdots w_{p_1,m}}-\epsilon_F(p_2) \cdot \frac{1}{w_{p_2,1}\cdots w_{p_2,m}}=0-2 \epsilon_F(p_1) \cdot \frac{1}{ax_2\cdots x_m}$,
\end{center}
which is a contradiction. Thus $F'$ is not orientable. \end{proof}

\begin{figure}
\centering
\begin{tikzpicture}[state/.style ={circle, draw}]
\node[state] (a) at (0,3) {$p_1,+$};
\node[state] (b) at (0,0) {$p_2,-$};
\node[state] (c) at (-2,1.5) {$p_3$};
\path (a) edge node[above] {$a$} (c);
\path (b) edge node[below] {$a$} (c);
\end{tikzpicture}
\caption{Any multigraph describing $M$ with Property A does not contain a sub-multigraph of this form with $\Sigma_{p_1}=-\Sigma_{p_2}$.}\label{f14}
\end{figure}
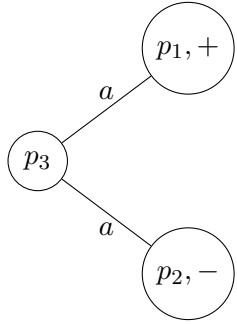

We discuss equivariant blow up at a fixed point, of a torus action on an oriented manifold with isolated fixed points and a corresponding operation on a multigraph describing it. For a complex manifold, blow up is an operation that replaces a point by all complex straight lines through it. Blowing up a point is equivalent to taking a connected sum with $\overline{\mathbb{CP}^n}$. Thus, we shall discuss a multigraph describing a linear $T^k$-action on $\mathbb{CP}^n$. For a torus action on an almost complex manifold with isolated fixed points, the sign of every weight at a fixed point is well-defined and there is a better notion of a multigraph which encodes the information on weights at fixed points \cite{J6}; also see \cite{GS} and \cite{JT} on multigraphs for circle actions. Hence, we also discuss multigraphs for (almost) complex manifolds and illustrate multigraphs of standard examples as almost complex manifolds, and then convert such a multigraph for a complex manifold into a multigraph for an oriented manifold.

\begin{Definition} \label{d410} \cite{J6}
A \textbf{directed labeled $k$-multigraph} $\Gamma$ is a set $V$ of vertices, a set $E$ of
edges, maps $i : E \to V$ and $t : E \to V$ giving the initial and terminal vertices of
each edge, and a map $w:E \to \mathbb{Z}^k$ giving the label of each edge.
\end{Definition}

\begin{Definition} \label{d411} \cite{J6}
Let a torus $T^k$ act on a compact almost complex manifold $M$ with isolated fixed points. We say that a (directed labeled $k$-)multigraph $\Gamma=(V,E)$ \textbf{describes} $M$ if the following hold:
\begin{enumerate}[(i)]
\item The vertex set $V$ is equal to the fixed point set $M^{T^k}$.
\item The multiset of the weights at $p$ is $\{w(e) \, | \, i(e)=p\} \cup \{-w(e) \, | \, t(e)=p\}$ for all $p \in M^{T^k}$.
\item For each edge $e$, the two endpoints $i(e)$ and $t(e)$ are in the same component of the isotropy submanifold $M^{\ker w(e)}$.
\end{enumerate}
\end{Definition}

\begin{exa}[The complex projective space $\mathbb{CP}^n$] \label{e412}
Fix a positive integer $k$. Let $a_1,\cdots,a_n$ be non-zero elements of $\mathbb{Z}^k$ such that $a_i \neq a_j$ for $i \neq j$. Let a torus $T^k$ act on the complex projective space $\mathbb{CP}^n$ by
\begin{center}
$g \cdot [z_0:z_1:\cdots:z_n]=[z_0:g^{a_1}z_1:\cdots:g^{a_n}z_n]$
\end{center}
for all $g \in T^k \subset \mathbb{C}^n$ and for all $[z_0:\cdots:z_n] \in \mathbb{CP}^n$. The action has $n+1$ fixed points $q_i=[0:\cdots:0:1:0:\cdots:0]$ where the $i$-th entry is 1, $0 \leq i \leq n$. The complex weights at $q_i$ are $\{a_j-a_i\}_{j \neq i}$ if we let $a_0=(0,\cdots,0) \in \mathbb{Z}^k$.

For $i<j$, the fixed points $q_i$ and $q_j$ are in the connected component $F_{ij}:=[0:\cdots:0:z_i:0:\cdots:0:z_j:0:\cdots:0]$ of $M^{\ker (a_j-a_i)}$, which is the 2-sphere, on which the $T^k$-action on $\mathbb{CP}^n$ restricts to act by
\begin{center}
$g \cdot [0:\cdots 0:z_i:0:\cdots:0:z_j:0:\cdots:0]$

$=[0:\cdots:0:g^{a_i} z_i:0:\cdots:0:g^{a_j}z_j:0:\cdots:0]$,
\end{center}
giving $q_i$ and $q_j$ weight $a_j-a_i$ and $a_i-a_j$ for this action, respectively. Thus, with the orientation on $F$ induced from the orientation on  $\mathbb{CP}^n$, the fixed point data for the $T^k$-action on $F_{ij}$ at $q_i$ and $q_j$ are $[+,a_j-a_i]$ and $[-,a_j-a_i]$. In particular, $\epsilon_{F_{ij}}(q_i)=+1$ and $\epsilon_{F_{ij}}(q_j)=-1$. To each fixed point $q_i$, we assign a vertex, also denoted $q_i$.

\textbf{Complex multigraph: directed labeled multigraph}

For $i<j$ we draw a directed edge from $q_i$ to $q_j$ and give the edge label $a_j-a_i$. Let $\Gamma_{c}$ be a directed labeled multigraph obtained. Then $\Gamma_{c}$ describes the linear $T^k$-action on $\mathbb{CP}^n$ as a directed labeled multigraph describing a complex manifold. Figure \ref{f15-1} is such an example of $\Gamma_c$ for $n=3$.

\textbf{Oriented multigraph: signed labeled multigraph}

For $i<j$ we draw a non-directed edge between $q_i$ and $q_j$ and give the edge label $a_j-a_i$. With this choice of weights, the sign of $q_i$ is $(-1)^i$, that is, the fixed point data of $q_i$ as an oriented manifold is 
\begin{center}
$\Sigma_{q_i}=[(-1)^i,a_i-a_0,\cdots,a_i-a_{i-1},a_{i+1} - a_i,\cdots,a_n-a_i]$
\end{center}
if $i \neq 0$ and
\begin{center}
$\Sigma_{q_0}=[+,a_1,\cdots,a_n]$.
\end{center}
Thus, to each vertex $q_i$ we assign the sign $(-1)^i$. Let $\Gamma_{\mathbb{CP}^n}$ be a signed labeled multigraph obtained. Because $\epsilon_{F_{ij}}(q_i)=+1$ and $\epsilon_{F_{ij}}(q_j)=-1$ for $i<j$, $\Gamma_{\mathbb{CP}^n}$ describes $\mathbb{CP}^n$ with Property A. Figure \ref{f15-2} is such an example of $\Gamma$ for $n=3$.

Let $\overline{\Gamma}_{\mathbb{CP}^n}$ be another signed labeled multigraph obtained from $\Gamma_{\mathbb{CP}^n}$ by replacing the sign $\epsilon(v)$ of every vertex $v$ with its negative $-\epsilon(v)$. Then $\overline{\Gamma}_{\mathbb{CP}^n}$ describes $\overline{\mathbb{CP}^n}$ with Property A, where the $T^k$-action on $\overline{\mathbb{CP}^n}$ is the same as the action on $\mathbb{CP}^n$ in this example.
\end{exa}

\begin{figure} 
\centering
\begin{subfigure}[b][8.1cm][s]{.4\textwidth}
\centering
\vfill
\begin{tikzpicture}[state/.style ={circle, draw}]
\node[state] (a) {$q_0$};
\node[state] (b) [above right=of a] {$q_1$};
\node[state] (c) [above=of b] {$q_2$};
\node[state] (d) [above left=of c] {$q_3$};
\path (a) [->] edge node[right] {$a_1$} (b);
\path (a) [->] edge node[left] {$a_2$} (c);
\path (a) [->] edge node[left] {$a_3$} (d);
\path (b) [->] edge node [right] {$a_2-a_1$} (c);
\path (b) [->] edge node [left] {$a_3-a_1$} (d);
\path (c) [->] edge node [right] {$a_3-a_2$} (d);
\end{tikzpicture}
\caption{Directed labeled multigraph describing the complex manifold $\mathbb{CP}^3$} \label{f15-1}
\vfill
\end{subfigure}
\begin{subfigure}[b][8.1cm][s]{.4\textwidth}
\centering
\vfill
\begin{tikzpicture}[state/.style ={circle, draw}]
\node[state] (a) {$q_0,+$};
\node[state] (b) [above right=of a] {$q_1,-$};
\node[state] (c) [above=of b] {$q_2,+$};
\node[state] (d) [above left=of c] {$q_3,-$};
\path (a) edge node[right] {$a_1$} (b);
\path (a) edge node[left] {$a_2$} (c);
\path (a) edge node[left] {$a_3$} (d);
\path (b) edge node [right] {$a_2-a_1$} (c);
\path (b) edge node [left] {$a_3-a_1$} (d);
\path (c) edge node [right] {$a_3-a_2$} (d);
\end{tikzpicture}
\caption{Signed labeled multigraph describing the oriented manifold $\mathbb{CP}^3$} \label{f15-2}
\vfill
\end{subfigure}
\caption{Multigraphs describing a linear $T^k$-action on $\mathbb{CP}^3$}\label{f15}
\end{figure}
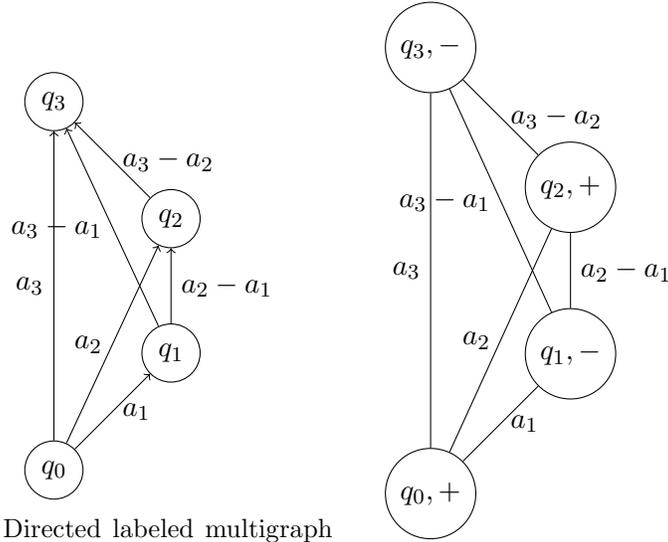

We define blow up of a torus action on an oriented manifold at an isolated fixed point.

\begin{Definition}[\textbf{Blow up of a manifold}] \label{d413}
Let a torus $T^k$ act on a $2n$-dimensional oriented manifold $M$ with isolated fixed points. Suppose that a fixed point $p$ has fixed point data $[\epsilon(p),a_1,\cdots,a_n]$, where $a_i \neq a_j$ for $i \neq j$. By \textbf{blow up of $p$} we mean an equivariant connected sum of $M$ and $\overline{\mathbb{CP}^n}$ ($\mathbb{CP}^n$) at $p$ and $q_0$ if $\epsilon(p)=+1$ (if $\epsilon(p)=-1$), where the torus action on $\overline{\mathbb{CP}^n}$ ($\mathbb{CP}^n$) is as in Example \ref{e412}.
\end{Definition}

The blow up of Definition \ref{d413} depends on the choice of the weights $a_i$ (the orientations of $L_{p,i}$). For instance, if we use weights $\{-a_1,-a_2,a_3,\cdots,a_n\}$ where $-a_1,-a_2,a_3,\cdots,a_n$ are mutually distinct, this multiset of weights still gives $p$ the same fixed point data $[\epsilon(p),a_1,\cdots,a_n]$, but we get a different manifold when we blow up $p$ with these weights $-a_1,-a_2,a_3,\cdots,a_n$.

Accordingly, we define blow up of a multigraph.

\begin{Definition}[\textbf{Blow up of a multigraph}] \label{d414}
Let $\Gamma$ be an $n$-regular signed labeled $k$-multigraph without any self-loops. Suppose a vertex $p$ has edges with labels $a_1$, $\cdots$, $a_n$. By \textbf{blow up of $p$} we mean a connected sum at $p$ and $q_0$ of $\Gamma$ and $\overline{\Gamma}_{\mathbb{CP}^n}$ if $\epsilon(p)=+1$ ($\Gamma_{\mathbb{CP}^n}$ if $\epsilon(p)=-1$), where $\Gamma_{\mathbb{CP}^n}$ and $\overline{\Gamma}_{\mathbb{CP}^n}$ are as in Examle \ref{e412}.
\end{Definition}

Blow up of a multigraph describes blow up of a manifold.

\begin{pro}
Let a torus $T^k$ act on a compact oriented manifold $M$ with isolated fixed points. Let $\Gamma$ be a multigraph describing $M$ with Property A. Suppose that a fixed point (vertex) $p$ has edges with labels $a_1,\cdots,a_n$ and we choose $a_1,\cdots,a_n$ to be the weights at $p$. Let $M'$ be blow up of $M$ at $p$ and let $\Gamma'$ be blow up of $\Gamma$ at $p$. Then $\Gamma'$ describes $M'$ with Property A.
\end{pro}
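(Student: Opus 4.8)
The plan is to observe that the statement is essentially a repackaging of Proposition \ref{p47}, because by Definitions \ref{d413} and \ref{d414} both blow ups are \emph{defined} to be equivariant connected sums at $p$ and $q_0$: the manifold $M'$ is the connected sum of $M$ with $\overline{\mathbb{CP}^n}$ (if $\epsilon(p)=+1$) or $\mathbb{CP}^n$ (if $\epsilon(p)=-1$), and the multigraph $\Gamma'$ is the connected sum of $\Gamma$ with $\overline{\Gamma}_{\mathbb{CP}^n}$ or $\Gamma_{\mathbb{CP}^n}$ accordingly, where by Example \ref{e412} these standard multigraphs describe $\mathbb{CP}^n$ and $\overline{\mathbb{CP}^n}$ with Property A. Thus it suffices to check that the two summands satisfy the hypotheses of Proposition \ref{p47}.

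First I would verify the fixed point data condition $\Sigma_p=-\Sigma_{q_0}$. By hypothesis the chosen weights at $p$ are exactly the edge labels $a_1,\cdots,a_n$, so $\Sigma_p=[\epsilon(p),a_1,\cdots,a_n]$. From Example \ref{e412} (with $a_0=0$), the vertex $q_0$ carries fixed point data $[+,a_1,\cdots,a_n]$ in $\Gamma_{\mathbb{CP}^n}$ and $[-,a_1,\cdots,a_n]$ in $\overline{\Gamma}_{\mathbb{CP}^n}$. When $\epsilon(p)=+1$ the blow up uses $\overline{\mathbb{CP}^n}$, giving $\Sigma_{q_0}=[-,a_1,\cdots,a_n]=-\Sigma_p$; when $\epsilon(p)=-1$ it uses $\mathbb{CP}^n$, giving $\Sigma_{q_0}=[+,a_1,\cdots,a_n]=-\Sigma_p$. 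In both cases $\Sigma_p=-\Sigma_{q_0}$.

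Next I would record the remaining hypotheses. The edges at $q_0$ in $\Gamma_{\mathbb{CP}^n}$ (and in $\overline{\Gamma}_{\mathbb{CP}^n}$) have labels $a_1-a_0,\cdots,a_n-a_0=a_1,\cdots,a_n$, which agree up to order with the edge labels at $p$ in $\Gamma$ by hypothesis; this is precisely the label-matching requirement of Proposition \ref{p47}. Moreover $\Gamma$ is automatically $n$-valent with no self-loops: describing $M$ forces each vertex to have exactly $n$ edges, since its fixed point data read off from its edges must match the $n$ weights at the corresponding fixed point, and Property A forbids self-loops (a loop at $p$ would require $\epsilon_F(p)=-\epsilon_F(p)$), as is likewise true of $\Gamma_{\mathbb{CP}^n}$. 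Since $M$ is compact connected oriented with isolated fixed points and $\Gamma$ describes it with Property A, and since $\mathbb{CP}^n$ and $\overline{\mathbb{CP}^n}$ are as well and are described by $\Gamma_{\mathbb{CP}^n}$, $\overline{\Gamma}_{\mathbb{CP}^n}$ with Property A, all hypotheses of Proposition \ref{p47} are in force. Applying it yields that $\Gamma'$ describes $M'$ with Property A, which is the claim.

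The proof has no genuine obstacle, since blow up was defined so as to be a connected sum; the only point requiring care — and the step I would write out most explicitly — is the sign bookkeeping that pairs $\epsilon(p)=+1$ with $\overline{\mathbb{CP}^n}$ and $\epsilon(p)=-1$ with $\mathbb{CP}^n$, together with the dependence of the construction on the \emph{chosen} weights $a_1,\cdots,a_n$ (the blow up genuinely depends on this choice, as noted after Definition \ref{d413}), which is exactly why the hypothesis fixes the weights to equal the edge labels.
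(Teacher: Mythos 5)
Your proposal is correct and follows essentially the same route as the paper's own proof: unwind Definitions \ref{d413} and \ref{d414} to see both blow ups as connected sums with $\overline{\mathbb{CP}^n}$ or $\mathbb{CP}^n$ (and their standard multigraphs from Example \ref{e412}, which describe them with Property A), then apply Proposition \ref{p47}. Your explicit verification of the hypotheses of Proposition \ref{p47} — the sign pairing giving $\Sigma_p=-\Sigma_{q_0}$ and the label matching — is carried out in more detail than in the paper, which simply cites the definitions and Proposition \ref{p47}, but the argument is the same.
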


\begin{proof}
By definition, blow up of $M$ at $p$ is an equivariant connected sum at $p$ and $q_0$ of $M$ and $\overline{\mathbb{CP}^n}$ ($\mathbb{CP}^n$ if $\epsilon(p)=+1$), where the $T^k$-action on $\overline{\mathbb{CP}^n}$ ($\mathbb{CP}^n$) is as in Example \ref{e412}. As in Example \ref{e412}, $\Gamma_{\mathbb{CP}^n}$ ($\overline{\Gamma}_{\mathbb{CP}^n}$) describes the action on $\mathbb{CP}^n$ ($\overline{\mathbb{CP}^n}$) with Property A. 
By definition, blow up of $\Gamma$ at $p$ is a connected sum at $p$ and $q_0$ of $\Gamma$ and $\overline{\Gamma}_{\mathbb{CP}^n}$ ($\Gamma_{\mathbb{CP}^n}$ if $\epsilon(p)=+1$). By Proposition \ref{p47}, the blown up multigraph $\Gamma'$ describes $M'$ with Property A.
\end{proof}

\begin{figure} 
\centering
\begin{subfigure}[b][5.3cm][s]{.49\textwidth}
\centering
\vfill
\begin{tikzpicture}[state/.style ={circle, draw}]
\node[state] (e) at (-1.3, 0) {$p,\pm$};
\node[state] (f) at (-2.6, -1.7) {};
\node[state] (g) at (-2.6, 0) {};
\node[state] (h) at (-2.6, 1.7) {};
\path (f) edge node [right] {$a_1$} (e);
\path (g) edge node [below] {$a_2$} (e);
\path (h) edge node [right] {$a_3$} (e);
\end{tikzpicture}
\caption{$\Gamma$} \label{f16-1}
\vfill
\end{subfigure}
\begin{subfigure}[b][5.3cm][s]{.49\textwidth}
\centering
\vfill
\begin{tikzpicture}[state/.style ={circle, draw}]
\node[state] (a) at (0,0) {$q_0,\mp$};
\node[state] (b) at (1.3, -1.7) {$q_1,\pm$};
\node[state] (c) at (2.6, 0) {$q_2,\mp$};
\node[state] (d) at (1.3, 1.7) {$q_3,\pm$};
\node[state] (e) at (-1.3, 0) {$p,\pm$};
\node[state] (f) at (-2.6, -1.7) {};
\node[state] (g) at (-2.6, 0) {};
\node[state] (h) at (-2.6, 1.7) {};
\path (a) edge node[left] {$a_1$} (b);
\path (a) edge node[pos=.2, below] {$a_2$} (c);
\path (a) edge node[left] {$a_3$} (d);
\path (b) edge node [right] {$a_2-a_1$} (c);
\path (b) edge node [above] {$a_3-a_1$} (d);
\path (c) edge node [right] {$a_3-a_2$} (d);
\path (f) edge node [right] {$a_1$} (e);
\path (g) edge node [below] {$a_2$} (e);
\path (h) edge node [right] {$a_3$} (e);
\end{tikzpicture}
\caption{$\Gamma \sqcup \overline{\Gamma}_{\mathbb{CP}^n}$ (or $\Gamma \sqcup \Gamma_{\mathbb{CP}^n}$)} \label{f16-2}
\vfill
\end{subfigure}
\begin{subfigure}[b][5cm][s]{.49\textwidth}
\centering
\vfill
\begin{tikzpicture}[state/.style ={circle, draw}]
\node[state] (b) at (1, -1.7) {$q_1,\pm$};
\node[state] (c) at (2.6, 0) {$q_2,\mp$};
\node[state] (d) at (1, 1.7) {$q_3,\pm$};
\node[state] (f) at (-1.6, -1.7) {};
\node[state] (g) at (-1.6, 0) {};
\node[state] (h) at (-1.6, 1.7) {};
\path (b) edge node [right] {$a_2-a_1$} (c);
\path (b) edge node [pos=.8, left] {$a_3-a_1$} (d);
\path (c) edge node [right] {$a_3-a_2$} (d);
\path (f) edge node [below] {$a_1$} (b);
\path (g) edge node [below] {$a_2$} (c);
\path (h) edge node [below] {$a_3$} (d);
\end{tikzpicture}
\caption{Blow up of $\Gamma$ at $p$} \label{f16-3}
\vfill
\end{subfigure}
\caption{Blow up of a multigraph}\label{f16}
\end{figure}
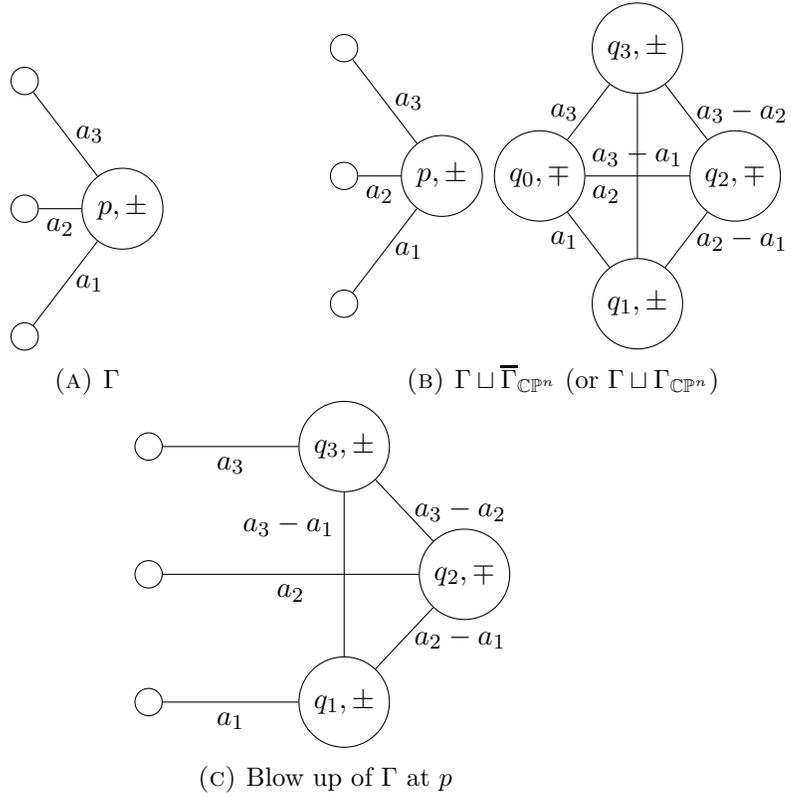

\section{Dimension 4: classification of multigraph and fixed point data}\label{s5}

In this section, we classify multigraphs describing torus actions on 4-dimensional compact oriented manifolds with isolated fixed points, and discuss its applications.

\begin{figure}
\centering
\begin{tikzpicture}[state/.style ={circle, draw}]
\node[state] (a) at (0,0) {$q_1,+$};
\node[state] (b) at (0,2) {$q_2,-$};
\path (a) [bend left =20]edge node[left] {$A$} (b);
\path (b) [bend left =20]edge node[right] {$B$} (a);
\end{tikzpicture}
\caption{Minimal model in dimension 4, Operation (1)} \label{f17}
\end{figure}
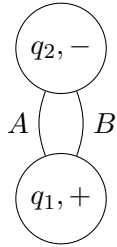

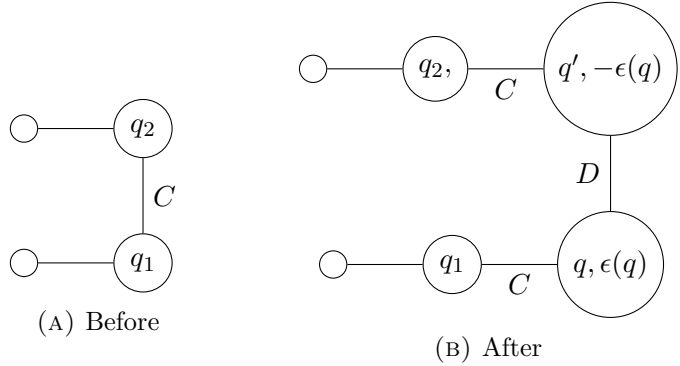
\begin{figure} 
\centering
\begin{subfigure}[b][4.5cm][s]{.4\textwidth}
\centering
\vfill
\begin{tikzpicture}[state/.style ={circle, draw}]
\node[state] (a) {$q_1$};
\node[state] (b) [above=of a] {$q_2$};
\node[state] (c) [left=of a] {};
\node[state] (d) [left=of b] {};
\path (a) edge node[right] {$C$} (b);
\path (a) edge node[below] {} (c);
\path (b) edge node[below] {} (d);
\end{tikzpicture}
\caption{Before} \label{f18-1}
\vfill
\end{subfigure}
\begin{subfigure}[b][4.5cm][s]{.4\textwidth}
\centering
\vfill
\begin{tikzpicture}[state/.style ={circle, draw}]
\node[state] (a) {$q_1$};
\node[state] (b) [right=of a] {$q,\epsilon(q)$};
\node[state] (c) [above=of b] {$q',-\epsilon(q)$};
\node[state] (d) [left=of c] {$q_2,$};
\node[state] (e) [left=of a] {};
\node[state] (f) [left=of d] {};
\path (a) edge node[below] {$C$} (b);
\path (b) edge node[left] {$D$} (c);
\path (c) edge node[below] {$C$} (d);
\path (a) edge node[below] {} (e);
\path (d) edge node[below] {} (f);
\end{tikzpicture}
\caption{After} \label{f18-2}
\vfill
\end{subfigure}
\caption{Operation (2)}\label{f18}
\end{figure}

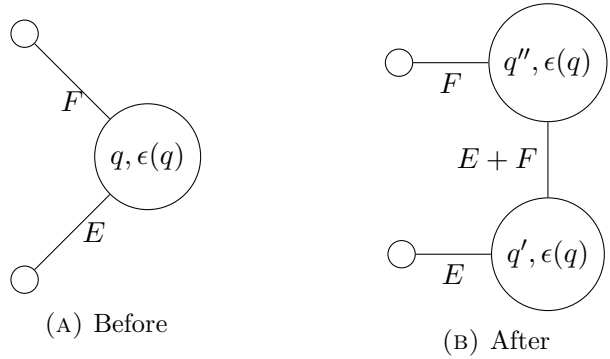
\begin{figure} 
\centering
\begin{subfigure}[b][4.5cm][s]{.4\textwidth}
\centering
\vfill
\begin{tikzpicture}[state/.style ={circle, draw}]
\node[state] (a) {};
\node[state] (b) [above right=of a] {$q,\epsilon(q)$};
\node[state] (c) [above left=of b] {};
\path (a) edge node[right] {$E$} (b);
\path (b) edge node[below] {$F$} (c);
\end{tikzpicture}
\caption{Before} \label{f19-1}
\vfill
\end{subfigure}
\begin{subfigure}[b][4.5cm][s]{.4\textwidth}
\centering
\vfill
\begin{tikzpicture}[state/.style ={circle, draw}]
\node[state] (a) {};
\node[state] (b) [right=of a] {$q',\epsilon(q)$};
\node[state] (c) [above=of b] {$q'',\epsilon(q)$};
\node[state] (d) [left=of c] {};
\path (a) edge node[below] {$E$} (b);
\path (b) edge node[left] {$E+F$} (c);
\path (c) edge node[below] {$F$} (d);
\end{tikzpicture}
\caption{After} \label{f19-2}
\vfill
\end{subfigure}
\caption{Operation (3)}\label{f19}
\end{figure}

\begin{theo} \label{t51} Let a torus $T^k$ act effectively on a 4-dimensional compact oriented manifold $M$ with isolated fixed points, whose isotropy submanifolds are orientable. Then a multigraph describing $M$ is obtained by applying a combination of the following operations, beginning with the empty graph.
\begin{enumerate}[(1)]
\item Add Figure \ref{f17} for some non-zero $A,B \in \mathbb{Z}^k$ that span $\mathbb{Z}^k$.
\item If $(q_1,q_2)$ is $C$-edge, replace the edge $(q_1,q_2)$ with three edges $(q_1,q)$ of label $C$, $(q,q')$ of label $D$, $(q',q_2)$ of label $C$ with $\epsilon(q)=-\epsilon(q')$, where $C,D$ span $\mathbb{Z}^k$ (Figure \ref{f18}).
\item If a vertex $q$ has edges with labels $E$ and $F$ where $E,F$ span $\mathbb{Z}^k$, replace the vertex $q$ with $(E+F)$-edge $(q',q'')$ with $\epsilon(q)=\epsilon(q')=\epsilon(q'')$ (Figure \ref{f19}).
\item Reverse an edge.
\end{enumerate}
Conversely, there exists a multigraph $\Gamma$ describing $M$ with Property A, and we can convert $\Gamma$ to the empty graph by self connected sums, blow ups, and reversing edges. In other words, we can convert $\Gamma$ to the empty multigraph by performing a combination of the reverses of Operations (1-4) above.
\end{theo}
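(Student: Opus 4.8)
The plan is to prove the (logically equivalent) converse assertion and then read it backwards. First I would invoke Proposition~\ref{p34} to fix a signed labeled $k$-multigraph $\Gamma$ describing $M$ with Property A and no self-loops. Since $\dim M=4$ we have $n=2$, so every fixed point carries exactly two weights and hence every vertex of $\Gamma$ has degree exactly two; thus $\Gamma$ is a disjoint union of cycles. Effectiveness forces the two labels at each vertex to span $\mathbb{Z}^k$, so $k\le 2$ and along a cycle consecutive labels span $\mathbb{Z}^k$. Because Operations (1)--(3) each have an evident inverse (removing a two-gon, the self connected sum of Figure~\ref{f18} read right-to-left, and the blow-down of Figure~\ref{f19} read right-to-left) while Operation (4) is its own inverse, a reduction of $\Gamma$ to the empty graph by these inverses, read in reverse, exhibits $\Gamma$ as built from the empty graph by Operations (1)--(4). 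So it suffices to reduce each cyclic component to nothing.

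I would argue by induction on the number of vertices, treating one component (cycle) at a time since all the moves are local to a component. The base case is a two-gon $p,q$: its two labels span $\mathbb{Z}^k$ and, using Property A on both edges, $\Sigma_p=-\Sigma_q$ (the two-point content of Theorem~\ref{t310}), so it is removed by the inverse of Operation (1). For a cycle $v_1,\dots,v_m$ with edge labels $c_1,\dots,c_m$ and $m\ge 3$, the two available reductions are: the inverse of Operation (2) (a self connected sum at the two endpoints of an edge $c_i$ whose two neighbouring edges $c_{i-1},c_{i+1}$ carry, after reversing edges, the same label, with the enclosed vertices of opposite sign), and the inverse of Operation (3) (a blow-down of an edge $c_i$ whose label equals, after reversing edges, the sum of its two neighbours, with enclosed vertices of equal sign). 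Here the equivariant normal bundle of the isotropy $2$-sphere $F_i$ attached to $c_i$ supplies an integer $e_i$, its equivariant Euler number; the two labels adjacent to $c_i$ are congruent up to sign modulo $c_i$, and after normalizing signs by Operation (4) the self connected sum is available exactly when $e_i=0$ and the blow-down exactly when $|e_i|=1$. That these moves create no self-loop and preserve Property A is guaranteed by Lemma~\ref{l49} and Propositions~\ref{p47} and~\ref{p48}, while Lemma~\ref{l36} handles the auxiliary edge reversals.

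The crux --- and the step I expect to be the main obstacle --- is therefore the combinatorial claim that every cycle with $m\ge 3$ has an edge with $|e_i|\le 1$. For $k=1$ I would normalize all labels to be positive and choose an edge $c_i$ of maximal label; the degree relation then gives $|e_i|\le (c_{i-1}+c_{i+1})/c_i\le 2$, and the cases $e_i\in\{0,\pm 1\}$ reduce directly, while $|e_i|=2$ forces $c_{i-1}=c_{i+1}=c_i$ and hence, by coprimality of adjacent labels, the degenerate all-ones cycle, which is reduced by a self connected sum after balancing signs with Lemma~\ref{l28}. For $k=2$ no maximum is available and the claim becomes a statement about cyclic sequences in $\mathbb{Z}^2$ whose consecutive terms are bases: after reversing edges to normalize orientations one obtains the fan relation $c_{i-1}+c_{i+1}=e_ic_i$, and the assertion is exactly that such a complete smooth fan on a cycle must contain either a fibre edge ($e_i=0$) or a $(-1)$-edge ($|e_i|=1$). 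The minimal configurations are the triangle (the $\mathbb{CP}^2$-type cycle, where every $|e_i|=1$) and the four-cycles of Hirzebruch type (which contain an $e_i=0$), and every longer cycle contains a $(-1)$-edge; ruling out a putative cycle with all $|e_i|\ge 2$ is the delicate point, which I would handle via the winding identity $\sum_i e_i=3m-12$ for a once-around cycle together with the spanning condition.

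Finally I would assemble the induction: each reduction strictly decreases the vertex count (by one for a blow-down, by two for a self connected sum or a two-gon removal) while reversing edges leaves it unchanged, so the process terminates, reducing every cycle to a two-gon and then to the empty graph; running the whole sequence backwards realizes $\Gamma$ from the empty graph by Operations (1)--(4), proving the statement in both directions. Throughout, the fact that every intermediate multigraph genuinely describes a compact oriented $T^k$-manifold with Property A --- so that the words ``describing $M$'' are justified at each step --- is exactly what Propositions~\ref{p47} and~\ref{p48} together with the blow-up proposition provide, and the standard two-point model is the sphere of Example~\ref{e1}.
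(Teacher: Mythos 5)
Your overall architecture (produce a Property-A multigraph via Proposition \ref{p34}, reduce it to the empty graph by self connected sums, blow-downs and edge reversals, keeping everything realized by manifolds via Lemma \ref{l36} and Propositions \ref{p47}, \ref{p48}, then read the moves backwards) is the paper's, and your $k=1$ reduction---pick an edge of maximal label, use the mod-$c_i$ congruence from the isotropy sphere, and split into the ``neighbours equal'' and ``neighbours sum to $c_i$'' cases---is essentially the paper's argument. But there is a genuine gap exactly at the step you flag as the crux. Your premise that for $k=2$ ``no maximum is available'' is false, and it is what derails you: the paper's key Lemma \ref{l53} takes $l$ to be a weight of maximal \emph{Euclidean magnitude} (which makes sense for every $k$), normalizes all weights into the half-space $\{z:\langle z,l\rangle\ge 0\}$, and then the congruence $c\equiv\pm a \bmod l$ coming from the normal bundle of the isotropy $2$-sphere, together with $|a|,|c|\le|l|$, pins the data down exactly: either $\Sigma_{q'}=[-\epsilon(q),l,a]$ (self connected sum) or $\Sigma_{q'}=[\epsilon(q),l,l-a]$ (blow up, then self connected sum). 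This single lemma proves your ``some $|e_i|\le 1$'' claim uniformly for $k=1,2$; without it your proof covers only $k=1$.

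The substitute you propose for $k=2$ does not work as written. First, the exact fan relation $c_{i-1}+c_{i+1}=e_ic_i$ cannot be produced by ``normalizing signs by Operation (4)'': reversing an edge changes its label and its endpoints' signs, but it cannot change the integer in the congruence $c_{i+1}\equiv\pm c_{i-1}\bmod c_i$, nor which sign occurs; both are geometric facts, extracted in Lemma \ref{l53} from $\epsilon=\epsilon_F\cdot\epsilon_N$, orientability of the isotropy sphere (Lemma \ref{l29}), and Theorem \ref{t310} applied to that sphere. Second, the identity $\sum_ie_i=3m-12$ you invoke is the identity for unimodular fans winding exactly once around the origin; the cycles arising from merely \emph{oriented} $T^2$-manifolds are multi-fans whose winding number need not be one (the two-gon of $S^{4}$, whose two vertices carry opposite signs, already has winding number zero), so the identity---and with it the contradiction you hope to draw from ``all $|e_i|\ge 2$''---is unjustified; you yourself label this step unresolved. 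A smaller instance of the same over-reach: for a two-gon that is one component of a larger multigraph you cannot quote Theorem \ref{t310}, which requires the manifold to have exactly two fixed points in total; that its vertices have opposite fixed point data again requires the Lemma-\ref{l53}-type analysis (or, when an edge has label $1$ and $k=1$, Property A applied to that edge, since then $M^{\ker 1}=M$).
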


Along the way of proving Theorem \ref{t51}, we also prove the following.

\begin{theo} \label{t52}
Let a torus $T^k$ act effectively on a 4-dimensional compact oriented manifold $M$ with isolated fixed points, whose isotropy submanifolds are orientable. Then we can successively blow up fixed points and take self connected sums to construct another 4-dimensional compact oriented manifold, which is equipped with a fixed point free $T^k$-action.
\end{theo}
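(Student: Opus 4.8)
The plan is to translate the desired geometric operations into the combinatorial language of multigraphs, run the combinatorial reduction already furnished by Theorem \ref{t51}, and then lift that reduction back to manifolds one step at a time. First I would invoke Proposition \ref{p34} to fix a signed labeled $k$-multigraph $\Gamma$ that describes $M$, satisfies Property A, and has no self-loops; since each fixed point of a $4$-manifold carries exactly two weights, $\Gamma$ is $2$-regular, i.e.\ a disjoint union of cycles. By the converse part of Theorem \ref{t51}, $\Gamma$ can be converted to the empty graph by a finite sequence $\Gamma=\Gamma_0,\Gamma_1,\dots,\Gamma_N=\varnothing$, where each $\Gamma_{j+1}$ arises from $\Gamma_j$ by one of three moves: reversing an edge, a self connected sum at a pair of vertices with opposite fixed point data, or a blow up at a vertex.

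The core of the argument is to realize this sequence geometrically while maintaining the invariant that at stage $j$ there is a $4$-dimensional compact oriented $T^k$-manifold $M_j$ with isolated fixed points, with $M_0=M$, such that $\Gamma_j$ describes $M_j$ and satisfies Property A. I would build $M_{j+1}$ from $M_j$ according to the move that produces $\Gamma_{j+1}$. A reversing-edge move keeps the manifold unchanged, $M_{j+1}=M_j$, and only re-chooses the orientations of the $L_{p,i}$; the resulting multigraph still describes $M_j$ with Property A by Lemma \ref{l36}. A self connected sum at vertices $p,q$ with $\Sigma_p=-\Sigma_q$ is realized by the self equivariant connected sum of $M_j$ at $p$ and $q$ (Definition \ref{d42}), which is again a $4$-dimensional compact oriented $T^k$-manifold with isolated fixed points whose describing multigraph is the self connected sum of $\Gamma_j$ and satisfies Property A by Proposition \ref{p48}. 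A blow up at a vertex $p$ is realized by the equivariant connected sum of $M_j$ with $\mathbb{CP}^2$ or $\overline{\mathbb{CP}^2}$ at $p$ and $q_0$ (Definition \ref{d413}); by Lemma \ref{l43}, Proposition \ref{p47}, and the proposition following Definition \ref{d414}, this is a $4$-dimensional compact oriented $T^k$-manifold described by the blown-up multigraph with Property A. Since the weights at surviving fixed points are unchanged throughout, effectiveness persists at every stage, although this is not needed for the conclusion.

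Because $\Gamma_N$ is the empty graph, a multigraph describing $M_N$ has empty vertex set, so by part (1) of Definition \ref{d32} we get $M_N^{T^k}=\varnothing$; that is, the $T^k$-action on the $4$-dimensional compact oriented manifold $M_N$ is fixed point free, which is exactly the assertion. The construction terminates in finitely many steps because the underlying combinatorial reduction of Theorem \ref{t51} does.

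The step I expect to be the genuine obstacle is not visible on the surface of this lifting but sits inside it: I must verify that every self connected sum demanded by Theorem \ref{t51} actually satisfies the hypotheses of Proposition \ref{p48} — namely that the two vertices have matching edge labels, opposite fixed point data, and no common neighbor joined to both by edges of the same label — so that no self-loop is created and $M_{j+1}$ is genuinely orientable. This is precisely where Property A together with Lemma \ref{l49} enters: Lemma \ref{l49} forbids a vertex joined to two opposite-sign vertices by equal-label edges, which is exactly the configuration that would obstruct a self connected sum or force non-orientability of the glued isotropy submanifold. The remaining delicate point — the termination and correctness of the combinatorial reduction itself, in which blow ups temporarily increase the number of vertices while self connected sums decrease it — is internal to Theorem \ref{t51} and may be taken as given here.
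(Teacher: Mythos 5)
Your proof is correct, but it reaches Theorem \ref{t52} by a different organization than the paper. The paper does not deduce Theorem \ref{t52} from Theorem \ref{t51}: it proves the two statements simultaneously in a single induction, in which the manifold and its describing multigraph are modified in lockstep. There, at each stage one picks an edge whose label $l$ has the largest magnitude, applies Lemma \ref{l53} to the \emph{current} manifold to conclude that the other endpoint has fixed point data $[-\epsilon(p_1),l,x]$ or $[\epsilon(p_1),l,l-x]$, and accordingly performs a self connected sum, or a blow up followed by a self connected sum, on manifold and multigraph together via Propositions \ref{p47} and \ref{p48}; termination comes from eliminating the occurrences of the largest label. You instead take the converse half of Theorem \ref{t51} as a black box producing a finite sequence of multigraph moves, and lift that sequence move-by-move back to manifolds, using Definition \ref{d32} to transfer each move's applicability conditions from vertices to fixed points, Lemma \ref{l36} for reversals, Proposition \ref{p48} for self connected sums, and Proposition \ref{p47} together with the proposition following Definition \ref{d414} for blow ups. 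This is sound and non-circular, since the paper's proof of Theorem \ref{t51} nowhere presupposes Theorem \ref{t52}; what your route buys is a clean separation in which Theorem \ref{t52} becomes a formal corollary of Theorem \ref{t51} plus the lifting machinery of Section \ref{s4}. What it costs is that all the substance — the case analysis via Lemma \ref{l53}, the small-weight base case, the distinct-label condition needed for blow ups, and the termination argument — is deferred to Theorem \ref{t51}, whose only proof in the paper is precisely the combined induction; in that proof the multigraph sequence already comes equipped with its geometric realizations $M_j$, so your lifting reconstructs data the paper's argument produces en route. Your identification of the genuinely delicate point — that every self connected sum demanded by the sequence is legal and yields an orientable result, which rests on Property A and Lemma \ref{l49} — coincides with the paper's own use of Lemma \ref{l49}.
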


\begin{proof}[\textbf{Proof of Theorem \ref{t51} and Theorem \ref{t52}}]

\textbf{$\Gamma_0$ is a multigraph describing $M$, and $l$ is a weight of the biggest magnitude}

By Proposition \ref{p34}, there is a multigraph $\Gamma_0$ describing $M$ that satisfies Property A. Let $l$ be a weight of the biggest magnitude; $|l|=\max\{|w_{p,i}|:p \in M^{T^k}, 1 \leq i \leq 2\}$.

\textbf{We may reverse edges to get another multigraph $\Gamma$ describing $M$, Operation (4)}

Let $e$ be an edge of $\Gamma_0$. If its label $w(e)$ satisfies $\langle w(e), l \rangle < 0$ ($w(e)<0$ if $k=1$), then we reverse the edge $e$ (Definition \ref{d35}), so that its new label, still denoted $w(e)$, satisfies $\langle w(e), l \rangle \geq 0$ ($w(e)>0$ if $k=1$). Accordingly, if $e$ has vertices $p_1$ and $p_2$, we replace $w_{p,i}=w(e)$ by its negative $-w_{p,i}$ (reverse the orientation of $L_{p,i}$) so that the weights $w_{p,1},w_{p,2}$ at $p$ satisfy $\langle w_{p,i}, l \rangle \geq 0$ ($w_{p,i}>0$ if $k=1$). By Lemma \ref{l36}, the resulting multigraph $\Gamma$ also describes $M$ with Property A.

\textbf{Case (0): $k=1$ and $l=1$ - We use self connected sum, whose reverse is Operation (1) or (2)}

Suppose that $k=1$ and $l=1$. By Lemma \ref{l28}, the number of fixed points with fixed point data $[+,1,1]$ is equal to the number of fixed points with fixed point data $[-,1,1]$. Let $e$ be an edge of $\Gamma$ and let $p_1$ and $p_2$ the vertices of $e$. Because $\Gamma$ satisfies Property A, $p_1$ and $p_2$ satisfy $\epsilon_F(p_1)=-\epsilon_F(p_2)$, where $F$ is a component of $M^{\ker 1}$. Since $M^{\ker 1}=M$, this means $\epsilon_M(p_1)=\epsilon_F(p_1)$ and similarly $\epsilon_M(p_2)=\epsilon_F(p_2)$. Therefore, $\epsilon(p_1)=-\epsilon(p_2)$ and hence $\Sigma_{p_1}=[+,1,1]=-[-,1,1]=-\Sigma_{p_2}$.

Assume that there are two edges between $p_1$ and $p_2$; Figure \ref{f20-1} with $l=x=1$ describes $M$. Let $M'$ ($\Gamma'$) be a self connected sum of $M$ ($\Gamma$) at $p_1$ and $p_2$. By Proposition \ref{p48}, $\Gamma'$, which is Figure \ref{f20-2}, describes $M'$ with Property A. The reverse of this self connected sum is Operation (1).

Next, assume that $e$ is the only edge between $p_1$ and $p_2$. Let $e_1$ ($e_2$) be the other edge of $p_1$ ($p_2$) and let $p_1'$ ($p_2'$) be the other vertex of $e_1$ ($e_2$). Because $\epsilon(p_1)=-\epsilon(p_2)$ and the labels of $e_1$ and $e_2$ are both 1, by Lemma \ref{l49} $p_1'$ and $p_2'$ are distinct. Figure \ref{f21-1} describes $M$ with Property A. Let $M'$ ($\Gamma'$) be a self connected sum of $M$ ($\Gamma$) at $p_1$ and $p_2$. By Proposition \ref{p48}, $\Gamma'$, which is Figure \ref{f21-2}, describes $M'$ with Property A. The reverse of this self connected sum is Operation (2).

\textbf{From now on assume $l>1$ if $k=1$ - there are two cases}

From now on, we assume that $l>1$ if $k=1$. Let $e$ be an edge with label $l$. Let $p_1$ and $p_2$ be the vertices of $e$ and let $[\epsilon(p_1),l,x]$ be the fixed point data of $p$. Because $\Gamma$ describes $M$, by Lemma \ref{l53} below, the fixed points $p_1$ and $p_2$ are in the same 2-sphere of $M^{\ker l}$, and one of the following holds for the fixed point data of $p_2$:
\begin{enumerate}[(a)]
\item $\Sigma_{p_2}=[-\epsilon(p_1),l,x]$.
\item $\Sigma_{p_2}=[\epsilon(p_1),l,l-x]$.
\end{enumerate}

\textbf{Case (a) - We use self connected sum, whose reverse is Operation (1) or (2)}

Suppose that Case (a) holds that $\Sigma_{p_2}=[-\epsilon(p_1),l,x]$.

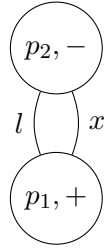
\begin{figure} 
\centering
\begin{subfigure}[b][3.4cm][s]{.49\textwidth}
\centering
\vfill
\begin{tikzpicture}[state/.style ={circle, draw}]
\node[state] (a) at (0,0) {$p_1,+$};
\node[state] (b) at (0,2) {$p_2,-$};
\path (a) [bend left =20]edge node[left] {$l$} (b);
\path (b) [bend left =20]edge node[right] {$x$} (a);
\end{tikzpicture}
\caption{Before} \label{f20-1}
\vfill
\end{subfigure}
\begin{subfigure}[b][3.4cm][s]{.49\textwidth}
\centering
\vfill
\begin{tikzpicture}[state/.style ={circle, draw}]
\end{tikzpicture}
\caption{After} \label{f20-2}
\vfill
\end{subfigure}
\caption{Case (a) - there are two edges between $p_1$ and $p_2$}\label{f20}
\end{figure}

Assume that there are two edges between $p_1$ and $p_2$. Figure \ref{f20-1} describes $M$. If we let $M'$ ($\Gamma'$, Figure \ref{f20-2}) be a self connected sum of $M$ ($\Gamma$) at $p_1$ and $p_2$, by Proposition \ref{p48} $\Gamma'$ describes $M'$ with Property A. The reverse of this self connected sum is Operation (1).

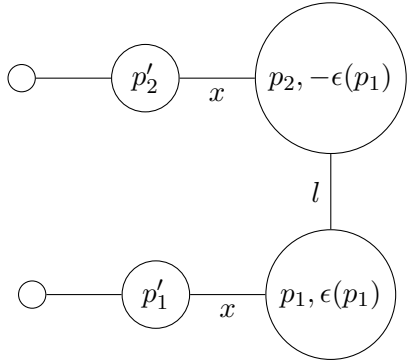
\begin{figure} 
\centering
\begin{subfigure}[b][4.9cm][s]{.49\textwidth}
\centering
\vfill
\begin{tikzpicture}[state/.style ={circle, draw}]
\node[state] (a) {$p_1'$};
\node[state] (b) [right=of a] {$p_1,\epsilon(p_1)$};
\node[state] (c) [above=of b] {$p_2,-\epsilon(p_1)$};
\node[state] (d) [left=of c] {$p_2'$};
\node[state] (e) [left=of a] {};
\node[state] (f) [left=of d] {};
\path (a) edge node[below] {$x$} (b);
\path (b) edge node[left] {$l$} (c);
\path (c) edge node[below] {$x$} (d);
\path (a) edge node[below] {} (e);
\path (d) edge node[below] {} (f);
\end{tikzpicture}
\caption{Before} \label{f21-1}
\vfill
\end{subfigure}
\begin{subfigure}[b][4.9cm][s]{.49\textwidth}
\centering
\vfill
\begin{tikzpicture}[state/.style ={circle, draw}]
\node[state] (a) {$p_1'$};
\node[state] (b) [above=of a] {$p_2'$};
\node[state] (c) [left=of a] {};
\node[state] (d) [left=of b] {};
\path (a) edge node[right] {$x$} (b);
\path (a) edge node[below] {} (c);
\path (b) edge node[below] {} (d);
\end{tikzpicture}
\caption{After} \label{f21-2}
\vfill
\end{subfigure}
\caption{Case (a) - there is only one edge between $p_1$ and $p_2$}\label{f21}
\end{figure}

Assume that $e$ is the only edge between $p_1$ and $p_2$. Let $e_1$ ($e_2$) be the other edge of $p_1$ ($p_2$) and let $p_1'$ ($p_2'$) be the other vertex of $e_1$ ($e_2$). Since $\epsilon(p_1)=-\epsilon(p_2)$ and the labels of $e'$ and $e''$ are both $x$, by Lemma \ref{l49} $p_1'$ and $p_2'$ are distinct. Figure \ref{f21-1} describes $M$ with Property A. Let $M'$ ($\Gamma'$) be a self connected sum of $M$ ($\Gamma$) at $p_1$ and $p_2$. By Proposition \ref{p48}, $\Gamma'$, which is Figure \ref{f21-2}, describes $M'$ with Property A. The reverse of this self connected sum is Operation (2).

\textbf{Case (b) - We use blow up and self connected sum, whose reverse is Operation (3)}

Assume Case (b) that $\Sigma_{p_2}=[\epsilon(p_1),l,l-x]$. Suppose that $\epsilon(p_1)=+1$; the other case will be similar.
We blow up $p_1$, which is a connected sum of $M$ and $\overline{\mathbb{CP}^2}$ of Example \ref{e412} at $p_1$ and $q_0$.
Take the $T^k$-action on $\overline{\mathbb{CP}^2}$ in Example \ref{e412} with taking $a_1=x$ and $a_2=l$ and reverse its orientation. Then the right figure of Figure \ref{f22-2} describes the action on $\overline{\mathbb{CP}^2}$ with Property A. Because $\Sigma_{p_1}=-\Sigma_{q_0}$, we can take an equivariant connected sum at $p_1$ and $q_0$ of $M$ and $\overline{\mathbb{CP}^2}$ ($\Gamma$ and $\overline{\Gamma}_{\mathbb{CP}^2}$); let $M_1$ ($\Gamma_1$, which is Figure \ref{f22-3}) denote the connected sum. By Proposition \ref{p47}, $\Gamma_1$ (Figure \ref{f22-3}) describes $M_1$ with Property A. In $\Gamma_1$, $(p_2,q_2)$ is $l$-edge. Also, in $M_1$, the fixed points $p_2$ and $q_2$ have the opposite fixed point data; $\Sigma_{p_2}=[+,l,l-x]=-[-,l,l-x]=-\Sigma_{q_2}$. Next, take a self connected sum of $M_1$ ($\Gamma_1$) at $p_2$ and $q_2$ and let $M'$ ($\Gamma'$, which is Figure \ref{f22-4}) denote the manifold (multigraph) constructed. By Proposition \ref{p48}, Figure \ref{f22-4} describes $M'$ with Property A. On the level of multigraph, the reverse of this whole procedure is Operation (3).

\begin{figure} 
\centering
\begin{subfigure}[b][4.8cm][s]{.49\textwidth}
\centering
\vfill
\begin{tikzpicture}[state/.style ={circle, draw}]
\node[state] (a) {$p_1,+$};
\node[state] (b) [above=of a] {$p_2,+$};
\node[state] (c) [left=of a] {};
\node[state] (d) [left=of b] {};
\path (a) edge node[right] {$l$} (b);
\path (a) edge node[below] {$x$} (c);
\path (b) edge node[below] {$l-x$} (d);
\end{tikzpicture}
\caption{The multigraph $\Gamma$ describing $M$} \label{f22-1}
\vfill
\end{subfigure}
\begin{subfigure}[b][4.8cm][s]{.49\textwidth}
\centering
\vfill
\begin{tikzpicture}[state/.style ={circle, draw}]
\node[state] (a) at (0,0) {};
\node[state] (b) at (1.5,0) {$p_1,+$};
\node[state] (c) at (1.5,3) {$p_2,+$};
\node[state] (d) at (0,3) {};
\node[state] (e) at (3.5,0) {$q_0,-$};
\node[state] (f) at (5,1.5) {$q_1,+$};
\node[state] (g) at (3.5,3) {$q_2,-$};
\path (a) edge node[below] {$x$} (b);
\path (b) edge node[left] {$l$} (c);
\path (c) edge node[below] {$l-x$} (d);
\path (e) edge node[right] {$x$} (f);
\path (e) edge node[left] {$l$} (g);
\path (f) edge node [right] {$l-x$} (g);
\end{tikzpicture}
\caption{$\Gamma \sqcup \overline{\Gamma}_{\mathbb{CP}^2}$ describing $M \sqcup \overline{\mathbb{CP}^2}$} \label{f22-2}
\vfill
\end{subfigure}
\begin{subfigure}[b][4.8cm][s]{.49\textwidth}
\centering
\vfill
\begin{tikzpicture}[state/.style ={circle, draw}]
\node[state] (a) at (0,0) {};
\node[state] (c) at (1.5,3) {$p_2,+$};
\node[state] (d) at (0,3) {};
\node[state] (f) at (5,1.5) {$q_1,+$};
\node[state] (g) at (3.5,3) {$q_2,-$};
\path (a) edge node[below] {$x$} (f);
\path (c) edge node[below] {$l$} (g);
\path (d) edge node[below] {$l-x$} (c);
\path (g) edge node[right] {$l-x$} (f);
\end{tikzpicture}
\caption{The multigraph $\Gamma_1$ describing $M_1$, blow up of $\Gamma$ at $p_1$} \label{f22-3}
\vfill
\end{subfigure}
\begin{subfigure}[b][4cm][s]{.49\textwidth}
\centering
\vfill
\begin{tikzpicture}[state/.style ={circle, draw}]
\node[state] (a) at (0,0) {};
\node[state] (d) at (0,3) {};
\node[state] (f) at (5,1.5) {$q_1,+$};
\path (a) edge node[below] {$x$} (f);
\path (d) edge node[below] {$l-x$} (f);
\end{tikzpicture}
\caption{The multigraph $\Gamma'$ describing $M'$, self connected sum of $M_1$ at $p_1$ and $q_1$} \label{f22-4}
\vfill
\end{subfigure}
\caption{Case (b)}\label{f22}
\end{figure}

\textbf{Conclusion - We can successively reduce the number of vertices (fixed points) by self connected sums, blow ups, and reversing edges, which are the reverses of Operations (1-4)}

We have shown that by the steps above (by the reverses of Operations (1-4)), we can remove fixed points having a weight of the biggest magnitude (an edge having a label of the biggest magnitude) to construct another 4-dimensional compact connected oriented $T^k$-manifold $M'$ with fewer fixed points (a multigraph $\Gamma'$ with fewer vertices). Take the manifold $M'$ and the multigraph $\Gamma'$ describing $M'$ with Property A, and repeat the above. Continuing this, we get a 4-dimensional compact oriented $T^k$-manifold with no fixed points (the empty multigraph). This implies that $\Gamma$ is obtained by Operarions (1-4), beginning with the empty multigraph. \end{proof}

In the proof of Theorem \ref{t51} we used the following lemma.

\begin{lemma} \label{l53}
Let a torus $T^k$ act effectively on a 4-dimensional compact oriented manifold $M$ with isolated fixed points, whose isotropy submanifolds are orientable. Suppose that a fixed point $q$ has fixed point data $[\epsilon(q),l,a]$, where $\langle l, a \rangle \geq 0$. If $k=1$, we assume further that $|l|>1$. Then there exists another (unique) fixed point $q'$ in the same component of $M^{\ker l}$, which is the 2-sphere, so that one of the following holds for the fixed point data of $q'$:
\begin{enumerate}
\item $\Sigma_{q'}=[-\epsilon(q),l,a+m_1l]$ for some integer $m_1$.
\item $\Sigma_{q'}=[\epsilon(q),l,-a+m_2l]$ for some integer $m_2$.
\end{enumerate}
If furthermore $l$ is a weight of the biggest magnitude, that is, $|l|=\max\{|w_{p,i}|:p \in M^{T^k}, 1 \leq i \leq 2\}$, then $m_1=0$ and $m_2=1$.
\end{lemma}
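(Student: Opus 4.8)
The plan is to identify $q'$ as the ``antipode'' of $q$ on the invariant $2$-sphere cut out by $\ker l$, read the tangent weight and sign of $q'$ off the two-fixed-point classification on that sphere, and recover its normal weight from the $\ker l$-equivariance of the normal bundle.

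First I would let $F$ be the connected component of $M^{\ker l}$ containing $q$. The weight-$l$ summand of $T_qM$ is fixed by $\ker l$, so $\dim F\ge 2$; and since $\ker l$ acts on each normal space $N_pF$ without nonzero fixed vectors, the normal bundle of $F$ has even rank, whence $\dim F$ is even. The subgroup $\ker l$ is nontrivial (automatically for $k\ge 2$, and for $k=1$ precisely because of the hypothesis $|l|>1$, giving $\ker l=\mathbb{Z}_{|l|}$), so effectiveness rules out $F=M$ and forces $\dim F=2$. By Lemma~\ref{l29} $F$ is orientable, and the nontrivial induced $T^k$-action on the compact connected oriented surface $F$ has the fixed point $q$; hence by Lemma~\ref{l211} $F$ is a $2$-sphere with exactly two fixed points. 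This yields the unique second fixed point $q'$, lying in the same component $F$ of $M^{\ker l}$.

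Next I would compute $\Sigma_{q'}$ in two pieces. Since the only weight in $T_qF$ is $l$, the fixed point data of $q$ for the action on $F$ is $[\epsilon_F(q),l]$, and Theorem~\ref{t310} applied to the two-fixed-point action on $F$ gives $\Sigma^F_{q'}=-\Sigma^F_q=[-\epsilon_F(q),l]$; so the tangent weight at $q'$ is $l$ and $\epsilon_F(q')=-\epsilon_F(q)$. For the normal part, fix orientations of $M$ and $F$, making $NF$ an oriented $T^k$-line bundle over the connected base $F$ on which $\ker l$ acts fiberwise by a single character. If $\tilde a,\tilde b$ denote the $T^k$-weights of $NF$ at $q,q'$ in this orientation, then $\tilde b-\tilde a$ annihilates $\ker l$; as $g\mapsto g^l$ surjects $T^k$ onto $S^1$, the annihilator of $\ker l$ in $\mathbb{Z}^k$ is exactly $\mathbb{Z}l$, so $\tilde b=\tilde a+ml$ for some $m\in\mathbb{Z}$. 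Choosing the normal orientation at $q$ so that $\tilde a=a$ gives $\epsilon_N(q')=\epsilon_N(q)$, and combining with $\epsilon(p)=\epsilon_F(p)\epsilon_N(p)$ and $\epsilon_F(q')=-\epsilon_F(q)$ yields $\Sigma_{q'}=[-\epsilon(q),l,a+ml]$. Both alternatives in the statement are instances of this single family, via $[-\epsilon(q),l,a+ml]=[\epsilon(q),l,-a-ml]$.

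Finally I would pin down $m$ under maximality. Here $a+ml$ is a weight at $q'$, so $|a+ml|\le|l|$; also $|a|\le|l|$, $\langle l,a\rangle\ge 0$, and $a\notin\mathbb{Z}l$ (otherwise $T_qF=T_qM$ and $F=M$). Expanding $|a+ml|^2=|a|^2+2m\langle a,l\rangle+m^2|l|^2$ rules out $m\ge 1$ at once. For $m\le -2$ the triangle inequality gives $|a+ml|\ge |m|\,|l|-|a|\ge 2|l|-|l|=|l|$, with equality only if $|m|=2$, $|a|=|l|$ and $a$ is a positive multiple of $l$, forcing $a=l\in\mathbb{Z}l$ and hence excluded; so $|a+ml|>|l|$, a contradiction. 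Thus $m\in\{0,-1\}$: $m=0$ gives $\Sigma_{q'}=[-\epsilon(q),l,a]$ (case (1), $m_1=0$), and $m=-1$ gives $\Sigma_{q'}=[-\epsilon(q),l,a-l]=[\epsilon(q),l,l-a]$ (case (2), $m_2=1$). The main obstacle is the normal-bundle step: making the congruence $\tilde b\equiv\tilde a\pmod l$ and the sign identity $\epsilon_N(q')=\epsilon_N(q)$ precise requires fixing a consistent orientation of $NF$ over all of $F$ and tracking how the fixed-point-data sign conventions interact with it; the maximality analysis is then elementary.
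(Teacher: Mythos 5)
Your proof is correct and takes essentially the same route as the paper's: the invariant $2$-sphere $F\subset M^{\ker l}$ via Lemmas \ref{l29} and \ref{l211}, the sign relation $\epsilon_F(q')=-\epsilon_F(q)$ via Theorem \ref{t310}, the normal-weight congruence modulo $l$ coming from $\ker l$-equivariance of $NF$ over the connected base, and a maximality estimate to pin down the integer; your only (cosmetic) difference is that putting a complex structure on $NF$ lets you merge the paper's two sign cases (treated there via orientation-preserving versus orientation-reversing isomorphisms of the normal fibres) into the single family $\Sigma_{q'}=[-\epsilon(q),l,a+ml]$. One caveat: your justification that $NF$ has even rank (``no nonzero fixed vectors implies even rank'') is false as a general principle when $|l|$ is even, since $\mathbb{Z}_2$ admits a one-dimensional sign representation; but the conclusion $\dim F=2$ is immediate anyway, because $a\notin\mathbb{Z}l$ (which you establish) forces $T_qF=(T_qM)^{\ker l}$ to be exactly the weight-$l$ plane, which is how the paper argues.
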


\begin{proof}
First, we shall do the following: for every weight $w_{p,i}$ at a fixed point $p$, if the dot product of $w_{p,i}$ and $l$ is negative, replace $w_{p,i}$ with $-w_{p,i}$. If $k=1$ this can be also done by choosing all weights to be positive. In this way we have that $\langle w_{p,i},l \rangle \geq 0$ for all weights $w_{p,i}$.

Let $F$ be a connected component of $M^{\ker l}$ containing $q$, which is a compact submanifold of $M$. Choose an orientation of $F$; also choose an orientation of $NF$ so that the induced orientation on $TF \oplus NF$ agrees with the orientation of $M$.

Because every weight in $T_qF$ is a multiple of $l$ and $q$ has only one weight disivible by $l$ (the action is effective), $\dim F=2$. The $T^k$-action on $M$ restricts to act on $F$, and this action on $F$ has a fixed point $q$. By Lemma \ref{l211}, $F$ is the 2-sphere and has another (unique) fixed point $q'$. Applying Theorem \ref{t310} to the $T^k$-action on $F$, $\epsilon_F(q)=-\epsilon_F(q')$. Let $a$ be the weight of $N_qF$ and let $c$ be the weight of $N_{q'}F$.

First, suppose that $\epsilon(q)=\epsilon(q')$. Since $\epsilon_F(q)=-\epsilon_F(q')$, with $\epsilon(q)=\epsilon_F(q) \cdot \epsilon_N(q)$ and $\epsilon(q')=\epsilon_F(q') \cdot \epsilon_N(q')$ this implies that $\epsilon_N(q)=-\epsilon_N(q')$. Therefore, there is an orientation reversing isomorphism from $N_qF$ to $N_{q'}F$ as $\ker l$-representations. This implies that $a \equiv -c \mod l$, that is, $a=-c+m_1l$ for some integer $m_1$. This is Case (2) of this lemma.

Suppose that $|l|=\max\{|w_{p,i}|:p \in M^{T^k}, 1 \leq i \leq 2\}$. Because $l$, $a$, and $c$ are in the same half space $\{z \in \mathbb{R}^n \, | \, \langle z, l \rangle \geq 0\}$ of $\mathbb{R}^n$ and $l$ has the biggest magnitude, this implies that $m_1=1$, that is, $a=l-c$. Then $q'$ has fixed point data $[-\epsilon(q),l,l-a]$. 

Second, suppose that $\epsilon(q)=-\epsilon(q')$. Since $\epsilon_F(q)=-\epsilon_F(q')$, it follows that $\epsilon_N(q)=\epsilon_N(q')$. Hence, there is an orientation preserving isomorphism from $N_qF$ to $N_{q'}F$ as $\ker l$-representations. This implies that $a \equiv c \mod l$ and so $a=c+m_2l$ for some integer $m_2$. 

Suppose that $|l|=\max\{|w_{p,i}|:p \in M^{T^k}, 1 \leq i \leq 2\}$. Because $l$, $a$, and $c$ are in the same half space $\{z \in \mathbb{R}^n \, | \, \langle z, l \rangle \geq 0\}$ and $l$ has the biggest magnitude, this implies that $m_2=0$, that is, $a=c$. Then $q'$ has fixed point data $[-\epsilon(q),l,a]$. This is Case (1) of this lemma. \end{proof}

Theorem \ref{t51} implies the following theorem, which was obtained in \cite{J1} for circle actions.

\begin{theo}
Let a torus $T^k$ act effectively on a 4-dimensional compact oriented manifold $M$ with isolated fixed points, whose isotropy submanifolds are orientable. Then the fixed point data of $M$ can be achieved in the following way: begin with the empty collection, and apply a combination of the following steps.
\begin{enumerate}
\item Add $[+,a,b]$ and $[-,a,b]$, where $a$ and $b$ span $\mathbb{Z}^k$.
\item Replace $[+,c,d]$ with $[+,c,c+d]$ and $[+,d,c+d]$.
\item Replace $[-,e,f]$ with $[-,e,e+f]$ and $[-,f,e+f]$.
\end{enumerate} \end{theo}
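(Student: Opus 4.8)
The plan is to deduce this statement directly from the multigraph classification in Theorem \ref{t51} by a bookkeeping argument that tracks the effect of each of Operations (1)--(4) on the collection of fixed point data. Recall that if a multigraph $\Gamma$ describes $M$, then by Definition \ref{d32} together with Definition \ref{d21} the fixed point data $\Sigma_M$ is exactly the collection $\bigcup_v \Sigma_v$ of the fixed point data of the vertices of $\Gamma$, where $\Sigma_v=[\epsilon(v),w(e_1),\cdots,w(e_n)]$ is read off from the sign and the edge labels at $v$. By Theorem \ref{t51}, a multigraph describing $M$ is built from the empty graph by finitely many applications of Operations (1)--(4). Hence it suffices to show that each graph operation changes the vertex-data collection by one of Steps (1)--(3) above (or leaves it unchanged), and then induct on the number of operations.

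I would check the four operations in turn. Operation (1) introduces two vertices $q_1,q_2$ joined by an $A$-edge and a $B$-edge with $\epsilon(q_1)=+1$ and $\epsilon(q_2)=-1$ (Figure \ref{f17}); reading off their data gives exactly $[+,A,B]$ and $[-,A,B]$ with $A,B$ spanning $\mathbb{Z}^k$, which is Step (1). Operation (2) (Figure \ref{f18}) inserts two new vertices $q,q'$ with $\epsilon(q)=-\epsilon(q')$ onto a $C$-edge, replacing $(q_1,q_2)$ by the three edges $(q_1,q)$, $(q,q')$, $(q',q_2)$ of labels $C,D,C$. Here $q_1$ and $q_2$ each retain a $C$-labeled edge, hence keep the same multiset of edge labels, so $\Sigma_{q_1}$ and $\Sigma_{q_2}$ are unchanged, while $q$ and $q'$ contribute the canceling pair $[+,C,D]$ and $[-,C,D]$. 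Thus Operation (2) also realizes Step (1), now with $a=C$, $b=D$. The spanning hypotheses that $A,B$ and $C,D$ span $\mathbb{Z}^k$ are precisely what Step (1) requires.

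For Operation (3) (Figure \ref{f19}), the vertex $q$ has data $[\epsilon(q),E,F]$ with $E,F$ spanning $\mathbb{Z}^k$, and it is replaced by vertices $q',q''$ joined by an $(E+F)$-edge, where $q'$ retains the $E$-edge, $q''$ retains the $F$-edge, and $\epsilon(q')=\epsilon(q'')=\epsilon(q)$. Reading off the data gives $\Sigma_{q'}=[\epsilon(q),E,E+F]$ and $\Sigma_{q''}=[\epsilon(q),F,E+F]$, so the single datum $[\epsilon(q),E,F]$ is replaced by these two. When $\epsilon(q)=+1$ this is Step (2) with $c=E$, $d=F$, and when $\epsilon(q)=-1$ it is Step (3) with $e=E$, $f=F$; note that $E,F$ span $\mathbb{Z}^k$ if and only if $E,E+F$ do if and only if $F,E+F$ do, so no spanning condition is lost. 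Finally, Operation (4) reverses an edge, which by Lemma \ref{l36} (equivalently, by the equivalence relation of Definition \ref{d21}) preserves the fixed point data of every vertex and hence leaves the collection $\Sigma_M$ invariant.

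Assembling these, an induction on the length of the construction in Theorem \ref{t51} shows that the fixed point data of $M$ is obtained from the empty collection by applying Steps (1)--(3) (while the data-preserving Operation (4) may be ignored), which is the assertion. The only point requiring genuine care, rather than pure transcription, is Operation (2): one must verify that the two pre-existing vertices $q_1,q_2$ really keep their full multisets of weights, so that only a canceling pair is added, and that the relation $\epsilon(q)=-\epsilon(q')$ makes that pair of the form $[+,C,D]$, $[-,C,D]$. Everything else is a direct read-off from Figures \ref{f17}--\ref{f19}.
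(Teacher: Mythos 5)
Your proposal is correct and is exactly the paper's (implicit) argument: the paper offers no separate proof, stating only that Theorem \ref{t51} implies this result, and your vertex-by-vertex bookkeeping of how Operations (1)--(4) act on the collection $\bigcup_v \Sigma_v$ is precisely the deduction left to the reader. In particular, your identifications (Operations (1)--(2) $\leftrightarrow$ Step (1), Operation (3) $\leftrightarrow$ Steps (2)--(3) according to the sign $\epsilon(q)$, Operation (4) data-preserving via Lemma \ref{l36}) are all accurate.
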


\begin{figure} 
\centering
\begin{tikzpicture}[state/.style ={circle, draw}]
\node[state] (a) {$q_1,\pm$};
\node[state] (b) [above right=of a] {$q_2,\mp$};
\node[state] (c) [above left=of b] {$q_3,\pm$};
\path (a) edge node[right] {$a$} (b);
\path (a) edge node[left] {$a+b$} (c);
\path (b) edge node [right] {$b$} (c);
\end{tikzpicture}
\caption{3 fixed points}\label{f23}
\end{figure}
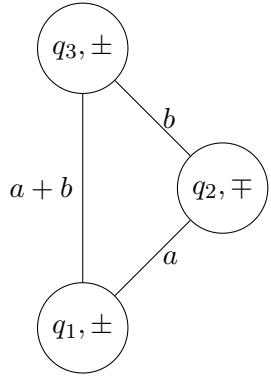

As an immediate consequence of Theorem \ref{t51}, a multigraph describing a 4-dimensional manifold with 3 fixed points is obtained.

\begin{theo}
Let a torus $T^k$ act effectively on a 4-dimensional compact oriented manifold $M$ with 3 fixed points. Then Figure \ref{f23} describes $M$ with Property A, for some non-zero $a,b$ that span $\mathbb{Z}^k$.
\end{theo}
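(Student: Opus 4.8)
The plan is to deduce this as a structural consequence of Theorem \ref{t51}, via a bookkeeping argument on the number of vertices together with the fact that a $4$-dimensional manifold forces a $2$-valent multigraph. First I would record the combinatorial shape of any multigraph describing $M$: since $\dim M = 4$, each fixed point has exactly $n=2$ weights, so every vertex of a describing multigraph has valence $2$, and by Proposition \ref{p34} there are no self-loops. A $2$-regular loopless multigraph on $3$ vertices has $3$ edges and must be the triangle: a double edge between two vertices would saturate their valences and leave the third vertex of valence $0$, and no one- or two-vertex component can be $2$-regular without a loop. Hence the underlying graph of any describing multigraph of $M$ is forced to be the triangle of Figure \ref{f23}.

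Next I would pin down the labels and signs using the generative description in Theorem \ref{t51}: there is a multigraph $\Gamma$ describing $M$ with Property A that is obtained from the empty graph by a combination of Operations (1)--(4). Each operation changes the number of vertices by $+2$ (Operation (1)), $+2$ (Operation (2)), $+1$ (Operation (3)), or $0$ (Operation (4)), so the vertex count is nondecreasing along the construction. The only operation applicable to the empty graph is Operation (1) (the others require an existing edge or vertex), which produces two vertices joined by a bigon with labels $A,B$ that span $\mathbb{Z}^k$ and with signs $+$ and $-$. To reach exactly $3$ vertices, the unique remaining step (up to interspersed edge reversals, Operation (4), which do not change the vertex count) is a single Operation (3); Operations (1) and (2) would overshoot to $4$, and a second Operation (3) afterward would also give $4$.

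Then I would simply read off the outcome. Applying Operation (3) at one vertex of the bigon --- say the one of sign $+$, whose two edge-labels $A,B$ span $\mathbb{Z}^k$ as required --- replaces it with two vertices $q',q''$ of the same sign $+$, joined by an edge of label $A+B$, with $q'$ and $q''$ still joined to the surviving vertex (sign $-$) by edges of labels $A$ and $B$ respectively. Relabeling $q_1=q'$, $q_3=q''$, and $q_2$ the surviving vertex, and setting $a=A$, $b=B$, this is exactly Figure \ref{f23}: two vertices of one sign and one of the opposite sign, with edge-labels $a$, $b$, $a+b$; since $A,B$ span $\mathbb{Z}^k$ and are non-zero, so are $a,b$. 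Property A is inherited because every operation in Theorem \ref{t51} preserves it, and any trailing reversals only replace labels and signs by equivalent choices already absorbed into the $\pm/\mp$ notation and the freedom in choosing $a,b$.

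I do not expect a serious obstacle: the entire content resides in Theorem \ref{t51}, and the only care needed is the monotone vertex-count argument showing that the generating sequence is forced to be Operation (1) followed by Operation (3). The single point to state carefully is that reversing edges (Operation (4)) changes neither the vertex count nor the essential triangular structure, so it may be ignored when identifying the shape.
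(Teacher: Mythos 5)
Your proposal follows the same route the paper intends: the paper gives no separate proof of this statement, presenting it as an immediate consequence of Theorem \ref{t51}, and your vertex-count argument (from the empty graph only Operation (1) is applicable, and to land on exactly $3$ vertices the rest of the generating sequence must consist of a single Operation (3) plus edge reversals) is exactly the way to make that ``immediate consequence'' precise. The reading-off of the triangle produced by Operation (1) followed by Operation (3), and its identification with Figure \ref{f23}, are also correct.

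One point, however, is stated inaccurately. You claim that trailing applications of Operation (4) are ``absorbed into the $\pm/\mp$ notation and the freedom in choosing $a,b$.'' That is true for reversing either edge incident to the odd-sign vertex (reversing the $a$-edge yields a triangle that is again of the form of Figure \ref{f23}, with $a'=-a$ and $b'=a+b$), but it fails for the edge joining the two vertices of equal sign: reversing the $(a+b)$-labelled edge flips both of those signs, producing a triangle whose three vertices all carry the \emph{same} sign, which is not of the form of Figure \ref{f23} for any choice of $a,b$ and $\pm/\mp$. So the terminal multigraph $\Gamma$ delivered by Theorem \ref{t51} need not itself have the required shape. The repair is immediate: reversing an edge is an involution, and by Lemma \ref{l36} it preserves both ``describes $M$'' and Property A; hence you may undo all reversals performed after the Operation (3) step, and the resulting multigraph --- which is of the form of Figure \ref{f23} by your reading-off argument, with the reversals interspersed \emph{before} Operation (3) genuinely absorbed into the choice of $a,b$ --- still describes $M$ with Property A. The same lemma is also the correct source of Property A here: your phrase ``every operation in Theorem \ref{t51} preserves Property A'' is not meaningful for the intermediate multigraphs of the generating chain, since they describe no manifold and Property A is defined only relative to $M$; what one actually uses is that the terminal $\Gamma$ has Property A by Theorem \ref{t51}, and that Lemma \ref{l36} transports this along reversals. (A smaller quibble: your opening paragraph asserts that \emph{any} describing multigraph is loopless, whereas Proposition \ref{p34} only provides the existence of a loopless one; but that paragraph is redundant, as the generative argument already forces the triangular shape.)
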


A GKM manifold, named after Goresky, Kottwitz, and MacPherson, is a manifold equipped with a torus action with isolated fixed points such that the action is equivariantly formal, and weights at each fixed point are pairwise linearly independent \cite{GKM}. For a given GKM manifold, a labeled graph, called a GKM graph, is associated, and such a graph encodes the equivariant cohomology of the manifold. The GKM theory has been extensively studied since the appearance of \cite{GKM}. If a given GKM manifold is compact and oriented, our signed labeled multigraph describing it is a GKM graph if we forget signs of vertices.

Let a torus $T^k$ act on a 4-dimensional compact oriented manifold $M$ with isolated fixed points. Suppose that weights at each fixed point are linearl independent (for example, a GKM manifold) and we are only given a labeled multigraph describing $M$, without knowing signs of vertices (for example, a GKM graph). In this case we can also classify such a labeled multigraph as follows.

\begin{theo}[\textbf{Classification of graphs of 4-dimensional oriented GKM manifolds}] \label{t56}
Let a torus $T^k$ act effectively on a 4-dimensional compact oriented manifold $M$ with isolated fixed points, whose isotropy submanifolds are orientable. Suppose that weights at each fixed point are linearly independent (for example, $M$ is a GKM manifold). 
Suppose that we are only given a labeled multigraph $\Delta$ describing $M$ (for example, a GKM graph).
Let $e'$ be an edge whose label $l$ has the biggest magnitude, and let $p_1$ and $p_2$ be the vertices of $e'$. Reversing the label $w(e)$ of an edge $e$ by its negative $-w(e)$ if necessary, one of the below cases holds, and we can proceed as below.
\begin{enumerate}[(1)]
\item Labels of edges of $p_1$ and $p_2$ are both $\{l,x\}$. In this case, take a self connected sum of $\Delta$ (Definition \ref{d45}, ignoring signs). This step is going from Figure \ref{f20-1} to Figure \ref{f20-2} if there are two edges between $p_1$ and $p_2$, and from Figure \ref{f21-1} to Figure \ref{f21-2} if $e'$ is the only edge between $p_1$ and $p_2$.
\item Labels of edges of $p_1$ and $p_2$ are $\{l,x\}$ and $\{l,l-x\}$. In this case, blow up $p_1$ to convert $\Delta$ (Figure \ref{f22-1}) into Figure \ref{f22-3}, and then take a self connected sum of Figure \ref{f22-3} at $p_2$ and $q_3$. This step is going from Figure \ref{f22-1} to Figure \ref{f22-4}, ignoring signs of vertices.
\end{enumerate}
Repeating the above steps, we can convert $\Delta$ into the empty multigraph.

Conversely, $\Delta$ is obtained from the empty graph by successively applying Operations (1-4) of Theorem \ref{t51}, where $A,B$ are linearly independent, $C,D$ are linearly independent, and $E,F$ are linearly independent, ignoring the signs of vertices.
\end{theo}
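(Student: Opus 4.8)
The plan is to deduce Theorem \ref{t56} from Theorem \ref{t51} by observing that, under the linear independence hypothesis, every case distinction made in the proof of Theorem \ref{t51} is already visible at the level of labels alone. First I would record that the hypotheses force $k=2$: since $\dim M=4$ there are exactly two weights at each fixed point, linear independence of these two weights requires $k\ge 2$, while effectiveness forces the two weights to span $\mathbb{Z}^k$ and hence $k\le 2$. In particular the two weights at each fixed point form a basis of $\mathbb{Z}^2$, so the labeled multigraph $\Delta$ is $2$-regular and, by Lemma \ref{l211}, each isotropy $2$-sphere carries exactly two fixed points; thus the labeled structure of $\Delta$ is the canonical graph forced by the fixed points and isotropy $2$-spheres of $M$.

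Next I would equip $\Delta$ with signs. By Proposition \ref{p34} there is a signed labeled multigraph $\Gamma$ describing $M$ with Property A, and since the underlying labeled structure of $\Gamma$ is forced by the geometry of $M$ it coincides with $\Delta$. Hence $\Delta$ is $\Gamma$ with the signs of the vertices forgotten, and I may run the reduction of Theorem \ref{t51} on $\Gamma$ while tracking only labels. After reversing labels to lie in a half-space $\{\langle\,\cdot\,,l\rangle\ge 0\}$ (the label-level form of reversing an edge, Operation (4)), I select an edge $e'$ of biggest-magnitude label $l$ with vertices $p_1,p_2$, which lie on a common isotropy $2$-sphere.

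The key observation is that the dichotomy of Lemma \ref{l53} --- between $\Sigma_{p_2}=[-\epsilon(p_1),l,x]$ (Case (a)) and $\Sigma_{p_2}=[\epsilon(p_1),l,l-x]$ (Case (b)) --- is detected purely by the label multisets: Case (a) gives both $p_1,p_2$ the labels $\{l,x\}$, while Case (b) gives $\{l,x\}$ and $\{l,l-x\}$. These multisets cannot coincide, since $\{l,x\}=\{l,l-x\}$ forces $l=2x$, contradicting the linear independence of $l$ and $x$. Hence reading off the labels of $p_1$ and $p_2$ from $\Delta$ alone determines which case occurs, and this is exactly the dichotomy (1)/(2) in the statement. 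In Case (a) the signs at $p_1,p_2$ are automatically opposite, so $\Sigma_{p_1}=-\Sigma_{p_2}$, Proposition \ref{p48} applies, and forgetting signs the self connected sum is precisely the surgery of Figures \ref{f20} or \ref{f21}. In Case (b) the signs agree; after blowing up $p_1$ (which on labels is the sign-independent surgery of Figure \ref{f22-1} into Figure \ref{f22-3}, since the new edge labels $a_j-a_i$ do not depend on signs, via Proposition \ref{p47}) the new vertex $q_2$ satisfies $\Sigma_{p_2}=-\Sigma_{q_2}$, so a self connected sum at $p_2,q_2$ is again legitimate and on labels is the surgery of Figure \ref{f22-3} into Figure \ref{f22-4}. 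Each step removes the fixed points carrying the biggest-magnitude label without creating any label of magnitude exceeding $|l|$, so, exactly as in the proof of Theorem \ref{t51}, the procedure terminates with the empty graph.

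The main obstacle is precisely this bookkeeping: verifying that every sign condition required to invoke Propositions \ref{p47} and \ref{p48} is forced by the labels, so that discarding signs loses no information needed to run the reduction. This is guaranteed because the sign relations in the two cases are dictated by Lemma \ref{l53}, and because the blow-up surgery (using $\mathbb{CP}^2$ or $\overline{\mathbb{CP}^2}$ according to $\epsilon(p_1)$) produces the same labeled graph in either case. For the converse, the reverses of the two reduction moves are exactly Operations (1)--(3) of Theorem \ref{t51}, with Operation (4) the label reversals; each move was only ever applied with labels spanning $\mathbb{Z}^2$, hence in particular linearly independent, so reading the reduction backwards exhibits $\Delta$ as built from the empty graph by Operations (1)--(4) with linearly independent labels, as claimed.
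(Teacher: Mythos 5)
Your proposal is correct and follows essentially the same route as the paper: both arguments recover the vertex signs (up to one global orientation choice) from the labels alone, using Lemma \ref{l53} together with linear independence to rule out $x=l-x$, so that the two cases of that lemma are distinguished by the label multisets $\{l,x\}$ versus $\{l,l-x\}$; one then runs the reduction from the proof of Theorem \ref{t51} (self connected sum in the first case, blow up followed by self connected sum in the second), and reads the moves backwards for the converse. Your identification of $\Delta$ with the unsigned version of the multigraph $\Gamma$ of Proposition \ref{p34}, and your observation that the blow-up produces the same labeled graph whether $\mathbb{CP}^2$ or $\overline{\mathbb{CP}^2}$ is used, are just repackagings of the paper's step of assigning $\epsilon(p_1)=+1$ and propagating signs.

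The one point to flag is your opening claim that the hypotheses force $k=2$. Under a literal reading of ``linearly independent'' you are right, but that reading makes the $k=1$ case vacuous, whereas the paper explicitly intends the theorem to cover circle actions (the introduction promises a classification of GKM graphs ``for both $S^1$ and $T^2$-actions,'' and the paper's proof keeps $k=1$ alive with the remark ``note that $l>2$ if $k=1$''). For $k=1$ the condition must be read in the weaker sense that neither weight at a fixed point is an integer multiple of the other; this is exactly what makes the paper's step ``weights $\{2x,x\}$ are not linearly independent'' meaningful. Your argument survives this reading essentially unchanged --- $\dim F=2$ still holds, $l\neq 2x$ is still forced, and the new vertex's weights $\{x,l-x\}$ again inherit the divisibility condition (if $x\mid l-x$ then $x\mid l$, and if $(l-x)\mid x$ then $(l-x)\mid l$) --- but as written, your normalization ``the two weights form a basis of $\mathbb{Z}^2$'' should be replaced by this weaker statement so that the $S^1$ case the theorem is meant to include is not excluded.
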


\begin{proof}
Choose the sign of $p_1$. We may as well choose it to be $+1$. This determines the orientation of $M$. We show that we can determine the sign of $p_2$.

Consider a component $F$ of $M^{\ker l}$ containing $p_1$ and $p_2$. Since the weights at $p_1$ are linearly independent, $\dim F=2$; note that $l>2$ if $k=1$. By Theorem \ref{t310}, $\epsilon_F(p_1)=-\epsilon_F(p_2)$ for any orientation of $F$ if we choose weights in $F$ to be $l$.

Let $\Sigma_{p_1}=[+,l,x]$. By Lemma \ref{l53}, one of the following holds for the fixed point data $\Sigma_{p_2}$ of $p_2$.
\begin{enumerate}[(a)]
\item $\Sigma_{p_2}=[-,l,x]$.
\item $\Sigma_{p_2}=[+,l,l-x]$.
\end{enumerate}

Suppose that $x=l-x$. Then $p_1$ has weights $\{2x,x\}$ and so these weights are not linearly independent. Thus, $x \neq l-x$. Therefore, either $p_2$ has weights $\{l,x\}$ and $\epsilon(p_2)$ is -1, or $p_2$ has weights $\{l,l-x\}$ and $\epsilon(p_2)$ is +1. Then the rest of the proof is similar to the proof of Theorem \ref{t51}.

If $\Sigma_{p_2}=[-,l,x]$ then we take a connected sum of $M$ ($\Delta$) at $p_1$ and $p_2$ to construct another manifold $M'$ ($\Delta'$). Then $\Delta'$ describes $M'$ though is not signed. If there are two edges between $p_1$ and $p_2$, then $\Delta$ is Figure \ref{f20-1} and $\Delta'$ is Figure \ref{f20-2}. If $e'$ is the only edge between $p_1$ and $p_2$, $\Delta$ is Figure \ref{f21-1} and $\Delta'$ is Figure \ref{f21-2}.

If $\Sigma_{p_2}=[+,l,l-x]$ then we blow up $p$ of $M$ ($\Delta$) to construct another manifold $M_1$ ($\Delta_1$, which is Figure \ref{f22-3}). Then $\Delta_1$ describes $M_1$. Next take a self connected sum of $M_1$ ($\Delta_1$) to get another manifold $M'$ ($\Delta'$, which is Figure \ref{f22-4}), where $\Delta'$ describes $M'$ by Proposition \ref{p48}.

On the manifold $M'$, weights at each fixed point are still linearly independent (the fixed point $q_2$ in the second case is the only new fixed point and has weights $\{x,l-x\}$, which are linearly independent), and a non-signed labeled multigraph $\Delta'$ describes $M'$. Thus, on $M'$ and $\Delta'$ take an edge whose label has the biggest magnitude, and repeat the above procedure. Repeating the above arguments, we can convert $\Delta$ into the empty graph. \end{proof}

\section{Dimension 6: statements of the main results}\label{s6}

In the introduction we gave brief statements of our main results. With the terminologies understood, we give precise statements of Theorems \ref{t11} and \ref{t14}.

\begin{theo} \label{t61}
Let the circle group $S^1$ act effectively on a 6-dimensional compact oriented manifold $M$ with isolated fixed points, whose isotropy submanifolds are orientable.
Then there exists a signed labeled multigraph $\Gamma$ describing $M$ that satisfies Property A, and we can successively apply a combination of reversing edges and connected sums with itself, or with Figures \ref{f15-2}, \ref{f25-2}, \ref{f26-2}, \ref{f28} (multigraphs for $\mathbb{CP}^3$, $Z_1$, $Z_2$, $Z_2 \sharp \overline{Z_2}$) and these with all signs of vertices reversed, to convert $\Gamma$ to the empty multigraph. There is a definite procedure that this ends in a finite number of steps.
\end{theo}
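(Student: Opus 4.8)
The plan is to adapt the dimension-four argument of Theorem \ref{t51}, organizing the induction around the isotropy submanifold of the largest weight and invoking the dimension-four classification itself whenever that submanifold is four-dimensional. I would begin by fixing, via Proposition \ref{p34}, a signed labeled multigraph $\Gamma$ describing $M$ with Property A; since $k=1$ I may choose all weights positive (reversing edges by Lemma \ref{l36}), and then set $l:=\max\{w_{p,i}:p\in M^{S^1},\,1\le i\le 3\}$, the largest weight occurring. The induction runs on the lexicographic complexity $(l,\nu_l)$, where $\nu_l$ is the number of fixed points carrying a weight equal to $l$; each step performs a connected sum (with $\Gamma$ itself, or with one of the standard models, together with reversals) that strictly decreases this pair while preserving Property A. After finitely many steps no weight-$l$ fixed point survives, $l$ drops, and eventually $\Gamma$ is empty; configurations with very few fixed points or with $l$ small are disposed of as base cases beforehand.

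Fix a fixed point $p$ carrying a weight $l$ and let $F$ be the component of $M^{\ker l}=M^{\mathbb{Z}_l}$ containing $p$. As all weights are positive and $l$ is largest, the weights at any $q\in F\cap M^{S^1}$ divisible by $l$ are exactly those equal to $l$; hence $\dim F\in\{2,4\}$ (the value $\dim F=6$ forces $l=1$, a base case), and I split along this dimension just as Theorem \ref{t51} splits into its Cases (a) and (b). If $\dim F=2$, then $F\cong S^2$ carries exactly two fixed points $p,p'$ (Lemma \ref{l211}), Theorem \ref{t310} applied to the induced action on $F$ gives $\epsilon_F(p)=-\epsilon_F(p')$, and a comparison of the normal $\mathbb{Z}_l$-representations at the two poles (the six-dimensional analogue of Lemma \ref{l53}) pins down $\Sigma_{p'}$. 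Either $\Sigma_{p'}=-\Sigma_p$, and a self connected sum at $p,p'$ (Proposition \ref{p48}) removes both, or the normal weights differ by complements modulo $l$, in which case I first take a connected sum with $\mathbb{CP}^3$ or $\overline{\mathbb{CP}^3}$ (Figure \ref{f15-2} and its sign-reversal) at $p$ to produce a vertex matching $p'$, and then self connect. Property A is retained throughout by Propositions \ref{p47} and \ref{p48}, and Lemma \ref{l49} guarantees no self-loops are created.

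If $\dim F=4$, the new phenomenon of dimension six appears. Then $F$ is a four-dimensional compact oriented manifold on which the quotient circle $S^1/\mathbb{Z}_l\cong S^1$ acts effectively with isolated fixed points $F\cap M^{S^1}$, all of whose tangential weights equal $l$, and $F$ is orientable by Lemma \ref{l29}. I would apply Theorem \ref{t51} to this induced action: the describing multigraph of $F$ is assembled from the Operations (1)--(3) of that theorem, so the weight-$l$ configuration around $p$ is one of finitely many standard local pieces. To each such piece I match one of the standard six-manifolds $Z_1$, $Z_2$, $Z_2\sharp\overline{Z_2}$ (Figures \ref{f25-2}, \ref{f26-2}, \ref{f28}), whose own largest-weight isotropy submanifold realizes exactly that piece; checking the opposite-data and label-agreement hypotheses of Definitions \ref{d44} and \ref{d45} lets me take the corresponding equivariant connected sum by Proposition \ref{p47}, cancelling the weight-$l$ part of $F$. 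Since the non-tangential weights of the models are differences of the original weights, hence strictly smaller than $l$, no weight-$l$ fixed point is created and $(l,\nu_l)$ strictly decreases.

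The hard part is exactly this $\dim F=4$ case. Unlike blowing up in dimension four, where a connected sum with $\mathbb{CP}^2$ adds a single new fixed point with transparently controlled weights, a four-dimensional isotropy submanifold here carries a whole nontrivial fixed-point configuration whose weights must ``cooperate'' before an equivariant connected sum can remove them, precisely the obstruction flagged in the introduction. The technical core is therefore to prove that the list $\mathbb{CP}^3,Z_1,Z_2,Z_2\sharp\overline{Z_2}$ (with opposite orientations) is complete: every local configuration that the Operations of Theorem \ref{t51} can produce on $F$ is matched by exactly one model, and when no single model suffices the composite $Z_2\sharp\overline{Z_2}$ is forced. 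Establishing this matching dictionary, and verifying in each instance that $\Sigma_p=-\Sigma_q$ with agreeing edge labels so the connected sum is realizable and self-loop-free (Lemma \ref{l49}), is the bulk of the work; finite termination then follows at once from the strict decrease of the lexicographic complexity $(l,\nu_l)$.
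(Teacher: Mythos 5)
Your overall induction scheme (largest weight $l$, connected sums preserving Property A, strict decrease of $(l,\nu_l)$) matches the paper, but the case analysis underneath it is misassigned in a way that leaves genuine gaps. In the $\dim F=2$ case your dichotomy is incomplete: the six-dimensional analogue of Lemma \ref{l53} that you invoke (Lemma \ref{l81} in the paper) produces \emph{four} possibilities for the second pole $p_2$, not two. Besides $\Sigma_{p_2}=-\Sigma_{p_1}$ (self connected sum) and $\Sigma_{p_2}=[-\epsilon(p_1),l,l-x,l-y]$ (which, when $x\neq y$, is the $\mathbb{CP}^3$ case you describe), there are the equal-sign cases $\Sigma_{p_2}=[\epsilon(p_1),l,x,l-y]$ and $\Sigma_{p_2}=[\epsilon(p_1),l,l-x,y]$, which cannot be cancelled by $\mathbb{CP}^3$ at all: these are exactly where the paper takes connected sums with $Z_1$ (when $x\neq y$) and $Z_2$ (when $x=y$, $2x\neq l$). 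Likewise the both-complemented case with $x=y$, $2x\neq l$ requires $Z_2\sharp\overline{Z_2}$. These cases genuinely occur ($Z_1$ and $Z_2$ themselves realize them), so your $\dim F=2$ argument halts precisely where the new six-dimensional models are needed.

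Conversely, the case you flag as ``the hard part'' ($\dim F=4$, i.e.\ the weight $l$ has multiplicity $2$ at $p_1$) is the easy one in the paper, and your proposed treatment of it would not work. Lemma \ref{l82} shows, by applying the ABBV localization theorem to $\int_F e_{T^k}(NF)=0$ on the $4$-dimensional component $F$, that fixed points with data $[+,l,l,a]$ and $[-,l,l,a]$ occur in equal numbers in $F$; a self connected sum then removes such a pair, with no standard model involved. Your plan---classify the induced action on $F$ via Theorem \ref{t51} and match the resulting configuration against $Z_1$, $Z_2$, $Z_2\sharp\overline{Z_2}$---is left as an unproven ``matching dictionary,'' and it is structurally untenable: at every fixed point of those models the largest weight $l$ occurs with multiplicity $1$, so their $\ker l$-isotropy submanifolds through fixed points are $2$-spheres, and a connected sum at a single fixed point of such a model can never cancel a $4$-dimensional isotropy configuration. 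So the missing ingredients are exactly the paper's two isotropy lemmas: the full four-case Lemma \ref{l81} driving the $\dim F=2$ analysis, and the pairing Lemma \ref{l82} disposing of the $\dim F=4$ case.
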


A classification of the fixed point data of a circle action on a 6-dimensional compact oriented manifold with isolated fixed points is as follows.

\begin{theorem} \label{t62}
Let the circle group $S^1$ act on a 6-dimensional compact oriented manifold $M$ with isolated fixed points, whose isotropy submanifolds are orientable. To the fixed point data $\Sigma_M$ of $M$, we can apply a combination of the following operations to convert $\Sigma_M$ to the empty collection.
\begin{enumerate}
\item[(1)] Remove $[+,A,B,C]$ and $[-,A,B,C]$ together.
\item[(2)] Remove $[\pm,A,B,C]$ and $[\mp,C-A,C-B,C]$, and add $[\pm,A,B-A,C-A ]$ and $[\mp, B,B-A,C-B]$, where $0<A<B<C$.
\item[(3)] Remove $[\pm,A,B,C]$ and $[\pm,A,C-B,C]$, and add $[\pm, C-B,C-A,A]$, $[\pm, C-B,B,A]$, $[\pm, C-B,A-B,A]$, and $[\mp, C-A,A-B,A]$, where $0<A,B<C$ and $A \neq B$.
\item[(4)] Remove $[\pm,A,A,C]$ and $[\pm,A,C-A,C]$, and add $[\pm,C-A,C-2A,A]$, $[\pm,C-A,A,A]$, $[\pm,C-A,A,A]$, $[\mp,C-2A,A,A]$, where $0<A<C$.
\item[(5)] Remove $[\pm, C, A, A]$ and $[\mp, C, C-A, C-A]$, and add $[\pm,C-A,C-2A,A]$, $[\pm,C-A, A, A]$, $[\pm,C-A, A, A]$, $[\mp,C-2A, A, A]$, $[\pm, A, C-2A, C-A]$, $[\mp,A,C-A,C-A]$, $[\mp,A,C-A,C-A]$, $[\mp, C-2A, C-A, C-A]$, where $0<A<C$.
\end{enumerate}
There is a definite procedure that this ends in a finite number of steps.
\end{theorem}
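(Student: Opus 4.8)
The plan is to obtain Theorem \ref{t62} as the fixed-point-data shadow of the multigraph classification in Theorem \ref{t61}: I track how the collection $\Sigma_M$ changes under each multigraph operation used there. By Proposition \ref{p34} there is a signed labeled multigraph $\Gamma$ describing $M$ with Property A whose vertices carry exactly the classes of $\Sigma_M$. Reversing an edge produces a multigraph describing the same manifold (Lemma \ref{l36}), hence leaves $\Sigma_M$ untouched and is invisible at the data level. A self connected sum at two vertices $p,q$ with $\Sigma_p=-\Sigma_q$ simply deletes these two classes (Proposition \ref{p48} with Lemma \ref{l43}); writing $\Sigma_p=[+,A,B,C]$ this is Operation (1). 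It remains to compute the net effect of a connected sum with one of the standard models of Theorem \ref{t61}.

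For this I would run the biggest-weight induction of the four-dimensional proof of Theorem \ref{t51}. Let $C$ be a weight of largest magnitude; after reversing edges we may assume every weight is positive. Fix an edge of label $C$ with endpoints $p_1,p_2$. Since $0<A,B<C$ the only weight divisible by $C$ at such a vertex is $C$ itself, so the component $F$ of $M^{\ker C}$ through $p_1,p_2$ is a $2$-sphere (Lemma \ref{l211}) whose two poles are $p_1,p_2$, with $\epsilon_F(p_1)=-\epsilon_F(p_2)$ (Theorem \ref{t310}). The normal bundle $NF$ splits $S^1$-equivariantly into two complex line bundles over this $S^2$ --- precisely the Hirzebruch-type data that the models $Z_n$ encode. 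Consequently the transverse weights $\{A,B\}$ at $p_1$ determine those at $p_2$: each of $A,B$ is either preserved or reflected to its residue $C-A\equiv-A$, $C-B\equiv-B \bmod C$, and maximality of $C$ fixes the integer multiples.

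This yields a short table indexed by how many transverse weights reflect and whether a weight is repeated. If neither reflects, the poles have opposite data and we are back in Operation (1). Otherwise I take a connected sum with the model realizing the given reflection pattern --- $\mathbb{CP}^3$ when both distinct weights reflect, $Z_1$ when one reflects, $Z_2$ when one of a repeated pair $[\pm,A,A,C]$ reflects, and $Z_2\sharp\overline{Z_2}$ when both of a repeated pair reflect --- and then self-sum away the freshly created weight-$C$ vertices. Reading off the fixed point data of the model from Section \ref{s7} and computing the net change of $\Sigma_M$ reproduces Operations (2)--(5) respectively; for instance gluing $\overline{\mathbb{CP}^3}$ with $a_1=A,a_2=B,a_3=C$ at $[+,A,B,C]$ and then self-summing its vertex $[+,C,C-A,C-B]$ against $[-,C,C-A,C-B]\in\Sigma_M$ deletes $[+,A,B,C]$ and $[\mp,C-A,C-B,C]$ while inserting $[+,A,B-A,C-A]$ and $[-,B,B-A,C-B]$, which is exactly Operation (2). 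Each operation removes both weight-$C$ poles and introduces only weights smaller than $C$, so the lexicographic quantity (largest weight, number of vertices attaining it) strictly decreases; this is the same termination invariant as in Theorem \ref{t61}, and iterating empties $\Sigma_M$ in finitely many steps.

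I expect the main obstacle to be this case analysis --- in particular determining the transverse weights at $p_2$ and certifying that each admissible reflection pattern is realized by exactly one standard model in the correct orientation. The delicate case is Operation (5) with $Z_2\sharp\overline{Z_2}$, arising from a repeated transverse weight $A$: here $M^{\ker A}$ is $4$-dimensional, and I would invoke the four-dimensional classification Theorem \ref{t51} on $M^{\ker A}$ to control how the two equivariant normal summands behave and thereby separate $Z_2$ from $Z_2\sharp\overline{Z_2}$. Throughout, Lemma \ref{l49} and Property A are what guarantee that the edge-matchings required for each connected sum are consistent and create no self-loops, so that the purely combinatorial Operations (1)--(5) genuinely lift to sequences of equivariant connected sums.
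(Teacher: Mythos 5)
Your overall architecture is the same as the paper's (Section \ref{s10}): reduce to a multigraph with Property A, induct on the largest weight $C$, use the two-poles-of-a-sphere dichotomy to determine how the transverse weights at $p_2$ reflect those at $p_1$, realize each reflection pattern by a connected sum with $\mathbb{CP}^3$, $Z_1$, $Z_2$, or $Z_2\sharp\overline{Z_2}$, and terminate because each step trades two weight-$C$ points for strictly smaller weights; your sample computation for Operation (2) is correct and matches the paper's Case (2-a). However, there is a genuine gap at the foundation of your case analysis: the sentence ``since $0<A,B<C$ the only weight divisible by $C$ at such a vertex is $C$ itself, so the component $F$ of $M^{\ker C}$ through $p_1,p_2$ is a $2$-sphere'' silently assumes the maximal weight has multiplicity one at $p_1$. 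Nothing rules out fixed point data $[\pm,C,C,x]$ (effectiveness only forbids multiplicity three), and in that case $F$ is $4$-dimensional, so the sphere dichotomy and your reflection table never get off the ground. The paper devotes a separate argument to exactly this situation (Case (0-b) of the proof, resting on Lemma \ref{l82}, an ABBV localization computation of $\int_F e_{T^k}(NF)$ over the codimension-two component) to produce a partner fixed point with data $[\mp,C,C,x]$ so that Operation (1) applies. For the same reason your framework does not cover the base cases $C=1$ (where $\ker C$ is trivial and $F=M$) and $C=2$ (where weights $\{1,2,2\}$ force the repeated-maximal-weight situation); the paper handles these separately in Section \ref{s9} (Lemmas \ref{l91} and \ref{l92}), using the counting identities of Lemma \ref{l28}, Lemma \ref{l39}, and Proposition \ref{p26} to pair off fixed points with opposite data. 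Without these three missing cases the induction is incomplete.

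Two smaller remarks. First, your statement that $NF$ ``splits $S^1$-equivariantly into two complex line bundles'' over the sphere is stronger than what is needed or proved; the paper's Lemma \ref{l81} only uses that $NF$ is an oriented $\ker C$-equivariant bundle over a connected base, so the $\ker C$-representations at the two poles are isomorphic, and it is the orientation bookkeeping ($\epsilon_F$, $\epsilon_N$ versus $\epsilon$) that pins down the four sign-and-reflection patterns. Second, your plan to invoke Theorem \ref{t51} on $M^{\ker A}$ to separate $Z_2$ from $Z_2\sharp\overline{Z_2}$ is unnecessary: the sign correlation already contained in the Lemma \ref{l81} dichotomy does this for free (one reflection with equal signs gives Operation (4) via $Z_2$; double reflection with opposite signs gives Operation (5) via $Z_2\sharp\overline{Z_2}$).
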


The sign convention in Theorem \ref{t12} means the following: suppose that the fixed point data of $M$ contains $[-,A,B,C]$ and $[+,C-A,C-B,C]$. Then we can perform Operation (2) to remove these, and add $[-,A,B-A,C-A]$ and $[+,B,B-A,C-B]$. If the fixed point data of $M$ contains instead $[+,A,B,C]$ and $[-,C-A,C-B,C]$ (same fixed point data as above but with opposite signs), then we can perform Operation (2) to remove these and add $[+,A,B-A,C-A]$ and $[-,B,B-A,C-B]$.

The operations in Theorem \ref{t12} work in a way that they remove the weight $C$ with the biggest magnitude; every weight in what we add has magnitude strictly smaller than $|C|$. Therefore, the process of Theorem \ref{t12} stops in a finite number of steps. Operation (1) corresponds to a self connected sum, Operation (2) corresponds to a connected sum with $\mathbb{CP}^3$ (or $\overline{\mathbb{CP}^3}$), Operations (3) corresponds to a connected sum with the manifold $Z_1$ (or $\overline{Z_1}$), Operations (4) corresponds to a connected sum with the manifold $Z_2$ (or $\overline{Z_2}$), and Operation (5) corresponds to a connected sum with the manifold $Z_2 \sharp \overline{Z_2}$ in Example \ref{e74}, which is a connected sum at fixed points of two copies of $Z_2$, one with reversed orientation.

If a 6-dimensional compact oriented $S^1$-manifold has exactly 2 fixed points $p$ and $q$, the two fixed points have the opposite fixed point data, that is, $\Sigma_p=-\Sigma_q$ (see Theorem \ref{t310}); by performing Operation (1) of Theorem \ref{t62} once to the fixed point data of the manifold, we reach the empty collection.

\section{Dimension 6: standard models and their multigraphs} \label{s7}

In this section, we illustrate multigraphs describing torus actions on standard 6-dimensional oriented manifolds. These manifolds form a set of minimal models for our main results. All examples are in fact complex manifolds. We already discussed a torus action on the complex projective space $\mathbb{CP}^n$ and multigraphs describing it.

\begin{exa}[The manifold $Z_n$, 6-dimensional analogue of the Hirzebruch surfaces]\label{e71}

Fix an integer $n$. By the 6-dimensional analogue $Z_n$ of Hirzebruch surfaces we mean a compact complex manifold
\begin{center}
$Z_n=\{([z_0:z_1:z_2:z_3],[w_2:w_3]) \in \mathbb{CP}^3 \times \mathbb{CP}^1 \, : \, z_2 w_3^n=z_3 w_2^n\}$.
\end{center}
Fix $k \in \{1,2,3\}$. Let $a$, $b$, and $c$ be non-zero elements of $\mathbb{Z}^k$ such that $b-a \neq 0$, $nc-a \neq 0$, and $nc-b \neq 0$. Let a torus $T^k$ act on $Z_n$ by
\begin{center}
$g \cdot ([z_0:z_1:z_2:z_3],[w_2:w_3])=([g^a z_0: g^b z_1: z_2: g^{nc} z_3],[w_2:g^c w_3])$
\end{center}
for all $g \in T^k \subset \mathbb{C}^k$ and for all $([z_0:z_1:z_2:z_3],[w_2:w_3]) \in Z_n$. We denote by $Z_n(a,b,c)$ the manifold with this action.
The action has 6 fixed points; at each fixed point, we exhibit local coordinates and the weights at the fixed point as complex $T^k$-representations.
\begin{enumerate}
\item $q_1=([1:0:0:0],[1:0])$: local coordinates $(z_1/z_0, z_2/z_0, w_3/w_2)$, weights $\{b-a, -a, c\}$
\item $q_2=([1:0:0:0],[0:1])$: local coordinates $(z_1/z_0, z_3/z_0, w_2/w_3)$, weights $\{b-a, nc-a, -c\}$
\item $q_3=([0:1:0:0],[1:0])$: local coordinates $(z_0/z_1, z_2/z_1, w_3/w_2)$, weights $\{a-b, -b, c\}$
\item $q_4=([0:1:0:0],[0:1])$: local coordinates $(z_0/z_1, z_3/z_1, w_2/w_3)$, weights $\{a-b, nc-b, -c\}$
\item $q_5=([0:0:1:0],[1:0])$: local coordinates $(z_0/z_2, z_1/z_2, w_3/w_2)$, weights $\{a, b, c\}$
\item $q_6=([0:0:0:1],[0:1])$: local coordinates $(z_0/z_3, z_1/z_3, w_2/w_3)$, weights $\{a-nc, b-nc, -c\}$
\end{enumerate}
For instance, local coordinates of $q_2$ are $(z_1/z_0, z_3/z_0, w_2/w_3)$, and the torus $T^k$ acts near $q_2$ by
\begin{center}
$\displaystyle g \cdot \left( \frac{z_1}{z_0}, \frac{z_3}{z_0}, \frac{w_2}{w_3}\right)=\left(  \frac{g^b z_1}{g^a z_0}, \frac{g^{nc} z_3}{g^a z_0}, \frac{w_2}{g^c w_3}\right)=\left(g^{b-a} \frac{z_1}{z_0}, g^{nc-a} \frac{z_3}{z_0}, g^{-c} \frac{w_2}{w_3}\right)$.
\end{center}
Thus the complex $T^k$-weights at $q_2$ are $\{b-a,nc-a,-c\}$. With the orientation on $Z_n$ induced from the complex structure, the fixed point data of $q_2$ is $[-,b-a,nc-a,c]$. The fixed point data of $Z_n$ as an oriented manifold is
\begin{center}
$[+,a-b,a,c], [-,a-b,a-nc,c], [-,a-b,b,c], [-,a-b,nc-b,c], [+,a,b,c], [+,a-nc,nc-b,c]$.
\end{center}
Figure \ref{f24-1} describes $Z_n$ as a directed labeled multigraph, and Figure \ref{f24-2} describes $Z_n$ as a signed labeled multigraph.

As Figure \ref{f15-2} satisfies Property A, Figure \ref{f24-2} also satisfies Property A. For example, $q_1$ has weight $c$, $q_2$ has weight $-c$, $q_1$ and $q_2$ are in the same 2-sphere $F:=\{([1:0:0:0],[w_2:w_3])\}$ of $M^{\ker c}$. With the induced orientation on $F$, the fixed point data  of $q_1$ and $q_2$ for the action on $F$ are $[+,c]$ and $[-,c]$, respectively. Therefore,  $\epsilon_F(q_1)=+1=-\epsilon_F(q_2)$. \end{exa}

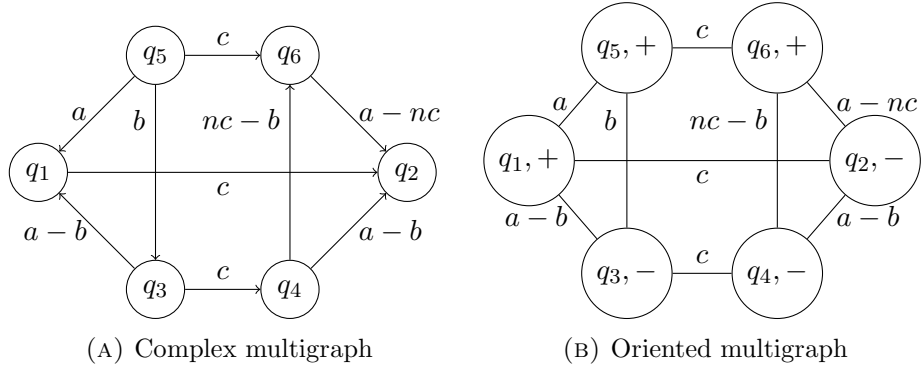
\begin{figure} 
\centering
\begin{subfigure}[b][5cm][s]{.49\textwidth}
\centering
\vfill
\begin{tikzpicture}[state/.style ={circle, draw}]
\node[state] (a) {$q_1$};
\node[state] (c) [below right=of a] {$q_3$};
\node[state] (d) [right=of c] {$q_4$};
\node[state] (b) [above right=of d] {$q_2$};
\node[state] (e) [above right=of a] {$q_5$};
\node[state] (f) [right=of e] {$q_6$};
\path (a) [->] edge node[below] {$c$} (b);
\path (c) [->] edge node[left] {$a-b$} (a);
\path (e) [->] edge node[left] {$a$} (a);
\path (d) [->] edge node [right] {$a-b$} (b);
\path (f) [->] edge node [right] {$a-nc$} (b);
\path (c) [->] edge node [above] {$c$} (d);
\path (d) [->] edge node [pos=.8, left] {$nc-b$} (f);
\path (e) [->] edge node [pos=.2, left] {$b$} (c);
\path (e) [->] edge node [above] {$c$} (f);
\end{tikzpicture}
\caption{Complex multigraph} \label{f24-1}
\vfill
\end{subfigure}
\begin{subfigure}[b][5cm][s]{.49\textwidth}
\centering
\vfill
\begin{tikzpicture}[state/.style ={circle, draw}]
\node[state] (a) at (0,0) {$q_1,+$};
\node[state] (c) at (1.3, -1.5) {$q_3,-$};
\node[state] (d) at (3.3, -1.5) {$q_4,-$};
\node[state] (b) at (4.6, 0) {$q_2,-$};
\node[state] (e) at (1.3, 1.5) {$q_5,+$};
\node[state] (f) at (3.3, 1.5) {$q_6,+$};
\path (a) edge node[below] {$c$} (b);
\path (a) edge node[left] {$a-b$} (c);
\path (e) edge node[left] {$a$} (a);
\path (b) edge node [right] {$a-b$} (d);
\path (b) edge node [right] {$a-nc$} (f);
\path (c) edge node [above] {$c$} (d);
\path (d) edge node [pos=.8, left] {$nc-b$} (f);
\path (e) edge node [pos=.2, left] {$b$} (c);
\path (e) edge node [above] {$c$} (f);
\end{tikzpicture}
\caption{Oriented multigraph} \label{f24-2}
\vfill
\end{subfigure}
\caption{Multigraph describing a $T^k$-action on $Z_n$}\label{f24}
\end{figure}

To simplify the proof of Theorems \ref{t11} and \ref{t12}, in Example \ref{e71} we will take specific values of $a$, $b$, $c$, and $n$ and record them as separate examples.

\begin{exa}[The manifold $Z_1(a,b,c)$ with $a \neq b$, $a \neq c$, $b \neq c$]\label{e72}
Suppose that $a,b,c$ are mutually distinct. Take the manifold $Z_1(a,b,c)$ in Example \ref{e71}. The weights at the fixed points as complex $T^k$-representations are
\begin{center}
$\{b-a, -a, c\}$, $\{b-a, c-a, -c\}$, $\{a-b, -b, c\}$, $\{a-b, c-b, -c\}$, $\{a, b, c\}$, $\{a-c, b-c, -c\}$,
\end{center} 
respectively. Therefore, as an oriented real $T^k$-representations, the fixed point data of $Z_1(a,b,c)$ is
\begin{center}
$[+, a-b, a, c]$, $[-, a-b, a-c, c]$, $[-, a-b, b, c]$, $[-, a-b, c-b, c]$, $[+, a, b, c]$, $[+, a-c, c-b, c]$.
\end{center}
Figure \ref{f25-1} describes $Z_1(a,b,c)$ as a directed labeled multigraph and Figure \ref{f25-2} describes $Z_1(a,b,c)$ as a signed labeled multigraph with Property A.
\end{exa}

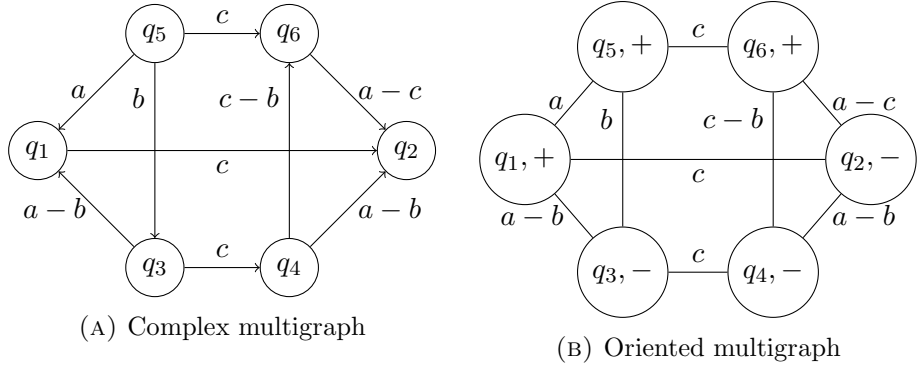
\begin{figure} 
\centering
\begin{subfigure}[b][4.5cm][s]{.49\textwidth}
\centering
\vfill
\begin{tikzpicture}[state/.style ={circle, draw}]
\node[state] (a) {$q_1$};
\node[state] (c) [below right=of a] {$q_3$};
\node[state] (d) [right=of c] {$q_4$};
\node[state] (b) [above right=of d] {$q_2$};
\node[state] (e) [above right=of a] {$q_5$};
\node[state] (f) [right=of e] {$q_6$};
\path (a) [->] edge node[below] {$c$} (b);
\path (c) [->] edge node[left] {$a-b$} (a);
\path (e) [->] edge node[left] {$a$} (a);
\path (d) [->] edge node [right] {$a-b$} (b);
\path (f) [->] edge node [right] {$a-c$} (b);
\path (c) [->] edge node [above] {$c$} (d);
\path (d) [->] edge node [pos=.8, left] {$c-b$} (f);
\path (e) [->] edge node [pos=.2, left] {$b$} (c);
\path (e) [->] edge node [above] {$c$} (f);
\end{tikzpicture}
\caption{Complex multigraph} \label{f25-1}
\vfill
\end{subfigure}
\begin{subfigure}[b][4.5cm][s]{.49\textwidth}
\centering
\vfill
\begin{tikzpicture}[state/.style ={circle, draw}]
\node[state] (a) at (0,0) {$q_1,+$};
\node[state] (c) at (1.3,-1.5) {$q_3,-$};
\node[state] (d) at (3.3, -1.5) {$q_4,-$};
\node[state] (b) at (4.6, 0) {$q_2,-$};
\node[state] (e) at (1.3, 1.5) {$q_5,+$};
\node[state] (f) at (3.3, 1.5) {$q_6,+$};
\path (a) edge node[below] {$c$} (b);
\path (a) edge node[left] {$a-b$} (c);
\path (e) edge node[left] {$a$} (a);
\path (b) edge node [right] {$a-b$} (d);
\path (b) edge node [right] {$a-c$} (f);
\path (c) edge node [above] {$c$} (d);
\path (d) edge node [pos=.8, left] {$c-b$} (f);
\path (e) edge node [pos=.2, left] {$b$} (c);
\path (e) edge node [above] {$c$} (f);
\end{tikzpicture}
\caption{Oriented multigraph} \label{f25-2}
\vfill
\end{subfigure}
\caption{Multigraph describing a $T^k$-action on $Z_1$}\label{f25}
\end{figure}

\begin{exa}[The manifold $Z_2(a,d,d)$ with $2d \neq a$]\label{e73}
Let $a$ and $d$ be non-zero elements of $\mathbb{Z}^k$ such that $2d \neq a$. In Example \ref{e71}, take $n=2$ and $b=c=d$. Complex $T^k$-weights at the fixed points are
\begin{center}
$\{d-a, -a, d\}$, $\{d-a, 2d-a, -d\}$, $\{a-d, -d, d\}$, $\{a-d, d, -d\}$, $\{a, d, d\}$, $\{a-2d, -d, -d\}$.
\end{center} 
As real $T^k$-representations, the fixed point data of $Z_2(a,d,d)$ is
\begin{center}
$[+, a-d, a, d]$, $[-,a-d,a-2d,d]$, $[-,a-d, d, d]$, $[-,a-d, d, d]$, $[+, a, d, d]$, $[+,a-2d, d, d]$.
\end{center}
Figure \ref{f26-1} describes $Z_2(a,d,d)$ as a directed labeled multigraph and Figure \ref{f26-2} describes $Z_2(a,d,d)$ as a signed labeled multigraph with Property A.
\end{exa}

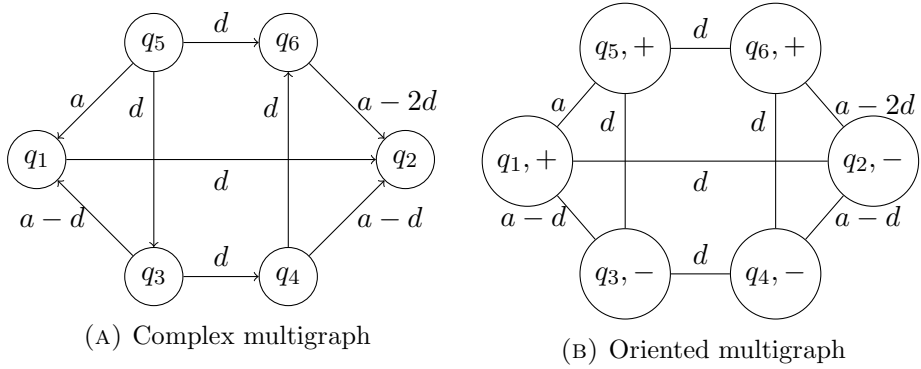
\begin{figure} 
\centering
\begin{subfigure}[b][4.5cm][s]{.49\textwidth}
\centering
\vfill
\begin{tikzpicture}[state/.style ={circle, draw}]
\node[state] (a) {$q_1$};
\node[state] (c) [below right=of a] {$q_3$};
\node[state] (d) [right=of c] {$q_4$};
\node[state] (b) [above right=of d] {$q_2$};
\node[state] (e) [above right=of a] {$q_5$};
\node[state] (f) [right=of e] {$q_6$};
\path (a) [->] edge node[below] {$d$} (b);
\path (c) [->] edge node[left] {$a-d$} (a);
\path (e) [->] edge node[left] {$a$} (a);
\path (d) [->] edge node [right] {$a-d$} (b);
\path (f) [->] edge node [right] {$a-2d$} (b);
\path (c) [->] edge node [above] {$d$} (d);
\path (d) [->] edge node [pos=.8, left] {$d$} (f);
\path (e) [->] edge node [pos=.2, left] {$d$} (c);
\path (e) [->] edge node [above] {$d$} (f);
\end{tikzpicture}
\caption{Complex multigraph} \label{f26-1}
\vfill
\end{subfigure}
\begin{subfigure}[b][4.5cm][s]{.49\textwidth}
\centering
\vfill
\begin{tikzpicture}[state/.style ={circle, draw}]
\node[state] (a) at (0,0) {$q_1,+$};
\node[state] (c) at (1.3,-1.5) {$q_3,-$};
\node[state] (d) at (3.3, -1.5) {$q_4,-$};
\node[state] (b) at (4.6, 0) {$q_2,-$};
\node[state] (e) at (1.3, 1.5) {$q_5,+$};
\node[state] (f) at (3.3, 1.5) {$q_6,+$};
\path (a) edge node[below] {$d$} (b);
\path (a) edge node[left] {$a-d$} (c);
\path (e) edge node[left] {$a$} (a);
\path (b) edge node [right] {$a-d$} (d);
\path (b) edge node [right] {$a-2d$} (f);
\path (c) edge node [above] {$d$} (d);
\path (d) edge node [pos=.8, left] {$d$} (f);
\path (e) edge node [pos=.2, left] {$d$} (c);
\path (e) edge node [above] {$d$} (f);
\end{tikzpicture}
\caption{Oriented multigraph} \label{f26-2}
\vfill
\end{subfigure}
\caption{Multigraph describing a $T^k$-action on $Z_2$}\label{f26}
\end{figure}

\begin{exa}[The manifold $Z_2(a,e,e) \sharp \overline{Z_2}(a,a-e,a-e)$]\label{e74}
Let $a$ and $e$ be non-zero elements of $\mathbb{Z}^k$ such that $2e \neq a$. Take $Z_2(a,e,e)$ of Example \ref{e73} that has fixed point data
\begin{center}
$[+, a-e, a, e]$, $[-,a-e,a-2e,e]$, $[-,a-e, e, e]$, $[-,a-e, e, e]$, $[+, a, e, e]$, $[+,a-2e, e, e]$.
\end{center}
Denote the fixed points by $q_1',\cdots,q_6'$, respectively. We also take $Z_2(a,a-e,a-e)$ (take $d=a-e$) of Example \ref{e73} that has fixed point data
\begin{center}
$[+, e, a, a-e]$, $[+, e, a-2e, a-e]$, $[-, e, a-e, a-e]$, $[-, e, a-e, a-e]$, $[+, a, a-e, a-e]$, $[-, a-2e, a-e, a-e]$.
\end{center}
We reverse the orientation of $Z_2(a,a-e,a-e)$ to get a manifold $\overline{Z_2}(a,a-e,a-e)$ that has fixed point data
\begin{center}
$[-, e, a, a-e]$, $[-, e, a-2e, a-e]$, $[+,e,a-e,a-e]$, $[+,e,a-e,a-e]$, $[-, a, a-e, a-e]$, $[+, a-2e, a-e, a-e]$.
\end{center}
Denote the fixed points by $q_1'',\cdots,q_6''$, respectively.
Now, $q_1'$ and $q_1''$ have the same weights (as real $T^k$-representations) and satisfy $\epsilon(q_1')=-\epsilon(q_1'')$. Therefore, we can take an equivariant connected sum at $q_1'$ of $Z_2(a,e,e)$ and at $q_1''$ of $\overline{Z_2}(a,a-e,a-e)$ to construct another 6-dimensional compact connected oriented $T^k$-manifold $Z_2(a,e,e) \sharp \overline{Z_2}(a,a-e,a-e)$ with 10 fixed points $q_2'$, $\cdots$, $q_6'$, $q_2''$, $\cdots$, $q_6''$, that has fixed point data
\begin{center}
$[-,a-e,a-2e,e]$, $[-,a-e, e, e]$, $[-,a-e, e, e]$, $[+, a, e, e]$, $[+,a-2e, e, e]$, $[-, e, a-2e, a-e]$, $[+,e,a-e,a-e]$, $[+,e,a-e,a-e]$, $[-, a, a-e, a-e]$, $[+, a-2e, a-e, a-e]$.
\end{center}
Since Figure \ref{f26-2} describes $Z_2$ with Property A and $Z_2 \sharp \overline{Z_2}$ is constructed by taking a connected sum of two copies of $Z_2$ (one with reversed orientation), by Proposition \ref{p47} Figure \ref{f28} describes $Z_2 \sharp \overline{Z_2}$ with Property A.
\end{exa}

\begin{figure} 
\centering
\begin{tikzpicture}[state/.style ={circle, draw}]
\node[state] (a) at (0,0) {$q_1',+$};
\node[state] (c) at (-1.3,-1.5) {$q_3',-$};
\node[state] (d) at (-3.3,-1.5) {$q_4',-$};
\node[state] (b) at (-4.6,0) {$q_2',-$};
\node[state] (e) at (-1.3,1.5) {$q_5',+$};
\node[state] (f) at (-3.3,1.5) {$q_6',+$};
\node[state] (g) at (1.5,0) {$q_1'',-$};
\node[state] (i) at (2.8,-1.5) {$q_3'',+$};
\node[state] (j) at (4.8,-1.5) {$q_4'',+$};
\node[state] (h) at (6.1,0) {$q_2'',-$};
\node[state] (k) at (2.8,1.5) {$q_5'',-$};
\node[state] (l) at (4.8,1.5) {$q_6'',+$};
\path (a) edge node[below] {$e$} (b);
\path (a) edge node[right] {$a-e$} (c);
\path (e) edge node[above] {$a$} (a);
\path (b) edge node [left] {$a-e$} (d);
\path (b) edge node [left] {$a-2e$} (f);
\path (c) edge node [below] {$e$} (d);
\path (d) edge node [pos=.8, right] {$e$} (f);
\path (e) edge node [pos=.2, right] {$e$} (c);
\path (e) edge node [above] {$e$} (f);
\path (g) edge node[below] {$a-e$} (h);
\path (g) edge node[left] {$e$} (i);
\path (k) edge node[left] {$a$} (g);
\path (h) edge node [right] {$e$} (j);
\path (h) edge node [right] {$a-2e$} (l);
\path (i) edge node [below] {$a-e$} (j);
\path (j) edge node [pos=.8, left] {$a-e$} (l);
\path (k) edge node [pos=.2, left] {$a-e$} (i);
\path (k) edge node [above] {$a-e$} (l);
\end{tikzpicture}
\caption{Multigraph for $Z_2(a,e,e) \sqcup \overline{Z_2}(a,a-e,a-e)$}\label{f27}
\end{figure}
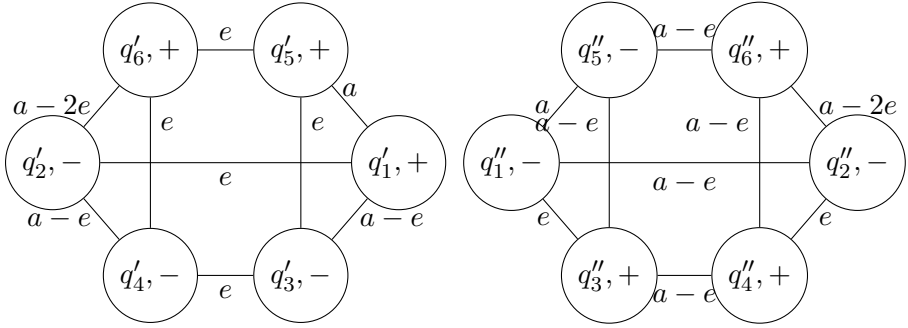

\begin{figure} 
\centering
\begin{tikzpicture}[state/.style ={circle, draw}]
\node[state] (c) at (-1.3,-1.5) {$q_3',-$};
\node[state] (d) at (-3.3,-1.5) {$q_4',-$};
\node[state] (b) at (-4.6,0) {$q_2',-$};
\node[state] (e) at (-1.3,1.5) {$q_5',+$};
\node[state] (f) at (-3.3,1.5) {$q_6',+$};
\node[state] (i) at (1.8,-1.5) {$q_3'',+$};
\node[state] (j) at (3.8,-1.5) {$q_4'',+$};
\node[state] (h) at (5.1,0) {$q_2'',-$};
\node[state] (k) at (1.8,1.5) {$q_5'',-$};
\node[state] (l) at (3.8,1.5) {$q_6'',+$};
\path (e) edge node[above] {$a$} (k);
\path (b) edge node[above] {$e$} (i);
\path (c) edge node[above] {$a-e$} (h);
\path (b) edge node [left] {$a-e$} (d);
\path (b) edge node [left] {$a-2e$} (f);
\path (c) edge node [below] {$e$} (d);
\path (d) edge node [pos=.8, right] {$e$} (f);
\path (e) edge node [pos=.2, right] {$e$} (c);
\path (e) edge node [above] {$e$} (f);
\path (h) edge node [right] {$e$} (j);
\path (h) edge node [right] {$a-2e$} (l);
\path (i) edge node [below] {$a-e$} (j);
\path (j) edge node [pos=.8, left] {$a-e$} (l);
\path (k) edge node [pos=.2, left] {$a-e$} (i);
\path (k) edge node [above] {$a-e$} (l);
\end{tikzpicture}
\caption{Multigraph for $Z_2(a,e,e) \sharp \overline{Z_2}(a,a-e,a-e)$}\label{f28}
\end{figure}

\section{Dimension 6: fixed point data in isotropy submanifold} \label{s8}

In this section, for a torus action on a 6-dimensional oriented manifold, we discuss a relation between fixed point datum of fixed points that are in the same component of a set fixed by some subgroup of the torus, an isotropy submanifold.

Let a torus $T^k$ act on a 6-dimensional oriented manifold $M$ with isolated fixed points, whose isotropy submanifolds are orientable. Let $l$ be a weight with the biggest magnitude. First, assume that $M^{\ker l}$ has a 2-dimensional component $F$ that contains a $T^k$-fixed point $q$. Then $F$ is the 2-sphere, and $F$ contains another fixed point $q'$. A relationship between the fixed point data of $q$ and that of $q'$ is as follows.

\begin{lemma} \label{l81}
Let a torus $T^k$ act effectively on a 6-dimensional compact oriented manifold $M$ with isolated fixed points, whose isotropy submanifolds are orientable. Suppose that a fixed point $q$ has fixed point data $[\epsilon(q),l,a,b]$, where $|l|=\max\{|w_{p,i}|:p \in M^{T^k}, 1 \leq i \leq 3\}$, $a$ and $b$ are not integer multiples of $l$, $\langle l,a \rangle \geq 0$, and $\langle l,b \rangle \geq 0$. Then there exists another (unique) fixed point $q'$ so that one of the following holds for the fixed point data of $q'$:
\begin{enumerate}
\item $\Sigma_{q'}=[-\epsilon(q),l,a,b]$.
\item $\Sigma_{q'}=[-\epsilon(q),l,l-a,l-b]$.
\item $\Sigma_{q'}=[\epsilon(q),l,a,l-b]$.
\item $\Sigma_{q'}=[\epsilon(q),l,l-a,b]$.
\end{enumerate}
The fixed points $q$ and $q'$ are in the same component of $M^{\ker l}$, which is the 2-sphere. 
\end{lemma}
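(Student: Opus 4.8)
The plan is to mimic the proof of the four-dimensional analogue, Lemma~\ref{l53}, but with a rank-two complex normal bundle, so that the single modular relation used there is replaced by two independent ones. First I would normalize: for every weight $w_{p,i}$ at every fixed point $p$, replace $w_{p,i}$ by $-w_{p,i}$ whenever $\langle w_{p,i},l\rangle<0$ (and choose all weights positive if $k=1$), so that every weight lies in the closed half-space $\{\langle\,\cdot\,,l\rangle\ge 0\}$; this changes no fixed point datum. Let $F$ be the connected component of $M^{\ker l}$ containing $q$. A tangent line $L_{q,i}$ is fixed by $\ker l$ exactly when its weight vanishes on $\ker l$, and the annihilator of $\ker l$ in $\mathbb{Z}^k$ is precisely $\mathbb{Z}\cdot l$; hence the hypothesis that $a$ and $b$ are not integer multiples of $l$ means that only the $l$-direction is tangent to $F$, so $\dim F=2$. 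By Lemma~\ref{l29}, $F$ is orientable, and by Lemma~\ref{l211} it is a $2$-sphere carrying exactly two fixed points, $q$ and a second one $q'$. Choosing orientations of $F$ and of $NF$ compatible with $M$ as in Definition~\ref{d23}, Theorem~\ref{t310} applied to the $T^k$-action on $F$ gives $\epsilon_F(q)=-\epsilon_F(q')$.

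Next I would analyze the normal bundle. Over $F\cong\mathbb{CP}^1$ the equivariant bundle $NF$ splits $T^k$-equivariantly as a sum of two complex line bundles $L_1\oplus L_2$ (the equivariant Birkhoff--Grothendieck splitting over $\mathbb{CP}^1$). For an equivariant complex line bundle over $F$ the two fiber weights at $q$ and $q'$ differ by an integer multiple of the tangent weight $l$, which is the equivariant analogue of the normal-bundle congruence used in Lemma~\ref{l53}. Thus, writing the normal weights at $q$ as $a$ and $b$, the partner weight of each at $q'$ is congruent to $\pm a$, respectively $\pm b$, modulo $l$. Using that $l$ has the biggest magnitude and that all weights lie in the half-space $\{\langle\,\cdot\,,l\rangle\ge 0\}$, the only representatives of magnitude at most $|l|$ in that half-space are the weight itself and $l$ minus the weight; hence the unordered pair of normal weights at $q'$ is one of $\{a,b\}$, $\{l-a,l-b\}$, $\{a,l-b\}$, or $\{l-a,b\}$. (When $\langle a,l\rangle=0$ only the choice $a$ survives, and similarly for $b$, which merely restricts which of the four cases can occur.)

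Finally I would pin down the sign $\epsilon(q')$ by a parity count. The complex structure on $L_1\oplus L_2$ gives a global orientation of $NF$, so the comparison sign $s$ between the chosen orientation of $NF$ and this complex orientation is constant on $F$. Writing $f_q$ (respectively $f_{q'}$) for the number of the two normalized normal weights at $q$ (respectively $q'$) that are the negatives of the genuine complex weights of $L_1,L_2$, one gets $\epsilon_N(q)=(-1)^{f_q}s$ and $\epsilon_N(q')=(-1)^{f_{q'}}s$; combining with $\epsilon=\epsilon_F\cdot\epsilon_N$ and $\epsilon_F(q)=-\epsilon_F(q')$ yields $\epsilon(q')=-(-1)^{f_q+f_{q'}}\,\epsilon(q)$. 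A direct check shows that for each line bundle the partner weight equals the original weight precisely when that bundle contributes an even amount to $f_q+f_{q'}$, and equals $l$ minus the weight precisely when it contributes an odd amount. Hence the pairs $\{a,b\}$ and $\{l-a,l-b\}$ (even total) give $\epsilon(q')=-\epsilon(q)$, which are Cases~(1) and~(2), while $\{a,l-b\}$ and $\{l-a,b\}$ (odd total) give $\epsilon(q')=\epsilon(q)$, which are Cases~(3) and~(4).

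I expect the main obstacle to be the orientation bookkeeping of the last paragraph: translating ``partner weight is $a$ versus $l-a$'' into the parity of sign flips relative to the genuine complex weights, and verifying that $\epsilon_N$ behaves multiplicatively so that the parity argument is clean. The equivariant splitting of $NF$ and the line-bundle weight congruence are standard, and the half-space/biggest-magnitude step is the same elementary estimate as in Lemma~\ref{l53}; the delicate point is ensuring the two normal lines can be treated independently and that no global complex structure on $M$ (which we do not have) is needed, only the fiberwise complex structures supplied by the weight decomposition.
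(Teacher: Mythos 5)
Your proposal follows the same skeleton as the paper's proof of Lemma \ref{l81}: the half-space normalization, the identification of $F$ as a $2$-sphere with exactly two fixed points via Lemmas \ref{l29} and \ref{l211}, the relation $\epsilon_F(q)=-\epsilon_F(q')$ from Theorem \ref{t310}, the ``biggest magnitude plus half-space'' argument pinning down representatives modulo $l$, and a final sign bookkeeping through $\epsilon=\epsilon_F\cdot\epsilon_N$; moreover, granting your splitting, your parity computation does reproduce Cases (1)--(4) with the correct signs. The genuine gap is the splitting itself. You invoke an ``equivariant Birkhoff--Grothendieck splitting'' of $NF$ into two $T^k$-equivariant complex line bundles, but $NF$ is only a \emph{real} vector bundle, and Birkhoff--Grothendieck is a statement about holomorphic (or at least complex) bundles. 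Your third paragraph needs a \emph{global} $T^k$-invariant complex structure on $NF$: without it, the ``genuine complex weights of $L_1,L_2$,'' the constant sign $s$, and the flip counts $f_q,f_{q'}$ are not coherently defined at the two poles --- fiberwise complex structures chosen independently at $q$ and $q'$ carry no information linking the two ends, and that link is precisely what is at stake. Producing such a structure is not routine. The natural candidate comes from the $\ker l$-isotypic decomposition of $NF$, which yields canonical invariant complex structures only on isotypic pieces of complex type. But the hypotheses allow $2a\equiv 0\pmod{l}$ (i.e.\ $l=2a$, in which case $a$ is still not an integer multiple of $l$), where the relevant isotypic piece is of real type ($\ker l$ acts through $\pm 1$) and admits no canonical complex structure; and when $a\equiv\pm b\pmod{l}$ the isotypic piece has real rank $4$, so the further equivariant splitting into line bundles is an additional unproved step. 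So the step you call ``standard'' is the technical heart of the argument, and as written it is incomplete exactly in these degenerate cases.

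The repair --- and this is what the paper does --- is to observe that no complex structure on $NF$ is needed. Since $\ker l$ acts trivially on the connected manifold $F$, the real $\ker l$-representations $N_qF$ and $N_{q'}F$ are isomorphic, and the isomorphism can be taken to preserve the chosen orientation of $NF$. Comparing with $\epsilon_N(q)$ and $\epsilon_N(q')$ (Definition \ref{d23}), the dichotomy $\epsilon(q')=\pm\epsilon(q)$ becomes the existence of an orientation-preserving, respectively orientation-reversing, isomorphism of $\ker l$-representations $N_qF\to N_{q'}F$. Splitting that isomorphism along the two weight lines, an orientation-preserving (respectively reversing) isomorphism of real $2$-dimensional $\ker l$-representations forces the weights to be congruent (respectively opposite) modulo $l$; this yields exactly your four congruence patterns with the same sign correlation, after which your magnitude/half-space argument finishes the proof. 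In short, your parity bookkeeping is correct, but it must be grounded in the elementary fact that the fiber $\ker l$-representation class is constant over the connected $F$, together with orientation comparison, rather than in an equivariant complex splitting that you have not established.
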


\begin{proof}
First, we shall do the following: for every weight $w_{p,i}$ at a fixed point $p$, if the dot product of $w_{p,i}$ and $l$ is negative, replace $w_{p,i}$ with $-w_{p,i}$ (that is, reverse the orientation of $L_{p,i}$). If $k=1$ this can be also done by choosing all weights to be positive. In this way we have that $\langle w_{p,i},l \rangle \geq 0$ for all weights $w_{p,i}$.

Let $F$ be a connected component of $M^{\ker l}$ that contains $q$, which is a smaller dimensional compact submanifold of $M$. Choose an orientation of $F$; also choose an orientation of $NF$ so that the induced orientation on $TF \oplus NF$ agrees with the orientation of $M$. Because $q$ has only one weight disivible by $l$, $\dim F=2$. The $T^k$-action on $M$ restricts to act on $F$, and this action on $F$ has a fixed point $q$. By Lemma \ref{l211}, $F$ is the 2-sphere and has another fixed point $q'$. Applying Theorem \ref{t310} to the $T^k$-action on $F$, $\epsilon_F(q)=-\epsilon_F(q')$.

Let $N_qF=L_{q,2} \oplus L_{q,3}$, where the torus $T^k$ acts on $L_{q,2}$ with weight $a$ and on $L_{q,3}$ with weight $b$. Similarly, let $N_{q'}F=L_{q',2} \oplus L_{q',3}$, where $T^k$ acts on $L_{q',2}$ with weight $c$ and on $L_{q',3}$ with weight $d$ for some $c$ and $d$.

First, suppose that $\epsilon(q)=\epsilon(q')$. Since $\epsilon_F(q)=-\epsilon_F(q')$, with $\epsilon(q)=\epsilon_F(q) \cdot \epsilon_N(q)$ and $\epsilon(q')=\epsilon_F(q') \cdot \epsilon_N(q')$ this implies that $\epsilon_N(q)=-\epsilon_N(q')$. Therefore, there is an orientation reversing isomorphism $\phi$ from $L_{q,2} \oplus L_{q,3}$ to $L_{q',2} \oplus L_{q',3}$ as $\ker l$-representations. Without loss of generality, by permuting $L_{q',2}$ and $L_{q',3}$ if necessary, we may assume that this isomorphism takes $L_{q,2}$ to $L_{q',2}$ and $L_{q,3}$ to $L_{q',3}$. Then one of the following holds.

\begin{enumerate}[(a)]
\item The isomorphism $\phi$ is orientation preserving from $L_{q,2}$ to $L_{q',2}$, and orientation reversing from $L_{q,3}$ to $L_{q',3}$.
\item The isomorphism $\phi$ is orientation reversing from $L_{q,2}$ to $L_{q',2}$, and orientation preserving from $L_{q,3}$ to $L_{q',3}$.
\end{enumerate}

Assume that Case (a) holds. Since $\phi$ is an orientation preserving isomorphism from $L_{q,2}$ to $L_{q',2}$ as $\ker l$-representations, this implies that $a \equiv c \mod l$, that is, $a=c+m_1l$ for some integer $m_1$. Because $l$, $a$, and $c$ are in the same half space $\{z \in \mathbb{R}^n \, | \, \langle z, l \rangle \leq 0\}$ of $\mathbb{R}^n$ and $l$ has the biggest magnitude, this implies that $m_1=0$, that is, $a=c$.

Next, since $\phi$ is an orientation reserving isomorphism from $L_{q,3}$ to $L_{q',3}$ as $\ker l$-representations, this implies that $b \equiv -d \mod l$. Because $l$, $b$, and $d$ are in the same half space $\{z \in \mathbb{R}^n \, | \, \langle z, l \rangle \geq 0\}$ and $l$ has the biggest magnitude, this implies that $-d=b-l$, that is, $d=l-b$. This is Case (3) of this lemma. Similarly, if Case (b) holds, then $c=l-a$ and $b=d$; this is Case (4) of this lemma.

Second, suppose that $\epsilon(q)=-\epsilon(q')$. Since $\epsilon_F(q)=-\epsilon_F(q')$, it follows that $\epsilon_N(q)=\epsilon_N(q')$. Hence, there is an orientation preserving isomorphism $\phi$ from $L_{q,2} \oplus L_{q,3}$ to $L_{q',2} \oplus L_{q',3}$ as $\mathbb{Z}_l$-representations. We may assume that this isomorphism takes $L_{q,2}$ to $L_{q',2}$ and $L_{q,3}$ to $L_{q',3}$. There are two possibilities.
\begin{enumerate}[(i)]
\item The isomorphism $\phi$ is orientation preserving from $L_{q,i}$ to $L_{q',i}$ for both $i \in \{2,3\}$.
\item The isomorphism $\phi$ is orientation reversing from $L_{q,i}$ to $L_{q',i}$ for both $i \in \{2,3\}$.
\end{enumerate}
Case (i) means that $a=c$ and $b=d$; this is Case (1) of this lemma. Case (ii) means that $c=l-a$ and $d=l-b$; this is Case (2) of this lemma. 
\end{proof}

Suppose now that the dimension of $M$ is arbitrary but at least 6, and $M^{\ker l}$ has a  component of codimension 2, which contains a $T^k$-fixed point $q$. In other words, the multiplicity of $\pm l$ in $T_qM$ is $n-1$, where $\dim M=2n$. Then the component contains another fixed point $q'$ that has the opposite fixed point data of $q$; $\Sigma_q=\Sigma_{q'}$. 

\begin{lemma} \label{l82}
Let $n \geq 3$. Let a torus $T^k$ act effectively on a $2n$-dimensional compact oriented manifold $M$ with isolated fixed points, whose isotropy submanifolds are orientable. Let $l$ be a weight with the biggest magnitude; $|l|=\max\{|w_{p,i}| : 1 \leq i \leq n, p \in M^{T^k}\}$. If $k=1$, we assume further that $|l|>1$.
\begin{enumerate}[(1)]
\item Suppose that there is a fixed point $q$ that has fixed point data $[\pm,l,\cdots,l,a]$ for some $a \in \mathbb{Z}^k \setminus \{0\}$. Then there exists another fixed point $q'$ that has fixed point data $-\Sigma_q$. Moreover, $q$ and $q'$ are in the same component of $M^{\ker l}$.
\item For each $a \in \mathbb{Z}^k \setminus \{0\}$, the number of fixed points with fixed point data $[+,l,\cdots,l,a]$ is equal to the number of fixed points with fixed point data $[-,l,\cdots,l,a]$.
\end{enumerate}
\end{lemma}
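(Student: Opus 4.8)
The plan is to locate the partner $q'$ \emph{inside} the component $F$ of $M^{\ker l}$ containing $q$, and to show that $F$ is so rigid that \emph{every} fixed point on it carries the same fixed point data $[\pm,l,\dots,l,a]$; both statements then fall out of Lemma \ref{l39} applied to $F$. First I would normalize as in the proofs of Lemmas \ref{l53} and \ref{l81}: reverse the orientation of each $L_{p,i}$ so that $\langle w_{p,i},l\rangle\ge 0$ for every weight (all weights positive if $k=1$). I may assume $a$ is not an integer multiple of $l$; otherwise $|a|\le|l|$ forces $a=\pm l$, so $q$ has all weights $\pm l$ and lies on a component of $M^{\ker l}$ equal to a whole connected component of $M$, a degenerate situation I treat at the end. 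With $a$ not a multiple of $l$, the fixed point $q$ has exactly $n-1$ weights divisible by $l$, so the component $F$ of $M^{\ker l}$ through $q$ has $\dim F=2(n-1)$, i.e.\ codimension $2$, with normal weight $a$. By Lemma \ref{l29}, $F$ is orientable; I fix orientations of $F$ and of $NF$ compatibly with $M$ as in Definition \ref{d23}.

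The key step is to prove that all fixed points of $F$ have identical data. Let $p\in F^{T^k}$. Every weight in $T_pF$ is a multiple of $l$, and since $|l|$ is maximal each such weight is $\pm l$, hence equal to $l$ after normalization; thus at every fixed point of $F$ all tangent weights equal $l$ and $N_p(l)=n-1$. For the normal direction, the weight $w_p$ of $N_pF$ satisfies $w_p\equiv a\pmod{l}$ (the $\ker l$-character of the line bundle $NF$ is constant on the connected $F$), together with $|w_p|\le|l|$ and $\langle w_p,l\rangle\ge 0$. As in the half-space argument of Lemma \ref{l81}, write $w_p=a+ml$: for $m\ge 1$ one has $|a+ml|^2=|a|^2+m^2|l|^2+2m\langle a,l\rangle>|l|^2$, contradicting maximality of $|l|$, while for $m\le-1$ one has $\langle w_p,l\rangle=\langle a,l\rangle-|m|\,|l|^2<0$ since $\langle a,l\rangle\le|a|\,|l|<|l|^2$ (equality would force $a=l$), contradicting the normalization. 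Hence $w_p=a$ for every $p$, so each $p\in F^{T^k}$ has $M$-data $[\epsilon_M(p),l,\dots,l,a]$. Moreover, since the normal weight is the constant $a$ and $NF$ is an oriented $T^k$-equivariant real rank-$2$ (hence complex line) bundle over the connected $F$, the sign $\epsilon_N(p)$ of Definition \ref{d23} is the same for all $p$; call it $\epsilon_N$.

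Now I apply Lemma \ref{l39} to the $T^k$-action on the compact oriented manifold $F$: every fixed point has weights $\{\pm l,\dots,\pm l\}$ ($n-1$ copies), so the number of $p\in F^{T^k}$ with $\epsilon_F(p)=+1$ equals the number with $\epsilon_F(p)=-1$. Since $\epsilon_M(p)=\epsilon_F(p)\cdot\epsilon_N$ with $\epsilon_N$ constant, the same balance holds for $\epsilon_M$ on $F$. For part (1): $q\in F^{T^k}$, so this balance produces a fixed point $q'\in F$ with $\epsilon_M(q')=-\epsilon_M(q)$, hence with data $[-\epsilon(q),l,\dots,l,a]=-\Sigma_q$, lying in the same component $F$ of $M^{\ker l}$. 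For part (2): every fixed point of $M$ with data $[\pm,l,\dots,l,a]$ lies on a codimension-$2$ component of $M^{\ker l}$ with normal weight $a$, these components are disjoint and exhaust all such fixed points, and on each the counts of the two signs agree; summing gives the global equality. The degenerate case $a=\pm l$ is handled by applying Lemma \ref{l39} directly to a connected component of $M$ all of whose weights are $\pm l$, which by effectiveness (and $|l|>1$ when $k=1$) can occur only if $M$ is disconnected.

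The main obstacle is precisely the rigidity of the middle paragraph: ruling out the normal weight drifting to $a+ml$ with $m\neq0$, and the constancy of $\epsilon_N$ along $F$. Both rest on $|l|$ being maximal, exactly as maximality pins down the cases in Lemma \ref{l81}; the only care needed is with the boundary possibility $\langle a,l\rangle=0$ and with whether $2a$ is a multiple of $l$, which the same magnitude and half-space inequalities dispatch. Once this rigidity is established, the counting is immediate from Lemma \ref{l39}.
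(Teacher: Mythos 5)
There is a genuine gap, and it sits exactly at the rigidity claim in your middle paragraph: it is \emph{not} true that every fixed point of $F$ has normal weight $a$, nor that $\epsilon_N$ is constant along $F$ (and note that in your write-up each of these two claims is used to justify the other, so the argument is also circular). What is constant along the connected $F$ is the isomorphism class of $N_pF$ as an \emph{oriented} $\ker l$-representation, i.e.\ the weight taken with respect to a fixed orientation of the bundle $NF$; that weight is $\epsilon_N(p)\,w_p$, not $w_p$ itself, where $w_p$ is the weight normalized by your half-space condition $\langle w_p,l\rangle\ge 0$. So constancy of the $\ker l$-character only gives $\epsilon_N(p)\,w_p\equiv\epsilon_N(q)\,a \pmod{l}$, i.e.\ $w_p\equiv \pm a \pmod{l}$, and your maximality argument then leaves \emph{two} survivors, $w_p=a$ (when $\epsilon_N(p)=\epsilon_N(q)$) and $w_p=l-a$ (when $\epsilon_N(p)=-\epsilon_N(q)$); the second one passes both the half-space and the magnitude tests. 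This loophole is realized geometrically: take $k=1$, let $H$ be the Hirzebruch surface $\mathbb{P}(\mathcal{O}(-1)\oplus\mathbb{C})\to S^2$ with the circle acting with speed $l$ on the base and fiber weights $a$ and $a-l$ at the two fixed points of the invariant section ($0<a<l$, $\gcd(a,l)=1$, $2a\neq l$), and let $M=H\times S^2$ with the second factor rotating with speed $l$. Then the component $F\cong S^2\times S^2$ of $M^{\ker l}$ containing the section has all tangent weights equal to $l$, but its four fixed points have normal weights $a,a,l-a,l-a$: the data $[\pm,l,l,a]$ and $[\pm,l,l,l-a]$ coexist on the same component. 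Consequently your transfer of the $\epsilon_F$-balance of Lemma \ref{l39} to an $\epsilon_M$-balance via a constant $\epsilon_N$, and the per-component count in part (2), both collapse.

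The paper's proof begins the same way (normalization, the codimension-two component $F$, all tangent weights equal to $l$, Lemma \ref{l39} applied to $F$), but instead of excluding the mixed case it works with it: the fixed points of $F$ are partitioned according to both $\epsilon_F$ and the normal weight ($a$ versus $l-a$, equivalently the value of $\epsilon_N$), giving counts $s$ and $k-s$ among the $\epsilon_F=+1$ points and $t$ and $k-t$ among the $\epsilon_F=-1$ points, and the key identity $s=t$ is then proved by applying the localization theorem (Theorem \ref{t24}) to $\alpha=e_{T^k}(NF)$: the push-forward $\int_F e_{T^k}(NF)$ vanishes for degree reasons since $\deg e_{T^k}(NF)=2<\dim F=2n-2$ (this is precisely where the hypothesis $n\ge 3$ is used), while localization evaluates it to $\epsilon_N(p_1)(s-t)/l^{n-2}$. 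That vanishing integral of the equivariant Euler class of $NF$ is the ingredient missing from your proposal; Lemma \ref{l39} alone cannot separate the fixed points with normal weight $a$ from those with normal weight $l-a$, which is exactly what both parts of the lemma require.
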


\begin{proof}
As in the proof of Lemma \ref{l81}, for every weight $w_{p,i}$ at a fixed point $p$, if $\langle w_{p,i},l \rangle <0$, replace $w_{p,i}$ with $-w_{p,i}$.

Suppose that a fixed point $p_1:=q$ has fixed point data $[\pm,l,\cdots,l,a]$ for some $a$. Let $F$ be a component of $M^{\ker l}$ that contains $p_1$; $F$ is a $(2n-2)$-dimensional compact orientable submanifold of $M$. Choose an orientation of $F$, so that $\epsilon_F(p_1)=+1$ for simplicity of the proof. Also choose an orientation of $NF$ so that the induced orientation on $TF \oplus NF$ agrees with the orientation of $M$. The $T^k$-action on $M$ restricts to act on $F$, and the fixed point set $F^{T^k}$ of this action on $F$ is equal to $F \cap M^{T^k}$ so it is non-empty and finite. For any $p \in F^{T^k}$, the weights in $T_pF$ are all integer multiples of $l$, and hence they are all equal to $l$ as $l$ is the biggest weight. Applying Lemma \ref{l39} to the induced $T^k$-action on $F$ by taking $a_1=\cdots=a_{n-1}=l$, the number of fixed points $p \in F^{T^k}$ with $\epsilon_F(p)=+1$ and the number of fixed points $p \in F^{T^k}$ with $\epsilon_F(p)=-1$ are equal. Let $p_1,\cdots,p_k \in F^{T^k}$ have $\epsilon_F(p_i)=+1$ and let $q_1,\cdots,q_k \in F^{T^k}$ have $\epsilon_F(q_i)=-1$. 

By permuting $p_i$'s if necessary, let $\epsilon(p_1)=\cdots=\epsilon(p_s)$ and $\epsilon(p_{s+1})=\cdots=\epsilon(p_k)=-\epsilon(p_1)$. Similarly, by permuting $q_j$'s if necessary, let $\epsilon(q_1)=\cdots=\epsilon(q_t)=-\epsilon(p_1)$ and $\epsilon(q_{t+1})=\cdots=\epsilon(q_k)=\epsilon(p_1)$.

Let $\{l,\cdots,l,a_i\}$ be the weights at $p_i$ and let $\{l,\cdots,l,b_j\}$ be the weights at $q_j$. Since $NF$ is an oriented $\ker l$-bundle over $F$ and $F$ is connected, the $\ker l$-representations of $N_{p}F$ and $N_{p'}F$ are isomorphic for any two fixed points $p$ and $p'$ in $F^{T^k}$.

Consider $p_i$ for $1 \leq i \leq s$ ($s+1 \leq i \leq k$.) Since $\epsilon(p_i)=\epsilon(p_1)$ ($\epsilon(p_i)=-\epsilon(p_1)$) and $\epsilon_F(p_i)=\epsilon_F(p_1)$ ($\epsilon_F(p_i)=\epsilon_F(p_1)$), it follows that $\epsilon_N(p_i)=\epsilon_N(p_1)$ ($\epsilon_N(p_i)=-\epsilon_N(p_1)$). Thus, there is an orientation preserving (reversing) isomorphism from the representation $N_{p_1}F$ with weight $a$ to that $N_{p_i}F$ with weight $a_i$ as $\ker l$-representations. This implies that $a \equiv a_i \mod l$ ($a \equiv -a_i \mod l$). Because $l$, $a$, and $a_i$ are in the same half space $\{z \in \mathbb{R}^n \, | \, \langle z, l \rangle  \geq 0\}$ of $\mathbb{R}^n$ and $l$ has the biggest magnitude, it follows that $a=a_i$ ($a_i=l-a$, respectively).

Similarly, for $q_j$ with $1 \leq j \leq t$ ($t+1 \leq j \leq k$), $\epsilon(q_j)=-\epsilon(p_1)$ ($\epsilon(q_j)=\epsilon(p_1)$) and $\epsilon_F(q_j)=-\epsilon_F(p_1)$ imply that $\epsilon_N(q_j)=\epsilon_N(p_1)$ ($\epsilon_N(q_j)=-\epsilon_N(p_1)$) and hence there is an orientation preserving (reversing) isomorphism from $N_{p_1}F$ with weight $a$ to $N_{q_j}F$ with weight $b_j$ as $\mathbb{Z}_l$-representations, and thus $a \equiv b_j \mod l$ ($a \equiv -b_j \mod l$), that is, $b_j=a$ ($b_j=l-a$, respectively.)

Let $e_{T^k}(NF)$ denote the equivariant Euler class of the normal bundle to $F$ in $M$. The restriction of $e_{T^k}(NF)$ at $p_i$ is 
\begin{enumerate}
\item $e_{T^k}(NF)(p_i)=\epsilon_N(p_i) \cdot a=\epsilon_N(p_1) \cdot a$ if $1 \leq i \leq s$.
\item $e_{T^k}(NF)(p_i)=\epsilon_N(p_i) \cdot (l-a)=-\epsilon_N(p_1) \cdot (l-a)$ if $s+1 \leq i \leq k$.
\end{enumerate}
Similarly, the restriction of $e_{T^k}(NF)$ at $q_j$ is
\begin{enumerate}
\item $e_{T^k}(NF)(q_j)=\epsilon_N(q_j) \cdot a=\epsilon_N(p_1) \cdot a$ if $1 \leq j \leq t$.
\item $e_{T^k}(NF)(q_j)=\epsilon_N(q_j) \cdot (l-a)=-\epsilon_N(p_1) \cdot (l-a)$ if $t+1 \leq j \leq k$.
\end{enumerate}

For the induced action on $F$, $\int_F:=\pi_*$ is a map from $H_{T^k}^i(F;\mathbb{Z})$ to $H^{i- \dim F}(\mathbb{CP}^\infty;\mathbb{Z})$ for all $i \in \mathbb{Z}$. Since $\dim F=2n-2 \geq 4$ and $e_{T^k}(NF)$ has degree 2, the image under $\int_F$ of $e_{T^k}(NF)$ vanishes;
\begin{center}
$\displaystyle \int_F e_{T^k}(NF)=0$.
\end{center}
On the other hand, applying Theorem \ref{t24} to the induced action on $F$ with taking $\alpha=e_{T^k}(NF)$,
\begin{center}
$\displaystyle \int_F e_{T^k}(NF)=\sum_{p \in F^{T^k}}\int_p \frac{e_{T^k}(NF)|_p}{e_{T^k}(T_pF)}$\footnote{Note that the bundle in the denominator is the normal bundle of $p$ in $F$, which is therefore the tangent space $T_pF$ of $p$ in $F$.}

$\displaystyle =\sum_{i=1}^k \bigg\{ \epsilon_F(p_i)\frac{e_{T^k}(NF)(p_i)}{e_{T^k}(T_{p_i}F)}\bigg\}+ \sum_{j=1}^k \bigg\{ \epsilon_F(q_j)\frac{e_{T^k}(NF)(q_j)}{e_{T^k}(T_{q_j}F)}\bigg\}$

$\displaystyle =\sum_{i=1}^s \bigg\{ \epsilon_F(p_i)\frac{e_{T^k}(NF)(p_i)}{e_{T^k}(T_{p_i}F)} \bigg\} + \sum_{i=s+1}^k \bigg\{ \epsilon_F(p_i)\frac{e_{T^k}(NF)(p_i)}{e_{T^k}(T_{p_i}F)} \bigg\}$

$\displaystyle + \sum_{j=1}^t \bigg\{ \epsilon_F(q_j)\frac{e_{T^k}(NF)(q_j)}{e_{T^k}(T_{q_j}F)}\bigg\} + \sum_{j=t+1}^k \bigg\{ \epsilon_F(q_j)\frac{e_{T^k}(NF)(q_j)}{e_{T^k}(T_{q_j}F)}\bigg\}$

$\displaystyle =\sum_{i=1}^s \bigg\{ +\frac{+\epsilon_N(p_1) \cdot a}{l^{n-1}}\bigg\} +\sum_{i=s+1}^k \bigg\{ +\frac{-\epsilon_N(p_1) \cdot (l-a)}{l^{n-1}}\bigg\}$

$\displaystyle +\sum_{j=1}^t \bigg\{ -\frac{+\epsilon_N(p_1) \cdot a}{l^{n-1}} \bigg\} + \sum_{j=t+1}^k \bigg\{ -\frac{-\epsilon_N(p_1) \cdot (l-a)}{l^{n-1}}\bigg\}$

$\displaystyle =\frac{\epsilon_N(p_1)}{l^{n-1}} \{sa - (k-s)(l-a) - ta +(k-t)(l-a)\}$

$\displaystyle =\frac{\epsilon_N(p_1)}{l^{n-2}}(s-t).$
\end{center}
Therefore, $s=t$. Then $p_1(=q),\cdots,p_s$ have fixed point data $[\epsilon(q),l,\cdots,l,a]$ and $q_1,\cdots,q_s$ have fixed point data $[-\epsilon(q),l,\cdots,l,a]$. Also, $p_{s+1},\cdots,p_k$ have fixed point data $[-\epsilon(q),l,\cdots,l,l-a]$ and $q_{s+1},\cdots,q_k$ have fixed point data $[\epsilon(q),l,\cdots,l,l-a]$. Thus the lemma holds. \end{proof}

\section{Dimension 6: some base cases} \label{s9}

We prove our main results by separating into cases in terms of the biggest weight. First, we deal with the case that all weights are small.

\begin{lem} \label{l92}
Let the circle act effectively on a 6-dimensional compact connected oriented manifold $M$ with isolated fixed points, whose isotropy submanifolds are orientable. Suppose that the maximum of the absolute values of weights is at most 2, that is, $\max\{|w_{p,i}| : p \in M^{S^1}, 1 \leq i \leq 3\} \leq 2$. Then the number of fixed points with fixed point data $[+,1,1,1]$ ($[+,1,1,2]$ and $[+,1,2,2]$) and the number of fixed points with fixed point data $[-,1,1,1]$ ($[-,1,1,2]$ and $[-,1,2,2]$) are equal, respectively. Consequently, we can successively take self connected sums at the fixed points of $M$, to construct a fixed point free $S^1$-action on a 6-dimensional compact connected oriented manifold.
Moreover, there exists a multigraph describing $M$ that satisfies Property A, and we can convert it to the empty multigraph by reversing edges and by self connected sums.
\end{lem}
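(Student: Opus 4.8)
The plan is to follow the template of Lemma \ref{l91}: first establish the three sign balances, then perform self connected sums. Since the action is effective and $\max|w_{p,i}| = 2$, after reversing the orientations of the $L_{p,i}$ so that every weight is positive, each weight is $1$ or $2$, and the only a priori possible fixed point data are $[\pm,1,1,1]$, $[\pm,1,1,2]$, $[\pm,1,2,2]$, and $[\pm,2,2,2]$. I would first rule out $[\pm,2,2,2]$: if a fixed point $p$ had weights $\{2,2,2\}$, then $\ker 2=\mathbb{Z}_2$ would fix $T_pM$, hence a neighborhood of $p$, so the component of $M^{\mathbb{Z}_2}$ through $p$ would be open; being also closed, by connectedness it would equal $M$, forcing $\mathbb{Z}_2$ to act trivially and contradicting effectiveness. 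Write $a_\pm$, $b_\pm$, $c_\pm$ for the numbers of fixed points with data $[\pm,1,1,1]$, $[\pm,1,1,2]$, $[\pm,1,2,2]$, respectively.

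The heart of the proof is to produce three independent linear relations among these numbers. Applying Proposition \ref{p26} (valid since $\dim M = 6 \equiv 2 \bmod 4$) shows that the total numbers of positive and negative fixed points agree,
\[
a_+ + b_+ + c_+ = a_- + b_- + c_-.
\]
Applying Lemma \ref{l28} with the smallest positive weight $w=1$, and counting the multiplicity of $1$ in each type ($3$, $2$, and $1$, respectively), gives
\[
3a_+ + 2b_+ + c_+ = 3a_- + 2b_- + c_-.
\]
Finally, part (2) of Lemma \ref{l82} with the biggest weight $l=2$ and $a=1$ (the codimension-two isotropy case, corresponding to data $[\pm,2,2,1]=[\pm,1,2,2]$) gives $c_+ = c_-$. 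Setting $x=a_+-a_-$, $y=b_+-b_-$, $z=c_+-c_-$, these read $x+y+z=0$, $3x+2y+z=0$, $z=0$; this linear system has only the trivial solution, so $a_+=a_-$, $b_+=b_-$, $c_+=c_-$, as claimed.

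For the remaining assertions I would argue exactly as in Lemma \ref{l91}. By Proposition \ref{p34} there is a multigraph $\Gamma$ describing $M$ with Property A; by reversing edges (Lemma \ref{l36}) I may assume every label is positive. Having matched each positive fixed point with a negative fixed point of the same weight type, each matched pair $p,q$ satisfies $\Sigma_p=-\Sigma_q$ with edge labels agreeing up to order, and Lemma \ref{l49} guarantees that no common neighbour forces a self-loop; hence Proposition \ref{p48} lets me take a self connected sum at $p$ and $q$, yielding a new compact connected oriented manifold together with a multigraph describing it with Property A, having two fewer fixed points of that type. Iterating over all matched pairs removes every fixed point, producing a fixed point free $S^1$-manifold and converting $\Gamma$ to the empty multigraph. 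The main obstacle is the balance for the middle type $[\pm,1,1,2]$, which is captured neither by the smallest-weight lemma (Lemma \ref{l28}) nor by the codimension-two biggest-weight lemma (Lemma \ref{l82}); the resolution is to invoke the vanishing of the signature (Proposition \ref{p26}) as the third independent relation, after which elementary linear algebra forces all three balances simultaneously.
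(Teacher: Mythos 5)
Your proof is correct and takes essentially the same route as the paper's: the same three relations (Proposition \ref{p26}, Lemma \ref{l28} applied to the smallest weight $1$, and Lemma \ref{l82}(2) applied to the biggest weight $l=2$) combined by elementary linear algebra to force the three balances, followed by the identical pairing-and-self-connected-sum argument via Propositions \ref{p34}, \ref{p48} and Lemmas \ref{l36}, \ref{l49}. Your only addition is the explicit openness-and-closedness argument ruling out weights $\{2,2,2\}$, a detail the paper leaves implicit in the phrase ``since the action is effective.''
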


\begin{proof}
By Proposition \ref{p34}, there exists a multigraph describing $M$ that satisfies Property A. By reversing edges (the signs of weights), let every edge (fixed point) have label 1 or 2 (weights $+1$ or $+2$). By Lemma \ref{l36}, the resulting multigraph $\Gamma$ also describes $M$ and satisfies Property A.

Since the action is effective, possible multisets of weights at a fixed point are $\{1,1,1\}$, $\{1,1,2\}$, and $\{1,2,2\}$. Let $k_1,\cdots,k_6$ be the numbers of fixed points with fixed point data $[+,1,1,1]$, $[+,1,1,2]$, $[+,1,2,2]$, $[-,1,1,1]$, $[-,1,1,2]$, and $[-,1,2,2]$, respectively. By Proposition \ref{p26}, the number $k_1+k_2+k_3$ of fixed points with sign $+1$ is equal to the number $k_4+k_5+k_6$ of fixed points with sign $-1$, that is,  $k_1+k_2+k_3=k_4+k_5+k_6$. Moreover, by Lemma \ref{l82}, the number $k_3$ of fixed points with fixed point data $[+,1,2,2]$ and the number $k_6$ of fixed points with fixed point data $[-,1,2,2]$ are equal; $k_3=k_6$. Thus, $k_1+k_2=k_4+k_5$.

Applying Lemma \ref{l28} to $M$ (take $a=1$), it follows that $3k_1+2k_2+k_3=3k_4+2k_5+k_6$. Therefore, $3k_1+2k_2=3k_4+2k_5$. With $k_1+k_2=k_4+k_5$, this implies that $k_1=k_4$ and hence $k_2=k_5$. Therefore, we can divide the fixed points into pairs $(p_i,q_i)$ so that $\Sigma_{q_i}=-\Sigma_{p_i}$.

Let $M_1$ ($\Gamma_1$) be a connected sum of $M$ ($\Gamma$) at $p_1$ and $q_1$. By Proposition \ref{p48}, $\Gamma_1$ describes $M_1$ with Property A. Next, let $M_2$ ($\Gamma_2$) be a self connected sum of $M_1$ ($\Gamma_1$) at $p_2$ and $q_2$; then $\Gamma_2$ describes $M_2$ with Property A. Continuing this, the lemma follows. 
\end{proof} 

\section{Dimension 6: proof of the main results} \label{s10}

In this section, we prove our main results, Theorems \ref{t12}, \ref{t61}, and \ref{t62} altogether. We prove these theorems by showing that we can successively remove fixed points that have a weight of the biggest magnitude, by taking connected sums at fixed points with itself, or with $\mathbb{CP}^3$, $Z_1$, and $Z_2$ (and these with opposite orientations).

\begin{proof}[\textbf{Proof of Theorems \ref{t12}, \ref{t61}, and \ref{t62}}]

\textbf{Step 1 - Preparation: reverse labels of multigraph and weights to be positive}

By Proposition \ref{p34}, there exists a multigraph $\Gamma_0$ describing $M$ that satisfies Property A. For an edge $e$ of $\Gamma_0$, if its label $w(e)$ is negative, then we reverse the edge $e$; Definition \ref{d35}. By Lemma \ref{l36}, the resulting multigraph $\Gamma$ also describes $M$ and satisfies Property A. Then the label of every edge of $\Gamma$ is positive. Accordingly, for every weight $w_{p,i}$ at a fixed point $p$, if $w_{p,i}<0$, we replace $w_{p,i}$ by $-w_{p,i}$ (reverse the orientation of $L_{p,i}$), so that every weight is positive.

\textbf{Step 2 - Pick the biggest weight}

Let $l$ be the biggest weight that occurs at any fixed point, that is,
\begin{center}
$l=\max\{w_{p,i}:p \in M^{S^1}, 1 \leq i \leq 3\}$.
\end{center}

\textbf{Case (0-a): $l \leq 2$ - We use self connected sum (for both manifold and multigraph) and Operation (1) of Theorem \ref{t62}}

When $l \leq 2$, the theorem follows from Lemma \ref{l92}, in which we use self connected sums for both $M$ and $\Gamma$. Because a self connected sum removes fixed points $p_1$ and $p_2$ that have the opposite fixed point data ($\Sigma_{p_1}=-\Sigma_{p_2}$), this corresponds to Operation (1) of Theorem \ref{t62}. Figures \ref{f5} and \ref{f6} (when there are no edges between $p_1$ and $p_2$), and Figures \ref{f7} through \ref{f13} (when there are edges between $p_1$ and $p_2$) illustrate self connected sums of $\Gamma$ in various cases.

\textbf{From now on, we assume that $l >2$. A fixed point $p_1$ has weight $l$, and we may assume that $\epsilon(p_1)=+1$}

From now on we assume that $l>2$. Let $e$ be an edge with label $l$, and let $p_1$ and $p_2$ be the vertices of $e$. Then both of fixed points $p_1$ and $p_2$ have weight $l$. By reversing the orientation of $M$, we may assume that $\epsilon(p_1)=+1$; the other case is completely analogous. Since $\Gamma$ describes $M$, $p_1$ and $p_2$ are in the same component $F$ of $M^{\ker l}$. Because the action is effective, there are two possibilities.
\begin{enumerate}[(I)]
\item The multiplicity of $l$ in $T_{p_1}M$ is 2. 
\item The multiplicity of $l$ in $T_{p_1}M$ is 1. 
\end{enumerate}

\textbf{Case (0-b): $p_1$ has $l$ twice - Self connected sum, Operation (1)}

Suppose that the multiplicity of $l$ in $T_{p_1}M$ is 2. Let $[+,l,l,x]$ be the fixed point data of $p_1$ for some positive integer $x$. By Lemma \ref{l82}, there is a fixed points $q \in F \cap M^{S^1}$ that has fixed point data $[-,l,l,x]$. By Step 1, the labels of the edges of $p_1$ ($p_2$) are $l$, $l$, and $x$.

Let $M'$ ($\Gamma'$) be a self connected sum at $p_1$ and $q$ of $M$ ($\Gamma$). By Proposition \ref{p48}, $\Gamma'$ describes $M'$ and satisfies Property A. Because a self connected sum removes fixed points $p_1$ and $q$ that have the opposite fixed point data (that is, $\Sigma_{p_1}=-\Sigma_{p}$), this corresponds to Operation (1) of Theorem \ref{t62}.

\textbf{From now on $p_1$ has $l$ once - We divide into more cases}

Suppose that the multiplicity of $l$ in $T_{p_1}M$ is 1. Let $[+,l,x,y]$ be the fixed point data of $p_1$ for some positive integers $x$ and $y$. Then $x,y<l$. By Lemma \ref{l81}, $p_1$ and $p_2$ are in the same 2-sphere $F \subset M^{\ker l}$, and one of the following holds for the fixed point data of $p_2$:
\begin{enumerate}
\item $\Sigma_{p_2}=[-,l,x,y]$.
\item $\Sigma_{p_2}=[-,l,l-x,l-y]$.
\item $\Sigma_{p_2}=[+,l,x,l-y]$.
\item $\Sigma_{p_2}=[+,l,l-x,y]$.
\end{enumerate}
Permuting $x$ and $y$, Case (3) and Case (4) are equivalent; we only need to consider Cases (1-3).

\textbf{Case (1) - Self connected sum, Operation (1)}

Suppose that $\Sigma_{p_2}=[-,l,x,y]$. Then $\Sigma_{p_1}=-\Sigma_{p_2}$. Let $M'$ ($\Gamma'$) be a connected sum at $p_1$ and $p_2$ of $M$ ($\Gamma$). By Proposition \ref{p48}, $\Gamma'$ describes $M'$ and satisfies Property A. The self connected sum corresponds to Operation (1) of Theorem \ref{t62}. Figures \ref{f7} through \ref{f13} illustrate this connected sum in various situations.

\textbf{Case (2) - We divide into more cases}

Assume $\Sigma_{p_2}=[-,l,l-x,l-y]$. We consider the following three cases.
\begin{enumerate}[(a)]
\item $x \neq y$.
\item $x=y$ and $2x \neq l$.
\item $x=y$ and $2x=l$.
\end{enumerate}

\textbf{Case (2-a) - Connected sum with $\mathbb{CP}^3$ (and with itself), Operation (2)}

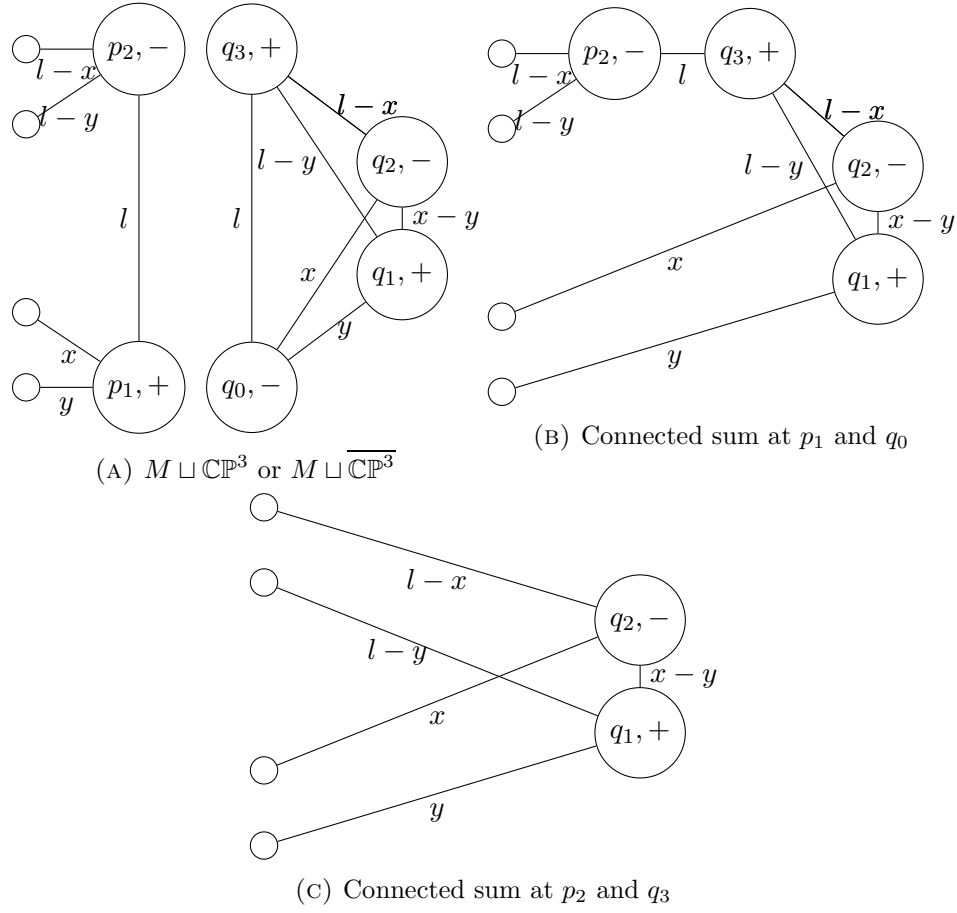
\begin{figure} 
\centering
\begin{subfigure}[b][6cm][s]{.49\textwidth}
\centering
\vfill
\begin{tikzpicture}[state/.style ={circle, draw}]
\node[state] (a) at (0,0) {$q_0,-$};
\node[state] (b) at (2,1.5){$q_1,+$};
\node[state] (c) at (2,3) {$q_2,-$};
\node[state] (d) at (0,4.5){$q_3,+$};
\node[state] (e) at (-1.5,0) {$p_1,+$};
\node[state] (f) at (-1.5,4.5) {$p_2,-$};
\node[state] (g) at (-3,0) {};
\node[state] (h) at (-3,4.5) {};
\node[state] (i) at (-3,1) {};
\node[state] (j) at (-3,3.5) {};
\path (a) edge node[right] {$y$} (b);
\path (a) edge node[left] {$x$} (c);
\path (a) edge node[left] {$l$} (d);
\path (b) edge node [right] {$x-y$} (c);
\path (b) edge node [left] {$l-y$} (d);
\path (c) edge node [right] {$l-x$} (d);
\path (c) edge node [right] {$l-x$} (d);
\path (e) edge node[left] {$l$} (f);
\path (e) edge node[below] {$y$} (g);
\path (f) edge node[below] {$l-x$} (h);
\path (e) edge node[below] {$x$} (i);
\path (f) edge node[below] {$l-y$} (j);
\end{tikzpicture}
\caption{$M \sqcup \mathbb{CP}^3$ or $M \sqcup \overline{\mathbb{CP}^3}$} \label{f29-1}
\vfill
\end{subfigure}
\begin{subfigure}[b][6cm][s]{.49\textwidth}
\centering
\vfill
\begin{tikzpicture}[state/.style ={circle, draw}]
\node[state] (b) at (2,1.5){$q_1,+$};
\node[state] (c) at (2,3) {$q_2,-$};
\node[state] (d) at (0.3,4.5){$q_3,+$};
\node[state] (f) at (-1.5,4.5) {$p_2,-$};
\node[state] (g) at (-3,0) {};
\node[state] (h) at (-3,4.5) {};
\node[state] (i) at (-3,1) {};
\node[state] (j) at (-3,3.5) {};
\path (g) edge node[below] {$y$} (b);
\path (i) edge node[below] {$x$} (c);
\path (b) edge node [right] {$x-y$} (c);
\path (b) edge node [left] {$l-y$} (d);
\path (c) edge node [right] {$l-x$} (d);
\path (c) edge node [right] {$l-x$} (d);
\path (f) edge node[below] {$l-x$} (h);
\path (f) edge node[below] {$l-y$} (j);
\path (f) edge node[below] {$l$} (d);
\end{tikzpicture}
\caption{Connected sum at $p_1$ and $q_0$} \label{f29-2}
\vfill
\end{subfigure}
\begin{subfigure}[b][6cm][s]{.49\textwidth}
\centering
\vfill
\begin{tikzpicture}[state/.style ={circle, draw}]
\node[state] (b) at (2,1.5){$q_1,+$};
\node[state] (c) at (2,3) {$q_2,-$};
\node[state] (g) at (-3,0) {};
\node[state] (h) at (-3,4.5) {};
\node[state] (i) at (-3,1) {};
\node[state] (j) at (-3,3.5) {};
\path (g) edge node[below] {$y$} (b);
\path (i) edge node[below] {$x$} (c);
\path (b) edge node [right] {$x-y$} (c);
\path (b) edge node [left] {$l-y$} (j);
\path (c) edge node [below] {$l-x$} (h);
\end{tikzpicture}
\caption{Connected sum at $p_2$ and $q_3$} \label{f29-3}
\vfill
\end{subfigure}
\caption{Case (2-a)}\label{f29}
\end{figure}

Suppose that Case (2-a) holds. The fixed points $p_1$ and $p_2$ have fixed point data $[+,l,x,y]$ and $[-,l,l-x,l-y]$ with $x \neq y$. We may assume that $x>y$. In Example \ref{e412} of the action on $\mathbb{CP}^3$, take $a_3=l$, $a_2=x$, and $a_1=y$, and reverse its orientation (if $\epsilon(p_1)=-1$ then we do not reverse the orientation of $\mathbb{CP}^3$); its fixed point data is
\begin{center}
$\Sigma_{q_0}=[-,y,x,l]$, $\Sigma_{q_1}=[+,y,x-y,l-y]$, $\Sigma_{q_2}=[-,x,x-y,l-x]$, $\Sigma_{q_3}=[+,l,l-y,l-x]$.
\end{center}
Because $\Sigma_{p_1}=-\Sigma_{q_0}$ and $\Sigma_{p_2}=-\Sigma_{q_3}$, we can take an equivariant connected sum at $p_1$ and $p_2$ of $M$ and $q_1$ and $q_4$ of $\overline{\mathbb{CP}^3}$ to construct another compact connected oriented $S^1$-manifold $M'$ with isolated fixed points that has fixed point data 
\begin{center}
$\displaystyle \Big\{ \Sigma_M \setminus ([+,l,x,y] \cup [-,l,l-y,l-x]) \Big\} \cup ([+,y,x-y,l-y] \cup [-,x,x-y,l-x])$. 
\end{center}
This corresponds to Operation (2) of Theorem \ref{t62} with $l=C$, $x=B$, and $y=A$. The connected sum removes $p_1$ and $p_2$ that have weights $\{l,x,y\}$ and $\{l,l-x,l-y\}$, and adds $q_1$ and $q_2$ that have weights $\{y,x-y,l-y\}$, $\{x,x-y,l-x\}$. In terms of weights, it removes two $l$'s and adds two $x-y$'s, where $0<x-y<l$\footnote{This only holds for $S^1$-actions. If instead $T^2$ or $T^3$ acts on $M$ then the weight $x-y$ may have magnitude bigger than $|l|$; then the arguments of this proof do not work for $T^2$ or $T^3$-actions.}. Therefore, the connected sum reduces the number of biggest weights.

For a multigraph, we do this in two steps. First, we take a connected sum of $M$ and $\overline{\mathbb{CP}^3}$ ($\Gamma$ and the right figure of Figure \ref{f29-1}) at $p_1$ and $q_0$, to construct another $S^1$-manifold $M_1$ (another multigraph $\Gamma_1$, which is Figure \ref{f29-2}). Next, we take a self connected sum of $M_1$ ($\Gamma_1$) at $p_2$ and $q_3$ to construct the manifold $M'$ (a multigraph $\Gamma'$, which is Figure \ref{f29-3}). By Propositions \ref{p47} and \ref{p48}, $\Gamma_1$ and $\Gamma'$ describe $M_1$ and $M'$ with Property A, respectively.

\textbf{Case (2-b) - Connected sum with $Z_2 \sharp \overline{Z_2}$, Operation (5)}

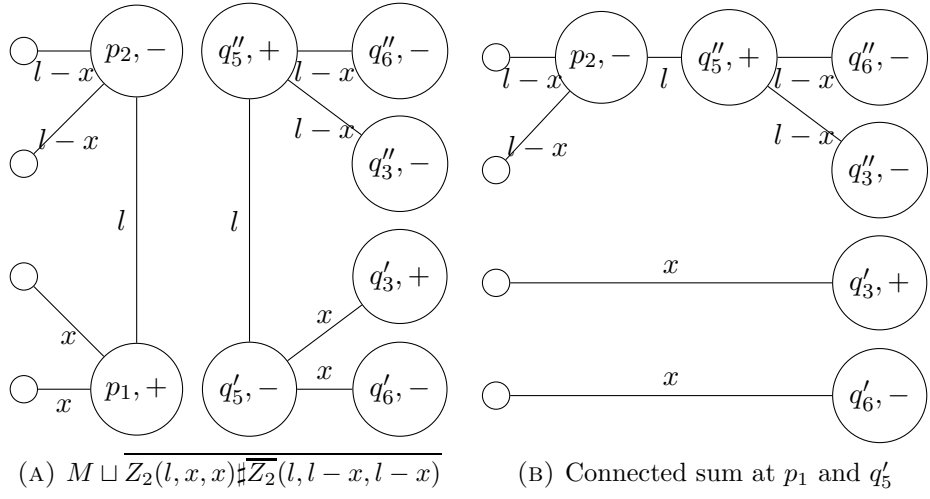
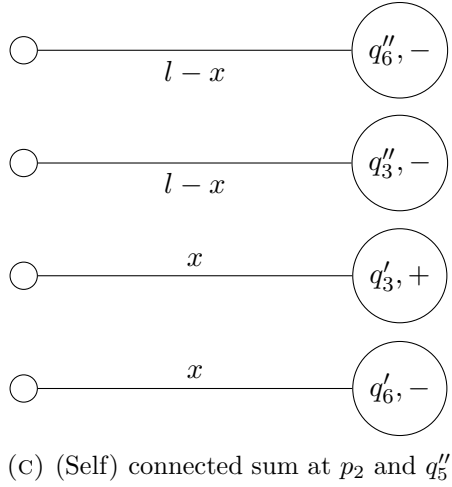
\begin{figure}
\centering
\begin{subfigure}[b][6.6cm][s]{.49\textwidth}
\centering
\vfill
\begin{tikzpicture}[state/.style ={circle, draw}]
\node[state] (a) at (0,0) {$q_5',-$};
\node[state] (b) at (0,4.5) {$q_5'',+$};
\node[state] (c) at (2,0) {$q_6',-$};
\node[state] (d) at (2,4.5) {$q_6'',-$};
\node[state] (e) at (2,1.5) {$q_3',+$};
\node[state] (l) at (2,3) {$q_3'',-$};
\path (a) edge node[left] {$l$} (b);
\path (a) edge node[above] {$x$} (c);
\path (a) edge node[above] {$x$} (e);
\path (b) edge node[below] {$l-x$} (d);
\path (b) edge node[below] {$l-x$} (l);
\node[state] (f) at (-1.5,0) {$p_1,+$};
\node[state] (g) at (-1.5,4.5) {$p_2,-$};
\node[state] (h) at (-3,0) {};
\node[state] (i) at (-3,4.5) {};
\node[state] (j) at (-3,1.5) {};
\node[state] (k) at (-3,3) {};
\path (f) edge node[left] {$l$} (g);
\path (f) edge node[below] {$x$} (h);
\path (g) edge node[below] {$l-x$} (i);
\path (f) edge node[below] {$x$} (j);
\path (g) edge node[below] {$l-x$} (k);
\end{tikzpicture}
\caption{$M \sqcup \overline{Z_2(l,x,x) \sharp \overline{Z_2}(l,l-x,l-x)}$}\label{f30-1}
\vfill
\end{subfigure}
\begin{subfigure}[b][6.6cm][s]{.49\textwidth}
\centering
\vfill
\begin{tikzpicture}[state/.style ={circle, draw}]
\node[state] (b) at (0.1,4.5) {$q_5'',+$};
\node[state] (c) at (2.1,0) {$q_6',-$};
\node[state] (d) at (2.1,4.5) {$q_6'',-$};
\node[state] (e) at (2.1,1.5) {$q_3',+$};
\node[state] (l) at (2.1,3) {$q_3'',-$};
\path (e) edge node[above] {$x$} (j);
\path (c) edge node[above] {$x$} (h);
\path (b) edge node[below] {$l-x$} (d);
\path (b) edge node[below] {$l-x$} (l);
\node[state] (g) at (-1.6,4.5) {$p_2,-$};
\node[state] (h) at (-3,0) {};
\node[state] (i) at (-3,4.5) {};
\node[state] (j) at (-3,1.5) {};
\node[state] (k) at (-3,3) {};
\path (g) edge node[below] {$l-x$} (i);
\path (g) edge node[below] {$l-x$} (k);
\path (g) edge node[below] {$l$} (b);
\end{tikzpicture}
\caption{Connected sum at $p_1$ and $q_5'$}\label{f30-2}
\vfill
\end{subfigure}
\begin{subfigure}[b][6.6cm][s]{.49\textwidth}
\centering
\vfill
\begin{tikzpicture}[state/.style ={circle, draw}]
\node[state] (c) at (2,0) {$q_6',-$};
\node[state] (d) at (2,4.5) {$q_6'',-$};
\node[state] (e) at (2,1.5) {$q_3',+$};
\node[state] (l) at (2,3) {$q_3'',-$};
\path (e) edge node[above] {$x$} (j);
\path (c) edge node[above] {$x$} (h);
\path (d) edge node[below] {$l-x$} (i);
\path (l) edge node[below] {$l-x$} (k);
\node[state] (h) at (-3,0) {};
\node[state] (i) at (-3,4.5) {};
\node[state] (j) at (-3,1.5) {};
\node[state] (k) at (-3,3) {};
\end{tikzpicture}
\caption{(Self) connected sum at $p_2$ and $q_5''$}\label{f30-3}
\vfill
\end{subfigure}
\caption{Case (2-b)} \label{f30}
\end{figure}

Suppose that Case (2-b) holds. We have $\Sigma_{p_1}=[+,l,x,x]$ and $\Sigma_{p_2}=[-,l,l-x,l-x]$ with $2x \neq l$. In Example \ref{e74} of the action on $Z:=Z_2(a,e,e) \sharp \overline{Z_2}(a,a-e,a-e)$ we take $a=l$ and $e=x$ and reverse its orientation; it has 10 fixed points $q_2',\cdots,q_6',q_2'',\cdots,q_6''$ that have fixed point data
\begin{center}
$[+,l-x,l-2x,x]$, $[+,l-x, x, x]$, $[+,l-x, x, x]$, $[-, l, x, x]$, $[-,l-2x, x, x]$, $[+, x, l-2x, l-x]$, $[-,x,l-x,l-x]$, $[-,x,l-x,l-x]$, $[+, l, l-x, l-x]$, $[-, l-2x, l-x, l-x]$.
\end{center}
We have $\Sigma_{p_1}=[+,l,x,x]=-\Sigma_{q_5'}$ and $\Sigma_{p_2}=-\Sigma_{q_5''}$. We take a connected sum at $p_1$ and $p_2$ of $M$ and $q_5'$ and $q_5''$ of $\overline{Z}$ to construct another $S^1$-manifold $M'$ with fixed point data 
\begin{center}
$\Sigma_{M'}=\{\Sigma_M \setminus ([+,l,x,x] \cup [-,l,l-x,l-x])\} \cup ([+,l-x,l-2x,x] \cup [+,l-x, x, x] \cup [+,l-x, x, x] \cup [-,l-2x, x, x] \cup [+, x, l-2x, l-x] \cup [-,x,l-x,l-x] \cup [-,x,l-x,l-x] \cup [-, l-2x, l-x, l-x])$. 
\end{center}
This corresponds to Operation (5) of Theorem \ref{t62} with $l=C$ and $x=A$. The manifold $M'$ has two less $l$ than $M$. The other weights $x,l-x,l-2x$ added have magnitudes strictly smaller than $l$ (the weight $l-2x$ may be negative).

As in Case (2-a), for a multigraph we do this in two steps, first we take a connected sum of $M$ ($\Gamma$) and $Z$ (Figure \ref{f28}) at $p_1$ and $q_5'$ to construct another $S^1$-manifold $M_1$ (another multigraph $\Gamma_1$, which is Figure \ref{f30-2}), and then take a self connected sum of $M_1$ ($\Gamma_1$) at $p_2$ and $q_5''$ to construct $M'$ ($\Gamma'$, which is Figure \ref{f30-3}).

As explained in Example \ref{e74}, Figure \ref{f28} describes the manifold $Z$ and satisfies Property A, and hence $\Gamma'$ describes $M'$ with Property A, by Propositions \ref{p47} and \ref{p48}.

\textbf{Case (2-c) - Self connected sum, Operation (1)}

Suppose that Case (2-c) holds. Since $p_1$ has weights $\{2x(=l),x,x(=y)\}$ and the action is effective, we have that $x=1$, that is, the biggest weight $l=2x$ is 2. This case is then Case (0-a); thus proceed as in Case (0-a).

\textbf{Case (3) - We divide into more cases}

Assume that Case (3) holds. We consider the following three cases.
\begin{enumerate}[(a)]
\item $x \neq y$.
\item $x=y$ and $2x \neq l$.
\item $x=y$ and $2x=l$.
\end{enumerate}

\textbf{Case (3-a) - Connected sum with $Z_1(l,y,x)$, Operation (3)}

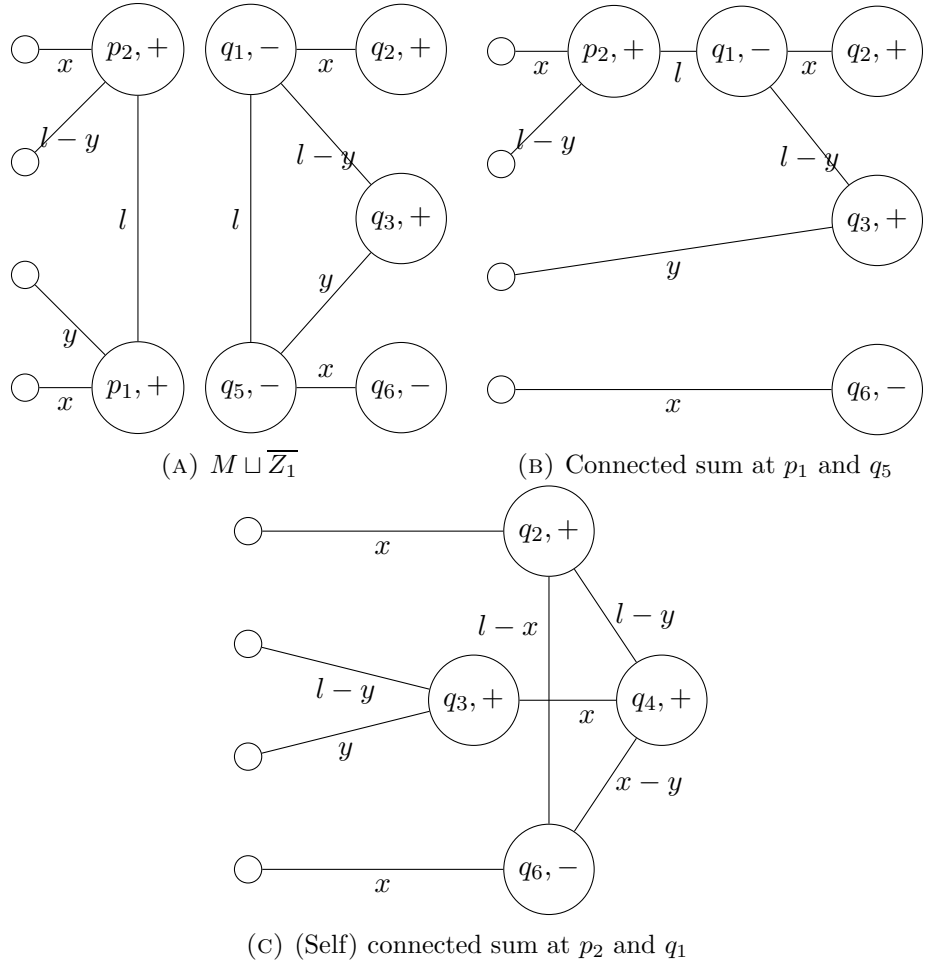
\begin{figure}
\centering
\begin{subfigure}[b][6.4cm][s]{.49\textwidth}
\centering
\vfill
\begin{tikzpicture}[state/.style ={circle, draw}]
\node[state] (a) at (0,0) {$q_5,-$};
\node[state] (b) at (0,4.5) {$q_1,-$};
\node[state] (c) at (2,0) {$q_6,-$};
\node[state] (d) at (2,4.5) {$q_2,+$};
\node[state] (e) at (2,2.25) {$q_3,+$};
\path (a) edge node[left] {$l$} (b);
\path (a) edge node[above] {$x$} (c);
\path (a) edge node[above] {$y$} (e);
\path (b) edge node[below] {$x$} (d);
\path (b) edge node[below] {$l-y$} (e);
\node[state] (f) at (-1.5,0) {$p_1,+$};
\node[state] (g) at (-1.5,4.5) {$p_2,+$};
\node[state] (h) at (-3,0) {};
\node[state] (i) at (-3,4.5) {};
\node[state] (j) at (-3,1.5) {};
\node[state] (k) at (-3,3) {};
\path (f) edge node[left] {$l$} (g);
\path (f) edge node[below] {$x$} (h);
\path (g) edge node[below] {$x$} (i);
\path (f) edge node[below] {$y$} (j);
\path (g) edge node[below] {$l-y$} (k);
\end{tikzpicture}
\caption{$M \sqcup \overline{Z_1}$}\label{f31-1}
\vfill
\end{subfigure}
\begin{subfigure}[b][6.4cm][s]{.49\textwidth}
\centering
\vfill
\begin{tikzpicture}[state/.style ={circle, draw}]
\node[state] (b) at (0.2,4.5) {$q_1,-$};
\node[state] (c) at (2,0) {$q_6,-$};
\node[state] (d) at (2,4.5) {$q_2,+$};
\node[state] (e) at (2,2.25) {$q_3,+$};
\path (b) edge node[below] {$x$} (d);
\path (b) edge node[below] {$l-y$} (e);
\node[state] (g) at (-1.5,4.5) {$p_2,+$};
\node[state] (h) at (-3,0) {};
\node[state] (i) at (-3,4.5) {};
\node[state] (j) at (-3,1.5) {};
\node[state] (k) at (-3,3) {};
\path (c) edge node[below] {$x$} (h);
\path (g) edge node[below] {$x$} (i);
\path (e) edge node[below] {$y$} (j);
\path (g) edge node[below] {$l-y$} (k);
\path (g) edge node[below] {$l$} (b);
\end{tikzpicture}
\caption{Connected sum at $p_1$ and $q_5$}\label{f31-2}
\vfill
\end{subfigure}
\begin{subfigure}[b][6.2cm][s]{.49\textwidth}
\centering
\vfill
\begin{tikzpicture}[state/.style ={circle, draw}]
\node[state] (c) at (2,0) {$q_6,-$};
\node[state] (d) at (2,4.5) {$q_2,+$};
\node[state] (e) at (1,2.25) {$q_3,+$};
\node[state] (a) at (3.5,2.25) {$q_4,+$};
\node[state] (h) at (-2,0) {};
\node[state] (i) at (-2,4.5) {};
\node[state] (j) at (-2,1.5) {};
\node[state] (k) at (-2,3) {};
\path (i) edge node[below] {$x$} (d);
\path (k) edge node[below] {$l-y$} (e);
\path (c) edge node[below] {$x$} (h);
\path (e) edge node[below] {$y$} (j);
\path (c) edge node[pos=.8, left] {$l-x$} (d);
\path (a) edge node[right] {$x-y$} (c);
\path (a) edge node[right] {$l-y$} (d);
\path (a) edge node[pos=.3, below] {$x$} (e);
\end{tikzpicture}
\caption{(Self) connected sum at $p_2$ and $q_1$}\label{f31-3}
\vfill
\end{subfigure}
\caption{Case (3-a)} \label{f31}
\end{figure}

Suppose that Case (3-a) holds. The fixed points $p_1$ and $p_2$ have fixed point data $[+,l,x,y]$ and $[+,l,x,l-y]$ with $x \neq y$. In Example \ref{e72} of the action on $Z_1(a,b,c)$ we take $a=l$, $b=y$, and $c=x$, and reverse its orientation; its fixed point data is
\begin{center}
$[-, l-y, l, x]$, $[+, l-y, l-x, x]$, $[+, l-y, y, x]$, $[+, l-y, x-y, x]$, $[-, l, y, x]$, $[-, l-x, x-y, x]$.
\end{center}
We take a connected sum at $p_1$ and $p_2$ of $M$ and $q_5$ and $q_1$ of $\overline{Z_1(l,y,x)}$ to construct another $S^1$-manifold $M'$ with fixed point data
\begin{center}
$\Sigma_{M'}=\{\Sigma_M \setminus ([+,l,x,y] \cup [+,l,x,l-y])\} \cup ([+, l-y, l-x, x] \cup [+, l-y, y, x] \cup [+, l-y, x-y, x] \cup [-, l-x, x-y, x])$.
\end{center}
This corresponds to Operation (3) of Theorem \ref{t62} with $x=A$, $y=B$, and $l=C$. The manifold $M'$ has two less $l$ than $M$. The other weights $l-y$, $l-x$, $x$, $x-y$ added have magnitudes strictly smaller than $l$.

For a multigraph, first take a connected sum of $M$ ($\Gamma$) and $Z_1$ (right of Figure \ref{f31-1}) at $p_1$ and $q_5$ to construct another $S^1$-manifold $M_1$ (another multigraph $\Gamma_1$, Figure \ref{f31-2}), and then take a self connected sum of $M_1$ ($\Gamma_1$) at $p_2$ and $q_1$ to construct $M'$ ($\Gamma'$, Figure \ref{f31-3}).

\textbf{Case (3-b) - Connected sum with $Z_2(l,x,x)$, Operation (4)}

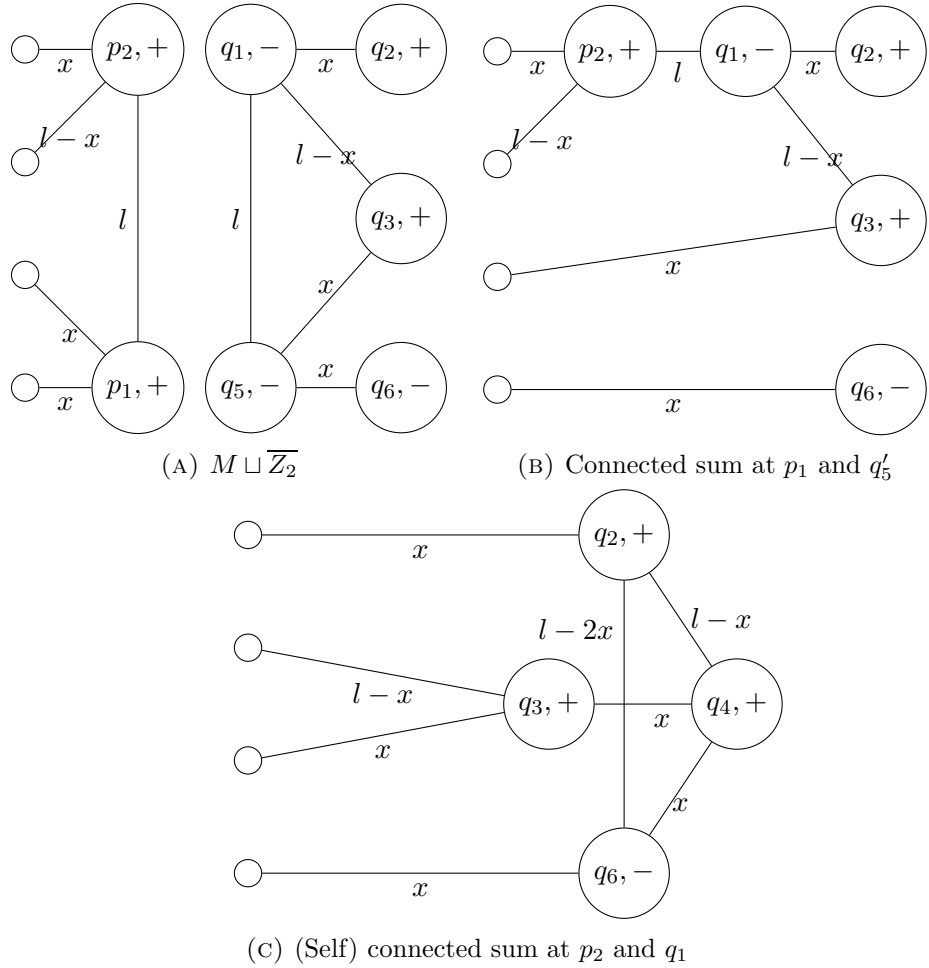
\begin{figure}
\centering
\begin{subfigure}[b][6.4cm][s]{.49\textwidth}
\centering
\vfill
\begin{tikzpicture}[state/.style ={circle, draw}]
\node[state] (a) at (0,0) {$q_5,-$};
\node[state] (b) at (0,4.5) {$q_1,-$};
\node[state] (c) at (2,0) {$q_6,-$};
\node[state] (d) at (2,4.5) {$q_2,+$};
\node[state] (e) at (2,2.25) {$q_3,+$};
\path (a) edge node[left] {$l$} (b);
\path (a) edge node[above] {$x$} (c);
\path (a) edge node[above] {$x$} (e);
\path (b) edge node[below] {$x$} (d);
\path (b) edge node[below] {$l-x$} (e);
\node[state] (f) at (-1.5,0) {$p_1,+$};
\node[state] (g) at (-1.5,4.5) {$p_2,+$};
\node[state] (h) at (-3,0) {};
\node[state] (i) at (-3,4.5) {};
\node[state] (j) at (-3,1.5) {};
\node[state] (k) at (-3,3) {};
\path (f) edge node[left] {$l$} (g);
\path (f) edge node[below] {$x$} (h);
\path (g) edge node[below] {$x$} (i);
\path (f) edge node[below] {$x$} (j);
\path (g) edge node[below] {$l-x$} (k);
\end{tikzpicture}
\caption{$M \sqcup \overline{Z_2}$}\label{f32-1}
\vfill
\end{subfigure}
\begin{subfigure}[b][6.4cm][s]{.49\textwidth}
\centering
\vfill
\begin{tikzpicture}[state/.style ={circle, draw}]
\node[state] (b) at (0.3,4.5) {$q_1,-$};
\node[state] (c) at (2.1,0) {$q_6,-$};
\node[state] (d) at (2.1,4.5) {$q_2,+$};
\node[state] (e) at (2.1,2.25) {$q_3,+$};
\path (b) edge node[below] {$x$} (d);
\path (b) edge node[below] {$l-x$} (e);
\node[state] (g) at (-1.5,4.5) {$p_2,+$};
\node[state] (h) at (-3,0) {};
\node[state] (i) at (-3,4.5) {};
\node[state] (j) at (-3,1.5) {};
\node[state] (k) at (-3,3) {};
\path (c) edge node[below] {$x$} (h);
\path (g) edge node[below] {$x$} (i);
\path (e) edge node[below] {$x$} (j);
\path (g) edge node[below] {$l-x$} (k);
\path (g) edge node[below] {$l$} (b);
\end{tikzpicture}
\caption{Connected sum at $p_1$ and $q_5'$}\label{f32-2}
\vfill
\end{subfigure}
\begin{subfigure}[b][6.4cm][s]{.49\textwidth}
\centering
\vfill
\begin{tikzpicture}[state/.style ={circle, draw}]
\node[state] (a) at (3.5,2.25) {$q_4,+$};
\node[state] (c) at (2,0) {$q_6,-$};
\node[state] (d) at (2,4.5) {$q_2,+$};
\node[state] (e) at (1,2.25) {$q_3,+$};
\path (a) edge node[right] {$l-x$} (d);
\path (i) edge node[below] {$x$} (d);
\path (a) edge node[below] {$x$} (c);
\path (a) edge node[pos=.3, below] {$x$} (e);
\path (k) edge node[below] {$l-x$} (e);
\path (c) edge node[pos=.8, left] {$l-2x$} (d);
\node[state] (h) at (-3,0) {};
\node[state] (i) at (-3,4.5) {};
\node[state] (j) at (-3,1.5) {};
\node[state] (k) at (-3,3) {};
\path (c) edge node[below] {$x$} (h);
\path (e) edge node[below] {$x$} (j);
\end{tikzpicture}
\caption{(Self) connected sum at $p_2$ and $q_1$}\label{f32-3}
\vfill
\end{subfigure}
\caption{Case (3-b)} \label{f32}
\end{figure}

Suppose that Case (3-b) holds. The fixed points $p_1$ and $p_2$ have fixed point data $[+,l,x,x]$ and $[+,l,x,l-x]$ with $2x \neq l$. In Example \ref{e73} of the action on $Z_2(a,d,d)$ we take $a=l$ and $d=x$ and reverse its orientation; its fixed point data is
\begin{center}
$[-, l-x, l, x]$, $[+,l-x,l-2x,x]$, $[+,l-x, x, x]$, $[+,l-x, x, x]$, $[-, l, x, x]$, $[-,l-2x, x, x]$.
\end{center}
We take a connected sum at $p_1$ and $p_2$ of $M$ and $q_5$ and $q_1$ of $\overline{Z_2(l,x,x)}$ to construct another $S^1$-manifold $M'$ with fixed point data
\begin{center}
$\Sigma_{M'}=\{\Sigma_M \setminus ([+,l,x,x] \cup [+,l,x,l-x])\} \cup ([+,l-x,l-2x,x] \cup [+,l-x, x, x] \cup [+,l-x, x, x] \cup [-,l-2x, x, x] )$.
\end{center}
This corresponds to Operation (4) of Theorem \ref{t62} with $x=A$ and $l=C$. The manifold $M'$ has two less $l$'s than $M$. The other weights $x$, $l-x$, $l-2x$ added have magnitudes strictly smaller than $l$.

For a multigraph, first take a connected sum of $M$ ($\Gamma$) and $Z_1$ (right of Figure \ref{f32-1}) at $p_1$ and $q_5$ to construct another $S^1$-manifold $M_1$ (another multigraph $\Gamma_1$, which is Figure \ref{f32-2}), and then take a self connected sum of $M_1$ ($\Gamma_1$) at $p_2$ and $q_1$ to construct $M'$ ($\Gamma'$, which is Figure \ref{f32-3}).

\textbf{Case (3-c) - Self connected sum, Operation (1)}

Suppose that Case (3-c) holds. Since $p_1$ has weights $\{2x(=l),x,x(=y)\}$ and the action is effective, this implies that $x=1$, that is, the biggest weight $l=2x$ is 2. This case is then Case (0-a); proceed as in Case (0-a).

\textbf{Completing the proof}

In any of the cases above, by taking connected sums with itself, $\mathbb{CP}^3$, $Z_1$, $Z_2$, and $Z_2 \sharp \overline{Z_2}$ (and these with opposite orientations), we can remove fixed points (vertices) that have the biggest weight (whose edge has the biggest label) to construct another compact connected oriented $S^1$-manifold $M'$ with isolated fixed points (another multigraph $\Gamma'$). On the manifold $M'$ and the multigraph $\Gamma'$, go to Step 1 and repeat the above procedure.
After a finite number of repeating the above steps, we can remove all fixed points to construct a fixed point free circle action on a compact connected oriented manifold (we can convert the multigraph $\Gamma$ describing $M$ with Property A to the empty multigraph).

All multigraphs Figures \ref{f15-2}, \ref{f25-2}, \ref{f26-2}, and \ref{f28} describing $\mathbb{CP}^3$, $Z_1$, $Z_2$, and $Z_2 \sharp \overline{Z_2}$ satisfy Property A, respectively. Since $\Gamma$ also satisfies Property A, by Propositions \ref{p47} and \ref{p48}, any multigraph appearing in this proof satisfies Property A. Thus we can repeat the above process. Repeating the steps, the theorems all follow. \end{proof}

\section*{Funding}
This work was supported by the National Research Foundation of Korea(NRF) grant funded by the Korea government(MSIT) (2021R1C1C1004158).

\end{document}